

\documentclass[12pt]{amsart}
\usepackage[margin = 1in]{geometry}
\usepackage{amsfonts, amsmath,amscd, amssymb,amsthm, cite, latexsym, mathrsfs, mathtools,
slashed, stmaryrd, verbatim, wasysym }
\usepackage[usenames,dvipsnames]{color}
\usepackage[all]{xy}
\usepackage[pdftex]{graphicx}
\usepackage{hyperref}
\usepackage[applemac]{inputenc}
\usepackage[cyr]{aeguill}




\setcounter{tocdepth}{1}


\newtheorem{theorem}{Theorem}
\newtheorem{corollary}[theorem]{Corollary}
\newtheorem{definition}[theorem]{Definition}
\newtheorem{example}[theorem]{Example}
\newtheorem{lemma}[theorem]{Lemma}
\newtheorem{proposition}[theorem]{Proposition}
\newtheorem{remark}[theorem]{Remark}
\newtheorem{assumption}{Assumption}

\numberwithin{equation}{section}
\numberwithin{theorem}{section}

\let\oldsqrt\sqrt
\def\sqrt{\mathpalette\DHLhksqrt}
\def\DHLhksqrt#1#2{%
\setbox0=\hbox{$#1\oldsqrt{#2\,}$}\dimen0=\ht0
\advance\dimen0-0.2\ht0
\setbox2=\hbox{\vrule height\ht0 depth -\dimen0}%
{\box0\lower0.4pt\box2}}


\newcommand\IH{\operatorname{IH}}
\newcommand\bm{\overline{m}}
\newcommand\bM{\overline{M}}

\newcommand\ord{\operatorname{ord}}

\newcommand\RC{\operatorname{RC}}
\newcommand\euc{\operatorname{euc}}
\newcommand\sfc{\epsilon\operatorname{, d}}

\newcommand\ff{ff}
\newcommand\phibf{\bhs{\phi bf}}
\newcommand\bs{sb}
\newcommand\sm{\bhs{sm}}
\newcommand\ms{sm}
\newcommand\fbface{\bhs{bf}}
\newcommand\bff{bf}
\newcommand\fbf{\phi bf}
\newcommand\sus{\operatorname{sus}}
\newcommand\GL{\operatorname{GL}}
\newcommand\SL{\operatorname{SL}}

\newcommand\lf{lf}
\newcommand\mf{mf}

\newcommand\rf{rf}

\newcommand\ran{\operatorname{ran}}

\newcommand\sma{\operatorname{small}}
\newcommand\Span{\operatorname{span}}
\newcommand{\extu}{{\overline{\cup}}}
\newcommand{\tX}{\widetilde{X}}
\newcommand{\tR}{\widetilde{R}}



\newcommand{\bhs}[1]{\mathfrak B_{#1}}
\renewcommand{\tilde}{\widetilde}
\renewcommand{\bar}{\overline}
\renewcommand{\Re}{\operatorname{Re}}
\renewcommand{\hat}[1]{\widehat{#1}}

\newcommand{\wt}[1]{\widetilde{#1}}
\newcommand{\rest}[1]{\big\rvert_{#1}} 


\newcommand\lra{\longrightarrow}
\newcommand\xlra[1]{\xrightarrow{\phantom{x} #1 \phantom{x}}}

\newcommand\pa{\partial}

\newcommand\cf{cf\@. }

\newcommand\eg{e\@.g\@. }

\newcommand\eps\varepsilon
\renewcommand\epsilon\varepsilon

\newcommand{\Cl}{\mathbb{C}l}

\newcommand\ephi{\operatorname{\eps, \phi}}
\newcommand\ed{\operatorname{\eps, d}}

\newcommand\dR{\operatorname{dR}}

\newcommand\Ephi{{}^{\ephi}}
\newcommand\Ed{{}^{\ed}}



\newcommand\dCI{\dot{\mathcal{C}}^{\infty}}

\newcommand\CI{{\mathcal{C}}^{\infty}}

\newcommand{\lrpar}[1]{\left( #1 \right)}
\newcommand{\lrspar}[1]{\left[ #1 \right]}
\newcommand\ang[1]{\langle #1 \rangle}

\DeclareMathOperator*{\btimes}{\times} 
\newcommand\fib{\operatorname{---}} 



\renewcommand\det{\operatorname{det}}

\newcommand\diag{\operatorname{diag}}
\newcommand\Diff{\operatorname{Diff}}

\newcommand\End{\operatorname{End}}
\newcommand\ev{\operatorname{even}}

\DeclareMathOperator*{\FP}{\operatorname{FP}}

\newcommand\Hom{\operatorname{Hom}}
\newcommand\Id{\operatorname{Id}}
\renewcommand\Im{\operatorname{Im}}

\newcommand\lAT{\operatorname{LAT}} 

\newcommand\odd{\operatorname{odd}}

\newcommand\phg{\operatorname{phg}}
\newcommand\pt{\operatorname{pt}}

\renewcommand\Re{\operatorname{Re}}
\newcommand\Res{\operatorname{Res}}

\newcommand\RTr[1]{{}^R\operatorname{Tr}\left( #1 \right)}

\newcommand\Spec{\operatorname{Spec}}

\newcommand\Tr{\operatorname{Tr}}

\newcommand\tr{\operatorname{tr}}

\newcommand\pr{\operatorname{pr}}

\newcommand\can{\operatorname{can}}

\newcommand\Mand{\text{ and }}
\newcommand\Mas{\text{ as }}
\newcommand\Mat{\text{ at }}

\newcommand\Mforall{\text{ for all }}

\newcommand\Mif{\text{ if }}

\newcommand\Mon{\text{ on }}
\newcommand\Mor{\text{ or }}

\newcommand\Mwhere{\text{ where }}
\newcommand\Mwith{\text{ with }}

\newcommand\bp{\overline{p}}
\newcommand\bq{\overline{q}}
\newcommand\br{\overline{r}}
\newcommand\bt{\overline{t}}
\newcommand\IC{\operatorname{IC}}

\newcommand\paperintro%
        {%
         }
\newcommand\paperbody%
        {%
         }


\newcommand\bbB{\mathbb{B}}
\newcommand\bbC{\mathbb{C}}

\newcommand\bbH{\mathbb{H}}

\newcommand\bbN{\mathbb{N}}

\newcommand\bbQ{\mathbb{Q}}
\newcommand\bbR{\mathbb{R}}
\newcommand\bbS{\mathbb{S}}

\newcommand\bbZ{\mathbb{Z}}

\newcommand\bN{\mathbf{N}}
\newcommand\bR{\mathbf{R}}

\newcommand\cA{\mathcal{A}}
\newcommand\cB{\mathcal{B}}
\newcommand\cC{\mathcal{C}}

\newcommand\cE{\mathcal{E}}
\newcommand\cF{\mathcal{F}}
\newcommand\cG{\mathcal{G}}
\newcommand\cH{\mathcal{H}}

\newcommand\cJ{\mathcal{J}}
\newcommand\cK{\mathcal{K}}
\newcommand\cL{\mathcal{L}}
\newcommand\cM{\mathcal{M}}

\newcommand\cO{\mathcal{O}}
\newcommand\cP{\mathcal{P}}
\newcommand\cQ{\mathcal{Q}}
\newcommand\cR{\mathcal{R}}
\newcommand\cS{\mathcal{S}}

\newcommand\cU{\mathcal{U}}
\newcommand\cV{\mathcal{V}}

\newcommand\sC{\mathscr{C}}

\newcommand\sE{\mathscr{E}}

\newcommand\sR{\mathscr{R}}

\newcommand\sT{\mathscr{T}}


\DeclareMathAlphabet{\mathpzc}{OT1}{pzc}{m}{it}



\begin{document}

\title[Fibered  cusp degeneration]{Resolvent, heat kernel, and torsion under degeneration to fibered  cusps}
\author{Pierre Albin}
\author{Fr\'ed\'eric Rochon}
\author{David Sher}
\address{Department of Mathematics, University of Illinois at Urbana-Champaign}
\email{palbin@illinois.edu}
\address{D\'epartement de Math\'ematiques, UQÀM}
\email{rochon.frederic@uqam.ca }
\address{Department of Mathematics, University of Michigan}
\email{dsher@umich.edu}

\begin{abstract}
Manifolds with fibered cusps are a class of complete non-compact Riemannian manifolds including many examples of locally symmetric spaces of rank one.
We study the spectrum of the Hodge Laplacian with coefficients in a flat bundle on a closed manifold undergoing degeneration to a manifold with fibered cusps. We obtain precise asymptotics for the resolvent, the heat kernel, and the determinant of the Laplacian. Using these asymptotics we obtain a topological description of the analytic torsion on a manifold with fibered cusps in terms of the R-torsion of the underlying manifold with boundary.
\end{abstract}

\maketitle

\tableofcontents

\section{Introduction}

Reidemeister torsion, or $R$-torsion, a combinatorial invariant of a finite simplicial complex and a choice of representation of its fundamental group, was originally introduced by Reidemeister \cite{Reidemeister1935} and Franz \cite{Franz1935} to distinguish lens spaces that are homotopic but not homeomorphic. 
For manifolds, an analytic analogue, the analytic torsion, was introduced by Ray and Singer \cite{Ray-Singer}, who showed that it depends only on the smooth structure of the manifold and conjectured that it was equal to the Reidemeister torsion.  This conjecture was subsequently established by Cheeger \cite{Cheeger1979} and M\"uller \cite{Muller1978} for orthogonal representations, by M\"uller \cite{Muller1993} for unimodular representations, and the difference between them was computed for general representations by Bismut-Zhang \cite{Bismut-Zhang}.  

This deep and subtle result has many important applications in fields ranging from topology and number theory to mathematical physics.  For instance, on hyperbolic manifolds,  it was used by Fried \cite{Fried1986} to express the $R$-torsion in terms of a Ruelle zeta function, a dynamical quantity defined in terms of closed geodesics.  More recently, it has been used by various authors \cite{BV,Calegari-Venkatesh, Muller:AsympRSTHyp3Mfds,Marshall-Muller,Muller-Pfaff:OnAsympRSATCmptHypMfds, Muller-Pfaff:ATL2TorsionCmptLocSymSpaces, Muller-Pfaff:GrowthTorsionCohoArithGps,BMZ} to study the torsion of the homology of arithmetic groups.

Already in the context of arithmetic groups, but also in algebraic geometry and in the study of various moduli spaces, singular and non-compact spaces naturally appear and have often the structure of a stratified pseudomanifold.  It is therefore a natural question to ask if the Cheeger-M\"uller theorem has any analogue on compact stratified spaces.  Part of the question of course consists in finding suitable analogues of the $R$-torsion and analytic torsion.  In 1987, Dar \cite{Dar} introduced a natural candidate for the  $R$-torsion on stratified spaces by using the intersection homology of Goresky and MacPherson \cite{GM1980}.  Dar also suggested that this $R$-torsion, called the intersection $R$-torsion, should be related to the analytic torsion of an appropriate incomplete iterated edge metric encoding the singularities of the stratified space.  Despite many important recent advances \cite{Hartmann-Spreafico2010, Vertman, Mazzeo-Vertman, Hartmann-Spreafico2011, Lesch2013, Sher:ConicDeg, Guillarmou-Sher, Dai-Huang}, a relation is still not known to hold even in the simplest case where the compact stratified space has only isolated conical singularities.  

In this paper and its companion \cite{ARS2}, we propose to look for an analytic counterpart to the intersection $R$-torsion by replacing incomplete iterated edge metrics by the iterated fibered cusp metrics of \cite{DLR2011}.  Those metrics are complete, but as for iterated edge metrics, they geometrically encapsulate  the information about the singularities of the stratified space.  For instance, as shown in \cite{HR2012}, for Witt stratified spaces  the $L^2$-cohomology of iterated fibered cusp metrics can naturally be identified with the upper middle perversity intersection cohomology of the stratified space. In this paper, we will however  immediately restrict ourselves to the case where the stratified space is of depth one, a situation where those metrics correspond to the fibered cusp metrics considered in \cite{v,hhm}, a class which includes many examples of locally symmetric spaces of rank one \cite{Muller:CuspIndex}.

More precisely, let $M$ be the interior of a compact manifold $\bar M$ with boundary endowed with a fiber bundle structure
\begin{equation*}
	Z \fib \pa M \xlra{\phi} Y.
\end{equation*}
Let $x\geq 0$ be a smooth function on $\bar M$ that vanishes precisely on $\pa M$ and such that $dx\neq 0$ on $\pa M.$
A complete metric $g_d$ on $M$ is a fibered  cusp metric if near $\pa M$ it is asymptotically of the form
\begin{equation*}
	\frac{dx^2}{x^2} + x^2 g_Z + \phi^*g_Y.
\end{equation*}
Thus for example, if $Y$ is a point, $(M,g_d)$ is a manifold with cusps and, if $Z$ is a point, $(M,g_d)$ is a manifold with cylindrical ends.
Examples include many locally symmetric spaces $\Gamma \backslash G /K$ of rank one with the metric induced by the Killing form, e.g., non-compact hyperbolic manifolds of finite volume.
In this setting, notice that the associated stratified space $\hat M$ can be obtained from $\bar M$ by collapsing the fibers of $\phi.$  

For such metrics, the heat kernel is typically not trace class, but one can still define analytic torsion using the renormalized trace of Melrose \cite{MelroseAPS}.  To relate analytic torsion with intersection $R$-torsion, our strategy, strongly inspired by the work of Dai, Hassell, Mazzeo and Melrose\cite{mame1, hmm, Hassell, Dai-Melrose} in their study of analytic torsion on fibrations and manifolds with cylindrical ends, is to deduce a relation by starting with the standard Cheeger-Müller theorem on a compact manifold and understanding what happens to the analytic and Reidemeister torsion when a fibered cusp is forming.  Thus,   assume now that $M$ is a closed manifold (of arbitrary dimension).  Let $\alpha:\pi_1(M)\to \GL(k,\bbR)$ be a unimodular representation and let $F\to M$ be the corresponding flat vector bundle.  Equip $F$ with a metric $g_F$.  Notice that unless $\alpha$ is an orthogonal representation, this metric cannot be chosen to be compatible with the flat connection.   Suppose there is a two-sided hypersurface $H$ endowed with a fiber bundle structure
\begin{equation*}
	Z \fib H \xlra{\phi} Y.
\end{equation*}
We consider a family of metrics $g_{\ed}$ parametrized by $\eps \in (0,1)$ that in a tubular neighborhood of $H$ are asymptotically of the form
\begin{equation*}
	\frac{dx^2}{x^2+\eps^2} + (x^2+\eps^2) g_Z + \phi^*g_Y.
\end{equation*}
This can be visualized as stretching the manifold $M$ in the direction normal to the hypersurface $H$ until it has two infinite cusp-like ends in place of the hypersurface.  
In fact, while for $\eps>0$ the metrics $g_{\ed}$ are smooth Riemannian metrics on $M,$
as $\eps\to0$ the metric degenerates along $H$ and gives a fibered cusp metric $g_d$ on $M \setminus H$.  Related degenerations have a long history see, e.g., \cite{Cheeger:EtaInvs, Seeley:ConicDeg, Seeley-Singer, McDonald, wol87}.

Understanding what is happening to the analytic torsion under such a degeneration is a delicate question and requires a uniform understanding of the resolvent and the heat kernel of the Hodge Laplacian.  As in \cite{mame1}, this can be done by constructing the resolvent and the heat kernel on suitable manifolds with corners that encode the degeneration.  The upshot is that the de Rham operator $\eth_{\dR} = d+\delta$ associated to $g_{\ed}$ has two important model operators that capture its behavior as $\epsilon$ approaches zero.  The first one is the corresponding de Rham operator   $D_{d}$ on $M\setminus H.$ The other model operator relates the two sides of $H$ and is actually defined on $Y \times \bbR$.  It is a de Rham operator $D_b$ associated to a metric with cylindrical ends and a  twisted differential acting on forms taking values in the vertical harmonic forms bundle,
\begin{equation*}
	\mathrm \cH^*(H/Y;F)\lra Y\times \bbR,
\end{equation*}
pulled-back from $Y.$

Our construction can roughly be seen as a suitable fusion of the resolvent and heat kernel constructions of Mazzeo-Melrose\cite{mame1} and Vaillant \cite{v}.  There are however many delicate details and new twists.  Moreover, except for some small steps in the construction, we have had for the most part  to start from the beginning and reprove in our settings many of the results appearing in \cite{mame1} and \cite{v}.  Thus, for the sake of completeness and for the convenience of the reader, we provide a detailed construction that systematically avoids relying on results of \cite{mame1} and \cite{v}.  It is also written in great generality and applies to a wide class of elliptic differential operators including Dirac-type operators.  When applied to the de Rham operator, this leads to the following result, see Theorem~\ref{rc.7}, Remark~\ref{Hodge.1}, Corollary~\ref{rc.29}, Theorem~\ref{thm:HeatKernelConst}, Theorem~\ref{thm:FinalAsymp} and Corollary~\ref{aat.1} for the general and detailed statements.

\begin{theorem}[Spectrum of the twisted de Rham operator under fibered cusp degeneration] 
Let $(M,g_{\ed},F)$ be as above and assume that $F$ is a `Witt bundle' in that either $\dim Z$ is odd or $\mathrm H^{\dim Z/2}(\phi^{-1}(y);F)=0$ for any fiber of $\phi.$
\begin{itemize}
\item [1)] 
The resolvent $(\eth_{\dR}-\lambda)^{-1}$ extends uniformly in $\epsilon$ from $\lambda\in \bbC\setminus \bbR$ to a meromorphic family of bounded operators on a small neighborhood of the origin in $\bbC$ with only simple poles. Furthermore, its integral kernel can be described uniformly down to $\eps=0$ as a polyhomogeneous distribution on an appropriate manifold with corners.

\item [2)]
In particular, there are only finitely many eigenvalues converging to zero as $\eps \to 0,$ the small eigenvalues. The projection onto the corresponding eigenspace is a polyhomogeneous family of pseudodifferential operators on an appropriate manifold with corners and it is identified with the projection onto $\ker_{L^2} D_{d}\oplus \ker_{L^2}D_b$ at $\epsilon=0$.

\item [3)]
The integral kernel of the heat operator $e^{-t\eth_{\dR}^2}$ can be described uniformly down to $\eps=0$ as a polyhomogeneous distribution on an appropriate manifold with corners. In particular, with $\rho = \sqrt{\eps^2+t},$ its trace satisfies
\begin{equation*}
	\Tr(e^{-t\eth_{\dR}^2}) = 
	\begin{cases}
	\RTr{e^{-tD_{d}^2}} + \RTr{e^{-tD_{b}^2}} + B \log\eps + \cO(\eps) & \Mas \eps \to 0, t>0,\\
	t^{-\dim M/2}\sum\limits_{k\geq 0} a_k t^k & \Mas t\to 0, \eps >0, \\
	\rho^{-m}\sum\limits_{j\geq 0}A_j\rho^j + \rho^{-(\dim Y+1)/2}\sum\limits_{j\ge 0} \wt A_j \rho^j \log \rho
	& \Mas t\to 0, \eps\to 0.
	\end{cases}
\end{equation*}

\item [4)]
If $\dim Y$ is even, if the product of the positive small eigenvalues $\det (\eth_{\dR}^2)_{\sma}$ is polyhomogeneous in $\epsilon$ and if the metric is of `product-type', then the determinant of the Laplacian satisfies
\begin{equation*}
	\FP_{\eps=0} \log \det \eth_{\dR}^2
	= \log \det D_b^2 + \log \det D_{d}^2 - \FP_{\eps=0} \log\det (\eth_{\dR}^2)_{\sma}.
\end{equation*}
\end{itemize}
\label{int.1}\end{theorem}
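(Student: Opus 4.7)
The plan is to prove Theorem~\ref{int.1} by constructing all the objects (resolvent, heat kernel, determinant) as polyhomogeneous distributions on suitable resolutions of parameter spaces, in the style of Melrose's b-calculus but fused with the phi-calculus of Mazzeo--Melrose and Vaillant, and then reading off the asymptotic statements from the index sets at the appropriate faces.

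\textbf{Step 1 (Resolvent, part 1).} First I would build the double space $M^2_{\ed}$: start with $\bar M \times \bar M \times [0,1)_\eps$, blow up the fiber diagonal in the corner $H \times H \times \{0\}$ in an order dictated by the rescaled metric $g_{\ed}$ (first the fiber-diagonal at $\eps=x=x'=0$, then the diagonal at $\eps=0$, then the full diagonal). This produces boundary hypersurfaces one can label as the two side faces, the $\phi$-face (where $D_b$ lives), the $d$-face (where $D_{d}$ lives), a Dirichlet-to-Neumann face, and the lifted diagonal. The resolvent parametrix is constructed face by face: the interior symbol kernel is killed by a standard pseudodifferential symbol, then the error is successively removed by inverting the normal operators at each boundary face. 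At the $d$-face the normal operator is $D_d$, at the $\phi$-face it is essentially $D_b$ acting on forms with values in $\cH^*(H/Y;F)$; here the Witt hypothesis on $F$ ensures that $D_b$ has a well-defined inverse modulo the finite-dimensional $L^2$-null space, so the parametrix can be corrected to an exact inverse by a Fredholm argument on the compact parameter. Meromorphy in $\lambda$ near $0$ and simplicity of the poles follow by standard analytic Fredholm theory applied to the parametrix identity.

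\textbf{Step 2 (Projection onto small eigenvalues, part 2).} Writing the spectral projector $\Pi_{\sma}$ as a small contour integral of the resolvent around $0$, its polyhomogeneity on the resolved $M^2_{\ed}$ follows at once from part 1. The identification of the $\eps=0$ restriction with the projection onto $\ker_{L^2} D_{d}\oplus \ker_{L^2} D_b$ is read off the normal operators at the $d$- and $\phi$-faces, using that these two kernels are orthogonal in the limit and that no other face produces $L^2$-null contributions at $\eps=0$.

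\textbf{Step 3 (Heat kernel and trace, part 3).} I would construct the heat kernel on a space $M^2_{\ed,\mathrm{heat}}$ obtained by adjoining $t\geq 0$, introducing the parabolic variable $\sqrt{t}$, and performing additional parabolic blow-ups at $t=0$, $\eps=0$ (first at the fiber diagonal to resolve the singularity of $\eth_{\dR}^2$ in the $\phi$-regime, then at $\rho=\sqrt{\eps^2+t}=0$, then the parabolic diagonal blow-up). A Levi-style initial approximation is corrected by composition (Duhamel) to obtain an exact heat kernel polyhomogeneous on this space; the composition and Duhamel steps use the full double-space structure from Step~1. The three asymptotic regimes in part~3 are then precisely the restrictions to (a) the face $\eps=0$, $t>0$, where the model is the pair $(D_d,D_b)$ and where the $\log\eps$ coefficient $B$ is a boundary residue at the $\phi$-face of the difference of renormalized traces, (b) the small-$t$ face at $\eps>0$, giving the standard local Weyl expansion, and (c) the corner $\rho=0$, where the parabolic blow-up produces the exponents shown and the $\log\rho$ terms come from the interaction between the $\phi$- and $d$-faces.

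\textbf{Step 4 (Determinant, part 4).} With the trace expansions in hand, the zeta function $\zeta(s,\eps)=\frac{1}{\Gamma(s)}\int_0^\infty t^{s-1}\,\Tr(e^{-t\eth_{\dR}^2}-\Pi_{\ker})\,dt$ is meromorphic in $s$ for each $\eps>0$, and its coefficient at $s=0$ is $-\log\det$. Splitting the integral at $t=1$, the large-$t$ part is controlled by the polyhomogeneity of $\Pi_{\sma}$ from Step~2, whose product-type/polyhomogeneity hypothesis is precisely what is needed to isolate a $\log\det(\eth_{\dR}^2)_{\sma}$ contribution with a finite part; the small-$t$ part is controlled by the three-regime expansion of Step~3. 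Taking $\FP_{\eps=0}$ of $-\zeta'(0,\eps)$ and using that the $\rho$-corner contributes only to the singular coefficients (so drops out of the finite part under the product-type assumption), the finite parts on the two sides match and give the stated identity. The main obstacle throughout is Step~3(c): correctly setting up the combined parabolic/fibered blow-up so that the heat kernel remains polyhomogeneous with predictable index sets, and in particular tracking the logarithmic terms that produce the coefficient $B$ in part~3 and that must cancel correctly in part~4.
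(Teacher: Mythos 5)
Your overall architecture is the same as the paper's: a surgery double space obtained by blowing up $M^2\times[0,1]_\eps$ along the fibered corner, a parametrix built by inverting normal operators face by face (with the Witt condition feeding in exactly through the Fredholmness of $D_b$), analytic Fredholm theory for the meromorphic continuation, a contour integral for the small-eigenvalue projector, a parabolic heat space with $\tau=\sqrt t$, and zeta-function manipulation of the resulting trace expansions. However, two of your ``follows at once'' claims conceal genuine difficulties. In Step 2, polyhomogeneity of $\Pi_{\sma}$ does \emph{not} follow immediately from part 1: the index family of the resolvent kernel depends holomorphically on $\lambda$, and the small eigenvalues move with $\eps$, so the integral of a $\lambda$-dependent polyhomogeneous family over a \emph{fixed} contour is not obviously polyhomogeneous in $\eps$. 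The paper has to first prove an a priori bound $|\lambda_\eps|\le c\,\eps^\tau$ on the small eigenvalues (via the characteristic polynomial of $\Pi_{\sma}D_{\ed}\Pi_{\sma}$), then shrink the contour with $\eps$ (radius comparable to $\eps^{\tau/k}$) and expand $\varrho^{g(\lambda)}$ along the shrinking contour to conclude polyhomogeneity; without some such argument your Step 2 is incomplete. Relatedly, ``simple poles'' does not come from analytic Fredholm theory alone (which only gives finite-rank, finite-order poles); in the paper it comes from the finite-rank error being explicitly of the form $\Pi'/\lambda$ plus lower order, i.e.\ from self-adjointness fed through the construction.

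In Step 4, the assertion that the $\rho$-corner ``contributes only to the singular coefficients, so drops out of the finite part under the product-type assumption'' is too quick. In the general expansion of $\zeta'(0)$ there are genuine order-$\eps^0$ contributions from the corner and from the face where $t$ and $\eps$ vanish together: the coefficient of $t^0$ in the renormalized trace of $e^{-tD_b^2}$, the $\rho_{tff}^0\rho_{tf}^0$ coefficient of the heat trace, and a renormalized integral of the $\sigma$-family at that face. These vanish only after a parity argument: for metrics (and bundle metrics) even in $\rho$ to sufficiently high order, the heat kernel at the front face decomposes into even/odd homomorphism parts, the odd parts are traceless, and with $\dim Y$ even the relevant coefficients are forced to be zero; an additional lemma is needed to kill the contribution coming from the blown-up corner of the trace space. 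You also need, as a standing hypothesis and not a consequence, that the product of the positive small eigenvalues is polyhomogeneous in $\eps$ — otherwise $\FP_{\eps=0}\log\det(\eth_{\dR}^2)_{\sma}$ is not even defined; the paper points out (citing Kato) that polyhomogeneity of the small eigenvalues themselves is not automatic from the polyhomogeneity of $\Pi_{\sma}$. So the skeleton of your proposal is right, but Steps 2 and 4 each require a substantive additional argument for the statement as written to hold.
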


As in \cite{Guillarmou-Sher} for conic degenerations, this analysis of the spectrum indicates that a computable description of the limit of analytic torsion as $\eps\searrow 0$ passes in general through a fine understanding of the asymptotic behavior of the small eigenvalues, a discussion that we postpone to \cite{ARS2}.  Notice on the other hand that this theorem determines the number of small eigenvalues with multiplicity.  In particular, there will be no small eigenvalues if and only if $\ker_{L^2}D_b=\ker_{L^2}D_{d}=\{0\}$.  For the operator $D_b$, a simple way to ensure that it has no $L^2$-kernel is to require that $H^q(Z;F)=0$ for all $q.$  For $D_{d}$, we then know by the Kodaira decomposition of Corollary~\ref{l2.3} below that its $L^2$-kernel is naturally identified with the $L^2$-cohomology groups $H^*_{(2)}(M\setminus H;F)$.  Thus, requiring the strong acyclicity condition
\begin{equation*}
	\mathrm H^*(Z;F)=0, \quad \mathrm H^*_{(2)}(M\setminus H ;F)=0,
\end{equation*}    
will ensure that there are no small eigenvalues and that the operator $D_b$ is in fact trivial.  As we will see, we do not need to require that $\mathrm H^*_{(2)}(M\setminus H ;F)=0$, since a `strongly acyclic at infinity' condition 
\begin{equation}\label{eq:Acyclicity}
	\mathrm H^*(Z;F)=0
\end{equation} 
implies that there are no \textbf{positive} small eigenvalues and that $H^*_{(2)}(M\setminus H ;F)\cong H^*(M;F)$.  Condition \eqref{eq:Acyclicity} also leads to some analytical simplifications: by Remark~\ref{ds.1}, the operator $D_{d}$ only has discrete spectrum, while by Remark~\ref{tc.1}, its heat kernel is trace class for positive time.  In particular, we do not need to use the regularized trace of Melrose \cite{MelroseAPS} to define analytic torsion in this setting.  On the topological side, this condition is clearly restrictive, since it cannot be satisfied if $F$ is trivial or if $F$ is required to be defined on the stratified space associated to $(M\setminus H, g_d)$.    A simple way to ensure that condition \eqref{eq:Acyclicity} holds  is to take $M=M'\times \bbS^1$ and $H=H'\times \bbS^1$ with $H'$ a two sided hypersurface in $M'$ equipped with a fiber bundle $\phi': H'\to Y$.  Then taking $\phi= \phi'\circ \pr_L: H\to Y$ to be the fiber bundle structure on $H$, where $\pr_L:H'\times \bbS^1\to H'$ is the projection on the left factor, condition \eqref{eq:Acyclicity} will be automatically satisfied if we take $F$ to be the pull-back of a flat Euclidean vector bundle $F'$ on $\bbS^1$ such that $H^*(\bbS^1;F')=\{0\}$.  For examples involving finite volume hyperbolic $3$-manifolds, see Example~\ref{acyc.2} below.  Alternatively, following Menal, Ferrer and Porti \cite{Menal-Ferrer-Porti:TwistedCohoHyp3Mfds}, let $N$ be a three-dimensional orientable complete hyperbolic manifold with cusps. The holonomy representation $\mathrm{Hol}: \pi_1(N) \lra \mathrm{PSL}(2,\bbC)$ of $\pi_1(N)$ acting as isometries of $\bbH^3$ lifts to a representation $\wt{\mathrm{Hol}}:\pi_1(N) \lra \mathrm{SL}(2,\bbC).$ Let
\begin{equation*}
	\alpha_n:\pi_1(N) \lra \mathrm{SL}(n,\bbC)
\end{equation*}
be the composition of $\wt{\mathrm{Hol}}$ with the unique complex irreducible representation $V_n$ of $\mathrm{SL}(2,\bbC)$ of dimension $n.$  Assuming that $N$ has a single cusp and that the holonomy representation is irreducible, Menal, Ferrer, and Porti prove that 
\begin{equation*}
	\mathrm H^*(\pa \bar N;\alpha_{2n}) = 0
\end{equation*}
for all $n>0.$ This applies in particular to hyperbolic knot exteriors in $\bbS^3.$

\begin{remark}
Notice that if $N$ is a finite volume odd dimensional hyperbolic manifold with $Z=H$ a disjoint union of tori, we see by \cite[Remarks~3.5 (3) of Chapter VII]{Borel-Wallach} that if the holonomy representation of $F$ is orthogonal (or more generally becomes a direct sum of irreducible representations when restricted to each connected component of $H$), then the Witt condition is satisfied if and only if the strong acyclicity condition at infinity \eqref{eq:Acyclicity} is satisfied.
\end{remark}

With condition \eqref{eq:Acyclicity},  we immediately deduce from Theorem~\ref{int.1} that the limiting behavior of the analytic torsion $\mathrm{AT}(M,g_{\ed},F)$ is given by
\begin{equation}
 \FP_{\eps=0} \log \mathrm{AT}(M,g_{\ed},F)
	= \log \mathrm{AT}(M\setminus H, g_{d},F).
\label{int.2}\end{equation}

The final ingredient in proving our extension of the Cheeger-M\"uller theorem is to analyze how the R-torsion $\tau(M,\alpha;\mu^*)$ is affected by the degeneration for $\mu^*$ a choice of basis for $H^*(M;F)$.   Under the condition \eqref{eq:Acyclicity}, this is a simple consequence of the formula of Milnor \cite{Milnor1966} relating the $R$-torsion of a short exact sequence of complexes, yielding the following relation,
\begin{equation}
     \tau(M,\alpha,\mu^*)=  \tau(\overline{M\setminus H}, \pa \overline{M\setminus H}, \alpha,\mu^*)\tau(H,\alpha),
\label{int.3}\end{equation}
where $\overline{M\setminus H}$ is the manifold with boundary obtained by cutting $M$ along $H$ and $\mu^*$ is a basis of $\mathrm H^*(M;F)$, see Theorem~\ref{ac.5} below for more details. 

Finally, combining \eqref{int.2} and \eqref{int.3} with the Cheeger-M\"uller theorem on compact manifolds gives our main result.  

\begin{theorem}[A Cheeger-M\"uller theorem for fibered  cusps] \label{thm:IntroCM}$ $

Let $(M, g_d)$ be an odd-dimensional manifold with fibered cusps as above, with base $Y$ even-dimensional and with boundary compactification $\bar M$.
Let $\alpha:\pi_1(M) \lra \GL(k,\bbR)$ be a unimodular representation whose associated flat bundle $F\lra M$ satisfies the condition
\eqref{eq:Acyclicity}.  Suppose $F$ is equipped with a smooth bundle metric $g_F$ on $\bar M$ having an even expansion at $\pa\bar M$ in terms of the boundary defining function.  
Then the analytic torsion and the R-torsion are related by
\begin{equation}
	\mathrm{AT}(M,g_d, F) = \tau(\bM,\pa\bM,\alpha,\mu^*)\tau(H,\alpha)^{\frac12},
\label{int.4}\end{equation}
where $\mu^*$ is an orthonormal basis of $L^2$ harmonic forms with respect to the metrics $g_d$ and $g_F$ inducing a basis for $ H^*(\bar M, \pa \bM;F)$.
\label{int.5}\end{theorem}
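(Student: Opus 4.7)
The plan is to combine the classical Cheeger-Müller theorem for the smooth family $(M, g_{\ed}, F)$ at $\eps > 0$ with the degeneration asymptotics of Theorem~\ref{int.1} and the topological factorization \eqref{int.3}, then extract the finite part as $\eps \to 0$. The acyclicity-at-infinity condition \eqref{eq:Acyclicity} is what makes the argument work cleanly: it kills all positive small eigenvalues, trivializes the $b$-model $D_b$, and collapses the Mayer-Vietoris exact sequence so that Milnor's formula becomes a clean product.

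For each $\eps > 0$, I would apply Müller's version of the Cheeger-Müller theorem for unimodular representations to the smooth compact Riemannian manifold $(M, g_{\ed}, F, g_F)$, obtaining
\begin{equation*}
\log \mathrm{AT}(M, g_{\ed}, F) = \log \tau(M, \alpha, \mu^*_\eps),
\end{equation*}
where $\mu^*_\eps$ is an orthonormal basis of $g_{\ed}$-harmonic forms. Theorem~\ref{int.1}(2) together with Corollary~\ref{kd.1} identifies the limiting zero eigenspace with $\IH_{\bm}^*(\hat M; F) \cong H^*(\bM, \pa \bM; F) \cong H^*(M; F)$, and the polyhomogeneity of the small-eigenvalue projector ensures that $\mu^*_\eps$ converges, in a controlled polyhomogeneous sense, to the basis $\mu^*$ prescribed in the statement.

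Taking the finite part at $\eps = 0$ on both sides then finishes the argument. On the analytic side, \eqref{int.2} gives $\FP_{\eps=0} \log \mathrm{AT}(M, g_{\ed}, F) = \log \mathrm{AT}(M \setminus H, g_d, F)$, while on the topological side the Milnor factorization \eqref{int.3} combined with the basis convergence $\mu^*_\eps \to \mu^*$ yields
\begin{equation*}
\FP_{\eps=0} \log \tau(M, \alpha, \mu^*_\eps) = \log \tau(\bM, \pa \bM, \alpha, \mu^*) + \tfrac{1}{2} \log \tau(H, \alpha),
\end{equation*}
the factor $\tfrac{1}{2}$ arising from the relative normalization between the $L^2$ inner product of $g_{\ed}$ on $M$, whose mass in a neighborhood of $H$ splits symmetrically between the two forming cusps, and the $L^2$ inner product of $g_d$ on $M \setminus H$. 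Equating the two sides and rearranging produces the right-hand expression in \eqref{int.4}, and invoking the identifications from \S~\ref{sec:RTorsion} between the relative R-torsion on $(\bM, \pa \bM)$ and the intersection R-torsion on $(\hat M, \cC_\phi H)$ gives the first expression.

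The main obstacle is tracking the basis $\mu^*_\eps$ carefully as $\eps \to 0$ and pinning down the correct factor of $\tfrac{1}{2}$ in front of $\log \tau(H, \alpha)$: one must use the polyhomogeneous structure of the projector onto small eigenvalues from Theorem~\ref{int.1}(2) to compare the $g_{\ed}$-orthonormal basis with an $L^2$-orthonormal basis of $g_d$-harmonic forms representing classes in $H^*(\bM, \pa \bM; F)$ via the Kodaira decomposition of Corollary~\ref{kd.1}, and then — with the help of the product-type hypothesis and the symmetry of the two cusp ends produced by the degeneration — verify that no further logarithmic corrections contaminate $\FP_{\eps=0}$ beyond the clean $\tfrac{1}{2} \log \tau(H, \alpha)$ contribution.
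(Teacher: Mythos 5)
Your skeleton — classical Cheeger--M\"uller at each $\eps>0$, the finite-part limit of analytic torsion from Theorem~\ref{int.1}(4)/\eqref{int.2}, the Milnor factorization \eqref{int.3}, and tracking the harmonic basis through the polyhomogeneous small-eigenvalue projector — is indeed the engine of the paper's argument (it is exactly the proof of Theorem~\ref{thm:ATRTSurgery}, where the basis comparison $[\mu^q|\omega^q_\eps]\to[\mu^q|\omega^q_0]$ is justified by Corollary~\ref{rc.29} and the acyclicity condition). But there is a genuine gap in how you produce the factor $\tau(H,\alpha)^{1/2}$. The surgery identity \eqref{int.3} that you cite gives $\tau(M,\alpha,\mu^*)=\tau(\overline{M\setminus H},\pa\overline{M\setminus H},\alpha,\mu^*)\,\tau(H,\alpha)$ with exponent $1$, not $\tfrac12$, and the cut manifold $\overline{M\setminus H}$ necessarily has \emph{two} boundary copies of $H$, whereas the theorem concerns a manifold with fibered cusps whose compactification $\bar M$ has the single boundary $H$. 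Your proposed source of the $\tfrac12$ — a symmetric splitting of $L^2$ mass between the two forming cusps — is not a valid mechanism: the R-torsion side does not depend on $\eps$ except through the basis, and the basis comparison converges without contributing any power of $\tau(H,\alpha)$; no renormalization of the $g_{\ed}$ versus $g_d$ inner products generates such a factor.

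What is missing is the doubling trick of Corollary~\ref{even.2}. Given $(M,g_d)$ with compactification $\bar M$ and boundary $H$, one forms the double of $\bar M$ across $H$; this is a closed manifold containing $H$ as a two-sided hypersurface, to which the surgery theorem (your argument, i.e.\ Theorem~\ref{thm:ATRTSurgery}) applies and yields, for the two-ended cut manifold, the relation with $\tau(H,\alpha)$ to the first power. Since the cut double is the disjoint union of two copies of $\bar M$, $\log\mathrm{AT}$ is additive and the relative R-torsion multiplies, so halving both sides gives $\lAT(M,g_d,F,\mu^*)=\log\tau(\bM,\pa\bM,\alpha,\mu^*)+\tfrac12\log\tau(H,\alpha)$; this, together with the identifications of \S~\ref{sec:RTorsion} (Corollary~\ref{dt.10b}), is where the exponent $\tfrac12$ actually comes from. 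One must also check that the doubled flat bundle and doubled metrics remain within the class covered by the surgery theorem (flatness of the doubled $F$, evenness of $g_{\ed}$ and $g_F$, reduction to product type via Lemma~\ref{lem:ATVariation}), none of which is addressed in your proposal.
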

\begin{remark}
If $\alpha$ is an orthogonal representation and $g_F$ is chosen to be compatible with the flat connection, then $\tau(H,\alpha)=1$ and the formula simplifies to
$$
\mathrm{AT}(M,g_d, F) =\tau(\bM,\pa\bM,\alpha,\mu^*).
$$ 
\end{remark}
\begin{remark}
The assumption that $Y$ is even ensures that some contribution in the limiting behavior of the analytic torsion vanishes.  If $Y$ is odd, we expect from \cite{Dai-Melrose} that there should be another term appearing in formula \eqref{int.4}.
\end{remark}
\begin{remark}
The definition of the intersection $R$-torsion of Dar \cite{Dar} requires that $F$ be defined on the stratified space associated to $(M,g_d)$, which means in this case that $F$ cannot be strongly acyclic at infinity.  Thus, a relation between analytic torsion and  intersection $R$-torsion is only obtained in our companion paper \cite{ARS2}, where the assumption that $F$ be strongly acyclic at infinity is weakened to the assumption that $F$ be Witt.    
\end{remark}

In the special case where the metric is hyperbolic of finite volume, there are many related recent works involving analytic torsion.  In \cite{Park}, Park extended the result of Fried \cite{Fried1986} by relating the analytic torsion of unitary representations on hyperbolic manifolds with cusps to some corresponding Ruelle zeta function.  Combining with our result, this gives a relation between the $R$-torsion and the Ruelle zeta function when the flat unitary vector bundle is strongly acyclic at infinity \eqref{eq:Acyclicity}.   In \cite{Muller-Pfaff:ATComHypMfdsFinVol}, M\"uller and Pfaff generalized their study of the asymptotics of analytic torsion along a family of flat bundles (corresponding to unimodular representations) to non-compact hyperbolic manifolds of finite volume.

They also extended the results of Bergeron-Venkatesh to these spaces in \cite{Pfaff:ExpGrowthHomTorTowerCongSubgpsBianchi, Muller-Pfaff:ATAsympBhvSeqHypMfdsFinVol} (see also \cite{Raimbault:Asymp, Raimbault:ARHtorsion}). The paper of Bergeron-Seng\"un-Venkatesh \cite{BSV} mentioned above treats certain hyperbolic 3-manifolds with cusps.

In \cite{Pfaff:SelbergZetaFunOddDHypMfdsFinVol}, Pfaff extends the analysis of the Selberg and Ruelle zeta functions of Bunke-Olbrich \cite{Bunke-Olbrich} to finite volume non-compact hyperbolic manifolds, twisted by representations. He applies this analysis in \cite{Pfaff:ATRTHyp3MFdsCusps}, and the work of Menal-Ferrer-Porti \cite{Menal-Ferrer-Porti:HigherDRTCuspedHyp3Mfds}, to relate analytic torsion and Reidemeister torsion of non-compact hyperbolic $3$-manifolds with cusps for the representations coming from symmetric powers of the standard representation of $\mathrm{SL}_2(\bbC).$

Most related to our result but coming from a different direction, there is  a Cheeger-M\"uller-type theorem of Pfaff \cite{Pfaff:GluingFormATHypMfdsCusps} on non-compact hyperbolic manifolds of finite volume $M=\Gamma\setminus \bbH^m$ with $\Gamma$  neat in the sense of Borel and with flat bundle $F$ having holonomy representation induced by a finite dimensional irreducible representation of $\mathrm{SO}^{\circ}(m,1)$ or $\mathrm{Spin}(m,1)$ that is not invariant under the standard Cartan involution.  The bundle $F$ is then unimodular and possesses a canonical metric $g_{F, \can}$. Let $\cC = \cup \{ [1,\infty) \times T_i \},$ where $T_i$ ranges among the tori that are the links of the cusps in $M,$ be endowed with the usual warped product metric.
Finally, let $\bar M$ denote the compactification of $M$ to a manifold with boundary $\cup \{ T_i\}.$ Pfaff uses Eisenstein series to define a canonical Reidemeister torsion for $\tau_{Eis}(\bar M, F)$ and is then able to compute the difference
\begin{equation*}
	\log \tau_{Eis}(\bar M, F) - \log \lrpar{ \frac{ AT(M; F) }{AT(\cC, \pa \cC; F)} }
\end{equation*}
in terms of the rank of $F,$ the Betti numbers and volume of $\pa \cC,$ and some weights associated to the holonomy representation of $F.$  Notice that there is no intersection with our result, since the bundle $F$ in this setting is such that $H^k(Z;F)$ is non-trivial in each degree, so that $F$ is not Witt and  the acyclicity condition \eqref{eq:Acyclicity} does not hold.

A more subtle, but very important difference between our results and the work of Pfaff is that in \cite{Pfaff:GluingFormATHypMfdsCusps}, as well as in \cite{Muller-Pfaff:ATComHypMfdsFinVol,Pfaff:ExpGrowthHomTorTowerCongSubgpsBianchi,Pfaff:SelbergZetaFunOddDHypMfdsFinVol}, it is the canonical metric $g_{F,\can}$ which is used to define analytic torsion.  This metric turns out to be quite different from the bundle metrics we consider, since as for the hyperbolic metric, it degenerates near $\pa\bar M$ and does not extend to a smooth bundle metric on the boundary compactification $\pa \bar M$.  This has a drastic impact on the spectrum of the de Rham operator.  In particular, despite the fact that $F$ is not Witt, the de Rham operator is nevertheless Fredholm when defined with the metric $g_{F,\can}$.

Finally, we mention that analytic and $R$-torsion of knot complements have also been the subject of recent applications in knot theory where it is related to the twisted Alexander polynomial see, e.g., \cite{Friedl, Dunfield} for details.\\

Now let us indicate in more detail the content of this paper. It is roughly divided into four parts, together with three appendices containing technical results used in the body of the paper. The first part, \S\ref{sec:Metrics}, describes the main object of study, namely the family of  de Rham operators associated to a fibered cusp surgery metric. In order to analyze the degeneration of the metrics smoothly we introduce a {\em surgery space} $X_s$
\begin{equation*}
	X_s = [M \times [0,1]_{\eps}; H \times \{ 0\}]
\end{equation*}
where the notation indicates that we perform a `radial blow-up' of the submanifold $H$ at $\eps=0.$ We also replace the tangent bundle with a `stretched tangent bundle' adapted to the geometry, $\Ed TX_s.$ This has the effect of desingularizing the metrics $g_{\ed}$ described above.

The second part, consisting of \S\S\ref{doublespsec}-\ref{sec:SmallEigen}, is devoted to understanding the effect of this degeneration on the spectrum of the Hodge Laplacian. Analogously to how the singular limit of the metric is understood by passing to the blown-up surgery space $X_s,$ we resolve the singular behavior of the resolvent as $\eps\to0$ at the level of its Schwartz kernel by constructing a `double surgery space'  $X^2_s$ out of $M^2 \times [0,1]_{\eps}.$ The fruit of these efforts is a description of the Schwartz kernel as a polyhomogeneous distribution all the way down to $\eps=0.$ In particular we are able to read off important aspects of the spectrum under degeneration and especially about the small eigenvalues.

Part three, made up of sections \ref{sec:SurgeryHeatSpace} and \ref{sec:SolveHeat}, is where we analyze the heat kernel under degeneration. As with the resolvent, the Schwartz kernel of the heat operator has various singularities as $\eps\to0$ which we resolve geometrically, replacing the space $M^2 \times [0,1]_\eps \times \bbR^+_t$ with a `surgery heat space' $HX_s.$ On this space the heat kernel is smooth with polyhomogeneous expansions at the boundary faces. We use this refined description to obtain precise asymptotics of the trace of the heat kernel throughout the degeneration. 

The sections \ref{sec:RTorsion}-\ref{sec:CheegerMuller} make up the final part of the paper devoted to the behavior of analytic and Reidemeister torsion under degeneration. First we recall the definition of R-torsion on a closed manifold and intersection R-torsion on a stratified space.  We also extend some of the results of \cite{hhm} to allow for coefficients in a flat vector bundle.   Next we turn to analytic torsion in \S~\ref{sec:ATConventions} and use our results about the resolvent and the heat kernel to deduce the behavior of the determinant of the Laplacian under degeneration in \S~\ref{sec:AsympAT}. Finally combining these results yields our Cheeger-M\"uller theorem for manifolds with fibered cusps.\\

{\bf Acknowledgements.}
P. A. was supported by NSF grant DMS-1104533 and Simons Foundation grant \#317883.
F. R. was supported by a Canada Research Chair, NSERC and FRQNT.
D. S. was supported by a CRM postdoctoral fellowship and by NSF EMSW21-RTG 1045119.
The authors are happy to acknowledge useful conversations with Steven Boyer, Dan Burghela, Nathan Dunfield, Colin Guillarmou, Rafe Mazzeo, Richard Melrose, Werner M\"uller and Jonathan Pfaff.

\section{Fibered  cusp surgery metrics} \label{sec:Metrics}

Let $M$ be a closed manifold of dimension $m$ with a hypersurface $H \subseteq M$ that participates in a fiber bundle
\begin{equation*}
	Z \fib H \xlra{\phi} Y
\end{equation*}
of closed manifolds (with $v = \dim Z$ and $h = \dim Y$). We will assume that $M$ is oriented and $H$ has trivial normal bundle (i.e., has `two sides').
In this section we will discuss a class of metrics depending on a parameter $\eps$ that for $\eps>0$ are Riemannian metrics on $M$ and for $\eps=0$ are `fibered  cusp metrics' on $M \setminus H.$ Examples of the latter are the natural metrics on many locally symmetric spaces of $\bbQ$-rank one, see \cite{Muller:CuspIndex, v}. We call these `fibered  cusp surgery metrics' or more concisely $\ed$-metrics, $d$ being the moniker used for fibered  cusp metrics by Vaillant \cite{v}.
We will also recall the definition of analytic torsion for closed manifolds and for manifolds with fibered  cusps.

\subsection{Fibered  cusp surgery metrics} \label{sec:MetricDesc}

Let $x$ be a defining function for $H,$ meaning that $x$ is a smooth function defined in a neighborhood of $H$ such that  $H = \{ x =0 \}$ and with no critical points on $H.$
The fibered  cusp structure depends on the choice of $x$ slightly, and we shall simply fix $x$ once and for all.
We will work in the category of manifolds with corners and $b$-maps, as described in \cite{MelroseAPS}. We refer the reader to this reference for more on these concepts as well as the notions of {\em radial blow-up} and {\em $b$-fibration}, which we will use repeatedly.\\

Let $\sT$ be a tubular neighborhood of $H$ in $M$ consistent with $x$ in that
\begin{equation*}
	\sT \cong (-1,1)_x \times H.
\end{equation*}
On $H$ we fix a choice of connection for $\phi$ and choose a compatible submersion metric of the form
\begin{equation*}
	\phi^*g_Y + g_{H/Y}
\end{equation*}
where $g_Y$ is a Riemannian metric on $Y$ and $g_{H/Y}$ restricts to a metric on each fiber of $\phi.$
Finally, we introduce a parameter $\eps \in [0,1].$
For positive $\eps,$ a {\em product-type} $\ed$-metric is one that on $\sT \times (0,1)_{\eps}$ takes the form
\begin{equation}
	\frac{dx^2}{x^2+\eps^2} + (x^2+\eps^2)g_{H/Y} + \phi^*g_Y.
\label{metric.1}\end{equation}

This is a family of smooth metrics on $M$ that on $M\setminus H$ limits to a Riemannian metric that near $H$ has the form
\begin{equation}\label{eq:PtFc}
	\frac{dx^2}{x^2} + x^2g_{H/Y} + \phi^*g_Y,
\end{equation}
that is, a (product-type) fibered  cusp metric.
On the other hand, this family of metrics has a singular limit as $(x,\eps) \to (0,0)$ and to resolve this limit we perform a radial blow-up of $H \times \{0\}.$
The `single surgery space' is 
\begin{equation*}
	X_{s} = [M \times [0,1]_{\eps}; H \times \{ 0 \} ]
\end{equation*}
and is pictured in Figure \ref{fig:singlespace}.
\begin{figure}
	\centering
	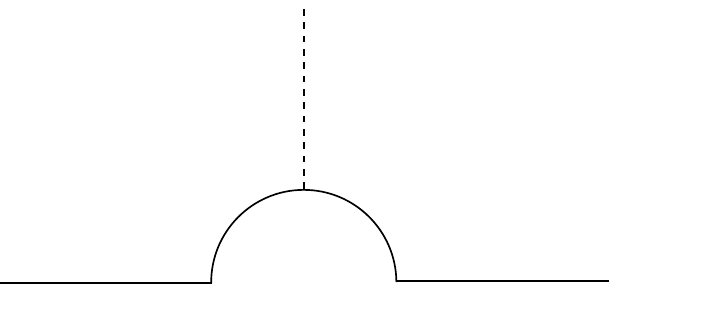
	\caption{The single surgery space $X_s.$}
	\label{fig:singlespace}
\end{figure}
Recall \cite{MelroseAPS} that this means that we replace $H \times \{0\}$ in $M \times [0,1]_{\eps}$ with its inward-pointing spherical normal bundle.
We single out two natural maps
\begin{equation*}
	\xymatrix{
	X_{s} \ar[rd]^{\beta_{(1)}} \ar[rr]^{\pi_{\eps}} & & [0,1]_{\eps} \\ & M \times [0,1]_{\eps}. \ar[ru] & }
\end{equation*}
Here, $\beta_{(1)}$ is the blow-down map that undoes the blow-up by collapsing the inward pointing spherical normal bundle of $H\times \{0\}$ back down to $H \times \{0\},$ and $\pi_{\eps}$ is the composition of $\beta_{(1)}$ with the projection onto the right factor of $M \times [0,1]_{\eps}.$ The second map is a  $b$-fibration, since it is a $b$-submersion and its target is a manifold with boundary.

The boundary hypersurface of $X_{s}$ resulting from the blow-up will be denoted $\bhs{\bs},$ and referred to as the `surgery boundary'. It fibers over $H$ with fiber an interval $[-\pi/2,\pi/2].$ Indeed, the existence of a global defining function implies that the normal bundle of $H$ in $M$ is trivial, so we can identify $\bhs{\bs}$ with the product $[-\pi/2,\pi/2] \times H.$ We will use a natural extension of $\phi$ to a fiber bundle on $\bhs{\bs},$
\begin{equation}\label{rc.1}
	Z \fib \bhs{\bs} \xlra{\phi_+} [-\pi/2,\pi/2] \times Y.
\end{equation}
The lift of the function $\sqrt{x^2+\eps^2}$ is a natural choice of boundary defining function, or `bdf', for $\bhs{\bs}$ and will be denoted $\rho$ or $\rho_{\bs}.$

The other boundary hypersurface of $X_{s}$ over $\eps = 0$ will be denoted $\bhs{\ms},$ for `surgery $M$', as this face is the `interior lift' of $M\times \{0\}$ under the surgery blow-up. 
We can identify it with 
\begin{equation*}
	\bhs{\ms} = [M;H].
\end{equation*}
Both of these faces are manifolds with boundary, 
\begin{equation*}
	\pa \bhs{\ms} = \pa \bhs{\bs} = \wt H,
\end{equation*}
equal to the double sheeted orientation cover $\wt H$ of $H.$ It inherits a fibration from $H$ which we continue to denote $\phi.$

There is also a boundary hypersurface at $\eps =1,$ but this bhs will not enter our considerations and we will simply ignore it.\\

The blow-down map $\beta_{(1)}$ is a diffeomorphism between $X_s\setminus \bhs{\bs}$ and $M\times [0,1] \setminus H \times \{ 0 \},$ and we think of $X_s$ as a compactification of $M \times (0,1)$ that is well adapted to the surgery degeneration that we are studying. 
It will be very convenient to similarly replace the tangent bundle of $X_s$ with a different bundle that coincides over $X_s^{\circ}$ but is well adapted to the degeneration we are studying.

Since our main object of study is the behavior of the spectral invariants of the level sets of $X_s \lra [0,1]_{\eps},$ let us start by setting
\begin{equation*}
	{}^\eps TX_s = \ker \pi_{\eps *} \subseteq TX_s.
\end{equation*}

We will define a vector bundle $\Ed TX_s$ whose sections are naturally identified with those sections of ${}^\eps TX_s$ that have bounded pointwise length with respect to any $\ed$-metric. It is convenient to define this bundle in two steps: first, let us set
\begin{equation*}
	\cV_{\ephi} = \{ V \in \CI(X_s;{}^\eps TX_s) : (\phi_+)_*( V\rest{\bhs{\bs}} )=0, \quad V\rho \in \cO(\rho^2) \}
\end{equation*}
and use the Serre-Swan theorem (or argue directly as in \cite{MelroseAPS}) to find a vector bundle $\Ephi TX_s$ over $X_s$ and a bundle map 
\begin{equation*}
	j:\Ephi TX_s \lra {}^\eps TX_s
\end{equation*}
with the property that it is the identity over $X_s^{\circ}$ and that $\cV_{\ephi} = j_*\CI(X_s;\Ephi TX_s);$ second, let us rescale this bundle to define
\begin{equation*}
	\Ed TX_s = \frac1\rho \Ephi TX_s
\end{equation*}
as in \cite[Chapter 8]{MelroseAPS}.
Sections of $\Ed TX_s$ are locally spanned by 
\begin{equation*}
	\rho \pa_x, \quad \tfrac1\rho \pa_z, \quad \pa_y.
\end{equation*}

We denote the dual bundle of $\Ed TX_s$ by $\Ed T^*X_s$ and refer to it as the {\em $\ed$ cotangent bundle}.
Its sections are locally spanned by
\begin{equation*}
	\frac{dx}\rho, \quad 
	\rho \; dz, \quad
	dy.
\end{equation*}
The bundles $\Ed T^*X_s$ and ${}^\eps T^*X_s$ are canonically isomorphic over $X_s^{\circ}.$ They are also isomorphic over $X_s$, but not naturally. Note in particular that $\rho \; dz$ vanishes at $\rho=0$ as a section of ${}^\eps T^*X_s$ and does not vanish as a section of $\Ed T^*X_s.$  Notice that the restriction of $\Ed T^*X_s$ to $\bhs{\ms}$, denoted  ${}^{d}T\bhs{\ms}$, is just the $d$-tangent bundle of \cite{v}.

Associated to the bundle  ${}^{\ephi} TX_s$ is a space of differential operators, which we denote  $\Diff^k_{\ephi}(X_s)$ and call  the space of $\ephi$-differential operators of order $k$.   They consist of  all $C^{\infty}$-linear combinations of at most $k$ elements of $\{\rho^2\pa_x,\pa_z,\rho\pa_y\}$. If $E\to X_s$ is a Euclidean vector bundle, we may define $\Diff^k_{\ephi}(X_s;E)$ in the usual way.  For the bundle  $\Ed T X_s$, the space of smooth sections does not have a natural structure of Lie algebra, but we can define the space of $\epsilon,d$-differential operators of order $k$ by $\Diff^{k}_{\Ed}(X_s;E)=\rho^{-k}\Diff^{k}_{\ephi}(X_s;E)$.\\

A product-type $\ed$-metric as described above over $X_s^{\circ}$ naturally extends to $X_s$ as a {\em non-degenerate} metric on $\Ed TX_s.$
A general $\ed$-metric is a non-degenerate bundle metric on $\Ed TX_s.$ Since the analysis of such a general metric is rather involved, we will restrict our attention to a class of better behaved metrics. Let us say that an $\ed$-metric is {\bf product-type to order $k$} if there is a choice of tubular neighborhood, connection, etc. as above and a corresponding product-type $\ed$-metric $g_{\ed, \pt}$ such that
\begin{equation*}
	g_{\ed}-g_{\ed,\pt} \in \rho^k\; \CI(X_s;S^2(\Ed T^*X_s)),
\end{equation*}
where $S^2(\Ed T^*X_s)$ denotes the space of symmetric two-tensors on $\Ed T^*X_s.$

\subsection{The de Rham operator of a surgery metric} \label{sec:DeRhamOp}

Let $g_{\ed}$ be an $\ed$-metric.
We are interested in the corresponding de Rham operator $\eth_{\dR} = d + \delta$ on differential forms.
Actually, it is convenient to replace the space of differential forms with forms adapted to the geometry.
Specifically, if $\Ed T^*X_{s}$ is the dual bundle to the $d$-surgery tangent bundle $\Ed TX_{s},$ then 
we are interested in the action of $\eth_{\dR}$ on the sections of the exterior powers of $\Ed T^*X_{s},$ $\Lambda^* (\Ed T^*X_{s}).$
Note that a differential form on $X_s$ is a section of $\Lambda^* (\Ed T^*X_{s})$ if, locally near $\bhs{\bs},$ it can be written as a linear combination
of wedge products of 
\begin{equation*}
	\beta^*(\frac{dx}\rho), 
	\quad dy, \quad \rho \; dz
\end{equation*}
with coefficients smooth up to $\bhs{\bs}.$
The bundle $\Ed TX_{s}$ is canonically isomorphic to ${}^\eps TX_{s}$ over the interior of $X_{s},$ so there is no loss in considering the de Rham operator acting on fibered  cusp surgery forms.
On the other hand, this is an advantageous point of view as it makes the model operators tractable, as we will see below.

We specialize to the de Rham operator twisted by a flat bundle following \cite{Muller1993}. Let $F \lra X_s$ be a flat vector bundle over $X_s$ with flat connection $\nabla^F.$
Then the dual bundle $F^*\to X_s$ is also flat when equipped with the dual connection $\nabla^{F^*}$.  For both bundles, the flat connection induces exterior derivatives
\begin{gather*}
      d_F: \CI(X_s; \Lambda^*(\Ed T^* X_s)\otimes F)\to \CI(X_s; \Lambda^{*+1}(\Ed T^* X_s)\otimes F), \\ 
      d_{F^*}: \CI(X_s; \Lambda^*(\Ed T^* X_s)\otimes F^*)\to \CI(X_s; \Lambda^{*+1}(\Ed T^* X_s)\otimes F^*).
\end{gather*}
Let $g_F$ be a bundle metric on $F,$ not necessarily compatible with the flat structure.  Then the metrics $g_F$ and $g_{\ed}$ induce the maps
\begin{equation*}
	\#:\Lambda^p ( \Ed T^*X_s ) \otimes F \lra \Lambda^p ( \Ed T^*X_s ) \otimes F^*, \quad
	*:\Lambda^p ( \Ed T^*X_s ) \otimes F \lra \Lambda^{m-p} ( \Ed T^*X_s ) \otimes F.
\end{equation*}
In terms of these maps, recall from \cite{Muller1993} that the formal adjoint $d^*_F$  of $d_F$ on $\Lambda^p (\Ed T^*X_s) \otimes F$ is given by
\begin{equation}\label{eq:MullerDelta}
	d^*_F = (-1)^{mp+m+1} * \#^{-1} d_{F^*} \# *.
\end{equation}
We will not incorporate the twisting into our notation for the de Rham operator,
\begin{equation*}
	\eth_{\dR}= d_F + d_F^*: \Omega^*(X_{s}; F) \lra \Omega^*(X_{s};F)
\end{equation*}
and trust that this will not lead to confusion. We set
\begin{equation*}
	E = \Lambda^*(\Ed T^*X_{s}) \otimes F.
\end{equation*}

In order to find an expression for the de Rham operator, let us recall its behavior on the total space of a fibration of smooth manifolds from \cite{hhm}.
Consider $H$ endowed with the submersion metric $\phi^*g_Y + g_Z$ and let $\Omega^{p,q}(H)$ be the forms of horizontal degree $p$ and vertical degree $q.$
The exterior derivative decomposes into
\begin{equation*}
\begin{gathered}
	d_F^H = d^{H/Y} + \hat d^Y + \mathrm R, \quad \Mwhere 
	\xymatrix{ & \Omega^{p,q}(H;F) \ar[ld]^{d^{H/Y}} \ar[d]^{\hat d^Y} \ar[rd]^{\mathrm R} & \\
	\Omega^{p, q+1}(H;F) & \Omega^{p+1, q}(H;F) & \Omega^{p+2,q-1}(H;F) }
\end{gathered}
\end{equation*}
are the non-zero projections of $d_F^H.$
We can identify these pieces geometrically in a way that justifies the notation; namely,
$d^{H/Y}$ is the vertical exterior derivative, $\hat d^Y$ is related to the exterior derivative on $Y$ and the second fundamental form of the fibers of the fibration, and $\mathrm R$ is obtained from the curvature of the fibration.

Now let us assume that $g_{\ed}$ is product-type in a tubular neighborhood $\sT$ of $H$ in $M,$ so that
\begin{equation*}
	g_{\ed,pt} = \frac{dx^2}{x^2+\eps^2} + (x^2+\eps^2) g_{H/Y} + \phi^*g_Y \Mon \sT,
\end{equation*}
where $g_{H/Y}$ and $\phi^*g_Y$ are independent of both $x$ and $\eps.$  Assume similarly that $g_F$ is constant in $x$ and $\epsilon$.
With respect to the splitting of differential forms
\begin{multline}\label{eq:SplittingEdTub}
	\Lambda^{\ell}(\Ed T^*\sT) \cong \\
	\lrpar{ \bigoplus_{j+k=\ell}\Lambda^j\phi^*T^*Y \wedge \rho^k \Lambda^kT^*H/Y }
	\oplus \frac{dx}\rho \wedge 
	\lrpar{ \bigoplus_{j+k=\ell-1}\Lambda^j\phi^*T^*Y \wedge \rho^k \Lambda^kT^*H/Y },
\end{multline}
the exterior derivative is given by
\begin{equation*}
	d_F = 
	\begin{pmatrix}
	\tfrac1\rho d^{H/Y} + \hat d^Y + \rho \mathrm R & 0 \\
	\rho\pa_x + \bN_{H/Y} \tfrac x\rho & -(\tfrac1\rho d^{H/Y} + \hat d^Y + \rho \mathrm R) 
	\end{pmatrix}
\end{equation*}
where $v = \dim Z$ and $\bN_{H/Y}$ is the `vertical number operator' which multiplies a form by its vertical degree. 
%
%
%
Its formal adjoint is thus given by
\begin{equation*}
	d_F^* = 
	\begin{pmatrix}
	\tfrac1\rho (d^{H/Y})^*  + (\hat d^Y)^*  + \rho \mathrm R^* & -\rho\pa_x + (\bN_{H/Y} - v) \tfrac x\rho \\
	0 & -(\tfrac1\rho (d^{H/Y})^*  + (\hat d^Y)^* + \rho \mathrm R^*) 
	\end{pmatrix}.
\end{equation*}
Thus, if we write $\eth_{\dR}^{H/Y}=d^{{H/Y}} + (d^{H/Y})^*$, $\hat\eth_{\dR}^{Y}= \hat d^Y+(\hat d^Y)^*$ and $\bR = \mathrm R + \mathrm R^*,$
$\eth_{\dR}$ is given by a two-by-two matrix,
\begin{equation*}
	\eth_{\dR} = 
	\begin{pmatrix}
	\tfrac1\rho \eth_{\dR}^{H/Y} + \hat \eth_{\dR}^Y + \rho \bR & -\rho\pa_x + (\bN_{H/Y} - v) \tfrac x\rho  \\
	\rho\pa_x + \bN_{H/Y} \tfrac x\rho & -(\tfrac1\rho \eth_{\dR}^{H/Y} + \hat \eth_{\dR}^Y + \rho \bR) 
	\end{pmatrix}.
\end{equation*}
Note that $\eth_{\dR}\in\Diff^1_{\Ed}(X_s;E)$. We are interested in this operator as an unbounded operator on the natural $L^2$ space of sections $L^2_{\ed}(M;E)$; however for some constructions it is convenient to work on the $L^2$ space corresponding to a `$b$-density' (see \cite{MelroseAPS} and section \ref{sec:Densities} below), such as
\begin{equation*}
	L^2_{\eps,b}(M;E) = \rho^{v/2} L^2_{\ed}(M;E).
\end{equation*}
The action of $\eth_{\dR}$ on $L^2_{\ed}(M;E)$ is equivalent to the action of 
\begin{equation}\label{eq:ShiftToL2b}
	D_{\dR} = \rho^{v/2} \eth_{\dR} \rho^{-v/2}
\end{equation}
on $L^2_{\eps,b}(M;E),$ so we will from now on consider $D_{\dR}$ as our main object of interest.
Note that, on $\sT \times (0,1)_{\eps},$ $D_{\dR}$ is given by
\begin{equation*}
	D_{\dR} = 
	\begin{pmatrix}
	\tfrac1\rho\eth_{\dR}^{H/Y}  + \hat \eth_{\dR}^Y  + \rho \bR 
		& -\rho\pa_x + (\bN_{H/Y} - \tfrac v2) \tfrac x\rho  \\
	\rho\pa_x + (\bN_{H/Y}-\tfrac v2) \tfrac x\rho 
		& -(\tfrac1\rho \eth_{\dR}^{H/Y}  + \hat \eth_{\dR}^Y + \rho \bR) 
	\end{pmatrix}.
\end{equation*}

A key r\^ole in understanding the behavior of this operator is played by its leading and `subleading' terms. For the former, note that while $\eth_{\dR}$ is singular at $\bhs{\bs},$ $\rho\eth_{\dR}$ can be restricted to this face. We call 
\begin{equation}\label{eq:VerticalOpDR}
	\rho D_{\dR}\rest{\bhs{\bs}} = 
	\begin{pmatrix}
	 \eth_{\dR}^{H/Y}  & 0 \\
	0 & -\eth_{\dR}^{H/Y} 
	\end{pmatrix}
\end{equation}
the {\bf vertical operator} at $\bhs{\bs}.$
Note that each fiber $Z$ of $\phi$ inherits a bundle $F\rest{Z}$ with flat connection $\nabla^F\rest{Z}$ and bundle metric $g_F\rest{Z}.$
From \cite[Proposition 3.7]{Bismut-Lott} we see that $\eth_{\dR}^{H/Y} \rest{Z}$ is precisely the de Rham operator $d_{F\rest{Z}} + d_{F\rest{Z}}^*$ corresponding to this data. 
It follows that the null space of the vertical operator is the space of vertical harmonic forms with respect to the induced flat connection on the fibers.
These null spaces fit together into a bundle over $Y$ and we pull-back this bundle to $\bhs{sb}$ and denote it
\begin{equation*}
	\rho^{\bN} \cH^*(H/Y;F) \lra \bhs{\bs}.
\end{equation*}
$ $

The second, `subleading' term is the leading term at $\bhs{\bs}$ once we restrict to extensions from $\bhs{\bs}$ of sections of $\ker \;  \rho D\rest{\bhs{\bs}} .$ Thus, denoting by $\Pi_{h}$ the projection onto fiberwise harmonic forms, the {\bf horizontal operator} is given by
\begin{multline*}
	D_b = D_{\dR}\rest{\bhs{\bs}, \ker \;  \rho D\rest{\bhs{\bs}} }  \\
	= 
	\Pi_{h}
	\begin{pmatrix}
	\hat \eth_{\dR}^Y   & \lrspar{-\beta^*(\rho\pa_x)+ (\bN_{H/Y} - \tfrac12 v)\frac x\rho}\rest{\bhs{sb}} \\
	\lrspar{\beta^*(\rho\pa_x) + (\bN_{H/Y}-\tfrac12 v)\frac x\rho}\rest{\bhs{\bs}} & -\hat \eth_{\dR}^Y 
	\end{pmatrix} \Pi_{h}.
\end{multline*}
Note that, by our assumptions on the metric, the diagonal entries commute with the off-diagonal entries.\\

To work with this operator, it is convenient to introduce projective coordinates near $\bhs{\bs}$ such as
\begin{equation*}
	X = \frac{x}\eps, \quad y, \quad z, \quad \eps
\end{equation*}
in which $\eps$ is a boundary defining function for $\bhs{\bs}.$
In these coordinates,
\begin{equation*}
	\beta^*(\rho\pa_x) = \sqrt{X^2+1} \pa_X = \ang X \pa_X, \quad
	\frac x\rho = \frac X{\ang X}
\end{equation*}
and hence
\begin{equation}\label{eq:dRHorOp}
	D_b = 
	\begin{pmatrix}
	\hat \eth_{\dR}^Y    & -\ang X \pa_X + (\bN_{H/Y} - \tfrac12 v)\frac X{\ang X} \\
	\ang X \pa_X + (\bN_{H/Y}-\tfrac12 v) \frac X{\ang X} & -\hat \eth_{\dR}^Y  
	\end{pmatrix} 
\end{equation}
acting on $\CI(Y \times \bbR^+_X; \Lambda^*Y \otimes \rho^{\bN} \cH^*(H/Y;F) \oplus \frac{dX}{\ang{X}} \wedge \Lambda^*Y \otimes \rho^{\bN}\cH^*(H/Y;F)).$ 
The bundle $\rho^{\bN} \cH^*(H/Y;F)$ inherits a metric $g_\cH$ and a connection $\nabla^{\cH}$ by composing with $\Pi_h.$
From \cite[Proposition 3.14]{Bismut-Lott}, the connection $\nabla^{\cH}$ is flat and $\hat\eth_{\dR}^Y$ is the de-Rham operator $d_{\cH} + d_{\cH}^*$ corresponding to this data (cf. \cite[Proposition 15]{hhm}, \cite[\S3.1]{ALMP:Novikov}).
We will denote this operator by $\eth_{\dR}^{\cH}.$


The horizontal operator is a $b$-operator in the sense of \cite{MelroseAPS} and so it will be Fredholm when its indicial family is invertible. In this case the indicial family corresponds to the ends $X\to \pm \infty,$ each of which we compactify using $\ang{X}^{-1}.$
The indicial family is equal to
\begin{equation*}
	I_b(D_b;\zeta) =
	\begin{cases}
	\begin{pmatrix}
	\eth_{\dR}^{\cH}    & -i\zeta - (\bN_{H/Y} - \tfrac12 v) \\
	i\zeta - (\bN_{H/Y}-\tfrac12 v) & -\eth_{\dR}^{\cH}  
	\end{pmatrix} 
	& \Mat X\to -\infty \\ 
	& \\
	\begin{pmatrix}
	\eth_{\dR}^{\cH}    & i\zeta + (\bN_{H/Y} - \tfrac12 v) \\
	-i\zeta + (\bN_{H/Y}-\tfrac12 v) & -\eth_{\dR}^{\cH}  
	\end{pmatrix} 
	& \Mat X\to \infty
	\end{cases}
\end{equation*}
and so $\zeta$ will be an indicial root precisely when
\begin{equation*}
	(\eth_{\dR}^{\cH})^2 + \zeta^2 + (\bN_{H/Y}-\tfrac12 v)^2
\end{equation*}
is not invertible.
We know that $D_b$ will be a Fredholm operator on $L^2_b$ when there are no indicial roots with imaginary part equal to zero.
This will be the case unless the operator $(\eth_{\dR}^{\cH})^2$ has non-trivial null space acting on the bundle $\rho^{v/2} \cH^{v/2}(H/Y;F) \lra Y$; the simplest way to rule this out is to ask that the bundle be of rank $0$.
Let us say that the flat bundle $F$ is {\bf Witt} if 
\begin{equation}\label{eq:DefWitt}
	\text{H}^{v/2}(H/Y;F) =0.
\end{equation}

\begin{lemma}\label{lem:WittIsFredholm}
Let $F \lra X_s$ be a bundle with flat connection $\nabla^F$ and bundle metric $g_F,$ not necessarily compatible.
If the bundle $F \lra X_s$ is Witt, then the horizontal operator is Fredholm on $L^2_b.$
Otherwise the horizontal operator is Fredholm on $L^2_b$ if and only if
\begin{equation*}
	\ker \eth_{\dR}^{\cH}\rest{\rho^{v/2} \cH^{v/2}(H/Y;F)} = \{ 0 \}.
\end{equation*}
\end{lemma}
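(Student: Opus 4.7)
The plan is to combine the Fredholm criterion for $b$-operators established in the preceding discussion with a positivity argument on the compact base $Y$. The excerpt has already reduced the question to invertibility of a single explicit scalar-type operator along the imaginary axis, so the remaining work is essentially a case analysis at $\zeta = 0$.

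First I would recall that since $D_b$ is a $b$-operator on $Y \times \bbR_X$, it is Fredholm on $L^2_b$ precisely when its indicial family $I_b(D_b;\zeta)$ is invertible for all real $\zeta$. From the formula for the indicial family computed in the excerpt, a direct square (using that $\eth_{\dR}^{\cH}$ commutes with $\bN_{H/Y}$ because it preserves vertical degree) gives
\begin{equation*}
   I_b(D_b;\zeta)^2 = \bigl[ (\eth_{\dR}^{\cH})^2 + \zeta^2 + (\bN_{H/Y}-\tfrac{v}{2})^2 \bigr] \cdot \Id,
\end{equation*}
so $\zeta$ is a root iff the bracketed operator fails to be invertible on $\cH^*(H/Y;F) \to Y$, as already noted in the excerpt.

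Next I would analyze this operator for real $\zeta$. All three summands are non-negative self-adjoint operators on sections over the closed manifold $Y$, and they mutually commute (the number operator preserves vertical degree, the horizontal de Rham operator preserves vertical degree, and $\zeta^2$ is scalar). Consequently the spectrum is discrete and non-negative, and the kernel equals the intersection of the individual kernels. For $\zeta \neq 0$ the bound $(\eth_{\dR}^{\cH})^2 + \zeta^2 + (\bN_{H/Y}-\tfrac v 2)^2 \geq \zeta^2 > 0$ makes the operator invertible, so the only candidate real indicial root is $\zeta = 0$.

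At $\zeta = 0$, the kernel of $(\bN_{H/Y}-\tfrac v 2)^2$ is exactly the middle vertical degree $v/2$, i.e.\ sections of $\rho^{v/2}\cH^{v/2}(H/Y;F)$, so $0$ is an indicial root iff $\eth_{\dR}^{\cH}$ has non-trivial kernel on this subbundle. In the Witt case $\mathrm{H}^{v/2}(H/Y;F) = 0$ this subbundle itself vanishes, giving no real indicial roots and hence Fredholmness of $D_b$; in the general case Fredholmness is equivalent to the stated vanishing condition on $\ker \eth_{\dR}^{\cH}|_{\rho^{v/2}\cH^{v/2}(H/Y;F)}$. The main potential obstacle would be checking commutativity of $\bN_{H/Y}$ with $\eth_{\dR}^{\cH}$ in the presence of a non-compatible bundle metric, but since $\eth_{\dR}^{\cH}$ is manufactured from a connection on each fixed $\cH^q(H/Y;F)$ and its formal adjoint, it preserves the vertical degree grading, so this step is immediate.
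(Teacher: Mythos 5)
Your proposal is correct and follows essentially the same route as the paper: the paper also reduces Fredholmness of $D_b$ on $L^2_b$ to the absence of real indicial roots, observes (using that $\hat\eth_{\dR}^Y$ commutes with the off-diagonal entries, hence with $\bN_{H/Y}$) that $\zeta$ is an indicial root precisely when $(\eth_{\dR}^{\cH})^2 + \zeta^2 + (\bN_{H/Y}-\tfrac v2)^2$ fails to be invertible, and then notes that positivity forces $\zeta=0$ and vertical degree $v/2$, so the obstruction is exactly $\ker \eth_{\dR}^{\cH}$ on $\rho^{v/2}\cH^{v/2}(H/Y;F)$, which is vacuous in the Witt case. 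Your explicit squaring of the indicial family and the positivity case analysis at $\zeta=0$ are just a spelled-out version of the paragraph preceding the lemma in the paper.
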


Next, let us record the following two extreme cases. If $Z$ is a point, so that $H=Y,$ we are in the setting of `$b$-surgery' and  the de Rham operator at the boundary takes the form
\begin{equation*}
	\text{ $b$-surgery } \eth_{\dR} = 
	\begin{pmatrix}
	\eth_{\dR}^Y    & -\rho\pa_x   \\
	\rho \pa_x  & - \eth_{\dR}^Y 
	\end{pmatrix}.
\end{equation*}
At the other extreme, if $Y$ is a point so that $H=Z,$ what we will call `cusp-surgery,' the de Rham operator near $\bhs{\bs}$ is
\begin{equation*}
	\text{ cusp-surgery } \eth_{\dR} = 
	\begin{pmatrix}
	 \tfrac1\rho \eth_{\dR}^{Z}   & -\rho\pa_x + (\bN_{H/Y} - \tfrac v2) \tfrac x\rho  \\
	\rho\pa_x + (\bN_{Z}-\tfrac12 v)\tfrac x\rho &  - \tfrac1\rho \eth_{\dR}^{Z}
	\end{pmatrix}.
\end{equation*}
$ $

Finally, for more general metrics, it is easy to see that if $g_{\ed}$ is product-type to second order and if $g_F$ is constant in $x$ and $\epsilon$ modulo terms of order $\rho^2$, then it has the same model operators as a product-type metric.
If $g_{\ed}$ is product-type to first order then it will have the same vertical operator, but the horizontal operator will generally be substantially more complicated (cf. \cite[Lemma 5.32]{v}).

\section*{Resolvent under degeneration}
\section{Pseudodifferential operator calculi}\label{doublespsec}

In this section we consider $H \subseteq M$ a hypersurface of $M$ with a fixed boundary defining function $x$ and fiber bundle structure
\begin{equation*}
	Z \fib H \xlra{\phi} Y.
\end{equation*}
We endow $M$ with a fibered cusp surgery metric and wish to 
give a precise description of the behavior of the resolvent of the Hodge Laplacian twisted by a flat bundle.
We will do this by describing the asymptotics of the Schwartz kernel of the resolvent by first constructing an appropriate pseudodifferential calculus and then proving that it contains the resolvent.
In keeping with the geometric approach to microlocal analysis of Melrose, we start from $M^2 \times [0,1]_{\eps}$ by constructing a `double space'  on which the Schwartz kernel of the resolvent will be practically smooth (i.e., polyhomogeneous).

\subsection{Double space}
The first step in our resolvent construction is to define a surgery double space, where the integral kernels of the operators in our various pseudodifferential calculi will be defined. As in \cite{mame1}, the double space associated to $X_s$ is an iterated radial blow-up of $M^2\times[0,1]_{\epsilon}.$ We let
\begin{equation*}
	X^2_{b,s}=
	[M\times M\times [0,1]_{\epsilon}; H\times H\times\{0\}; H\times M\times\{0\};
	M\times H\times\{0\}]
\end{equation*}
and denote the blow-down map by $\beta_b:X^2_{b,s} \lra M^2\times[0,1].$
The space $X^2_{b,s}$ is exactly the same as the space which is called $X_s^2$ in \cite{mame1}. There are four boundary hypersurfaces (though some may be disconnected, depending on the topology). 
The closure of the remains of the original $\{ \eps=0\}$ boundary is denoted $\bhs{\mf},$ this is the interior lift of $\{\eps=0\},$
\begin{equation*}
	\bhs{\mf} = \beta_b^{\sharp}(\{\eps=0\}) = \overline{ \beta_b^{-1}\lrpar{ \{ \eps=0\} \setminus 
	H\times M\times\{0\} \cup
	M\times H\times\{0\} } }.
\end{equation*}
The boundary hypersurfaces produced by the blow-ups are denoted
\begin{equation*}
	\bhs{\bff} = \beta_b^{-1}(H\times H \times \{ 0\}), \quad
	\bhs{\lf} = \beta_b^{-1}(H \times M \times \{ 0 \}), \quad
	\bhs{\rf} = \beta_b^{-1}(M\times H \times \{ 0 \}).
\end{equation*}
Our notation is inspired by \cite{mame1,v}. Let $x$ denote the boundary defining function for $H$ in the left factor of $M$ in $M^2 \times [0,1]_{\eps}$ and $x'$ denote the corresponding function on the right factor (we will generally use primes to denote functions or coordinates on the second factor). The lift of $x$ to $X^2_{b,s}$ vanishes at both $\bhs{\lf}$ and $\bhs{\bff},$ while the lift of $x'$ vanishes at $\bhs{\rf}$ and $\bhs{\bff},$ and the lift of $\epsilon$ vanishes at all boundary hypersurfaces.  We denote by $\Delta_b$ the interior lift of the diagonal $\Delta_M\times [0,1]_{\epsilon}\subset M\times M\times [0,1]_{\epsilon}$ to $X^2_{b,s}$.

The surgery double space we are interested in requires one additional blow-up which involves the fibration structure of the boundary. 
Note that $D_x = \bhs{\bff} \cap \beta_b^{\sharp}\{ x =x' \}$ does not intersect $\bhs{\lf} \cup \bhs{\rf}$ and it is easy to see that $\beta_b$ gives $D_x$ the structure of a fiber bundle over $H^2.$ The submanifold we are interested in is
\begin{equation}
	D_{fib} = (\beta_b\rest{D_x})^{-1}( \{ (h,h') \in H^2 : \phi(h) = \phi(h') \}) \subseteq \bhs{\bff}.
\label{fib.1}\end{equation}
This is a p-submanifold of $X^2_{b,s}$ and we define the surgery double space by 
\begin{equation*}
	X_s^2
	=[X_{b,s}^2; D_{fib}].
\end{equation*}
We label the new boundary hypersurface $\bhs{\ff}$ and relabel the lift of $\bhs{\bff}$ under the blow-down map as $\phibf$; the other boundary hypersurfaces ($\bhs{\mf},$ $\bhs{\lf},$ and $\bhs{\rf}$) keep their names under the lift. The space $X_s^2$ is illustrated in Figure \ref{fig:doublespace}. 
\begin{figure}
	\centering
	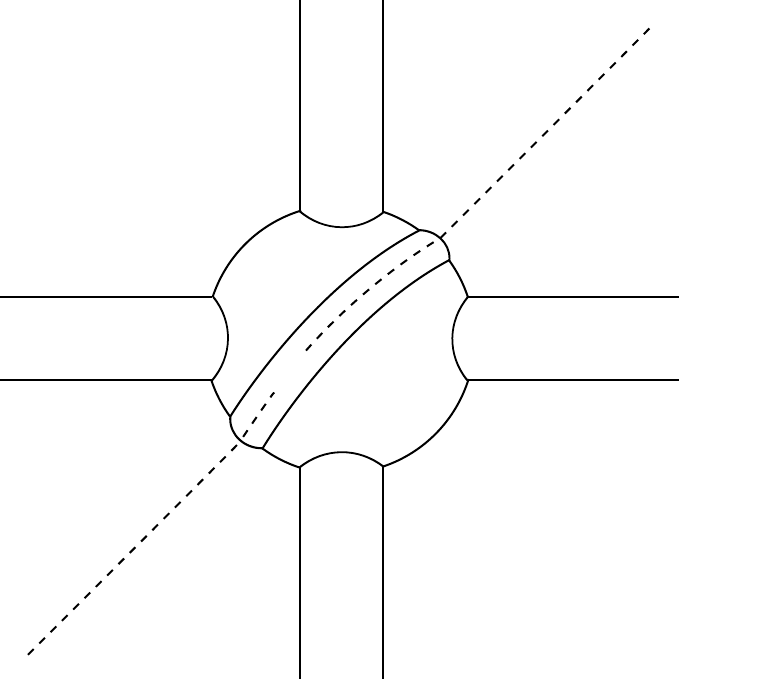
	\caption{The surgery double space $X_s^2.$}
	\label{fig:doublespace}
\end{figure}
Let $\beta_{(2)}$ be the blow-down map,
\begin{equation*}
	\beta_{(2)}:X^2_s \lra 
	M^2\times[0,1]_{\epsilon}.
\end{equation*}
There are also well-defined left and right `projection maps' from $X_s^2$ to $X_s,$ given by taking the projection on the interior and extending by continuity; see Proposition \ref{propc1}. We label these maps 
\begin{equation*}
	\xymatrix{
	& X^2_s \ar[ld]_{\beta_{(2),L}} \ar[rd]^{\beta_{(2),R}} & \\
	X_s & & X_s }
\end{equation*}
Similar maps $\beta_{(2),b,L}$ and $\beta_{(2),b,R}$ may be defined from $X_{b,s}^2$ to $X_s.$ Finally, we denote the interior lift of the diagonal of $M$ to $X_s^2$ by $\Delta_s.$

Note that $\bhs{\mf}$ may be identified with the $\phi$-double space corresponding to the $\phi$-manifold $\bhs{\ms}=[M;H]$ with boundary defining function $x.$ As in \cite{mame1}, we may identify $\fbface$ with the overblown $b$-double space corresponding to the face $\bhs{\bs}$ of the single surgery space. On the other hand, $\phibf$ is $\fbface$ after the fiber diagonal is blown up. Local coordinates help us understand the geometry. 
If $(x,y,z,x',y',z',\eps)$ are local coordinates in $M^2 \times [0,1]_{\eps}$ near the fiber diagonal in $H^2 \times \{0\},$ then a convenient choice of local coordinates in a neighborhood of the fiber diagonal on $\fbface,$ before the final blow-up, is
\begin{equation}\label{eq.lc1}
	\lrpar{ \rho'=\sqrt{(x')^2+\epsilon^2},s=\frac{x-x'}{\rho'},y,y',z,z',\theta'=\arctan \left(\frac{x'}{\epsilon}\right) }.
\end{equation}
In these coordinates, the submanifold blown-up to obtain $\bhs{\ff}$ is given by $\{\rho'=0;\ s=0,\ y=y'\}.$
After the final blow-up, coordinates near the interior of $\bhs{\ff}$ are:
\begin{equation}\label{eq.lc2}
	\lrpar{ \rho',\hat s=\frac{s}{\rho'},\hat y=\frac{y-y'}{\rho'},y',z,z',\theta' }.
\end{equation}
The lifted diagonal is the set of points with $\hat s=\hat y=0$ and $z=z'.$

This is related to the vector bundle 
\begin{equation}\label{eq:DefPhiNB}
	\Ephi N\bhs{\bs}\lra \bhs{\bs}
\end{equation}
obtained as the kernel of the inclusion $\Ephi TX_s \lra {}^\eps TX_s$ after restricting to $\bhs{\bs}.$  Notice first that as in \cite{mame2}, see also \cite[(2.10) and (2.11)]{DLR2011}, the vector bundle $\Ephi N\bhs{\bs}$ may be naturally identified with
$\phi_+^*\Ephi NY$ for some vector bundle $\Ephi NY\to Y\times [-\pi/2,\pi/2]$, where $\phi_+$ is defined in \eqref{rc.1}.  Now, 
before it is blown up, the fiber diagonal on $\fbface$ is naturally isomorphic to $\bhs{\bs}\times_{Y\times[-\pi/2,\pi/2]}\bhs{\bs},$ with coordinates $(y,z,z',\theta').$ Therefore, after blowing up at $\rho'=0,$ $s=1,$ and $y=y',$ we see that 
\begin{equation*}
	\bhs{\ff}\cong\bar{\Ephi N\bhs{\bs}}\times_{Y\times[-\pi/2,\pi/2]} \bhs{\bs} \cong    \bar{\Ephi NY}\times_{Y\times[-\pi/2,\pi/2]} \left(\bhs{\bs}\times_{Y\times[-\pi/2,\pi/2]}\bhs{\bs} \right).
\end{equation*}
In local coordinates, $y'$ and $\theta'$ are coordinates on the product, $z'$ is the extra coordinate on $\bhs{\bs},$ and $\hat s,$ $\hat y,$ and $z$ are extra coordinates on $\Ephi N\bhs{\bs}$; the $z$-coordinate comes from the base of the bundle, and $\hat s$ and $\hat y$ are Euclidean coordinates on the fibers $\mathbb R\times T_q Y.$ 

In order to check that we have the right double space, we need to show the following:
\begin{proposition}\label{liftedvfs} 
The lift of the Lie algebra $\mathcal V_{\phi,\epsilon}$ from the left and from the right is transversal to the lifted diagonal. 
 \end{proposition}
\begin{proof} By symmetry, it suffices consider the lift from the left.  We first prove this for the basis vector fields $\rho^2\pa_x,$ $\rho\pa_y,$ and $\pa_z.$ Away from the face $\bhs{\ff},$ the result is immediate.

To begin the analysis near $\bhs{\ff},$ we compute the lifts of these vector fields in the coordinates $(\rho',s,y,y',z,z',\theta')$ of \eqref{eq.lc1}. From \eqref{eq.lc1}, we compute
\begin{equation*}
	\beta_{(2),L}^*\pa_x=\frac{\pa s}{\pa x}\pa_s=\frac{1}{\rho'}\pa_s, \quad
	\beta_{(2),L}^*\pa_y=\pa_y, \quad
	\beta_{(2),L}^*\pa_z=\pa_z.
\end{equation*}
On the other hand, using $x'=\rho'\sin\theta',$ $x=\rho's+x',$ and $\rho=\sqrt{x^2+\epsilon^2},$ we compute that  $\rho=\rho'\sqrt{s^2+2s\sin\theta'+1}.$ So
\begin{equation*}
	\beta_{(2),L}^*(\rho^2\pa_x)=\rho'(s^2+2s\sin\theta'+1)\pa_s, \quad
	\beta_{(2),L}^*(\rho\pa_y)=\rho'\sqrt{s^2+2s\sin\theta'+1}\pa_y, \quad
	\beta_{(2),L}^*\pa_z=\pa_z.
\end{equation*}

Now we compute the lifts in the coordinates of $\eqref{eq.lc2}.$ We have $\beta_{(2),L}^*\pa_s=(\rho')^{-1}\pa_{\hat s}$ and $\beta_{(2),L}^*\pa_y = (\rho')^{-1}\pa_{\hat y},$ so
\begin{equation*}
	\beta_{(2),L}^*(\rho^2\pa_x)=((\rho')^2\hat s^2+2\rho'\hat s\sin\theta'+1)\pa_{\hat s}, \quad
	\beta_{(2),L}^*(\rho\pa_y)=\sqrt{(\rho')^2\hat s^2+2\rho'\hat s\sin\theta'+1}\pa_{\hat y}, 
\end{equation*}
and $\beta_{(2),L}^*\pa_z=\pa_z.$
Restricting to $\bhs{\ff},$ where $\rho'=0,$ we find that
\begin{equation*}
	\beta_{(2),L}^*(\rho^2\pa_x)|_{\bhs{\ff}}=\pa_{\hat s}, \quad
	\beta_{(2),L}^*(\rho\pa_y)|_{\bhs{\ff}}=\pa_{\hat y}, \quad
	\beta_{(2),L}^*\pa_z|_{\bhs{\ff}}=\pa_z.
\end{equation*}

The proposition now follows by $C^{\infty}(X_s)$-linearity, since multiplying by a smooth function of $(\rho,\theta,y,z)$ changes none of the properties we want to show. 
\end{proof}

\begin{corollary}\label{cor:normdiagonal} The normal bundle $N\Delta_s$ to the lifted diagonal $\Delta_s$  is canonically identified with ${}^{\ephi}TX_s.$
\end{corollary}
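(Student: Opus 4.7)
The plan is to exhibit an explicit bundle isomorphism $\Phi\colon {}^{\ephi}TX_s \to N\Delta_s$ by lifting vector fields from the left factor and projecting modulo $T\Delta_s$. Concretely, for a point $p \in \Delta_s$ identified with $q \in X_s$ via $\beta_{(2),L}$, and a vector $V \in {}^{\ephi}T_qX_s$, choose any extension $\widetilde V \in \cV_{\ephi}$ with $\widetilde V(q) = V$ and set
\begin{equation*}
	\Phi(V) \;=\; \bigl[\,\beta_{(2),L}^* \widetilde V\bigr]_p \;\bmod\; T_p\Delta_s \;\in\; N_p\Delta_s.
\end{equation*}
Proposition \ref{liftedvfs} guarantees that $\beta_{(2),L}^*\widetilde V$ is a smooth vector field on $X_s^2$ all the way up to $\bhs{\ff}$, so the definition makes sense at every point.

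Next I would verify that $\Phi$ is well-defined and $C^\infty(X_s)$-linear, hence a genuine bundle map. If $\widetilde V$ and $\widetilde V'$ are two extensions with $\widetilde V(q)=\widetilde V'(q)$, then $\widetilde V - \widetilde V'$ vanishes at $q$, and since the left lift is obtained by pulling back along a smooth b-fibration, $\beta_{(2),L}^*(\widetilde V - \widetilde V')$ vanishes at $p$; in particular its class in $N_p\Delta_s$ is zero. Linearity is automatic from the pullback construction.

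To show $\Phi$ is an isomorphism I would work locally and count: ${}^{\ephi}TX_s$ has rank $\dim X_s - 1 = m$, matching $\dim X_s^2 - \dim \Delta_s = (2m+1)-(m+1) = m$, so it suffices to prove injectivity. Away from $\bhs{\ff}$ the map reduces, under the canonical identification of ${}^{\ephi}TX_s$ with ${}^\eps TX_s$, to the standard identification of the fiber tangent bundle with the normal bundle of the diagonal in an interior fibration, which is trivially an isomorphism. The substantive point is at $\bhs{\ff}$, where the coordinate computation carried out in the proof of Proposition \ref{liftedvfs} shows that the local frame $\{\rho^2\partial_x,\;\rho\partial_y,\;\partial_z\}$ for ${}^{\ephi}TX_s$ lifts, at the diagonal-intersection $\bhs{\ff}\cap\Delta_s$, to the frame $\{\partial_{\hat s},\;\partial_{\hat y},\;\partial_z\}$. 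Since $\bhs{\ff}\cap\Delta_s$ is cut out by $\{\hat s=0,\;\hat y=0,\;z=z'\}$, these three vectors are precisely transverse to $\Delta_s$ and independent modulo $T\Delta_s$, so they give a frame for $N\Delta_s$ there. Thus $\Phi$ is an isomorphism on each fiber.

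The main obstacle is really the fiber-diagonal face $\bhs{\ff}$: on the rest of $X_s^2$ the identification is standard, but the $\ephi$-rescaling and the blow-up of the fiber diagonal are exactly designed so that the naively singular lifts $\rho^2\partial_x$, $\rho\partial_y$ extend to non-vanishing transverse vectors at $\bhs{\ff}\cap\Delta_s$. All the needed analytical content for this has been packaged into Proposition \ref{liftedvfs}, so the corollary reduces to the bookkeeping described above. Finally, the identification is canonical (independent of the choice of tubular neighborhood and connection used to define ${}^{\ephi}TX_s$) because the left-lift-modulo-$T\Delta_s$ construction makes no reference to such choices.
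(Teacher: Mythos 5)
Your proposal is correct and follows essentially the same route as the paper, which deduces the corollary directly from Proposition \ref{liftedvfs}: the left lifts of a local $\ephi$-frame remain smooth and, by the coordinate computation at $\bhs{\ff}$, restrict to $\{\partial_{\hat s},\partial_{\hat y},\partial_z\}$, which are transverse to $\Delta_s$ and of the right number, so left-lift-modulo-$T\Delta_s$ gives the canonical isomorphism ${}^{\ephi}TX_s\cong N\Delta_s$. Your well-definedness check via $C^\infty(X_s)$-linearity is exactly the observation made at the end of the proof of Proposition \ref{liftedvfs}, so no further comment is needed.
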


\subsection{Densities}\label{sec:Densities}
When discussing pseudodifferential operator calculi, we must make a choice of convention  with densities. Here we adopt the convention that our operators act on sections, and correspondingly have Schwarz kernels which are `densities lifted from the right.' This has the advantage that if $A$ is a differential operator and $B$ is a pseudodifferential operator with kernel $K_B,$ the operator $A\circ B$ has kernel $\beta_L^*(A)K_B$; that is, lifting $A$ from the left and applying  it to $K_B,$ gives the kernel of $A\circ B.$

Since objects from the $\phi$-calculus and the $b$-calculus will both appear in our construction, as will local invariants of $d$-metrics, we will first define several different density bundles. On any manifold with corners $W,$ there is a canonical density bundle $\Omega(W),$ obtained by using the local coordinate charts. There is also an associated $b$-density bundle
\[\Omega_b(W)=f^{-1}\Omega(W),\]
where $f$ is a product of boundary defining functions for each boundary hypersurface of $W.$ Throughout, we let $\nu(W)$ and $\nu_b(W)$ denote spanning sections for $\Omega(W)$ and $\Omega_b(W)$ respectively.

We now define several density bundles on $X_s.$ Recall that the hypersurface $H$ of $M$ is the total space of a fibration $Z \fib H \xlra{\phi} Y$ and we denote the dimension of $Z$ by $v$ and that of $Y$ by $h.$
 The metric objects we are interested in are $\ed$-densities; we call the appropriate bundle $\Omega_{\epsilon, d}(X_s),$ which is defined by the property that $\nu_{\epsilon, d}\equiv |dg_{\epsilon}|$ is a smooth nonvanishing section. In local coordinates, \[\nu_{\epsilon, d}\sim |\rho^{v-1}dx d\phi^*g_Y dg_Z|.\]
There is a similar surgery density bundle for the conformally related metric $g_{\ephi},$ which we call $\Omega_{\ephi}(X_s).$ It is spanned over $C^{\infty}(X_s)$ by $\nu_{\ephi}\equiv|dg_{\ephi}|,$ which in local coordinates reads
\[|\rho^{-h-2} dx d\phi^*g_Y dg_Z|.\]
Finally, there is a bundle of $b$-surgery densities $\Omega_{\epsilon,b}(X_s),$ as in \cite{mame1}, spanned by
\[\nu_{\epsilon, b}\equiv |\rho^{-1} dx d\phi^*g_Y dg_Z|.\]
None of these density bundles are multiples of the canonical density bundle on $X_s,$ as they all lack a $|d\epsilon|.$ However, directly from the definitions, they are related by \[\nu_{\ephi}=\rho^{-n}\nu_{\epsilon,d}=\rho^{-(h+1)}\nu_{\epsilon,b}.\]

\subsection{Operator calculi}
Now we define our pseudodifferential operator calculi. As before, let $E\to X_s$ be a smooth vector bundle.  Denote also by $E\to X_s^2$ its lift $\beta_{(2),L}^*E$ from the left  and let $E'\to X_s^2$ be the lift from the right 
$\beta_{(2),R}^* E^*$ of the dual vector bundle $E^*\to X_s$.  On $X_{b,s}^2$, let us also use the notation $E_b:= \beta_{(2),b,L}^*E$ and $E_b':= \beta_{(2),b,R}^*E^*$.   Our Schwarz kernels for the $\phi$- and $b-$surgery calculi will live on the double spaces $X_s^2$ and $X_{b,s}^2$ respectively. We will view them as sections of the partial density bundles $(\beta_{(2),R})^*\Omega_{\ephi}(X_s)$ for the $\phi$-surgery calculus and $(\beta_{(2),b,R})^*\Omega_{\epsilon,b}(X_s)$ for the $b$-surgery calculus. Let $\kappa_{\phi}$ and $\kappa_{b}$ be smooth nonvanishing sections of the former and latter bundles respectively.

First we define the small $b$- and $\phi$-surgery calculi. We follow the notation of Vaillant. The small $b$-surgery calculus is the union over all $m\in\mathbb R$ of
\[\Psi^{m}_{\epsilon,b}(X_s;E)=\dot{C}^{\infty}_{\bff,\mf}I^m(X_{b,s}^2,\Delta_b;(E_b\otimes (E_b'\otimes(\beta_{(2),b,R})^*\Omega_{\epsilon,b}(X_s)))).\]
These are the distributions which have a (one-step polyhomogeneous) conormal singularity of order $m$ at the lifted diagonal $\Delta_b$ in $X_{b,s}^2$ and are polyhomogeneous conormal on $X_{b,s}^2,$ with index set $C^{\infty}$ at $\bhs{\bff}$ and $\bhs{\mf},$ and with infinite-order vanishing at $\bhs{\lf}$ and $\bhs{\rf}.$ Similarly, the small $\phi$-surgery calculus is the union over $m \in \bbR$ of
\[\Psi^{m}_{\ephi}(X_s;E)=\dot{C}^{\infty}_{\ff, mf}I^m(X_s^2,\Delta_s;(E\otimes(E'\otimes (\beta_{(2),R})^*\Omega_{\ephi}(X_s)))).\]

Distributions in the small $\phi$-surgery calculus have (one-step polyhomogeneous) conormal singularities at the lifted diagonal $\Delta_s$ in $X_s^2,$ are polyhomogeneous on $X_s^2$ with $C^{\infty}$ index sets at $\bhs{\ff}$ and $\bhs{\mf}$ and vanishing to infinite order at $\bhs{\lf},$ $\bhs{\rf},$ and $\bhs{\fbf}.$ Note that by our choice of density bundles, the identity operator on $(M,g_{\ephi})$ is an element of $\Psi^0_{\ephi}(X_s).$

The small-calculus definitions generalize in the usual way to full calculus definitions. For example, let $\mathcal E$ be an index family for $X_{b,s}^2$; then
\[\Psi^{m,\mathcal E}_{\epsilon,b}(X_s;E)=\mathcal A^{\mathcal E}I^m(X_{b,s}^2,\Delta_b;(E_b\otimes (E_b'\otimes(\beta_{(2),b,R})^*\Omega_{\epsilon,b}(X_s)))).\]
The small calculus is the subset of the full calculus with $C^{\infty}$ index sets at $\bhs{\mf}$ and $\bhs{\bff}$ and empty index sets at all other faces. A similar definition holds for the full $\phi$-calculus; given an index family $\mathcal F$ for $X_s^2,$
\[\Psi^{m,\mathcal F}_{\ephi}(X_s;E)=\mathcal A^{\mathcal F}I^m(X_s^2,\Delta_\phi;(E\otimes(E'\otimes (\beta_{(2),R})^*\Omega_{\ephi}(X_s)))).\]

We also have a symbol map for the $\phi$-surgery calculus. The kernel of a $\phi$-surgery operator is a conormal distribution, for which there is the usual symbol map
\begin{equation*}
	\sigma:I^m(X_s^2,\Delta_\phi;(E\otimes(E'\otimes (\beta_{(2),R})^*\Omega_{\ephi}(X_s))))\to S^m(N^*\Delta_s; \textrm{End}(E)),
\end{equation*}
with null space the set of conormal distributions of one lower order. Using the identification between $N\Delta_s$ and ${}^{\phi}TX_s$ from Corollary \ref{cor:normdiagonal} (note also that, by an argument similar to that in Proposition \ref{liftedvfs}, the density factor $(\rho')^{-h-2}\ dx'\ dy'\ dz'$ lifts to be nonsingular all the way down to $\bhs{\ff}$), this may be interpreted as a map
\begin{equation}\label{eq:symbolmap}{}^{\ephi}\sigma_m:\Psi^{m}_{\ephi}(X_s;E)\rightarrow S^m({}^{\ephi}TX_s;\textrm{End}(E)).\end{equation}
This is the $\phi$-calculus symbol map; it is surjective and its nullspace is $\Psi^{m-1}_{\ephi}(X_s).$ It has a natural extension to the full $\phi$-surgery calculus (vanishing on distributions supported away from $\Delta_s$), which we also denote ${}^{\ephi}\sigma_m.$

\subsection{Mapping properties and composition}

In the remainder of this work, we will need a formula concerning the action of surgery operators on functions.

\begin{theorem}\label{mappingprops} Let $f\in\mathcal A^{\mathcal F}(X_s;E)$ and let $A\in\Psi^{m,\mathcal E}_{\ephi}(X_s;E).$ Then $g=Af\in\mathcal A^{\mathcal G}(X_s;E),$ with
\[G_{\ms}=(E_{\mf}+F_{\ms})\extu(E_{\rf}+F_{\bs}-(h+1));\ \ G_{\bs}=(E_{\lf}+F_{\ms})\extu(E_{\ff}+F_{\bs})\extu(E_{\fbf}+F_{\bs}-(h+1)).\]
\end{theorem}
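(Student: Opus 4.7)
My plan is to realize $g = Af$ as a pushforward and apply Melrose's pushforward theorem for polyhomogeneous conormal distributions under $b$-fibrations. Writing
$$g = (\beta_{(2),L})_*\bigl(K_A \cdot \beta_{(2),R}^* f\bigr),$$
where $K_A$ is the Schwartz kernel of $A$ viewed as a polyhomogeneous section of $\beta_{(2),R}^*\Omega_{\ephi}(X_s)$ with a conormal singularity at $\Delta_s$, the problem reduces to tracking indices at each boundary face of $X_s^2$ under the projection maps.

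The first step is to verify that $\beta_{(2),L}$ and $\beta_{(2),R}$ are $b$-fibrations and to catalog their face maps. Local coordinate computations, essentially those already carried out in the proof of Proposition~\ref{liftedvfs}, show that $\beta_{(2),L}$ sends $\bhs{\mf}, \bhs{\rf} \to \bhs{\ms}$ and $\bhs{\lf}, \bhs{\ff}, \phibf \to \bhs{\bs}$, while $\beta_{(2),R}$ sends $\bhs{\mf}, \bhs{\lf} \to \bhs{\ms}$ and $\bhs{\rf}, \bhs{\ff}, \phibf \to \bhs{\bs}$. Moreover, each boundary defining function of $X_s$ lifts to vanish to first order on each of its preimage faces in $X_s^2$.

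The second step is to quantify the density correction. From the definitions in Section~\ref{sec:Densities} one computes $\nu_{\ephi} = \rho^{-(h+1)} \nu_{\epsilon, b}$ on $X_s$, so $\beta_{(2),R}^* \nu_{\ephi}$ is singular of order $-(h+1)$ relative to a natural $b$-density on $X_s^2$ at precisely those faces where $\beta_{(2),R}^* \rho$ vanishes, namely $\bhs{\rf}$, $\bhs{\ff}$, and $\phibf$. Under pushforward by $\beta_{(2),L}$ this produces an index shift of $-(h+1)$ at the corresponding target face; the exception is $\bhs{\ff}$, where the extra transverse dimensions created by the $\ff$ blow-up yield a compensating Jacobian factor of order $\rho_{\ff}^{h+1}$, giving no net shift. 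This cancellation is precisely what makes the $\phi$-calculus close under composition and is the geometric purpose of the extra blow-up creating $\bhs{\ff}$.

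Finally, Melrose's pushforward theorem applied to the $b$-fibration $\beta_{(2),L}$ and the polyhomogeneous section $K_A \cdot \beta_{(2),R}^* f$ combines contributions from each preimage face via the extended union $\extu$ to give the claimed formulas for $G_{\ms}$ and $G_{\bs}$. The conormal singularity at $\Delta_s$ requires no special treatment: since $\Delta_s$ projects diffeomorphically onto $X_s$ under $\beta_{(2),L}$ and sits over $\bhs{\ms}$ and $\bhs{\bs}$ via the faces $\bhs{\mf}$ and $\bhs{\ff}$ respectively, the symbol map \eqref{eq:symbolmap} delivers the usual pseudodifferential action, whose polyhomogeneous boundary contributions are already captured by the $E_{\mf} + F_{\ms}$ and $E_{\ff} + F_{\bs}$ terms in the extended unions. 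The main obstacle will be the density bookkeeping---especially the explicit verification of the Jacobian cancellation at $\bhs{\ff}$ and the persistence of the shift at $\phibf$---carried out in the projective coordinates \eqref{eq.lc1}--\eqref{eq.lc2}; once these are in hand the rest is a direct invocation of the pushforward theorem.
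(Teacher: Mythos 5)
Your proposal is correct and follows essentially the same route as the paper's proof: write $Af=(\beta_{(2),L})_*\bigl(K_A\,\beta_{(2),R}^*f\bigr)$, use that the two projections are $b$-fibrations with the stated face maps, apply the pullback and pushforward theorems, and dispose of the conormal singularity at $\Delta_s$ by transversality of the fibers of $\beta_{(2),L}$ to the lifted diagonal. The only difference is how the density bookkeeping is organized: you compare $\beta_{(2),R}^*\Omega_{\ephi}(X_s)$ to a right $b$-surgery density and invoke the Jacobian $\rho_{\ff}^{h+1}$ coming from the codimension-$(h+2)$ blow-up of $D_{fib}$, while the paper converts everything to canonical densities via Corollary~\ref{cor:densityblowup} and tracks exact powers; both yield the same net shifts of $-(h+1)$ at the $\bhs{\rf}$ and $\phibf$ contributions and no shift at $\bhs{\ff}$, $\bhs{\lf}$, $\bhs{\mf}$.
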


There is also a composition formula for operators in the $\phi$-surgery calculus:
\begin{theorem}\label{composition}[Composition] Let $A\in\Psi^{m,\mathcal E}_{\ephi}(X_s;E)$ and $B\in\Psi^{m',\mathcal F}_{\ephi}(X_s;E).$ Then $C=A\circ B\in\Psi^{m+m',\mathcal G}_{\ephi}(X_s;E),$ where
\[G_{\ff}=(E_{\ff}+F_{\ff})\extu(E_{\fbf}+F_{\fbf}-(h+1))\extu(E_{\lf}+F_{\rf});\] 
\[G_{\fbf}=(E_{\fbf}+F_{\ff})\extu(E_{\ff}+F_{\fbf})\extu(E_{\fbf}+F_{\fbf}-(h+1))\extu (E_{\lf}+F_{\rf});\] 
\[G_{\lf}=(E_{\ff}+F_{\lf})\extu(E_{\fbf}+F_{\lf}-(h+1))\extu(E_{\lf}+F_{\mf});\]
\[G_{\rf}=(E_{\rf}+F_{\ff})\extu(E_{\rf}+F_{\fbf}-(h+1))\extu(E_{\mf}+F_{\rf});\]
\[G_{\mf}=(E_{\mf}+F_{\mf})\extu(E_{\rf}+F_{\lf}-(h+1)).\]
\end{theorem}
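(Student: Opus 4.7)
The proof strategy is the standard triple-space construction. The composition $C = A \circ B$ has Schwartz kernel given by
$$
K_C = (\pi_{LR})_* \bigl( \pi_{LM}^* K_A \cdot \pi_{MR}^* K_B \bigr),
$$
where the $\pi_{**}$ are the natural projections from an appropriate triple surgery space $X_s^3$ to $X_s^2$, keeping respectively the left-middle, middle-right, and left-right factors of $M$. Combining the pullback theorem for polyhomogeneous distributions with Melrose's pushforward theorem, applied to these maps, will yield the stated index-set formulas once the geometry of the triple space is worked out.

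First I would construct $X_s^3$ by iterated radial blow-up of $M^3 \times [0,1]_\epsilon$. In order of decreasing codimension, blow up the triple corner $H^3 \times \{0\}$, the three double corners $H\times H\times M\times\{0\}$, $H\times M\times H\times\{0\}$, $M\times H\times H\times\{0\}$, and finally the three single-factor submanifolds $H\times M\times M\times\{0\}$, $M\times H\times M\times\{0\}$, $M\times M\times H\times\{0\}$. This produces the triple b-surgery space $X^3_{b,s}$, directly analogous to the space used in \cite{mame1}. To obtain $X^3_s$, further blow up the triple fiber diagonal $\{(p_1,p_2,p_3)\in H^3 : \phi(p_1)=\phi(p_2)=\phi(p_3)\}$ at the interior of the triple front face, followed by the three pairwise fiber diagonals at the interiors of the three double front faces. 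The resulting space carries boundary hypersurfaces that are triple analogues of $\bhs{\ff}$ and $\bhs{\phibf}$ in $X_s^2$, along with faces corresponding to each of the five boundary hypersurfaces of the double space.

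Next I would verify that the three projections $\pi_{LM}, \pi_{MR}, \pi_{LR}:X^3_s \to X^2_s$ extend from the interior to b-fibrations. At each stage of the blow-up sequence, one must check that the preimage under the relevant projection of any submanifold of $X_s^2$ being blown up is itself a p-submanifold (or is already the lift of a previously blown-up face), so that the projection descends through the blow-up. This is the main technical obstacle of the proof; it is closely analogous to the verification needed for the composition theorem in the closed $\phi$-calculus and in the $b$-surgery calculus of \cite{mame1}, and carries over here with additional bookkeeping for the fibration $\phi$.

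With the b-fibration property in hand, the pullback theorem expresses $\pi_{LM}^* K_A$ and $\pi_{MR}^* K_B$ as polyhomogeneous conormal distributions on $X^3_s$, with index sets at each boundary hypersurface obtained by pullback. Their conormal singularities lie along distinct lifted diagonals in $X^3_s$ meeting transversally off the triple-lifted diagonal, so the product is conormal of the correct order. Applying Melrose's pushforward theorem to $(\pi_{LR})_*$ then yields a polyhomogeneous distribution on $X^2_s$ whose index set at each face is the extended union of contributions from the various preimage faces in $X^3_s$ under $\pi_{LR}$, shifted by the orders of the density Jacobians along those faces. As an illustration, $\bhs{\ff} \subseteq X^2_s$ is the image of three faces of $X^3_s$: (i) the triple front face, contributing $E_{\ff} + F_{\ff}$; (ii) a partially resolved face where all three factors are near $H$ but only two of the pairwise fiber diagonals have been blown up, contributing $E_{\fbf} + F_{\fbf} - (h+1)$, with the shift $-(h+1)$ arising from the codimension $h+1$ of the unresolved fiber diagonal (which equals the order of vanishing appearing in $\nu_{\ephi} = \rho^{-(h+1)} \nu_{\epsilon,b}$); (iii) a face where the middle factor is away from $H$ while the left and right factors approach it, contributing $E_{\lf} + F_{\rf}$. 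The formulas for $G_{\fbf}, G_{\lf}, G_{\rf}, G_{\mf}$ are obtained by identical bookkeeping of the preimage faces of $\pi_{LR}$ and the corresponding density-Jacobian shifts.
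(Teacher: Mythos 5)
Your overall route is the same as the paper's (triple surgery space, pullback of the two kernels, pushforward along the third projection), but there is a concrete gap in your construction of the triple space: you omit the blow-ups of the three \emph{pairwise} fiber diagonals inside the \emph{triple} front face. After forming $X^3_{b,s}$ you blow up the triple fiber diagonal at the triple front face $\bhs{T}$ and then the pairwise fiber diagonals only at the three double front faces $\bhs{F},\bhs{C},\bhs{S}$. The paper's space also blows up $\bhs{T}\cap D_F$, $\bhs{T}\cap D_C$, $\bhs{T}\cap D_S$ (after first blowing up the triple diagonal $\bhs T\cap D$ so that these three become disjoint), producing the faces $\bhs{\phi_{FT}},\bhs{\phi_{CT}},\bhs{\phi_{ST}}$. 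These blow-ups are not optional: the preimage under the b-level projection (say the one projecting off $x$) of the submanifold $D_{fib}\subset\bhs{\bff}$ that is blown up to form $X_s^2$ is the union of the two p-submanifolds $\bhs{T}\cap D_F$ and $\bhs{F}\cap D_F$, and both must be resolved for the projection to lift to a smooth b-map, let alone a b-fibration (this is exactly where Lemma 10 of \cite{hmm} is invoked). On your space the point of the projection lying over the interior of $\bhs{T}\cap D_F$ away from the triple diagonal maps into the blown-up locus of $X_s^2$ without the corresponding resolution in the source, so $\pi_{LM},\pi_{MR},\pi_{LR}$ are not even everywhere defined as maps into $X^2_s$, and the pullback/pushforward machinery cannot be applied as stated.

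This omission also matters for the index sets themselves: several terms in the theorem are contributed precisely by the faces you are missing. For instance, in the pushforward along the middle projection the face $\bhs{\phi_{CT}}$ maps to $\bhs{\ff}$ and produces the term $E_{\fbf}+F_{\fbf}-(h+1)$ in $G_{\ff}$, while $\bhs{\phi_{FT}}$ and $\bhs{\phi_{ST}}$ map to $\phibf$ and produce the terms $E_{\fbf}+F_{\ff}$ and $E_{\ff}+F_{\fbf}$ in $G_{\fbf}$; your description of the source of the $E_{\fbf}+F_{\fbf}-(h+1)$ term ("all three factors near $H$ with only two pairwise fiber diagonals blown up") is an attempt to describe such a face, but no face of that type exists in the space you constructed. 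To repair the argument you need to add the three blow-ups $\bhs{T}\cap D_F$, $\bhs{T}\cap D_C$, $\bhs{T}\cap D_S$ (after $\bhs T\cap D$), verify b-normality via the full exponent matrices, and carry out the density bookkeeping on the enlarged space — the $-(h+1)$ shifts come out of the explicit transformation of the canonical densities under all of these blow-ups (note, e.g., that $\rho_{\phi_{CT}}$ carries a shift of $2(h+1)$ before the final multiplication by $(\rho_{\lf}\rho_{\rf}\rho_{\fbf}^2\rho_{\ff})^{h+1}$), not simply from the codimension of a single unresolved fiber diagonal as you assert.
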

The proof may be found in Appendix \ref{appendix:composition}. Note that in contrast with the usual full $\phi$-calculus, two operators in the full $\phi$-surgery calculus may always be composed for $\epsilon>0$. However, if $E_{\mf}=F_{\mf}=0$ and $E_{\rf}+F_{\lf}>(h+1),$ then the restrictions of each operator to $\bhs{\mf}$ may be composed \cite{v}, and the index sets in the composition rule in \cite{v} match with the index sets in Theorem \ref{composition}.

\subsection{Normal operators}\label{normalops} For certain $A\in\Psi^{m,\mathcal F}_{\ephi}(X_s;E),$ we can define \textbf{normal operators}, which are essentially the leading order coefficients of the kernel of $A$ at the various boundary hypersurfaces of the double space. First, assuming $F_{mf}\ge 0$, we define $N_{\mf}(A)$ by restriction to $\bhs{\mf},$ and note that $N_{\mf}(A)$ may be viewed as a $\phi$-operator on the manifold with fibered boundary $\bhs{\ms} = [M;H]$; indeed, $\bhs{\mf}$ is an overblown version of the $\phi$-calculus double space for $\bhs{\ms}.$

Secondly, for any $A\in\Psi^{m,\mathcal F}_{\ephi}(X_s;E)$ with $F_{\ff}\geq 0$ (that is, with the kernel bounded as we approach $\bhs{\ff}$), we let the normal operator $N_{\ff}(A)$ be the restriction of $A$ to $\bhs{\ff}.$     Recall that 
\begin{equation*}
	\bhs{\ff}^{\circ}\cong \Ephi N\bhs{\bs}\times_{Y\times[-\pi/2,\pi/2]} \bhs{\bs}  \cong \Ephi NY\times_{Y\times[-\pi/2,\pi/2]}  \left( \bhs{\bs} \times_{Y\times[-\pi/2,\pi/2]}\bhs{\bs} \right),
	\end{equation*}
so this face fibers over $Y \times [-\pi/2,\pi/2]$ with typical fiber $Z^2 \times \bbR^{h+1}.$
The normal operator at this face corresponds in each fiber to a conormal distribution with respect to $\Delta_Z\times \{0\}\subset Z^2\times \bbR^{h+1}$.  If furthermore $F_{\phi bf}=\infty$, this conormal distribution decays rapidly at infinity on $\bbR^{h+1}$.  This means that this is a family of suspended operators of order $m$ in the sense of \cite{mel95,mame2}.  We denote the space of such \textbf{families of suspended operators} by 
 $\Psi^{m}_{\sus(\Ephi NY)}(\bhs{\bs}/(Y\times[-\frac{\pi}2, \frac{\pi}2]);E)$.  Recall that    the action of an element $B$ of \linebreak $\Psi^{m}_{\sus(\Ephi NY)}(\bhs{\bs}/(Y\times[-\frac{\pi}2, \frac{\pi}2]))$ on a function defined on $\Ephi N\bhs{\bs}\cong\Ephi NY \times_{Y\times[-\pi/2,\pi/2]}   \bhs{\bs} $ is given by
\[Bu(y',\theta',z,\hat s,\hat y)=\int K_B(y',\theta',z,z',\hat s-\hat s',\hat y-\hat y')u(y',\theta',z',\hat s',\hat y')\ dz'\ d\hat s'\ d\hat y'\]
for some kernel $K_B(y',\theta',z,z',\hat s'',\hat y'')$ with a conormal singularity of order $m$ at $\{z=z',\hat s''=\hat y''=0\}$; the kernel acts as a convolution operator because of the translation invariance of $B.$ In particular, we let the normal map take $A\in\Psi^{m,\mathcal F}_{\ephi}(X_s;E)$ to the suspended operator whose kernel is $A|_{\bhs{\ff}}.$ Note that if $A$ is an element of the calculus $\Psi^{m,\cF}_{\ephi}(X_s;E),$ but with $\cF_{\phi bf}\ne \infty$,  the normal operator can still be considered a family of  suspended operators, though this time its Schwartz kernel does not decay rapidly at infinity in $\bbR^{h+1}$.  Also observe that, restricting $\theta'$ to $\pm\pi/2,$ we obtain precisely the same space of suspended operators as in the $\phi$-calculus of \cite{mame2}.

As indicated in \cite{mame2}, see also \cite{DLR2011}, the normal map in the usual small $\phi$-calculus is a homomorphism into the corresponding space of suspended operators, which forms an order-filtered algebra. Although the same is true for the small calculus in our case, all we need is the following proposition:

\begin{proposition} Let $K_P \in \Psi^*_{\ephi}(X_s;E)$ be the Schwartz kernel of a differential operator $P$.
For any $Q\in\Psi^{m,\mathcal F}_{\ephi}(X_s;E),$ continuous down to $\bhs{\ff}$ (that is, with leading order at worst zero), we have $N_{\ff}(P\circ Q)=N_{\ff}(P)\circ N_{\ff}(Q).$
\label{comp.1}\end{proposition}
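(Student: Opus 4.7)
The plan is to exploit the fact that when $P$ is a differential operator, the Schwartz kernel of the composition $P\circ Q$ is simply $\beta_{(2),L}^*(P)$ applied to the kernel $K_Q$, so the whole statement reduces to a careful analysis of how lifted $\ephi$-vector fields restrict to $\bhs{\ff}$ and how this interacts with the suspended calculus on $\bhs{\ff}^\circ$.

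First, I would use the density convention set up in \S\ref{sec:Densities}: since kernels are sections with the density factor pulled back from the right, the Schwartz kernel of a left composition by a differential operator $P$ is obtained by applying $\beta_{(2),L}^*(P)$ to $K_Q$, and no redistribution of densities is required. Hence the kernel of $P\circ Q$ is the lift $\beta_{(2),L}^*(P)K_Q$, which at points where $K_Q$ is smooth up to $\bhs{\ff}$ retains at least the same leading order at $\bhs{\ff}$ as $K_Q$. By Proposition~\ref{liftedvfs} (and by $C^\infty(X_s)$-linearity), every element of $\mathcal V_{\ephi}$ lifts from the left to a smooth vector field on $X_s^2$ that is tangent to the fibers of $\phi_+:\bhs{\ff}^\circ \to Y\times[-\pi/2,\pi/2]$ and is of constant coefficients in the Euclidean fiber coordinates $(\hat s,\hat y)$. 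Consequently, the restriction of $\beta_{(2),L}^*(P)$ to $\bhs{\ff}$ is a fiberwise differential operator in $(z,\hat s,\hat y)$ which is translation invariant in $(\hat s,\hat y)$; by the very construction in \S\ref{normalops} this restriction is exactly $N_{\ff}(P)$, viewed as a suspended differential operator.

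Next, since $Q$ is assumed continuous down to $\bhs{\ff}$, restriction commutes with the action of the smooth differential operator $\beta_{(2),L}^*(P)$:
\begin{equation*}
\bigl(\beta_{(2),L}^*(P)K_Q\bigr)\big|_{\bhs{\ff}}
\;=\;\bigl(\beta_{(2),L}^*(P)\big|_{\bhs{\ff}}\bigr)\bigl(K_Q\big|_{\bhs{\ff}}\bigr)
\;=\;N_{\ff}(P)\,N_{\ff}(Q),
\end{equation*}
the last product being simply the action of a translation-invariant differential operator on a conormal distribution on the fibers of $\bhs{\ff}$. I would then check that this coincides with the composition $N_{\ff}(P)\circ N_{\ff}(Q)$ as elements of $\Psi^{*}_{\sus(\Ephi NY)}(\bhs{\bs}/(Y\times[-\pi/2,\pi/2]))$: composition in the suspended calculus is convolution in $(\hat s,\hat y)$, and since the kernel of the suspended operator $N_{\ff}(P)$ is supported on the diagonal $\{z=z',\hat s''=0,\hat y''=0\}$, convolution by it is exactly the application of the differential operator $N_{\ff}(P)$ to the convolution variable. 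This matches the expression above, giving the desired identity.

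The main technical point—and the one I would treat most carefully—is the commutation of restriction to $\bhs{\ff}$ with the action of the lifted differential operator; everything else is formal once Proposition~\ref{liftedvfs} identifies the restriction of $\beta_{(2),L}^*(P)$ with a suspended differential operator. Concretely, one must verify in the projective coordinates \eqref{eq.lc2} that no singular terms are produced in the lift of $P$ (which is immediate from the computation in Proposition~\ref{liftedvfs} showing that $\beta_{(2),L}^*(\rho^2\pa_x)$, $\beta_{(2),L}^*(\rho\pa_y)$ and $\beta_{(2),L}^*(\pa_z)$ are all smooth up to $\rho'=0$) and that the density factor $\kappa_\phi$ restricts compatibly, so that the identification of $K_Q|_{\bhs{\ff}}$ with the kernel of the suspended operator $N_{\ff}(Q)$ is preserved by left multiplication by $N_{\ff}(P)$. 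Once these are in place the proposition follows.
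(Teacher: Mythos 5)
Your proposal is correct and follows essentially the same route as the paper: exploit the density convention so that the kernel of $P\circ Q$ is $\beta_{(2),L}^*(P)K_Q$, use Proposition~\ref{liftedvfs} to identify the restriction of the lifted operator to $\bhs{\ff}$ with the suspended differential operator $N_{\ff}(P)$, restrict (using the nonnegative index set at $\bhs{\ff}$), and observe that applying this translation-invariant operator to the kernel of $N_{\ff}(Q)$ is the same as composing in the suspended (convolution) calculus. The only difference is cosmetic: you phrase the last step via the diagonal-supported convolution kernel of $N_{\ff}(P)$, whereas the paper verifies it by differentiating under the integral against a compactly supported test function.
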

\begin{proof} For simplicity, consider $V=f_1(\rho,\theta,y,z)\rho^2\pa_x+f_2(\rho,\theta,y,z)\rho\pa_y+f_3(\rho,\theta,y,z)\pa_z,$ where the functions $f_1,$ $f_2,$ and $f_3$ are smooth on $X_s$; the extension to higher-order surgery differential operators is immediate. Let $Q$ have kernel, in a neighborhood of $\bhs{\ff},$ given by $q(\rho',y',\theta',z,z',\hat s,\hat y).$

Recall that since our densities lift from the right factor, the action of differential operators is given by a lift from the left factor. In a neighborhood of $\bhs{\ff},$ the lift from the left of $V,$ as a direct result of the computation of the lifted vector fields in Proposition \ref{liftedvfs}, is
\[f_1(0,\theta',y',z)\pa_{\hat s}+f_2(0,\theta',y',z)\pa_{\hat y}+f_3(0,\theta',y',z)\pa_z+\mathcal O(\rho').\]
Therefore the kernel of $P\circ Q$ is given by
\[f_1(0,\theta',y',z)q_{\hat s}+f_2(0,\theta',y',z)q_{\hat y}+f_3(0,\theta',y',z)q_{z}+\mathcal O(\rho').\]
So
\[N_{\ff}(P\circ Q)=f_1(0,\theta',y',z)q_{\hat s}|_{\rho'=0}+f_2(0,\theta',y',z)q_{\hat y}|_{\rho'=0}+f_3(0,\theta',y',z)q_{z}|_{\rho'=0}.\]
Now take a compactly supported test function $u(y',\theta',z,\hat s,\hat y)$ on $\Ephi N\bhs{\bs}.$ We have
\[N_{\ff}(Q)u(y',\theta',z,\hat s,\hat y)=\int q(0,y',\theta',z,z',\hat s-\hat s',\hat y-\hat y')u(y',\theta',z',\hat s',\hat y')\ dz' d\hat s'\ d\hat y'.\]
We want to apply $N_{\ff}(P)$ to this function, but $N_{\ff}(P)$ is simply the suspended operator whose kernel is the kernel of the vector field $f_1(0,\theta',y',z)\pa_{\hat s}+f_2(0,\theta',y',z)\pa_{\hat y}+f_3(0,\theta',y',z)\pa_z.$ So we apply the vector field to $N_{\ff}(Q)u$ directly. We can move the derivatives inside the integral since $u$ has compact support. The derivatives in $\hat s,$ $\hat y,$ and $z$ just hit the kernel $q,$ and we conclude that $N_{\ff}(P)\circ N_{\ff}(Q)$ is the suspended operator with kernel
\[f_1(0,\theta',y',z)q_{\hat s}|_{\rho'=0}+f_2(0,\theta',y',z)q_{\hat y}|_{\rho'=0}+f_3(0,\theta',y',z)q_{z}|_{\rho'=0}.\]
This completes the proof. \end{proof}

\subsection{Compatibility}

The normal operators, as defined here, are compatible with each other and with the usual symbol. In particular, let $A$ be an element of the $\phi$-surgery calculus with order zero at $\bhs{\ff}$ and $\bhs{\mf}.$ At any component of the intersection $\bhs{\ff}\cap\bhs{\mf},$ the restriction of $N_{\ff}(A)$ to $\theta=\pm\pi/2$ is the same as the $\phi$-calculus normal operator of $N_{\mf}(A)$; they are both restrictions to $\bhs{\ff}\cap\bhs{\mf},$ just done in a different order.  The restriction of the lifted diagonal $\Delta_s$ to $\bhs{\mf}$ is the lifted diagonal $\Delta_{\phi}$ of $\bhs{\mf}$ seen as the $\phi$-double space of $\bhs{\ms}$, and its conormal bundle restricts to give the conormal bundle of $\Delta_{\phi}$ in $\bhs{\mf}$.  This means that the principal symbol ${}^{\ephi}\sigma_m(A)$ of a $\phi$-surgery operator $A$ of order $m$ naturally restricts on $N^*\Delta_{\phi}$ to give the principal symbol ${}^{\phi}\sigma_m(N_{\mf}(A))$ of the normal operator $N_{\mf}(A)$.  Similarly, the restriction of the conormal bundle $N^*\Delta_s$ to $\Delta_s\cap \bhs{\ff}$ is simply the conormal bundle of $\Delta_s\cap \bhs{ff}$ in $\bhs{ff}$, so that the restriction of ${}^{\ephi}\sigma_m(A)$ to $\Delta_s\cap \bhs{\ff}$ is naturally identified with the principal symbol of the family of suspended operators $N_{\ff}(A)$.

\section{Resolvent construction}  \label{rc.0}

In this section we carry out the construction of the resolvent using the pseudodifferential calculi described in \S\ref{doublespsec}. For our application to analytic torsion we will need information about the resolvent of the twisted de Rham operator, $\eth_{\dR}.$ However as it requires no more effort, we will construct the resolvent for a  Dirac-type operator associated to an $\ed$-metric.

Thus together with a fixed boundary defining function $x$ for $H$ and an  $\ed$-metric $g_{\sfc}$ product-type to order 1, we assume that we have an Euclidean vector bundle $E \lra X_s$ and a formally self-adjoint $\ed$-elliptic differential operator $\eth_{\ed}\in \Diff^1_{\sfc}(X_s;E)$, that is, $\rho\eth_{\sfc} \in \Diff^1_{\ephi}(X_s;E),$ where $\rho = \sqrt{x^2+\eps^2}$.  An example to keep in mind is the situation where $E$ is in fact a Clifford module for the Clifford bundle of  the $\ed$-tangent bundle and that $\eth_{\ed}$ is the Dirac-type operator associated to a choice of Clifford connection.  
As in \eqref{eq:ShiftToL2b}, the operator $\eth_{\ed}$ acting on $L^2_{\ed}(X_s;E)$ is equivalent to the operator
\begin{equation*}
	D_{\sfc}
	= \rho^{v/2}\eth_{\sfc}\rho^{-v/2} \text{ acting on } L^2_{\eps,b}(X_s;E),
\end{equation*}
and it will be convenient for us to work with the latter. It is also convenient to introduce the abbreviation 
\begin{equation*}
	P(\lambda) 
	= \rho( D_{\sfc} - \lambda ).
\end{equation*}

We now define two model operators associated to $D_{\sfc}.$ Noting that $\rho D_{\sfc}\in\Diff^1_{\ephi}(X_s;E),$ so its restriction to $\bhs{\bs}$ is tangent to the fibers of $\phi_{+},$ we have the following definition:
\begin{definition}
The \textbf{vertical family} is the family of operators 
\begin{equation*}
	D_v\in \Diff^1(\bhs{\bs}/(Y\times [-\pi/2,\pi/2]);E)
\end{equation*}
obtained by restricting the action of $\rho D_{\sfc}$ to the boundary face $\bhs{\bs}.$ Notice that we obtain the same family by restricting the action of $P(\lambda)$ or   $\rho\eth_{\sfc}$ to $\bhs{\bs}.$
\label{rc.2}\end{definition}

Writing $D_v = \rho^{1/2} D_{\sfc} \rho^{1/2}\rest{\bhs{sb}},$ notice that the formal self-adjointness of $D_{\sfc}$ on $L^2_{\eps,b}(X_s;E)$ implies the formal self-adjointness of $D_v.$
The vertical operator is closely related to the normal operator at $\bhs{\ff}$ of $\rho D_{\ed}.$ Indeed, a direct computation in local coordinates or an appeal to naturality shows that 
\begin{equation}\label{eq:NffDed}
	Y \times [-\pi/2,\pi/2] \ni p \mapsto N_{\ff}(\rho D_{\ed})_p = D_v\rest{Z_p} + \eth_h
\end{equation}
where $\eth_h$ is a family of elliptic translation invariant operators in the fibers of ${}^\phi N \bhs{\bs}$ over $\phi^{-1}(p) \in \bhs{\bs}.$  We will assume that this family is in fact a family of Euclidean Dirac-type operators, which is automatic if $\eth_{\ed}$ is a Dirac-type operator or if it is the de Rham operator of section~\ref{sec:Metrics}.

In order to be able to construct the resolvent, we will make the following crucial assumption:
\begin{assumption}\label{kernelbundle}
The nullspaces of the various fiberwise operators of the family $D_v$ form a vector bundle 
\begin{equation*}
	\ker D_v \lra Y\times [-\pi/2,\pi/2].
\end{equation*} 
\label{rc.3}\end{assumption}

Using the restriction of the $\phi$-surgery metric $g_{\ephi}=\rho^{-2}g_{\sfc}$ to the fibers of the fiber bundle \eqref{rc.1} and the Hermitian metric on  $E,$ we can define an $L^2$ norm of the sections of $E$ over each fiber of \eqref{rc.1}.  Thus, we can define a smooth family of fiberwise projections onto the bundle $\ker D_v,$ given by
\begin{multline}
	\Pi_h:  \CI(Y\times [-\pi/2,\pi/2]; L^2(\bhs{\bs}/(Y\times [-\pi/2,\pi/2]);E))\\
	\lra \CI(Y\times [-\pi/2,\pi/2]; \ker D_v).
\label{rc.4}\end{multline} 
This can be used as follows to define the horizontal operator already mentioned in \S~\ref{sec:DeRhamOp}.
\begin{definition}
The $b$-operator $D_b\in \Diff^1_b(Y\times [-\pi/2,\pi/2];\ker D_v)$ associated to $\eth_{\sfc}$ is defined by
$$
           D_b u:= \Pi_h  \left( \left( D_{\sfc} \widetilde{u}\right)\rest{\bhs{\bs}}\right), \quad u\in \CI(Y\times [-\pi/2,\pi/2];\ker D_v),
$$ 
where $\widetilde{u}\in \CI(X_s; E)$ is chosen so that $\left. \widetilde{u}\right|_{\bhs{\bs}}=u.$
\label{rc.4a}\end{definition}
\begin{lemma}
The $b$-operator $D_b$ is well-defined; that is, $D_b u$ does not depend on the choice of extension $\widetilde{u}.$  Moreover, it is formally self-adjoint as a $b$-operator.   
\label{rc.5}\end{lemma}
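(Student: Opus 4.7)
For well-definedness, let $\tilde u, \tilde u'$ be two smooth extensions of $u$, so that $\tilde u' - \tilde u = \rho w$ for some $w \in \CI(X_s; E)$; the claim reduces to $\Pi_h\lrpar{D_{\sfc}(\rho w)}\rest{\bhs{\bs}} = 0$. Setting $L = \rho D_{\sfc}$, a first-order $\phi$-operator by hypothesis, the Leibniz rule gives
\[
D_{\sfc}(\rho w) = \rho^{-1}L(\rho w) = \rho^{-1}V_L(\rho)\cdot w + L(w),
\]
where $V_L$ denotes the vector-field part of $L$. Since $L$ arises by multiplying an $\sfc$-operator by $\rho$, expanding the lift of the $\sfc$-vector field $\rho\pa_x$ through $x = \rho\sin\theta$, $\eps = \rho\cos\theta$ yields $\rho^2\pa_x = \rho\sin\theta\cdot\rho\pa_\rho + \rho\cos\theta\cdot\pa_\theta$, so the coefficients of $\rho\pa_\rho$ and $\pa_\theta$ in $L$ are both $O(\rho)$ near $\bhs{\bs}$. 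Hence $V_L(\rho) = O(\rho^2)$ and $\rho^{-1}V_L(\rho)\cdot w$ vanishes on $\bhs{\bs}$. Combined with the ``tangent to fibers of $\phi_+$'' property of $D_v$ (which kills the surviving $\pa_\theta$ contribution) and the vanishing of the vector fields $\rho\pa_\rho$, $\rho\pa_y$ at $\bhs{\bs}$, one gets $L(w)\rest{\bhs{\bs}} = D_v\lrpar{w\rest{\bhs{\bs}}} \in \im D_v$. Formal self-adjointness of $D_v$ (noted right after Definition~\ref{rc.2}) gives the fiberwise orthogonal decomposition $\ker D_v \perp \im D_v$, so $\Pi_h$ annihilates this term.

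For the $b$-self-adjointness, I would take $u_1, u_2 \in \CIc((-\pi/2, \pi/2)\times Y; \ker D_v)$ and use Assumption~\ref{rc.3} to extend $\ker D_v$ to a smooth subbundle $\cK$ of $E$ on a collar of $\bhs{\bs}$; choosing extensions $\tilde u_i$ taking values in $\cK$ ensures that $D_{\sfc}\tilde u_i$ is uniformly bounded up to $\bhs{\bs}$ by the argument just given. The identity
\[
\langle D_{\sfc}\tilde u_1, \tilde u_2\rangle_{L^2_{\eps,b}} = \langle \tilde u_1, D_{\sfc}\tilde u_2\rangle_{L^2_{\eps,b}}, \quad \eps > 0,
\]
is then passed to the limit $\eps \to 0$ in the projective coordinate $X = x/\eps$, where the $L^2_{\eps,b}$-density $\rho^{-1}\,dx\,d\mu_{H/Y}d\mu_Y$ reduces to the $\eps$-independent expression $\ang{X}^{-1}\,dX\,d\mu_{H/Y}d\mu_Y$, restricting at $\bhs{\bs}$ to the $b$-density on $Y\times[-\pi/2,\pi/2]$ after integration over the $Z$-fibers. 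With $\tilde u_i$ chosen $\eps$-independent in these coordinates, dominated convergence yields
\[
\int_{\bhs{\bs}} \lrpar{D_{\sfc}\tilde u_1\rest{\bhs{\bs}}, u_2}\ang{X}^{-1}\,dX\,d\mu_{H/Y}d\mu_Y = \int_{\bhs{\bs}} \lrpar{u_1, D_{\sfc}\tilde u_2\rest{\bhs{\bs}}}\ang{X}^{-1}\,dX\,d\mu_{H/Y}d\mu_Y.
\]
Since $\Pi_h u_i = u_i$ and $\Pi_h$ is fiberwise self-adjoint, each boundary restriction can be replaced by its $\Pi_h$-image $D_b u_j$, giving $\langle D_b u_1, u_2\rangle_b = \langle u_1, D_b u_2\rangle_b$.

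The main technical hurdle will be verifying the uniform $\eps$-behavior of the extensions globally near the corner $\bhs{\bs}\cap\bhs{\ms}$ so that dominated convergence applies not just on compact subsets of $\bhs{\bs}^\circ$; for $u_i$ with support strictly in the interior this is immediate, and the general case follows by approximation once one checks that $D_b$ is indeed a $b$-operator on $Y\times[-\pi/2,\pi/2]$, which is read off from the fact that in the $X$-coordinate the relevant $\sfc$-vector field rescales as $\rho\pa_x = \ang{X}\pa_X$, a $b$-vector field on the compactified interval.
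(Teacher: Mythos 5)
Your proof is correct and takes essentially the same route as the paper: well-definedness via the commutator bound $[D_{\sfc},\rho]\in\rho\,\CI(X_s;\End(E))$ (which you verify in coordinates where the paper simply cites "a simple computation"), so that the difference of boundary restrictions is $D_v w$ and is killed by $\Pi_h$ by formal self-adjointness of $D_v$; and formal self-adjointness of $D_b$ by restricting the $\eps>0$ symmetry of $D_{\sfc}$, applied to extensions supported near $\bhs{\bs}$, to $\eps=0$, exactly as in the paper's proof.
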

\begin{proof}
Since $D_{\sfc}$ is a Dirac-type operator associated to an $\ed$-metric product-type to order 1,  a simple computation in local coordinates shows that  $[D_{\sfc},\rho]\in \rho\CI(X_s, \End(E)).$  Thus, if $\widetilde{u_1}$ and $\widetilde{u}_2$ are two different choices of extensions of $u,$ so that $\widetilde{u}_1-\widetilde{u}_2=\rho w$ for some $w\in\CI(X_s;E),$ then
 $$
     \left. \left( D_{\sfc}(\widetilde{u}_1-\widetilde{u}_2)\right)\right|_{\bhs{\bs}}= \left. \left(D_{\sfc}( \rho w) \right)\right|_{\bhs{\bs}}=\left. \left(\rho D_{\sfc} w+ [D_{\sfc}, \rho] w\right)\right|_{\bhs{\bs}}= D_v w.
 $$
 Since $D_v$ is family of formally self-adjoint operators,  $D_v w$ is orthogonal to $\ker D_v$ so that 
 $$
 \Pi_h \left( \left. \left( D_{\sfc}(\widetilde{u}_1-\widetilde{u}_2)\right)\right|_{\bhs{\bs}} \right)= \Pi_h (D_v w)=0,
 $$
 showing that the definition of $D_b u$ does not depend on the choice of extension of $u.$  
 
Moreover, the fact that $D_b$ is formally self-adjoint follows from the corresponding assertion for $D_{\sfc}.$  Indeed, given $u_1, u_2\in \CI_c(Y\times [-\pi/2,\pi/2];\ker D_v),$ choose smooth extensions $\widetilde{u}_1,\widetilde{u}_2\in \CI(X_s;E)$ supported away from $\sm.$  Then 
 $$
 \langle u_1, D_b u_2\rangle_{L^2_b}= \left. \langle \widetilde{u}_1, D_{\sfc} \widetilde{u}_2\rangle_{L^2_b}\right|_{\epsilon=0}=   \left. \langle D_{\sfc} \widetilde{u}_1, \widetilde{u}_2\rangle_{L^2_b}\right|_{\epsilon=0}=   \langle D_b u_1 ,u_2\rangle_{L^2_b},
 $$
 which shows that $D_b$ is formally self-adjoint.  
 \end{proof}
 The following related lemma will also be useful for the construction of the heat kernel of $D^2_{\ed}$.
\begin{lemma}
If $\widetilde{u}\in \CI(X_s;E)$ satisfies $D_{\ed}^{\ell}\widetilde{u}\in \CI(X_s;E),$ then
\begin{equation*}
	\Pi_h (D^{\ell}_{\ed}\wt u)\rest{\bhs{sb}} = D_b^{\ell} \lrpar{ \wt u\rest{\bhs{sb}}}.
\end{equation*}
Conversely, given $u\in \CI(Y\times[-\pi/2,\pi/2];\ker D_v)$ there exists an extension $\widetilde{u}\in \CI(X_s;E)$ such that 
\begin{equation*}
	(D_{\ed}^{\ell}\widetilde{u})\rest{\bhs{sb}}= \Pi_h (D_{\ed}^{\ell}\widetilde{u})\rest{\bhs{sb}} = D_b^{\ell}u \quad \forall\; \ell\in\bbN_0.
\end{equation*}
\label{rc.5a}\end{lemma}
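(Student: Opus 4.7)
\emph{Plan.} Part~(1) will be an induction on $\ell$ bootstrapping from the definition of $D_b$; part~(2) will be a Borel-type construction of the Taylor expansion of $\widetilde u$ at $\bhs{sb}$. The structural fact underlying both is that $P:=\rho D_{\sfc}\in\Diff^1_{\ephi}(X_s;E)$ is smooth down to $\bhs{sb}$ with $P\rest{\bhs{sb}}=D_v$, so near $\bhs{sb}$ there is a Taylor expansion
\begin{equation*}
    P \;=\; D_v + \rho R_1 + \rho^2 R_2 + \cdots,
\end{equation*}
and $D_{\sfc}\widetilde u=\rho^{-1}P\widetilde u$ is smooth at $\bhs{sb}$ precisely when the successive Taylor coefficients of $P\widetilde u$ vanish to the appropriate order.

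For part~(1), I first handle $\ell=1$: since $D_{\sfc}\widetilde u=\rho^{-1}P\widetilde u$ is smooth, evaluation at $\bhs{sb}$ forces $D_v(\widetilde u\rest{\bhs{sb}})=0$, so $\widetilde u\rest{\bhs{sb}}\in\ker D_v$, and Definition~\ref{rc.4a} immediately gives $\Pi_h(D_{\sfc}\widetilde u)\rest{\bhs{sb}}=D_b(\widetilde u\rest{\bhs{sb}})$. For the inductive step, apply this base case to $\widetilde v:=D_{\sfc}^{\ell-1}\widetilde u$, which is smooth and satisfies $D_{\sfc}\widetilde v=D_{\sfc}^{\ell}\widetilde u\in\CI(X_s;E)$. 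This shows $\widetilde v\rest{\bhs{sb}}\in\ker D_v$, so $\Pi_h\widetilde v\rest{\bhs{sb}}=\widetilde v\rest{\bhs{sb}}$, which by the inductive hypothesis equals $D_b^{\ell-1}(\widetilde u\rest{\bhs{sb}})$. Combining with the base case for $\widetilde v$ yields
\begin{equation*}
    \Pi_h(D_{\sfc}^{\ell}\widetilde u)\rest{\bhs{sb}} \;=\; D_b(\widetilde v\rest{\bhs{sb}}) \;=\; D_b^{\ell}(\widetilde u\rest{\bhs{sb}}).
\end{equation*}

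For part~(2), I write a formal Taylor series $\widetilde u\sim\sum_{k\geq 0}\rho^k u_k$ with $u_0:=u$ and determine $u_{k+1}$, modulo $\ker D_v$, so that $(D_{\sfc}^{k+1}\widetilde u)\rest{\bhs{sb}}$ lies in $\ker D_v$ and equals $D_b^{k+1}u$. Tracking the expansion of $P\widetilde u$ order by order shows that this reduces to solving a linear equation
\begin{equation*}
    D_v u_{k+1} \;=\; w_k,
\end{equation*}
where $w_k$ depends smoothly on $u_0,\dots,u_k$ and on the Taylor coefficients of $P$. Formal self-adjointness of $D_v$ gives $\operatorname{range}(D_v)=(\ker D_v)^{\perp}$, and an induction carried out in tandem with part~(1) guarantees that $w_k$ lies in this range, so that $u_{k+1}$ exists and is unique modulo $\ker D_v$. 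Assumption~\ref{rc.3} makes both $\Pi_h$ and the partial (Green's) inverse of $D_v$ smooth families of operators on the compact $Z$-fibers of $\phi_+$, so each $u_{k+1}$ depends smoothly on the base. Borel's lemma then produces an actual $\widetilde u\in\CI(X_s;E)$ with the prescribed Taylor expansion, and part~(1) applied to this $\widetilde u$ delivers the claimed identity for every $\ell$.

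The main obstacle is verifying that the recursion is genuinely solvable at each order, i.e.\ that the consistency condition $w_k\in\operatorname{range}(D_v)$ propagates through the induction; this is precisely what fiberwise ellipticity of $D_v$, its formal self-adjointness, and Assumption~\ref{rc.3} together supply. The bookkeeping required to match the Taylor coefficients of $P$ and $\widetilde u$ so that the restriction lands exactly at $D_b^{k+1}u$ rather than at some other element of $\ker D_v$ is cleanly handled by invoking part~(1) during the construction.
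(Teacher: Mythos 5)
Your proposal is correct and follows essentially the same route as the paper: part (1) is the same induction off the $\ell=1$ case via Definition~\ref{rc.4a}, and part (2) is the same strategy of building a formal solution at $\bhs{sb}$ order by order (using formal self-adjointness of $D_v$, ellipticity on the compact fibers, and Assumption~\ref{rc.3} for smoothness of $\Pi_h$ and the fiberwise Green's operator) and then summing by Borel's lemma. The only real difference is bookkeeping: the paper corrects a partial extension $\wt u_N$ by $\rho^N\wt w_N$ and solves $D_v^{N+1}(w_N) = -D_v\bigl((D_{\ed}^N\wt u_N)\rest{\bhs{sb}}\bigr)$, so the obstruction is visibly $D_v$ of something and lies in $\operatorname{range}(D_v^{N+1})=\operatorname{range}(D_v)$, which makes the consistency condition you only assert (that the source term at each order is orthogonal to $\ker D_v$, and that the operator to invert is really a power of $D_v$ rather than $D_v$ itself) immediate rather than something to be propagated through the induction.
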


\begin{proof}
For $\ell=1$ note that 
since the leading term of $D_{\ed} \wt u$ at $\bhs{sb}$ is $\tfrac1\rho D_v \wt u,$ the fact that $D_{\ed}\wt u \in \CI(X_s;E)$ implies that
\begin{equation*}
	\wt u\rest{\bhs{sb}} = \Pi_h \wt u\rest{\bhs{sb}}
\end{equation*}
and so
\begin{equation*}
	\Pi_h (D_{\ed}\wt u)\rest{\bhs{sb}} = D_b \lrpar{ \wt u\rest{\bhs{sb}}}
\end{equation*}
follows directly from Definition \ref{rc.4a}.

For $\ell>1,$ 
assume inductively that
\begin{equation*}
	\Pi_h (D_{\ed}^{\ell-1}\wt u)\rest{\bhs{sb}} = D_b^{\ell-1}\wt u\rest{\bhs{sb}}.
\end{equation*}
We have that
$\wt v = D_{\ed}^{\ell-1}\wt u$ satisfies
\begin{equation*}
	\wt v \in \CI(X_s;E), \quad D_{\ed}\wt v \in \CI(X_s;E)
\end{equation*}
and hence 
\begin{equation*}
	\wt v\rest{\bhs{sb}} = \Pi_h \wt v\rest{\bhs{sb}}, \quad 
	\Pi_h (D_{\ed} \wt v)\rest{\bhs{sb}} = D_b(\wt v\rest{\bhs{sb}}).
\end{equation*}
It follows that
\begin{equation*}
	\Pi_h (D^{\ell}_{\ed}\wt u)\rest{\bhs{sb}}
	= \Pi_h (D_{\ed} \wt v)\rest{\bhs{sb}}
	= D_b(\wt v\rest{\bhs{sb}})
	= D_b(\Pi_h \wt v\rest{\bhs{sb}})
	= D_b(D_b^{\ell-1}\wt u\rest{\bhs{sb}})
	= D_b^{\ell}\wt u\rest{\bhs{sb}},
\end{equation*}
as required.\\

Now given $u \in \CI(Y \times [-\pi/2, \pi/2]; \ker D_v)$ we want to find an extension $\wt u$ to $X_s$ as in the statement of the lemma. By what we have shown so far, this is equivalent to finding $\wt u \in \CI(X_s; E)$ such that 
\begin{equation*}
	\wt u\rest{\bhs{sb}}=u, \quad D_{\ed}^{\ell}\wt u \in \CI(X_s;E) \Mforall \ell \in \bbN.
\end{equation*}

Assume inductively that we have found $\wt u_N \in \CI(X_s;E)$ satisfying
\begin{equation*}
	\wt u_N \rest{\bhs{sb}}=u, \quad
	D_{\ed}^\ell \wt u_N \in \CI(X_s;E) \Mforall \ell \leq N.
\end{equation*}
Let us show that we can find $\wt w_N \in \CI(X_s;E)$ such that $D_{\ed}^{N+1}(\wt u_N + \rho^N \wt w_N) \in \CI(X_s;E).$
First we point out that for any smooth $\wt w_N$ we have that 
\begin{equation*}
	\rho D_{\ed}^{N+1}(\rho^N \wt w_N) \in \CI(X_s;E), \quad
	\rho D_{\ed}^{N+1}(\rho^N \wt w_N)\rest{\bhs{sb}} = D_v^{N+1}(\wt w_N\rest{\bhs{sb}}).
\end{equation*}
On the other hand, for any $\wt u_N$ satisfying our inductive hypothesis we have that 
\begin{equation*}
	\rho D_{\ed}^{N+1}(\wt u_N) \in \CI(X_s;E), \quad
	\rho D_{\ed}^{N+1}(\wt u_N)\rest{\bhs{sb}} = D_v( (D_{\ed}^N\wt u_N)\rest{\bhs{sb}}).
\end{equation*}
Now, since $D_v$ is formally self-adjoint, the null space of $D_v^{N+1}$ coincides with the null space of $D_v$ and hence the image of $D_v$ coincides with the image of $D_v^{N+1}.$ Thus there exists $w_N \in \CI(\bhs{sb};E)$ such that 
\begin{equation*}
	D_v^{N+1}(w_N) = -D_v( (D_{\ed}^N\wt u_N)\rest{\bhs{sb}})
\end{equation*}
and if we let $\wt w_N$ be any smooth extension of $w_N$ off of $\bhs{sb}$ we have
\begin{equation*}
	\rho D_{\ed}^{N+1}(\wt u_N + \rho^N\wt\omega_N) \in \CI(X_s;E), \quad
	\rho D_{\ed}^{N+1}(\wt u_N + \rho^N\wt\omega_N)\rest{\bhs{sb}} =0,
\end{equation*}
i.e., $D_{\ed}^{N+1}(\wt u_N + \rho^N\wt\omega_N) \in \CI(X_s;E)$ so we have finished the induction.

Note that the $N^{\text{th}}$ step in this construction does not change the first $N$ terms in the Taylor expansion of $\wt u_N$ at $\bhs{sb}.$
By Borel's Lemma \cite{MelroseAPS} there exists a smooth function $\wt u \in \CI(X_s;E)$ such that $\wt u - \wt u_N \in \rho_{sb}^{N}\CI(X_s;E)$ for all $N \in \bbN.$ It follows that
\begin{equation*}
	\wt u\rest{\bhs{sb}}=u, \quad
	D_{\ed}^{\ell}\wt u = D_{\ed}^{\ell}(\wt u_{\ell+1}) + D_{\ed}^{\ell}(\wt u - \wt u_{\ell+1}) \in \CI(X_s;E) \Mforall \ell \in \bbN
\end{equation*}
as required.
\end{proof}

To construct the resolvent of $D_{\sfc}$ near $\lambda=0,$ we will make the following assumption on the $b$-operator $D_b.$

\begin{assumption}\label{dbfredholm}
For $m\in \bbN_0,$ the operator $D_b$ is Fredholm as a map
$$
     D_b : H^{m+1}_b(Y\times [-\pi/2,\pi/2]; \ker D_v) \to H^m_b(Y\times[-\pi/2,\pi/2];\ker D_v).
$$
If $\widehat{D}_b$ is the indicial family of $D_b,$ this is equivalent to requiring that $0\notin \Spec_b(\widehat{D}_b)$ by the Fredholm criterion of \cite[Theorem~5.60]{MelroseAPS}. 
\label{rc.6}\end{assumption}
Along with Assumption \ref{kernelbundle}, this will allow us to construct the resolvent of $D_{\sfc}.$\\

Let us point out that both of these assumptions hold for our main application, the de Rham operator of a fibered  cusp surgery metric that is of product-type up to order two, twisted by a Witt flat bundle $F.$ Indeed, in this case the vertical operator is given in \eqref{eq:VerticalOpDR} and it clearly satisfies Assumption 1, while the $b$-operator is given in \eqref{eq:dRHorOp} and Assumption 2 is guaranteed by Lemma \ref{lem:WittIsFredholm}.\\

To state the main theorem of this section, it will be convenient,  for an index set $\cE$, to write $\inf \cE>a$ if any $(z,k)\in \cE$ is such that $\Re z>a$.  Similarly, the notation $\inf\cE \ge a$ will mean that 
$$
         (z,k)\in \cE, \; \Re z\le a \; \Longrightarrow \; (z,k)=(a,0).
$$  

\begin{theorem}
Let $E \lra X_s$ be a  Euclidean vector bundle  and let $\eth_{\ed}\in \Diff^1_{\sfc}(X_s;E)$ be a formally self-adjoint $\ed$-elliptic differential operator 
satisfying assumptions \ref{rc.3} and \ref{rc.6}.  Suppose also that $\eth_h$ in \eqref{eq:NffDed} is a family of Euclidean Dirac-type operators. \\
i) 
For any bounded open set $V \subseteq \bbC$ such that
\begin{equation*}
	\bar V \cap \lrpar{ \Spec(D_b) \cup \Spec(N_{\mf}(D_{\ed})) } = \emptyset,
\end{equation*}
there is an $\eps_0(V)>0$ such that $D_{\ed}-\lambda$ is invertible for all $\lambda \in V$ and $\eps < \eps_0(V).$\\
ii) 
For any bounded open set $V \subseteq \bbC$ containing the origin such that
\begin{equation*}
	\bar V \cap \lrpar{ \Spec(D_b) \cup \Spec(N_{\mf}(D_{\ed})) } \subseteq \{ 0 \},
\end{equation*}
the resolvent $(D_{\ed}-\lambda)^{-1}$ extends from $V \cap \{ \Im \lambda \neq 0\}$ to a meromorphic family of bounded operators on $V$ with only simples poles.

More precisely,
there is:\\
a bounded scalar function $f(\eps,\lambda),$ polyhomogeneous in $\eps$ and holomorphic in $\lambda,$ \\
a pair of  families of index sets, $\cJ(\lambda),$ $\cK(\lambda),$ defined and holomorphic in $V,$
satisfying $\inf \mathcal{J}\ge 0,$
$\inf \mathcal{K}\ge 0$ and 
$$
\begin{gathered}
\inf \left. \mathcal{J}\right|_{\lf}>0, \quad \inf \left. \mathcal{J}\right|_{\rf}>h+1, \quad \inf \left. \mathcal{J}\right|_{\fbf}\ge h+1,\quad \inf\left.\mathcal{J}\right|_{\ff}\ge 1, \\
 \inf \left. \mathcal{K}\right|_{\lf}>0, \quad \inf \left. \mathcal{K}\right|_{\rf}>0,
\end{gathered}
$$  
a family of operators,
\begin{equation*}
	\Res_H(\lambda) \in \Psi^{-1,\cJ(\lambda)}_{\ephi}(X_s;E),
\end{equation*}
holomorphic in $V,$\\
and a family of operators, $\Res_M(\lambda),$ such that
\begin{equation}
	f(\lambda, \epsilon)\Res_{M}(\lambda)\in \Psi^{-\infty,\mathcal{K}(\lambda)}_{\eps, b}(X_s;E)
\label{merp.1}\end{equation}
is holomorphic in $V$ and of uniformly finite rank with 
$$
  N_{\mf}( \Res_M(\lambda)) = -\frac{\Pi_{\ker_{L^2}N_{\mf}(D_{\sfc})}}{\lambda} \quad \mbox{and} \quad   N_{\bff}( \Res_M(\lambda))= -\frac{\Pi_{\ker_{L^2}(D_b)}}{\lambda},
 $$
for which we have
\begin{equation*}
	(D_{\ed}-\lambda)^{-1} = \Res_H(\lambda) + \Res_M(\lambda)
\end{equation*}
as meromorphic families on $V.$  

\label{rc.7}\end{theorem}

Before proving this result, let us make few remarks.

\begin{remark}
Restricting to $\bhs{\mf},$ we recover in particular the result of Vaillant \cite[Theorem 3.25]{v},  for $\lambda$ in a sufficiently small neighborhood of 0.  
\end{remark}

\begin{remark}
Using the formula $(\eth_{\ed}-\lambda)^{-1}= \rho^{-\frac{v}2}(D_{\ed}-\lambda)\rho^{\frac{v}2}$ gives a corresponding result for the resolvent of the operator $\eth_{\ed}$.  The effect of the factors $\rho^{\pm\frac{v}2}$ is to shift the index sets at $\bhs{\lf}$ by $-\frac{v}2$ and the index sets at $\bhs{\rf}$ by $+\frac{v}2$.
\end{remark}

\begin{remark}
Using the formula $(D^2_{\ed}-\lambda^2)= (D_{\ed}-\lambda)^{-1}(D_{\ed}+\lambda)^{-1}$ and Theorem~\ref{composition} gives a corresponding result of the resolvent of the operator $D_{\ed}^2$.  For instance, for $V$ as in part (ii) of Theorem~\ref{rc.7} such that $V=-V$, we see that 
$$
    V\ni \lambda \mapsto (D^2_{\ed}-\lambda^2)^{-1}\in \Psi^{-2,\widetilde{\mathcal{J}}(\lambda)}_{\ephi}(X_s;E)
$$
is a meromorphic family such that $f(\lambda,\epsilon)f(-\lambda,\epsilon)(D^2_{\ed}-\lambda^2)^{-1}$ is holomorphic, where $\widetilde{\mathcal{J}}(\lambda)$ is a family of index sets  holomorphic on $V$ such that $\inf \widetilde{\mathcal{J}}\ge 0$ and
$$
\inf \left. \widetilde{\mathcal{J}}\right|_{\lf}>0, \quad \inf \left. \widetilde{\mathcal{J}}\right|_{\rf}>h+1, \quad \inf \left. \widetilde{\mathcal{J}}\right|_{\fbf}\ge h+1,\quad \inf\left.\widetilde{\mathcal{J}}\right|_{\ff}\ge 1.
$$
\end{remark}

\begin{remark}
Using the formula $(\eth^2_{\ed}-\lambda^2)^{-1}= \rho^{-\frac{v}2}(D_{\ed}^2-\lambda^2)^{-1}\rho^{\frac{v}2}$ gives a corresponding result of the resolvent of $\eth_{\ed}^2$.  For instance, for $V$ as in part (ii) of Theorem~\ref{rc.7} such that $V=-V$, we see that 
$$
    V\ni \lambda \mapsto (\eth^2_{\ed}-\lambda^2)^{-1}\in \Psi^{-2,\widehat{\mathcal{J}}(\lambda)}_{\ephi}(X_s;E)
$$
is a meromorphic family such that $f(\lambda,\epsilon)f(-\lambda,\epsilon)(D^2_{\ed}-\lambda^2)^{-1}$ is holomorphic, where $\widehat{\mathcal{J}}(\lambda)$ is a family of index sets holomorphic on $V$ such that $\inf \left. \widehat{\mathcal{J}}\right|_{\mf}\ge 0$ and
$$
\inf \left. \widehat{\mathcal{J}}\right|_{\lf}>-\frac{v}2, \quad \inf \left. \widehat{\mathcal{J}}\right|_{\rf}>h+1+\frac{v}2, \quad \inf \left. \widehat{\mathcal{J}}\right|_{\fbf}\ge h+1,\quad \inf\left.\widehat{\mathcal{J}}\right|_{\ff}\ge 1.
$$
Notice in particular that taking $\eth_{\ed}$ to be the de Rham operator yields a result of the resolvent of the Hodge Laplacian $\Delta:= \eth_{\ed}^2$, and hence for the scalar Laplacian when one restricts to forms of degree zero.

\label{Hodge.1}\end{remark}

\begin{remark}
The construction of the resolvent near the origin as $\eps \to 0$ could be carried out with essentially no change near any other isolated element of the point spectrum of the model operators of $D_{\ed}.$ Thus the same construction yields a description of the resolvent on any bounded open set whose closure does not intersect either the essential spectrum of $D_b$ or the essential spectrum of $N_{\mf}(D_{\ed}).$
\label{rc.8}\end{remark}

The proof of Theorem~\ref{rc.7} proceeds via a detailed construction of the resolvent which involves many steps. We focus on ($ii$) of the theorem and prove ($i$) in the process. For the sake of readability, we first outline the construction and then discuss each step in detail.
\begin{itemize}
\item[\textbf{Step 0}:]  This step is the standard symbolic inversion at the diagonal. It consists of finding a holomorphic family $\bbC \ni \lambda\mapsto Q_0(\lambda)\in \Psi^{-1}_{\ephi}(X_s;E)$ such that 
\begin{equation}
       (\rho D_{\sfc}- \rho \lambda)Q_0(\lambda)= \Id -R_0(\lambda),
\label{rc.9}\end{equation}
with $\bbC \ni \lambda\mapsto R_0(\lambda)\in \Psi^{-\infty}_{\ephi}(X_s;E)$ a holomorphic family.  

\item[\textbf{Step 1}:] We improve the error term in \eqref{rc.9} so that it vanishes to first order at $\bhs{\ff}.$  This is achieved by inverting $P(\lambda)=\rho(D_{\sfc}-\lambda)$ at the face $\bhs{\ff}.$  Now, as an element of  $\Psi^{1}_{\sus(\Ephi NY)}(\bhs{\bs}/(Y\times[-\frac{\pi}2, \frac{\pi}2]);E)$,  the normal $N_{\ff}(P(\lambda))$  decomposes as  a family of Dirac operators in the fibers of $\phi_+:\bhs{\bs}\to Y\times [-\pi/2,\pi/2]$ and a family of Euclidean Dirac operators in the fibers of $\Ephi NY \to Y\times [-\pi/2,\pi/2]$.   We can thus invert each family separately and apply Corollary~\ref{euc.4} to the Euclidean part. This yields a holomorphic family $\widetilde{Q}_1(\lambda)$ such that 
 $$
      (\rho D_{\sfc}- \rho \lambda)\widetilde{Q}_1(\lambda)= \Id -\widetilde{R}_1(\lambda)
 $$
where $\wt R_1$ vanishes to first order at $\bhs{\ff}$ but with generally a non-trivial asymptotic expansion at $\phibf.$  Multiplying by $\rho$ on the right and by $\rho^{-1}$ on the left, this gives
 \begin{equation*}
        (D_{\sfc}-\lambda) Q_1(\lambda)= \Id -R_1(\lambda)
\end{equation*}
 with $Q_1(\lambda)= \widetilde{Q}_1(\lambda)\rho$ and $R_1(\lambda)= \rho^{-1}\widetilde{R}_1 \rho.$  
 
 \item[\textbf{Step 2}:] The leading order term at $\phibf$ of the error from step $1,$ $R_1(\lambda),$ turns out to be of order $h+1;$ once the densities are taken into account this means that $R_1(\lambda)$ is not compact unless this leading order term vanishes. In this step, we find $Q_2(\lambda)$ so that 
 \begin{equation*}
   (D_{\sfc}-\lambda) Q_2(\lambda) = \Id -R_2(\lambda),
\end{equation*} 
and arrange for the part of the coefficient of the leading order term of $R_2(\lambda)$ at $\phibf$ which is in the range of $\Pi_h$ to vanish to infinite order at $\phibf\cap \bhs{\ff}.$ This allows us to view that coefficient as a function on $\fbface$ rather than $\phibf.$
 
 \item[\textbf{Step 3}:] Using Assumption~\ref{rc.6}, we find $Q_3(\lambda)$ with
 \begin{equation*}
          (D_{\sfc}-\lambda) Q_3(\lambda) = \Id -R_3(\lambda),
\end{equation*}
where the term of order $h+1$ of $R_3(\lambda)$ at $\phibf$ is precisely the projection $\Pi_b$ onto the kernel of $D_b.$
 
\item[\textbf{Step 4}:] We show that the restriction of $R_3(\lambda)$ to $\bhs{\mf}$ is compact as an operator on $L^2_b([M;H])$.  In particular, $N_{\mf}(D_{\sfc})$ is Fredholm, which could also have been deduced from the Fredholm criterion of \cite{v}.  Using this fact, we improve the error term in the previous step so that its restriction to $\bhs{\mf}$ is just the projection onto the kernel of $N_{\mf}(D_{\sfc}).$ 

\item[\textbf{Step 5}:] We ensure that  the new error is uniformly of finite rank. Moreover, at the cost of introducing a pole at $\lambda=0,$ we further improve the error so that its restriction to $\bhs{\mf}$ and its term of order $h+1$ at $\phibf$ both vanish.    
 
\item[\textbf{Step 6}:] The error term has now sufficiently nice properties and it can be removed using analytic Fredholm theory, producing the full resolvent.

\item[\textbf{Step 7}:] Analyzing the projection onto the eigenspace of small eigenvalues allow us to be more specific about the meromorphic part of the resolvent.  
 
\end{itemize} 

Notice that if $D_v$ is invertible, then the resolvent construction greatly simplifies.  In particular, in \textbf{Step 1}, there is no need to invert a family of Euclidean Dirac operators, so one can skip \textbf{Step 2} and \textbf{Step 3}.  We now discuss each step in full detail.

\subsection*{Step 0: Symbolic inversion}

\begin{proposition}\label{rc.13}
There exist holomorphic families $\bbC \ni \lambda\mapsto Q_0(\lambda)\in \Psi^{-1}_{\ephi}(X_s;E)$ and $\bbC\ni\lambda\mapsto R_{0}(\lambda)\in \Psi^{-\infty}_{\ephi}(X_s;E)$ so that
$$ \rho(D_{\sfc}-\lambda)Q_0(\lambda)= \Id - R_0(\lambda). $$
\end{proposition}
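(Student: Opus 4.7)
The plan is the standard symbolic parametrix construction for a $\ephi$-elliptic surgery operator, carried out holomorphically in the spectral parameter $\lambda$. Observe first that $\rho D_{\sfc} \in \Diff^1_{\ephi}(X_s;E)$ by hypothesis, while $-\rho\lambda$ is a smooth bundle endomorphism; hence $P(\lambda) := \rho(D_{\sfc}-\lambda)$ is a first-order $\ephi$-differential operator depending polynomially, hence holomorphically, on $\lambda$. Its $\ephi$-principal symbol coincides with that of $\rho D_{\sfc}$ (the added term has strictly lower $\ephi$-order), which is invertible away from the zero section of $\Ephi T^*X_s$ because $\eth_{\sfc}$ is $\ed$-elliptic. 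Thus $P(\lambda)$ is uniformly $\ephi$-elliptic in $\lambda$.

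First I would use surjectivity of the symbol map ${}^{\ephi}\sigma_{-1}$ of \eqref{eq:symbolmap} to produce an initial parametrix $Q_0^{(0)}(\lambda) \in \Psi^{-1}_{\ephi}(X_s;E)$ whose principal symbol equals ${}^{\ephi}\sigma_1(P(\lambda))^{-1}$; choosing any reasonable quantization yields holomorphic dependence on $\lambda$. Applying the composition formula (Theorem~\ref{composition}) together with the multiplicativity of the principal symbol in the small calculus gives
\begin{equation*}
    P(\lambda) Q_0^{(0)}(\lambda) = \Id - R_0^{(1)}(\lambda), \qquad R_0^{(1)}(\lambda) \in \Psi^{-1}_{\ephi}(X_s;E),
\end{equation*}
holomorphically in $\lambda$. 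Then I iterate in the usual way: inductively pick $Q_0^{(k)}(\lambda) \in \Psi^{-k-1}_{\ephi}(X_s;E)$ with ${}^{\ephi}\sigma_{-k-1}(Q_0^{(k)}(\lambda)) = {}^{\ephi}\sigma_1(P(\lambda))^{-1}\,{}^{\ephi}\sigma_{-k}(R_0^{(k)}(\lambda))$, so that the updated error $R_0^{(k+1)}(\lambda) := \Id - P(\lambda)\sum_{j=0}^{k}Q_0^{(j)}(\lambda)$ lies in $\Psi^{-k-2}_{\ephi}(X_s;E)$, again holomorphically in $\lambda$.

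To finish I would asymptotically sum $Q_0(\lambda) \sim \sum_{k\ge 0} Q_0^{(k)}(\lambda)$ in $\Psi^{-1}_{\ephi}(X_s;E)$ via Borel's lemma applied to the conormal expansion at $\Delta_s$, obtaining a holomorphic family for which $R_0(\lambda) := \Id - P(\lambda)Q_0(\lambda) \in \Psi^{-\infty}_{\ephi}(X_s;E)$. Because every step only modifies the kernel at the conormal singularity along $\Delta_s$, the resulting error stays in the small calculus: it has $C^{\infty}$ index sets at $\bhs{\ff}$ and $\bhs{\mf}$ and empty index sets at $\bhs{\lf}, \bhs{\rf}, \bhs{\fbf}$.

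The only subtle point, and it is a mild one at this preliminary stage, is maintaining holomorphy of the asymptotic sum in $\lambda$; this is routine because each symbolic step is polynomial in $\lambda$ of bounded degree, and the parametrized Borel lemma preserves holomorphy on compact subsets of $\bbC$. No serious obstacle arises here: the genuinely delicate work of inverting at $\bhs{\ff}$, controlling the leading behavior at $\phibf$, and matching with the $b$-model $D_b$ and with $N_{\mf}(D_{\sfc})$ is deferred to Steps~1--6.
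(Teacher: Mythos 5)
Your proof is correct and takes essentially the same approach as the paper: symbolic inversion of the $\ephi$-elliptic family $\rho(D_{\sfc}-\lambda)$ at the lifted diagonal, iterative improvement of the error, and a holomorphic asymptotic (Borel) summation, the only cosmetic difference being that the paper iterates by setting $Q_0''(\lambda)=Q_0'R_0'(\lambda)$ (a Neumann-series-style correction) instead of re-inverting the principal symbol at each step. One harmless bookkeeping slip: with your inductive choice of $Q_0^{(k)}(\lambda)$ the updated error $R_0^{(k+1)}(\lambda)$ lies in $\Psi^{-k-1}_{\ephi}(X_s;E)$ rather than $\Psi^{-k-2}_{\ephi}(X_s;E)$, which still drives the orders to $-\infty$ and does not affect the conclusion.
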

\begin{proof}
This is the usual symbolic parametrix construction. We take advantage of the fact that $\rho(D_{\sfc}-\lambda)$ is an elliptic $\phi$-surgery operator to find $Q_0' \in\Psi_{\ephi}^{-1}(X_s;E)$ with principal symbol given by 
${}^{\ephi}\sigma_{-1}(Q_0')= ({}^{\ephi}\sigma_1(\rho (D_{\sfc}-\lambda)))^{-1}=({}^{\ephi}\sigma_1(D_{\sfc}))^{-1},$ so that 
$$
      \rho(D_{\sfc}-\lambda)Q_0'= \Id - R_0'(\lambda),
$$ 
with $R_0'(\lambda)\in \Psi^{-1}_{\ephi}(X_s;E)$ holomorphic in $\lambda.$ Then add $Q_0''(\lambda):= Q_0'R_0'(\lambda)$ to $Q_0',$ so that 
 $$
      \rho(D_{\sfc}-\lambda)(Q_0'+Q_0''(\lambda))= \Id - R_0''(\lambda)
$$
with $R_0''(\lambda) \in \Psi^{-2}_{\ephi}(X_s;E)$ and also holomorphic in $\lambda.$ Proceeding inductively, we find holomorphic families $Q^{(k)}_0(\lambda):= Q_0'R_0^{(k-1)}(\lambda)\in \Psi^{-k}_{\ephi}(X_s;E)$ and $R^{(k)}_0(\lambda)\in \Psi^{-k}_{\ephi}(X_s;E)$ such that 
$$
        \rho(D_{\sfc}-\lambda) \lrpar{ \sum_{j=1}^kQ_0^{(k)}(\lambda) }= \Id - R_0^{(k)}(\lambda).
$$
Taking an asymptotic sum over the $Q^{(k)}_0(\lambda),$ we get the desired $Q_0(\lambda).$ The asymptotic sum can be taken so that $Q_0(\lambda)$ is also holomorphic.
\end{proof}

\subsection{Step 1: Removing the error at $\bhs{\ff}$}
In this step, we improve the remainder term so that its restriction to $\bhs{\ff}$ is trivial. This involves a careful analysis of the mapping properties of the Euclidean Dirac operator which we carry out in Appendix \ref{euc.0}. 

\begin{proposition}
There exist families of operators, defined and holomorphic for $\lambda \in \bbC,$ $\widetilde{Q}_1(\lambda)\in \Psi^{-1,\widetilde{\mathcal{Q}}_1}_{\ephi}(X_s;E)$ and $\widetilde{R}_1(\lambda)\in \Psi^{-\infty, \widetilde{\mathcal{R}}_1}_{\ephi}(X_s;E)$ such that 
\begin{equation}
      \rho(D_{\sfc}-\lambda)\widetilde{Q}_1(\lambda)= \Id -\widetilde{R}_{1}(\lambda).
\label{rc.14a}\end{equation}
Here the index families $\widetilde{\mathcal{Q}}_1$ and $\widetilde{\mathcal{R}}_1$ are empty at all boundary faces, except at $\bhs{\mf}$ where they are both $\bbN_0,$ and at $\bhs{\ff}$ and $\phibf,$ where we have
$$
     \left. \widetilde{\mathcal{Q}}_1\right|_{\ff}= \bbN_0, \quad  \left. \widetilde{\mathcal{Q}}_1\right|_{\fbf}= [\circ](J_{h+1}), \quad \left. \widetilde{\mathcal{R}}_1\right|_{\ff}= \bbN_0+1,  \quad   \left. \widetilde{\mathcal{R}}_1\right|_{\fbf}\subset (J_{h+1}+1),
     $$
where $J_{h+1}$ is the index set of Corollary~\ref{euc.4}.   Moreover, $\inf \left.  \widetilde{\mathcal{Q}}_1\right|_{\fbf}= (h,0)$  and   $\inf \left.  \widetilde{\mathcal{R}}_1\right|_{\fbf}= (h+1,0).$  \label{rc.14}\end{proposition}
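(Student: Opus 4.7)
The plan is to correct $Q_0(\lambda)$ from Proposition~\ref{rc.13} by inverting the normal operator of $\rho(D_{\sfc}-\lambda)$ at $\bhs{\ff}$. Since $\rho$ vanishes on $\bhs{\ff}$, this normal operator is $D_v + \eth_h$ by \eqref{eq:NffDed}, independent of $\lambda$; on each fiber $Z \times \bbR^{h+1}$ of the fibration $\bhs{\ff} \to Y\times[-\pi/2,\pi/2]$, $D_v$ acts purely in the $Z$-directions and $\eth_h$ purely in the Euclidean directions, and both are formally self-adjoint Dirac-type operators.

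To invert $D_v + \eth_h$ on the leading term $r_0 := N_{\ff}(R_0(\lambda))$, I would split via the fibrewise projection $\Pi_h$ onto $\ker D_v$. On the range of $\Id - \Pi_h$, separation of variables in an eigenbasis of $D_v$ reduces the problem to a family of massive Euclidean Dirac equations $(\mu + \eth_h)u = f$ with $\mu$ in the nonzero spectrum of $D_v$, bounded away from zero by Assumption~\ref{rc.3} and compactness of the base; each such equation is solved by a fundamental solution with exponential decay at infinity, and assembling these fibrewise yields a contribution whose Schwartz kernel vanishes to infinite order at $\bhs{\fbf}$. On the range of $\Pi_h$, the equation reduces to the massless Euclidean Dirac equation $\eth_h u = \Pi_h r_0$ on $\bbR^{h+1}$, and Corollary~\ref{euc.4} provides a fundamental solution whose Schwartz kernel is polyhomogeneous on the radial compactification of $\bbR^{h+1}$ with index set $J_{h+1}$ and leading order $h$.

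Combining these contributions produces a family of suspended operators $q_1^{\ff}(\lambda)$ on $\bhs{\ff}$ solving $(D_v + \eth_h)\, q_1^{\ff} = r_0$, with index set $\bbN_0$ at $\bhs{\ff}$ and $[\circ](J_{h+1})$ at $\bhs{\fbf}$ (minimum $(h,0)$), and holomorphic in $\lambda$ since $r_0$ is. I would then extend $q_1^{\ff}(\lambda)$ smoothly off $\bhs{\ff}$ to $q_1(\lambda)\in\Psi^{-\infty,\widetilde{\mathcal{Q}}_1}_{\ephi}(X_s;E)$ by Borel summation of the Taylor expansion at $\bhs{\ff}$, preserving holomorphy in $\lambda$. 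Setting $\widetilde{Q}_1 := Q_0 + q_1$, Proposition~\ref{comp.1} guarantees that $\widetilde{R}_1 := \Id - \rho(D_{\sfc}-\lambda)\widetilde{Q}_1$ vanishes to first order at $\bhs{\ff}$, and Theorem~\ref{composition} together with the fact that $\rho(D_{\sfc}-\lambda)$ is of first order at $\bhs{\fbf}$ then forces $\widetilde{\mathcal{R}}_1|_{\fbf}\subset J_{h+1}+1$ with infimum $(h+1,0)$. At $\bhs{\mf}$ nothing has changed and both index sets remain $\bbN_0$.

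The main obstacle is the precise description of the Euclidean Dirac inverse: since $\eth_h$ is not $L^2$-invertible on $\bbR^{h+1}$, its fundamental solution is genuinely polyhomogeneous at infinity rather than Schwartz, and it is this behavior that produces the index set $J_{h+1}$ at $\bhs{\fbf}$ with leading order $h$. This is precisely the content of Corollary~\ref{euc.4}, proved in Appendix~\ref{euc.0}; once it is in hand, the present step reduces to fibrewise normal-operator inversion along the $\Pi_h$/$(\Id-\Pi_h)$ splitting and careful bookkeeping of index sets through Theorem~\ref{composition}, together with standard Borel-summation and holomorphy-preservation arguments.
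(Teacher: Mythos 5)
Your strategy is the same as the paper's: correct $Q_0(\lambda)$ by inverting $N_{\ff}(\rho(D_{\sfc}-\lambda))=D_v+\eth_h$, splitting with $\Pi_h$, using Corollary~\ref{euc.4} on the harmonic range, and extending off $\bhs{\ff}$. However, two of your steps do not hold up as written. On the range of $\Id-\Pi_h$, the reduction to ``massive Euclidean Dirac equations $(\mu+\eth_h)u=f$'' with exponentially decaying fundamental solutions is incorrect: $\eth_h$ is a self-adjoint Euclidean Dirac operator, so its spectrum is all of $\bbR$ and $\mu+\eth_h$ is never invertible for real $\mu\neq 0$ (its square against $-\mu+\eth_h$ is the Helmholtz operator $\eth_h^2-\mu^2$); moreover $\eth_h$ anticommutes with $D_v$, so it does not even preserve the eigenspaces of $D_v$. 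What is true, and what the paper uses, is that $(D_v+\eth_h)^2=D_v^2+\eth_h^2\geq \mu_0^2>0$ on $(\ker D_v)^{\perp}$, so that $D_v+\eth_h$ restricted there is an invertible family of suspended operators (an invertible element of the $\phi$-calculus of \cite{mame2}), whence the rapid vanishing of that contribution at $\phibf$.

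The more serious gap is your justification of $\inf\widetilde{\mathcal{R}}_1|_{\fbf}=(h+1,0)$. This is not ``forced by Theorem~\ref{composition} together with the fact that $\rho(D_{\sfc}-\lambda)$ is of first order at $\bhs{\fbf}$'': the vertical ($\partial_z$) part of $\rho(D_{\sfc}-\lambda)$ does not vanish at $\phibf$, and the composition bookkeeping applied to a correction whose leading order at $\phibf$ is $h$ only yields vanishing of the remainder to order $h$ there, not $h+1$. The extra order is gained precisely because each term of the expansion of the correction at $\phibf$ is arranged to take values in the range of $\Pi_h$: the leading action of $\rho(D_{\sfc}-\lambda)$ at $\phibf$ is through $D_v$, which annihilates such terms, so $\rho(D_{\sfc}-\lambda)\widetilde{Q}_1'(\lambda)$ starts at order $\rho_{\fbf}^{h+1}$. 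This is exactly what the $[\circ]$ notation in the statement encodes; you record the $[\circ]$ structure but never invoke it at the one point where it is needed, and without it your argument proves only $\inf\widetilde{\mathcal{R}}_1|_{\fbf}\geq (h,0)$, which is insufficient for the subsequent steps of the resolvent construction.
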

Here the notation $[\circ]$ means that the leading order term $A$ in the expansion at  $\fbf$ is such that $\Pi_hA\Pi_h=A$.   

\begin{proof}
We need to find $\widetilde{Q}_1'(\lambda)$ such that 
\begin{equation}
         N_{\ff}(\rho(D_{\sfc}-\lambda)\widetilde{Q}_1'(\lambda))= N_{\ff}(R_0(\lambda)),
\label{rc.15}\end{equation}
for then it suffices to take $\widetilde{Q}_1(\lambda)= Q_0(\lambda)+ \widetilde{Q}_1'(\lambda).$  To solve \eqref{rc.15}, we can decompose $N_{\ff}(R_0(\lambda))$ using the fiberwise projection $\Pi_h$ onto $\ker D_v$ on the left (note that $\Pi_h$ is an element of $\Psi^{0}_{\sus(\Ephi NY)}(\bhs{\bs}/(Y\times[-\frac{\pi}2, \frac{\pi}2]);E)$ trivial in the Euclidean directions):
$$
    N_{\ff}(R_0(\lambda))= \Pi_h(N_{\ff}(R_0(\lambda)) + (\Id-\Pi_h)(N_{\ff}(R_0(\lambda)).
$$
Now recall from \eqref{eq:NffDed} that we have the decomposition
\begin{equation}
    N_{\ff}(\rho(D_{\sfc}-\lambda)) = N_{\ff}(\rho(D_{\sfc}))= D_v+ \eth_{h}.
\label{decom.1}\end{equation}
Here $D_v$ is a vertical operator in the fibers and $\eth_{h}$ is a family of Euclidean Dirac operators on the fibers of the vector bundle $ \Ephi NY\to Y\times [-\pi/2,\pi/2].$ 
On the range of $(\Id - \Pi_h)$ the operator $D_v + \eth_{h}$ is, on each fiber, an invertible suspended operator in the sense of \cite{mel95, mame2} and hence has an inverse in $\Psi^{-1}_{\sus(\Ephi NY)}(\bhs{\bs}/(Y\times[-\frac{\pi}2, \frac{\pi}2]);E)$. On the range of $\Pi_h,$ we can apply Corollary~\ref{euc.4} to invert $\eth_{h}.$ In particular, it suffices to take $\widetilde{Q}_1'(\lambda)$ such that
$$
    N_{\ff}(\widetilde{Q}_1'(\lambda))
    = (\eth_{h})^{-1}\Pi_h(N_{\ff}(R_0(\lambda)))+ (D_v+\eth_{h})_{\perp}^{-1}(\Id-\Pi_h)  (N_{\ff}(R_0(\lambda))).
$$
Note from Corollary~\ref{euc.4} that the image of $(\eth_{h})^{-1}$ has an expansion at infinity in the Euclidean coordinates, with index family $J_{h+1}.$ This expansion corresponds to a nontrivial expansion of $\widetilde{Q}'_1(\lambda)$ at $\phibf.$ In particular, we see immediately that $\widetilde{Q}'_1(\lambda)\in \Psi^{-\infty, \widetilde{\mathcal{Q}}_1'}_{\ephi}(X_s;E),$ with $\widetilde{\mathcal{Q}}'_1$ the index family specified by the proposition. Then $\widetilde{Q}_1(\lambda)= Q_0(\lambda)+ \widetilde{Q}_1'(\lambda)$ will be as claimed with
$$
        \widetilde{R}_1(\lambda)= R_0(\lambda)-\rho(D_{\sfc}-\lambda)\widetilde{Q}_1'(\lambda).
$$
Since $\widetilde{Q}_1'(\lambda)$ vanishes at order $\rho_{\fbf}^h$ at $\phibf,$ we know $\widetilde{R}_1$ will vanish to at least at this order at $\phibf.$  However, since we have chosen the top order term in the asymptotic expansion of $\widetilde{Q}'_1(\lambda)$ at $\phibf$ so that it maps into the range of $\Pi_h,$ this means that $\rho(D_{\sfc}-\lambda)\widetilde{Q}_1'(\lambda)$ and therefore $\widetilde{R}_1(\lambda)$ vanish at order $\rho^{h+1}_{\fbf}$ at $\phibf.$  
\end{proof}

By multiplying by $\rho^{-1}$ on the left and by $\rho$ on the right in \eqref{rc.14a}, and by setting $Q_1(\lambda):= \widetilde{Q}_1(\lambda)\rho$ and $R_1(\lambda)=\rho^{-1}\widetilde{R}_1(\lambda)\rho,$ we obtain a formulation of the previous proposition directly in terms of $D_{\sfc}-\lambda$:

\begin{corollary}\label{rc.16}
There are holomorphic families parametrized by $\lambda \in \bbC,$ 
\begin{equation*}
	Q_1(\lambda)\in \Psi^{-1,\mathcal{Q}_1}_{\ephi}(X_s;E)\Mand 
	R_1(\lambda)\in \Psi^{-\infty, \mathcal{R}_1}_{\ephi}(X_s;E)
\end{equation*}
such that 
\begin{equation}
      (D_{\sfc}-\lambda)Q_1(\lambda)= \Id -R_{1}(\lambda).
\label{rc.16a}\end{equation}
The Schwartz kernels of $Q_1$ and $R_1$ vanish to infinite order at most boundary hypersurfaces, the exceptions being $\bhs{\mf}$ where their index sets are both $\bbN_0,$ and at  $\bhs{\ff}$ and $\phibf,$ where we have
\begin{equation*}
	     \left. \mathcal{Q}_1\right|_{\bhs{\ff}}= \bbN_0+1, \quad  
	     \left. \mathcal{Q}_1\right|_{\fbf}= [\circ](J_{h+1}+1), \quad 
	     \left. \mathcal{R}_1\right|_{\bhs{\ff}}= \bbN_0+1,  \quad   
	     \left. \mathcal{R}_1\right|_{\fbf}=(J_{h+1}+1).
\end{equation*}
In particular, we have that $\inf \left.  \mathcal{Q}_1\right|_{\fbf}=\inf \left.  \mathcal{R}_1\right|_{\fbf}= (h+1,0).$ 
\end{corollary}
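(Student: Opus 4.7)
The plan is to deduce the corollary directly from Proposition~\ref{rc.14} by algebraic manipulation together with careful tracking of index sets. First I would multiply \eqref{rc.14a} on the left by $\rho^{-1}$ and on the right by $\rho$; since $\rho^{-1}\Id\rho = \Id$, this produces
\[
(D_{\sfc}-\lambda)\bigl(\widetilde{Q}_1(\lambda)\rho\bigr) = \Id - \rho^{-1}\widetilde{R}_1(\lambda)\rho,
\]
so that \eqref{rc.16a} holds with the natural definitions $Q_1(\lambda) := \widetilde{Q}_1(\lambda)\rho$ and $R_1(\lambda) := \rho^{-1}\widetilde{R}_1(\lambda)\rho$. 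Holomorphy in $\lambda$ is automatic since the multipliers $\rho^{\pm 1}$ are $\lambda$-independent.

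The substance of the proof will be translating the index-set data of $\widetilde{Q}_1, \widetilde{R}_1$ into the claimed index-set data for $Q_1, R_1$. To do this I would denote by $\rho_L$ and $\rho_R$ the lifts of $\rho$ from the left and right factor of $X_s^2$ respectively, so that the Schwartz kernels satisfy $K_{Q_1} = K_{\widetilde{Q}_1}\cdot\rho_R$ and $K_{R_1} = \rho_L^{-1}\,K_{\widetilde{R}_1}\,\rho_R$. Since $\rho$ is a global defining function for $\bhs{\bs}$ on $X_s$, inspection of the local coordinates \eqref{eq.lc1}--\eqref{eq.lc2} (together with the formula $\rho = \rho'\sqrt{(\rho')^2\hat{s}^2 + 2\rho'\hat{s}\sin\theta' + 1}$ derived in the proof of Proposition~\ref{liftedvfs}, and the symmetry $x\leftrightarrow x'$ relating $\rho_L$ to $\rho_R$) shows that at both $\bhs{\ff}$ and $\phibf$ each of $\rho_L, \rho_R$ is a smooth nonvanishing multiple of a boundary defining function for that face; at the interior of $\bhs{\mf}$ both are smooth and nonvanishing; and at $\bhs{\lf}, \bhs{\rf}$ the original index sets are empty, so multiplication by $\rho^{\pm 1}$ still produces infinite-order vanishing.

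The index-set shifts then follow mechanically. Right multiplication by $\rho$ raises the index by $1$ at $\bhs{\ff}$ and $\phibf$, giving $\mathcal{Q}_1|_{\ff} = \bbN_0+1$ and $\mathcal{Q}_1|_{\fbf} = [\circ](J_{h+1}+1)$; the $[\circ]$-condition on the leading coefficient at $\phibf$ is preserved, since multiplication by a smooth nonvanishing scalar does not affect whether a section lies in the range of $\Pi_h$. For $R_1$, the contributions of $\rho_L^{-1}$ and $\rho_R$ cancel at both $\bhs{\ff}$ and $\phibf$, yielding $\mathcal{R}_1|_{\ff} = \bbN_0+1$ and $\mathcal{R}_1|_{\fbf} \subseteq (J_{h+1}+1)$. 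The infima $\inf\mathcal{Q}_1|_{\fbf} = \inf\mathcal{R}_1|_{\fbf} = (h+1,0)$ drop out of the analogous infima $\inf\widetilde{\mathcal{Q}}_1|_{\fbf}=(h,0)$ and $\inf\widetilde{\mathcal{R}}_1|_{\fbf}=(h+1,0)$ from Proposition~\ref{rc.14} after applying the shifts.

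No step is a real obstacle; this corollary is essentially a rewriting of Proposition~\ref{rc.14} in a form adapted directly to $D_{\sfc}-\lambda$ rather than $\rho(D_{\sfc}-\lambda)$. The one item that requires any thought is verifying that $\rho_L, \rho_R$ remain nondegenerate defining functions at $\bhs{\ff}$, which arises from $\phibf$ by an additional blow-up, but the explicit formula above settles this at once.
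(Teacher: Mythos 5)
Your proposal is correct and follows essentially the same route as the paper: the corollary is obtained from Proposition~\ref{rc.14} precisely by setting $Q_1(\lambda)=\widetilde{Q}_1(\lambda)\rho$ and $R_1(\lambda)=\rho^{-1}\widetilde{R}_1(\lambda)\rho$ after composing \eqref{rc.14a} with $\rho^{-1}$ on the left and $\rho$ on the right. Your extra bookkeeping with the left and right lifts of $\rho$ (nonvanishing multiples of boundary defining functions at $\bhs{\ff}$ and $\phibf$, nonvanishing at the interior of $\bhs{\mf}$) simply spells out the index-set shifts that the paper leaves implicit, including the preservation of the $[\circ]$ condition and the cancellation for $R_1$.
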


\subsection*{Step 2: Error at $\phibf,$ preliminary step}

The error term $R_2(\lambda)$ is not good enough to apply analytic Fredholm theory yet. Indeed, since it has a term of order $h+1$ at $\phibf,$ hence possibly at $\phibf\cap\bhs{\mf},$ it may not even be compact as an operator on $L^2_b(X_s;E)$ uniformly down to $\epsilon=0.$ The reason is that vanishing at order $h+1$ in terms of $\phi$-densities corresponds to vanishing at order zero in terms of $b$-densities, and $b$-operators of this form are compact only if their restriction to the front face is zero. Therefore, our next objective is to remove this term of order $\rho^{h+1}_{\fbf}.$  In this step, we first remove the expansion of this term at $\bhs{\ff}\cap \phibf$ (technically, just the part which is in the range of $\Pi_h$), which enables us to regard the removal of the remaining term as solving a $b$-problem on the face $\fbface.$

\begin{proposition}
There exist families of operators, defined and holomorphic for $\lambda\in \bbC,$ $Q_2(\lambda)\in \Psi^{-1,\mathcal{Q}_2}_{\ephi}(X_s;E)$ and $R_2(\lambda)\in \Psi^{-\infty, \mathcal{R}_2}_{\ephi}(X_s;E)$ such that 
\begin{equation}
      (D_{\sfc}-\lambda)Q_2(\lambda)= \Id -R_{2}(\lambda).
\label{rc.16b}\end{equation}
The Schwartz kernels of $Q_2$ and $R_2$ vanish to infinite order at most boundary hypersurfaces, the exceptions being $\bhs{\mf}$ where their index sets are both $\bbN_0,$ and at  $\bhs{\ff}$ and $\phibf,$ where we have
$$
     \left. \mathcal{Q}_2\right|_{\ff}= G_{h+1}\cup(\bbN_0+1), \quad  \left. \mathcal{Q}_2\right|_{\fbf}= [\circ](J_{h+1}+1), \quad \left. \mathcal{R}_2\right|_{\ff}= \bbN_0+1,  \quad   \left. \mathcal{R}_2\right|_{\fbf}= (J_{h+1}+1), 
     $$
where $G_{h+1}$ is the index set of Proposition~\ref{bu.5} and $J_{h+1}$ is the index set of Corollary~\ref{euc.4}.   In particular $\inf \left.  \mathcal{Q}_2\right|_{\fbf}=\inf \left.  \mathcal{R}_2\right|_{\fbf}=(h+1,0).$  Finally and most importantly, if $A$ is the term of order $\rho_{\fbf}^{h+1}$ at $\phibf$ of $R_2(\lambda),$ then $\Pi_hA$ vanishes to infinite order at the boundary face $\phibf\cap \bhs{\ff}.$    
\label{rc.17}\end{proposition}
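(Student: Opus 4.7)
The plan is to iteratively cancel the asymptotic expansion of $\Pi_h A$ at the corner $\phibf \cap \bhs{\ff}$, where $A$ denotes the leading coefficient (at order $\rho_{\fbf}^{h+1}$) of the error $R_1(\lambda)$ produced in Corollary~\ref{rc.16}. The corrections will be built using Corollary~\ref{euc.4} applied to the Euclidean Dirac family $\eth_h$ appearing in the decomposition~\eqref{decom.1} of the normal operator at $\bhs{\ff}$. Nothing in Step~1 forced the $\Pi_h$-component of $A$ to vanish at the corner; Step~2 remedies this by a second, Euclidean-based inversion concentrated near $\phibf \cap \bhs{\ff}$.

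Concretely, I would first expand $\Pi_h A$ at $\phibf \cap \bhs{\ff}$ in powers of a boundary defining function $\rho_{\ff}$ for $\bhs{\ff}$, writing $\Pi_h A \sim \sum_{k \geq 0} a_k\, \rho_{\ff}^k$ with each $a_k$ smooth on $\phibf \cap \bhs{\ff}$ and valued in the range of $\Pi_h$. This expansion is well-defined because $R_1$ is polyhomogeneous with index set $\bbN_0+1$ at $\bhs{\ff}$ and $J_{h+1}+1$ at $\phibf$. I would then inductively construct corrections $q_k \in \Psi^{-\infty}_{\ephi}(X_s;E)$, supported in a neighborhood of $\phibf \cap \bhs{\ff}$, whose leading normal operator at $\bhs{\ff}$ is concentrated in the range of $\Pi_h$, chosen so that the contribution of $(D_{\sfc}-\lambda)q_k$ to the coefficient of $\rho_{\fbf}^{h+1}$ at $\phibf$ exactly cancels $a_k \rho_{\ff}^k$, modulo higher order in $\rho_{\ff}$.

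This inductive step is where the main work lies. On the range of $\Pi_h$, the model of $\rho(D_{\sfc}-\lambda)$ near the corner reduces to the translation-invariant family $\eth_h$ of Euclidean Dirac operators on the fibers of $\Ephi NY$, and Corollary~\ref{euc.4} provides an inverse whose output is polyhomogeneous with index set $J_{h+1}$ in the Euclidean variables. Transferred back to the surgery double space $X_s^2$, this yields a correction $q_k$ with contributions $G_{h+1}$ (from Proposition~\ref{bu.5}) at $\bhs{\ff}$ and $J_{h+1}+1$ at $\phibf$ whose leading $\phibf$-coefficient is automatically in the range of $\Pi_h$. Borel's lemma in the appropriate polyhomogeneous category then gives an asymptotic sum $\tilde Q(\lambda) \sim \sum_k q_k$, and I would set $Q_2(\lambda) = Q_1(\lambda) + \tilde Q(\lambda)$ and $R_2(\lambda) = R_1(\lambda) - (D_{\sfc}-\lambda)\tilde Q(\lambda)$. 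Holomorphy in $\lambda$ is preserved throughout since each $a_k$ is holomorphic in $\lambda$ and the Euclidean inversion itself is $\lambda$-independent.

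The main obstacle I anticipate is the book-keeping needed to confirm that the index sets come out as stated. Specifically, one must check that $Q_2$ acquires the $G_{h+1}$ component at $\bhs{\ff}$ while the leading order $(h+1,0)$ at $\phibf$ is preserved; that the $[\circ]$ condition at $\phibf$ (the requirement that the leading $\phibf$-coefficient lie in the range of $\Pi_h$) survives the asymptotic summation since each $q_k$ satisfies it individually; and that the error $R_2$ retains index set $\bbN_0+1$ at $\bhs{\ff}$ because the corrections are designed to cancel only the $\Pi_h$-part of the Taylor expansion of $A$ at $\phibf \cap \bhs{\ff}$, with the action of $D_{\sfc}-\lambda$ on each $q_k$ tracked via a normal-operator computation in the spirit of Proposition~\ref{comp.1}.
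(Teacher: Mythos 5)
Your overall architecture is the same as the paper's: set $Q_2(\lambda)=Q_1(\lambda)+\widetilde Q(\lambda)$ with a correction concentrated near $\phibf\cap\bhs{\ff}$, chosen so that the $\Pi_h$-part of the order-$\rho_{\fbf}^{h+1}$ coefficient of the new error at $\phibf$ vanishes to infinite order at that corner, then record the index sets. The gap is in the mechanism you use to produce the correction. Restricting to the coefficient of $\rho_{\fbf}^{h+1}$ at $\phibf$, the equation one must solve is $(D_b-\lambda)q^{\circ}(\lambda)=r_1^{\circ}(\lambda)$ modulo terms vanishing to infinite order at $\phibf\cap\bhs{\ff}$, where $D_b-\lambda$ is lifted from the left to $\phibf$ and $r_1^{\circ}$ is the $\Pi_h$-part of the coefficient of $R_1$. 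Near that corner, $\phibf$ is the blow-up of $\fbface$ along the fiber diagonal $D_{fib}$, so the corner is the front face of that blow-up, i.e.\ the \emph{small} end of the normal bundle of $D_{fib}$; the lifted $D_b-\lambda$ is there a Dirac-type family near a blown-up point, and the solvability statement needed is exactly Proposition~\ref{bu.5}, which is what produces the index set $G_{h+1}$ with $\inf G_{h+1}=(1,0)$. You instead invert the translation-invariant Euclidean family $\eth_h$ on the fibers of $\Ephi NY$ via Corollary~\ref{euc.4}. That is the model of $\rho(D_{\sfc}-\lambda)$ at $\bhs{\ff}$, i.e.\ the tool already used and exhausted in Step 1; its inverse governs behavior at \emph{infinity} of the Euclidean fibers (index set $J_{h+1}$) and cannot remove the Taylor expansion of the $\phibf$-coefficient at the corner — it is precisely the Step-1 use of Corollary~\ref{euc.4} that leaves this corner expansion behind. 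You mention $G_{h+1}$ ``from Proposition~\ref{bu.5}'' only parenthetically, but in your construction that proposition does no work, so your inductive step as written does not produce the required cancellation.

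A telltale symptom is your remark that the inversion is $\lambda$-independent: the correct coefficient equation involves $D_b-\lambda$, in which $\lambda$ genuinely enters, and Proposition~\ref{bu.5} is formulated precisely for holomorphic families $(D-\lambda)u_\lambda-f_\lambda$, yielding a holomorphic solution with index set $G_{h+1}$ at the front face. Once $q^{\circ}(\lambda)$ is obtained this way on $\phibf$ and extended smoothly off it, no term-by-term Borel summation is needed at this step (Proposition~\ref{bu.5} removes the whole Taylor series at the corner at once), and the stated index sets for $Q_2$ and $R_2$ follow essentially as you describe; one also has to treat the $(\Id-\Pi_h)$-part of the coefficient separately (or note, as the paper does, that only $\Pi_h A$ is claimed to vanish at the corner). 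So the book-keeping you worried about is not the issue; the missing ingredient is the identification of the correct model problem — the lifted $(D_b-\lambda)$ near the blown-up fiber diagonal, solved by Proposition~\ref{bu.5} — in place of the $\eth_h$/Corollary~\ref{euc.4} inversion.
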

\begin{proof}
Writing $Q_2(\lambda)= Q_1(\lambda)+ \widetilde{Q}_2(\lambda),$ where $Q_1(\lambda)$ is given by Corollary~\ref{rc.16}, we need to find a holomorphic family $\widetilde{Q}_2(\lambda)$ such that the restriction of
\begin{equation}
          (D_{\sfc}-\lambda) \widetilde{Q}_2(\lambda)-R_1(\lambda)
\label{rc.18a}\end{equation}
at order $h+1$ at $\phibf$ vanishes to infinite order at $\phibf\cap \bhs{\ff}.$  Let $r_1^{\circ}(\lambda)$ denote the part of the restriction (at order $h+1$) to $\phibf$ of $R_1(\lambda)$ whose image is in the range of $\Pi_h.$   We need to find $q^{\circ}_2(\lambda)$ such that  
\begin{equation}
                (D_b-\lambda)(q_2^{\circ}(\lambda))- r_1^{\circ}(\lambda)
\label{rc.18}\end{equation}
vanishes to infinite order at $\phibf\cap \bhs{\ff}.$

Using Proposition~\ref{bu.5} we can find such a
$q_2^{\circ}(\lambda).$ Indeed, the face $\phibf$ is obtained from the face $\fbface$ by blowing up $D_{fib}.$ If $F$ is the normal bundle of $D_{fib}$ in $\fbface,$ then in $\phibf,$ the boundary face $\phibf\cap \bhs{\ff}$ has a tubular neighborhood modelled on $\widetilde{F},$ the blow-up of $F$ at the zero section.  On the other hand, the lift from the left of $(D_b-\lambda)$ corresponds near $D_{fib}$ to a family of operators as in Proposition~\ref{bu.5} for the vector bundle $F.$ 
Therefore, we can find a holomorphic family
$q^{\circ}_2(\lambda)$ on $\phibf$  with index set $G_{h+1}$ at $\phibf\cap\bhs{\ff},$ index set $\bbN_0$ at $\bhs{\mf}\cap \phibf$ and vanishing to infinite order at all other boundary hypersurfaces (since we may assume that $q^{\circ}_2(\lambda)$ is identically zero away from $\phibf\cap\bhs{\ff}$), so that \eqref{rc.18} vanishes to infinite order at $\phibf\cap\bhs{\ff}.$

Extending $q^{\circ}_2 (\lambda)$ smoothly to all of $X^2_s,$ we get a holomorphic family $\widetilde{Q}^{\circ}_2(\lambda)$ with index set $\bbN_0$ at $\bhs{\mf},$  $(\bbN_0+h+1)$ at $\phibf,$ $G_{h+1}$ at $\bhs{\ff}$ and vanishing to infinite order at all other boundary hypersurfaces such that the restriction 
$$
(D_{\sfc}-\lambda) \widetilde{Q}^{\circ}_2(\lambda)+R_1(\lambda)$$
 at order $h+1$ at $\phibf$ vanishes to infinite order at $\bhs{\ff}\cap \phibf$ after we apply $\Pi_h$ on the left.  More precisely, if $T(\lambda)$ denotes this restriction, then $\Pi_h T(\lambda)$ vanishes to infinite order at $\bhs{\ff}\cap\phibf.$
 
 \end{proof}

\subsection*{Step 3: Removing the error at $\phibf$}
   
The next step is to get rid of the term of order $h+1$ of $R_2(\lambda)$ at $\phibf.$  This is where our assumption that $D_b$ is Fredholm will be used.  Recall from Lemma~\ref{rc.5} that $D_b\in\Diff^1(Y\times [-\pi/2,\pi/2];\ker D_v)$ is formally self-adjoint. By Assumption~\ref{rc.6}, zero is not an indicial root of the indicial family of $D_b.$  Consequently, by \cite[Proposition~5.64]{MelroseAPS}, \cite[Theorem 4.20]{maz91}, $D_b$ has a generalized inverse which is polyhomogeneous conormal on the $b$-double space corresponding to $Y\times[-\pi/2,\pi/2],$ and hence on the corresponding overblown $b$-double space, which we have identified with $\bhs{bf}.$ More specifically, let $G_b$ be the generalized inverse of $D_b$ on $L^2_b,$ so that 
\begin{equation*}
	D_bG_b = G_bD_b = \Id - \Pi_b
\end{equation*}
with $\Pi_b$ the orthogonal projection onto the $L^2_b$ null space of $D_b.$
If $\wt \cE$ is the smallest index set satisfying
\begin{equation*}
	\{ (z,p) \in \bbC \times \bbN_0 : (iz,p) \in \Spec_b(D_b), \quad  \Re z>0 \} \subseteq \wt\cE
\end{equation*} 
and $\cE = \wt\cE \bar\cup \wt \cE$
then we have that 
\begin{equation*}
\begin{gathered}
	\Pi_b \in \Psi^{-\infty, (\wt\cE, \wt\cE)}(Y \times [-\pi/2, \pi/2];\ker D_v), \\
	G_b \in 
	\Psi_b^{-1, (\cE, \cE, \bbN_0)}(Y \times [-\pi/2, \pi/2];\ker D_v)
	+\Psi^{-\infty, (\cE, \cE)}(Y \times [-\pi/2, \pi/2];\ker D_v).
\end{gathered}
\end{equation*}
Here, $\Psi^{-\infty, (\cE, \cE )}(W;\ker D_v)$  with $W=Y\times [\pi/2,\pi/2]$ is the space of operators with polyhomogeneous Schwartz kernel in $\cA^{ (\cE, \cE )}(W\times W; \ker D_v\otimes (\ker D_v)^*\otimes \pi_R^*{}\Omega_b(W))$ where \linebreak $\pi_R:W\times W\to W$ is the projection on the right factor.


More generally, for $\lambda\ne 0$ sufficiently close to zero or with non-vanishing imaginary part,  $D_b-\lambda$ is invertible with inverse
\begin{equation}
	(D_b-\lambda)^{-1}\in 
	\Psi_b^{-1, (\cE,(\lambda) \cE(\lambda), \bbN_0)}(Y \times [-\pi/2, \pi/2];\ker D_v).
\label{rc.19}\end{equation}
where $\cE(\lambda)$ is the index family, holomorphic in $\lambda,$ obtained by replacing $\Spec_b(D_b)$ by $\Spec_b(D_b-\lambda)$ in the definition of $\wt\cE$ above.
The family \eqref{rc.19} is meromorphic in a small neighborhood of zero as, e.g., a family of bounded operators acting on $L^2_b(Y\times [-\pi/2,\pi/2]; \ker D_{v}),$ with a pole of order one at $\lambda=0$ with residue given by $\Pi_b.$ 

We now use this knowledge about $(D_b-\lambda)^{-1}$ to remove the term of order $h+1$ at $\phibf.$

\begin{proposition}
Let $\cU$ be a bounded neighborhood of $0$ in $\bbC$ such that $\Spec(D_b)\cap \cU\subseteq \{ 0\}.$
There are index families $\mathcal{Q}_3(\lambda)$ and $\mathcal{R}_3(\lambda),$ defined and holomorphic in $\cU,$
and families of operators
$Q_3(\lambda)\in \Psi^{-1,\mathcal{Q}_3(\lambda)}_{\ephi}(X_s;E)$ and 
$R_3(\lambda)\in \Psi^{-\infty, \mathcal{R}_3(\lambda)}_{\ephi}(X_s;E),$ defined in $V_{\delta}$ and holomorphic as operators acting on $L^2_b(X_s;E),$
such that 
\begin{equation}
      (D_{\sfc}-\lambda)Q_3(\lambda)= \Id -R_{3}(\lambda).
\label{rc.16c}\end{equation}
Moreover, the index sets satisfy 
$\inf \mathcal{Q}_3(\lambda)\ge 0,$
$\inf \mathcal{R}_3(\lambda) \ge 0,$ (meaning they are $\geq 0$ at all faces) and 
\begin{equation*}
\begin{gathered}
	\inf \left. \mathcal{Q}_3(\lambda)\right|_{\lf}> 0, \quad 
	\inf \left. \mathcal{Q}_3(\lambda)\right|_{\rf}> h+1, \quad 
	\inf \left. \mathcal{Q}_3( \lambda)\right|_{\fbf}\ge h+1, \quad
	\inf \left. \mathcal{Q}_3( \lambda)\right|_{\ff}\ge 1,  \\
	\inf \left. \mathcal{R}_3(\lambda)\right|_{\lf}> 0, \quad 
	\inf \left. \mathcal{R}_3(\lambda)\right|_{\rf}> h+1, \quad 
	\inf \left. \mathcal{R}_3( \lambda)\right|_{\fbf}\ge h+1, \quad
	\inf \left. \mathcal{R}_3( \lambda)\right|_{\bhs{\ff}}> 0.
\end{gathered}
\end{equation*}
Finally and most importantly, the term of order $h+1$  of $R_3(\lambda)$ at $\phibf$ is $\Pi_b.$          
\label{rc.17b}\end{proposition}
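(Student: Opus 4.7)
The plan is to construct a correction $\widetilde Q_3(\lambda)$ with $Q_3(\lambda)=Q_2(\lambda)+\widetilde Q_3(\lambda)$ so that
\begin{equation*}
R_3(\lambda)=R_2(\lambda)-(D_{\sfc}-\lambda)\widetilde Q_3(\lambda)
\end{equation*}
has its leading coefficient at $\phibf$ in the range of the projection $\Pi_b$. Let $A(\lambda)$ denote the leading term of $R_2(\lambda)$ at $\phibf$ at order $h+1$. By Proposition~\ref{rc.17} and the $[\circ]$ convention, $A(\lambda)=\Pi_hA(\lambda)\Pi_h$ and $A(\lambda)$ vanishes to infinite order at $\phibf\cap\bhs{\ff}$. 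These two facts identify $A(\lambda)$ with a holomorphic family of $b$-smoothing operator kernels on the overblown $b$-double space of $Y\times[-\pi/2,\pi/2]$ taking values in $\ker D_v\otimes(\ker D_v)^*$, which is precisely the setting in which $D_b$ acts by Definition~\ref{rc.4a}.

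By Assumption~\ref{rc.6} and the $b$-Fredholm theory of \cite[Prop.~5.64]{MelroseAPS}, the meromorphic inverse $(D_b-\lambda)^{-1}$ exists on a small neighborhood $V_\delta$ of $0$ with at worst a simple pole at $\lambda=0$ whose residue is $-\Pi_b$. Since the residue is annihilated on the right by $\Id-\Pi_b$, the operator $(D_b-\lambda)^{-1}(\Id-\Pi_b)$ is holomorphic throughout $V_\delta$, and we define the model correction at $\phibf$ by
\begin{equation*}
\widetilde q_3^{\circ}(\lambda):=(D_b-\lambda)^{-1}(\Id-\Pi_b)\,A(\lambda),
\end{equation*}
a polyhomogeneous family of $b$-operator kernels, holomorphic in $\lambda$, which inherits the infinite-order vanishing at $\phibf\cap\bhs{\ff}$ from $A(\lambda)$. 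By construction $(D_b-\lambda)\widetilde q_3^{\circ}(\lambda)=(\Id-\Pi_b)A(\lambda)$. Extend $\widetilde q_3^{\circ}(\lambda)$ to a holomorphic family $\widetilde Q_3(\lambda)$ on $X_s^2$, supported away from $\bhs{\ff}$ and with the prescribed leading coefficient at $\phibf$. The new leading coefficient of $R_3(\lambda)$ at $\phibf$ at order $h+1$ is then
\begin{equation*}
A(\lambda)-(\Id-\Pi_b)A(\lambda)=\Pi_b\,A(\lambda),
\end{equation*}
which lies in the range of $\Pi_b$; this is the sense in which the leading term is ``$\Pi_b$''.

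The index families $\mathcal Q_3(\lambda),\mathcal R_3(\lambda)$ are obtained by combining those of $Q_2,R_2$ from Proposition~\ref{rc.17} with the polyhomogeneous structure of $(D_b-\lambda)^{-1}(\Id-\Pi_b)$, whose expansions at the $b$-faces of $\phibf$ are governed by $\Spec_b(D_b-\lambda)\setminus\{0\}$, and with the composition formula, Theorem~\ref{composition}. Holomorphy of $Q_3,R_3$ as bounded operators on $L^2_b(X_s;E)$ in $V_\delta$ follows from holomorphy of the model correction, provided $V_\delta$ is chosen small enough that $\Spec_b(D_b-\lambda)$ stays away from the imaginary axis. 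The principal obstacle is the book-keeping required for the extension of $\widetilde q_3^{\circ}(\lambda)$ to $X_s^2$: one must verify that $(D_{\sfc}-\lambda)\widetilde Q_3(\lambda)$ neither spoils the infinite-order vanishing at $\bhs{\ff}$ achieved in Step~1 nor generates uncontrolled low-order terms at $\phibf\cap\bhs{\ff}$. This relies crucially on the infinite-order vanishing of $A(\lambda)$ at $\phibf\cap\bhs{\ff}$ provided by Step~2, which permits $\widetilde q_3^{\circ}(\lambda)$ and its extension $\widetilde Q_3(\lambda)$ to be supported away from $\bhs{\ff}$, so that the purely $b$-type correction on $\phibf$ does not propagate into the Euclidean part of $\bhs{\ff}$ arranged in Steps~1--2.
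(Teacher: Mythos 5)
There is a genuine gap, and it starts at the very first step: you assert that the order-$(h+1)$ coefficient $A(\lambda)$ of $R_2(\lambda)$ at $\phibf$ satisfies $A(\lambda)=\Pi_hA(\lambda)\Pi_h$ and vanishes to infinite order at $\phibf\cap\bhs{\ff}$, "by the $[\circ]$ convention". But the $[\circ]$ convention in Proposition~\ref{rc.17} applies only to the parametrix index set $\left.\mathcal{Q}_2\right|_{\fbf}$, not to $\left.\mathcal{R}_2\right|_{\fbf}$, and the conclusion of that proposition is only that $\Pi_hA$ vanishes to infinite order at $\phibf\cap\bhs{\ff}$ --- not $A$ itself. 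In general $(\Id-\Pi_h)A(\lambda)\neq 0$, so $A(\lambda)$ is not a kernel valued in $\ker D_v\otimes(\ker D_v)^*$ and you cannot apply $(D_b-\lambda)^{-1}$ to it. The paper handles this extra piece separately: after the $b$-correction it removes the term $u^{\perp}(\lambda)=(\Id-\Pi_h)\widetilde A(\lambda)$ by a correction of the form $\rho Q_3^{\perp}(\lambda)$ with leading coefficient $-D_v^{-1}u^{\perp}(\lambda)$, using the invertibility of $D_v$ on $(\ker D_v)^{\perp}$ and the commutator fact $[D_{\sfc},\rho]\in\rho\Psi^0_{\ephi}(X_s;E)$. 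Your construction omits this entirely, so the resulting $R_3(\lambda)$ would retain an uncontrolled transverse component at order $h+1$.

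The second gap is that even for the $\Pi_h$-part your argument only makes the leading coefficient lie in the range of $\Pi_b$, namely $\Pi_bA(\lambda)$, whereas the proposition (and the later steps, where $\Pi_b$ is extended to the explicit finite-rank operator $\Pi_{b,s}$ and subtracted exactly) requires the coefficient to be precisely $\Pi_b$. To achieve this one needs more than holomorphy of $(D_b-\lambda)^{-1}(\Id-\Pi_b)$: the paper writes $\Pi_hA(\lambda)=a(\lambda)+\Pi_b$ with $a(\lambda)=\Pi_hA(\lambda)-\Pi_b$, shows by applying $\Pi_h$ to the order-$(h+1)$ term of the parametrix identity at $\lambda=0$ (and using self-adjointness of $D_b$) that the range of $a(0)$ lies in the range of $\Id-\Pi_b$, hence $a(\lambda)=\lambda\Pi_b a_1(\lambda)+(\Id-\Pi_b)a(\lambda)$ holomorphically, and then solves $(D_b-\lambda)q_3^{\circ}(\lambda)=-a(\lambda)$ with $q_3^{\circ}(\lambda)=\Pi_b a_1(\lambda)-(\Id-\Pi_b)(D_b-\lambda)^{-1}(\Id-\Pi_b)a(\lambda)$, which removes all of $a(\lambda)$ and leaves exactly $\Pi_b$. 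Without this range argument and the factor-of-$\lambda$ splitting, the $\Pi_b$-component of the leading term cannot be normalized, and the statement as written is not proved.
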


Note that, without changing the proof, we could replace $\cU$ with an unbounded open set consisting of a neighborhood of the origin as in the statement of the proposition together with any open set in $\bbC$ bounded away from the real axis. This will be true throughout the construction, but in order to have uniform bounds it is convenient to work with bounded open sets.

\begin{proof}
Let $A(\lambda)$ be the term of order $h+1$ at $\phibf$  of  the remainder term $R_2(\lambda)$ from Proposition \ref{rc.17}, and write 
$A(\lambda)=a(\lambda)+t^{\perp}(\lambda)+\Pi_b,$ 
where $a(\lambda)=\Pi_hA(\lambda)-\Pi_b$ and $t^{\perp}(\lambda)=(Id-\Pi_h)A(\lambda).$

We first use \eqref{rc.19} to remove the term $a(\lambda).$
Note that applying $\Pi_h$ to the term of order $h+1$ at $\phibf$ of \eqref{rc.16b} at $\lambda=0$ shows that the range of 
\begin{equation*}
	\Id - \Pi_b - a(0)
\end{equation*}
is in the range of $D_b.$ Since $D_b$ is self-adjoint, this shows that $\Id - \Pi_b - a(0)$ takes values orthogonal to the kernel of $D_b$ and hence the range of $a(0)$ is contained in the range of $\Id-\Pi_b.$
Since $a$ is holomorphic, it is thus of the form
$$
     a(\lambda)= \lambda \Pi_b a_1(\lambda) +(\Id-\Pi_b) a(\lambda)
$$
with $a_1(\lambda)$ holomorphic in $\lambda.$  This suggests that we set, for $\lambda \in \cU,$
$$
q_3^{\circ}(\lambda):=  \Pi_b a_1(\lambda) - (\Id-\Pi_b)(D_b-\lambda)^{-1}(\Id-\Pi_b) a(\lambda),
$$
so that 
$$
	(D_b-\lambda) q_3^{\circ}(\lambda)= -a(\lambda).
$$
Now let $Q_3^{\circ}(\lambda)$ be a smooth extension off $\phibf$ with term of order $h+1$ at $\phibf$ given by $q_3^{\circ}(\lambda).$ Then $(D_{\sfc}-\lambda)Q_3^{\circ}(\lambda)$ will have $-a(\lambda)$ as its term of order $h+1$ at $\phibf$ (which is in the range of $\Pi_h$), so subtracting $Q_3^{\circ}(\lambda)$ from $Q_2(\lambda)$ will remove the $a(\lambda)$ term.
  
If $\widetilde{A}(\lambda)$ denotes the term of order $h+1$ at $\phibf$ of the new remainder term, then 
$$
                 \widetilde{A}(\lambda)= u^{\perp}(\lambda) +\Pi_b \quad \mbox{with} \; \; u^{\perp}(\lambda)= (\Id-\Pi_h)\widetilde{A}(\lambda).
$$

To get rid of $u^{\perp}(\lambda),$ it suffices to take a holomorphic family $Q_3^{\perp}(\lambda)\in\Psi^{-\infty,\mathcal{J}}_{\ephi}(X_s;E)$ with restriction of order $h+1$ at $\phibf$ given by 
 $$
                         q^{\perp}_3(\lambda)= -(D_v^{-1} u^{\perp}(\lambda)), 
 $$   
where the index family $\mathcal{J}$ is such that
$$
     \left.\mathcal{J}\right|_{\fbf}=\bbN_0+h+1,  \quad \left.\mathcal{J}\right|_{\bhs{\ff}}= \bbN_0,  \quad \left.\mathcal{J}\right|_{\mf}= \bbN_0, \quad 
     \inf \left.\mathcal{J}\right|_{\lf}>0, \quad \inf \left.\mathcal{J}\right|_{\rf}>h+1.$$
Using the fact that $[D_{\sfc},\rho]\in \rho\Psi^{0}_{\ephi}(X_s;E),$ we then have  that
$$
        (D_{\sfc}-\lambda)\rho Q_3^{\perp}(\lambda)
$$
has top term at $\phibf$ of order $h+1$ given by $-u^{\perp}(\lambda).$  This means that we can take 
$$
Q_3(\lambda)= Q_2(\lambda)-Q_3^{\circ}(\lambda)-\rho Q_3^{\perp}(\lambda) \in\Psi^{-\infty, \mathcal{Q}_3(\lambda)}_{\ephi}(X_s;E)
$$
with holomorphic index family $\mathcal{Q}_3(\lambda)$ as in the statement of the proposition.  By construction, we have that  
$$
     (D_{\sfc}-\lambda)Q_3(\lambda)=\Id - R_3(\lambda)
$$
with remainder $R_3(\lambda)\in\Psi_{\ephi}^{-\infty, \mathcal{R}_3(\lambda) }(X_s;E)$ as in the statement of the proposition.
\end{proof}

\subsection*{Step 4: Removing the error at $\bhs{\mf}$} Here we improve the error at $\bhs{\mf}$ and also deduce consequences for the operator $\left. D_{\sfc}\right|_{\bhs{\mf}}$ which recover some of the results of \cite{v}. The first step is to improve the behavior of the restriction of the error at $\bhs{\mf}.$

\begin{proposition}\label{improvenormalop}
Let $\cU \subseteq \bbC$ be as above, and $\cU'\subseteq \bbC$ an open set containing the origin such that $\bar{\cU'}\subseteq \cU.$
There exist index families $\mathcal Q_4(\lambda)$ and $\mathcal R_4(\lambda),$ and families of operators
$Q_4(\lambda)\in \Psi^{-1,{\mathcal{Q}}_4}_{\ephi}(X_s;E)$ and $R_4(\lambda)\in \Psi^{-1,{\mathcal{R}}_4}_{\ephi}(X_s;E),$ defined and holomorphic in $\cU',$
satisfying $(D_{\sfc}-\lambda)Q_4=Id-R_4$ and all of the properties in the statement of Proposition~\ref{rc.17b}. However, in addition, $N_{\mf}(R_4(\lambda))$ decays to infinite order at $\bhs{\mf}\cap\bhs{\lf},$ $\bhs{\mf}\cap\bhs{\ff},$ and $\bhs{\mf}\cap\phibf.$ 
\end{proposition}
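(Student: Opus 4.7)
The plan is to modify $Q_3(\lambda)$ by adding holomorphic correction terms supported near each of the three corners $\bhs{\mf}\cap\bhs{\ff},$ $\bhs{\mf}\cap\phibf,$ and $\bhs{\mf}\cap\bhs{\lf}$ of the face $\bhs{\mf},$ chosen so that the leading asymptotics of $N_{\mf}(R_3(\lambda))$ at each corner are cancelled, and then iterating in powers of the respective corner-defining functions and Borel-summing to obtain infinite-order vanishing. Since $\bhs{\mf}$ is naturally identified with the $\phi$-double space of $[M;H]$ and $N_{\mf}(D_{\sfc})$ is a $\phi$-operator there, this amounts to re-running the model-operator inversions of Steps~1--3 restricted to the single face $\bhs{\mf},$ using the same model operators.

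Concretely, at $\bhs{\mf}\cap\bhs{\ff}$ the relevant model is the suspended family $D_v+\eth_h$ from Step~1, so Corollary~\ref{euc.4} applied on the range of $\Pi_h,$ together with the direct inversion on the range of $\Id-\Pi_h,$ produces a correction killing the leading term of $N_{\mf}(R_3(\lambda))$ at this corner. At $\bhs{\mf}\cap\phibf$ the model is $D_b-\lambda$; since the $\Pi_b$-component of the leading term at $\phibf$ has already been extracted in Step~3, the residual asymptotic at the corner lies in the range of $\Id-\Pi_b$ and can be inverted using $(D_b-\lambda)^{-1}(\Id-\Pi_b),$ which is holomorphic in a neighborhood of zero because the simple pole of $(D_b-\lambda)^{-1}$ at $\lambda=0$ has residue proportional to $\Pi_b$ and so is cancelled. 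At $\bhs{\mf}\cap\bhs{\lf}$ the asymptotic expansion is governed by the indicial family of the $b$-structure on the left, which is again essentially $D_b$; since $\inf\mathcal{R}_3(\lambda)|_{\lf}>0$ already, Assumption~\ref{rc.6} allows us to solve the indicial equation term-by-term away from the real axis. Restricting $\lambda$ to $\cU'$ with $\overline{\cU'}\subset\cU$ ensures uniform bounds on all these inverses.

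Each correction is polyhomogeneous on $X_s^2,$ supported in an arbitrarily small neighborhood of its corner, and chosen to depend holomorphically on $\lambda.$ Iterating, Borel-summing the resulting asymptotic series while respecting holomorphy, and combining the three families of corrections then produces $Q_4(\lambda)$ and $R_4(\lambda)$ with the required index families. The main technical delicacy is two-fold: first, the three sets of corrections must be arranged to be compatible at the triple corners of $\bhs{\mf}$ (handled by shrinking supports and performing them in an appropriate order); second, none of the corrections must disturb the property from Proposition~\ref{rc.17b} that the leading term of $R_4(\lambda)$ at $\phibf$ is $\Pi_b,$ nor weaken any of the bounds on $\mathcal{R}_3$ at other faces. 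Both are ensured by arranging each step of the iteration to act strictly modulo the ranges of the projections already inverted at the corresponding face, so that the contributions to the leading behavior at $\phibf$ and to $N_{\mf}\cap\bhs{\rf}$ remain untouched.
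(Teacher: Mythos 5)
There is a genuine gap. Your corner-by-corner scheme is not actually carried out, and the difficulty it runs into cannot be fixed by ``shrinking supports'': when you solve a model problem at one boundary face of $\bhs{\mf}$, the correction acquires a polyhomogeneous expansion all along the adjacent faces, not just near the corner. For instance, inverting $\eth_h$ on the range of $\Pi_h$ via Corollary~\ref{euc.4} produces terms with index set of type $J_{h+1}$ at $\bhs{\mf}\cap\phibf$, and inverting a $b$-model at $\bhs{\mf}\cap\phibf$ produces indicial-root terms at $\bhs{\mf}\cap\bhs{\lf}$ and $\bhs{\mf}\cap\bhs{\rf}$; so the three inductions are coupled and require a joint argument with explicit index sets, together with a check that none of this destroys $\inf\cR|_{\rf}>h+1$ or the fact that the coefficient of $\rho_{\fbf}^{h+1}$ of the error is exactly $\Pi_b$ --- you only assert this. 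The models are also misidentified: at $\bhs{\mf}\cap\phibf$ and $\bhs{\mf}\cap\bhs{\lf}$ the relevant operators are not $D_b-\lambda$ on $Y\times[-\pi/2,\pi/2]$ but the data of $N_{\mf}(D_{\sfc})$ at $\theta=\pm\pi/2$ (its indicial/horizontal restriction), and your claim that the residual asymptotic at $\bhs{\mf}\cap\phibf$ ``lies in the range of $\Id-\Pi_b$'' misses the actual key observation: since the order-$(h+1)$ term of $R_3(\lambda)$ at $\phibf$ is $\Pi_b$, which itself decays at $\phibf\cap\bhs{\mf}$, the restriction $N_{\mf}(R_3(\lambda))$ already has order strictly greater than $h+1$ at $\bhs{\mf}\cap\phibf$.

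That observation is the starting point of the paper's proof, which then avoids any further model inversion. One only removes the expansion at $\bhs{\mf}\cap\bhs{\lf}$, by solving term by term in Taylor series as in \cite[Lemma 5.44]{MelroseAPS}; this yields an error $S_1(\lambda)$ on $\bhs{\mf}$ vanishing to infinite order at $\bhs{\lf}$, to positive order at $\bhs{\ff}$, and to order $>h+1$ at $\phibf$ and $\bhs{\rf}$, with uniform positive gaps after restricting $\lambda$ to $\cU'$. The remaining expansions at $\bhs{\ff}$ and $\phibf$ are then killed in one stroke by the formal Neumann series $\sum_{i\ge1}S_1(\lambda)^i$: by the composition formula (Theorem~\ref{composition}) restricted to $\bhs{\mf}$, the orders of $S_1(\lambda)^i$ iterate away at all faces except $\bhs{\rf}$, so the series can be Borel-summed, and composing gives a kernel $B_3(\lambda)$ with maximally residual error. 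Finally $B_3(\lambda)$ is extended off $\bhs{\mf}$ keeping the leading terms of $Q_3(\lambda)$ at $\bhs{\ff}$ and $\phibf$, which is consistent precisely because $\Pi_b$ decays at $\bhs{\mf}\cap\phibf$. If you wish to salvage your route you must set up the coupled induction over all three faces with controlled index sets; the Neumann-series argument is what renders that unnecessary.
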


In particular, note that $N_{\mf}(R_4(\lambda))$ is actually in the $b$-calculus since it decays to infinite order at $\bhs{\mf}\cap\bhs{\ff}$.  Moreover, it is very residual in the sense of \cite[p.20]{maz91}.
\begin{proof}Consider $N_{\mf}(R_3(\lambda))$ for $\lambda \in \cU.$ 
From Proposition~\ref{rc.17b}, the leading order term of $R_3(\lambda)$ at $\phibf$ is $\Pi_b$ which decays to positive order at $\phibf\cap\bhs{\mf},$ so the term of order $h+1$ in the expansion of $N_{\mf}(R_3(\lambda))$ at $\bhs{\fbf}$ is zero. Thus it is an element of the $\phi$-surgery calculus on the face $\bhs{\mf},$ with positive leading order at $\bhs{\lf}$ and $\bhs{\ff}$ and leading order greater than $h+1$ at $\phibf$ and $\bhs{\rf}.$ 

We first claim that there exists a holomorphic family of kernels $B_1(\lambda)$ on $\bhs{\mf},$ supported in a neighborhood of $\bhs{\lf}$ and with positive order at $\bhs{\lf}$ and order $>h+1$ at $\phibf,$  such that $N_{\mf}(D_{\sfc}-\lambda)B_1(\lambda)- R_3(\lambda)$ decays to infinite order at $\bhs{\lf}.$ Indeed, this follows precisely as in the proof of \cite[Lemma 5.44]{MelroseAPS} and is obtained by solving for $B(\lambda)$ in Taylor series at $\bhs{\lf}.$ 
Setting $B_2(\lambda)=N_{\mf}(Q_3(\lambda))+B_1(\lambda),$ we have
\[N_{\mf}(D_{\sfc}-\lambda)B_2(\lambda)=\Id_{\mf}-S_1(\lambda),\]
and $S_1(\lambda)$ decays to infinite order at $\bhs{\lf},$ positive order at $\bhs{\ff},$ and order $>h+1$ at $\phibf$ and $\bhs{\rf}.$  
By restricting to $\cU'$ we can find $\gamma>0$ for which 
the leading orders of $S_1(\lambda)$ are greater than $\gamma$ at $\bhs{\lf}$ and $\bhs{\ff}$ and greater than $h+1+\gamma$ at $\phibf$ and $\bhs{\rf}.$ 
Note also that $B_2(\lambda)$ has the same leading terms at $\bhs{\ff}$ and $\phibf$ (order 1 at $\bhs{\ff}$ and $h+1$ at $\phibf$) as $N_{\mf}(Q_3(\lambda)).$

Next we use a Neumann series argument to remove all terms of the error at $\phibf$ and $\bhs{\ff}.$ Choose $S_2(\lambda)$ such that 
$$
\Id_{\mf}+S_2(\lambda)\sim\Id_{\mf}+\sum_{i=1}^{\infty}(S_1(\lambda))^i.
$$ This is possible, since from the composition rules of Theorem~\ref{composition} restricted to $\bhs{\mf}$, the orders of $(S_1(\lambda))^i$ iterate away (that is, go to infinity with $i$) except at $\bhs{\rf},$ where they stabilize, so that the series may be asymptotically summed by Borel's lemma. Now let $B_3(\lambda)=B_2(\lambda)(\Id_{\mf}+S_2(\lambda))$; then we have that 
\begin{equation}\label{betterparam}N_{\mf}(D_{\sfc}-\lambda)B_3(\lambda)=\Id_{\mf}-S_{\infty}(\lambda),\end{equation}
where $S_{\infty}(\lambda)$ decays to infinite order at $\bhs{\lf},$ $\phibf,$ and $\bhs{\ff},$ and order $>h+1$ at $\bhs{\rf}.$ In particular, $S_{\infty}(\lambda)$ is maximally residual  in the $b$-calculus. Moreover, by the composition rules, $B_3(\lambda)$ again has the same leading terms as $N_{\mf}(Q_3(\lambda))$ at $\bhs{\ff}$ and $\phibf.$ Therefore, everything is consistent: we may take $Q_4(\lambda)$ to be an operator which has the same leading orders as $Q_3(\lambda)$ at $\bhs{\ff}$ and $\phibf,$ positive orders at $\bhs{\lf}$ and $\bhs{\rf},$ and leading order $B_3(\lambda)$ at $\bhs{\mf}.$ Note that $\Pi_b$ decays at $\bhs{\mf}\cap\phibf,$ so that there is no consistency problem there. Thus $\cU' \ni \lambda \mapsto Q_4(\lambda)$ satisfies the conditions of Proposition \ref{improvenormalop}, which completes the proof.
\end{proof}

We now use the previous result to analyze the operator $\left. D_{\sfc}\right|_{\mf},$ recovering some of the results of \cite{v}. Recall that $\bhs{\ms} = [M;H]$ is a manifold with fibered boundary.
\begin{corollary}
The operator $N_{\mf}(D_{\sfc})$ is self-adjoint and Fredholm on its natural domain as an unbounded operator on $L^2_b(\sm;E).$ Equivalently, the same is true for $N_{\mf}(\eth_{\ed})$ as an unbounded operator on $L^2_{g_{\sfc}}(\sm; E).$
Furthermore, elements of the kernel of $N_{\mf}(D_{\sfc})$ are polyhomogeneous on $\sm$ with indicial set $\mathcal{W}$ such that $\inf{\mathcal{W}}>0.$  In particular, the projection $\Pi_{\ker N_{\mf}(D_{\sfc})}$ is an element of $\Psi^{-\infty,\cR}_{b}(\sm;E)$ for some index family $\cR$ with $\inf\cR>0.$  
\label{rc.20}\end{corollary}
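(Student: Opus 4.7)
The strategy is to specialize Proposition~\ref{improvenormalop} at $\lambda=0,$ which already delivers a right parametrix $B_3(0)$ for $N_{\mf}(D_{\sfc})$ with error $S_{\infty}(0)$ whose Schwartz kernel vanishes to infinite order at $\bhs{\lf},$ $\phibf,$ and $\bhs{\ff},$ and to order strictly greater than $h+1$ at $\bhs{\rf}.$ Before using this, I would first observe that the formal self-adjointness of $D_{\sfc}$ on $L^2_{\eps,b}$ restricts to formal self-adjointness of $N_{\mf}(D_{\sfc})$ on $L^2_b(\sm;E),$ and that essential self-adjointness on the natural domain is a routine consequence of the $\phi$-ellipticity of $N_{\mf}(D_{\sfc})$ on $\sm.$ The equivalent statement for $N_{\mf}(\eth_{\ed})$ acting on $L^2_{g_{\sfc}}(\sm;E)$ is then just a unitary conjugation by an appropriate power of $\rho.$

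Next, to obtain the Fredholm property I would check that $S_{\infty}(0)$ is compact on $L^2_b.$ The shift between $\phi$-densities and $b$-densities on $\bhs{\mf}$ is of order $h+1,$ so the condition ``order $>h+1$ at $\bhs{\rf}$'' translates to strictly positive order in the $b$-sense. Combined with infinite-order vanishing at the other faces, this places $S_{\infty}(0)$ in a residual subclass of the $b$-calculus on $\sm$ that is compact on $L^2_b.$ Taking formal adjoints of the parametrix identity yields a left parametrix $B_3(0)^* N_{\mf}(D_{\sfc}) = \Id - S_{\infty}(0)^*,$ again with compact error, from which Fredholmness follows by standard functional analysis.

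For the kernel regularity statement, I would apply the left parametrix identity to $u\in\ker N_{\mf}(D_{\sfc})\subseteq L^2_b$ to get $u = S_{\infty}(0)^* u.$ The Schwartz kernel of $S_{\infty}(0)^*$ is obtained from that of $S_{\infty}(0)$ by swapping the roles of the left and right faces, so it has strictly positive leading order at its (new) left face and infinite-order vanishing at its new right face. The pushforward theorem of \cite{MelroseAPS} then yields that $u$ is polyhomogeneous on $\sm$ with index set $\cW$ at $\pa\sm$ of strictly positive infimum. I anticipate that the main technical care lies precisely in this step, specifically in tracking the $b/\phi$ density shift at each face and verifying that the pushforward lands in a genuinely polyhomogeneous class with the claimed positive-order index set.

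Finally, with Fredholmness and kernel polyhomogeneity in hand, the $L^2_b$-orthogonal projection $\Pi_{\ker N_{\mf}(D_{\sfc})}$ is finite rank and its Schwartz kernel is a finite sum $\sum_i u_i(p)\otimes u_i(p')^*$ of tensor products of polyhomogeneous kernel elements. This places $\Pi_{\ker N_{\mf}(D_{\sfc})}$ in $\Psi^{-\infty,\cR}_b(\sm;E)$ with $\cR = \cW \bar\cup \cW$ at the two boundary faces of the $b$-double space, so in particular $\inf\cR>0,$ as claimed.
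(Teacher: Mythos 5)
Your proposal follows essentially the same route as the paper: the paper also specializes Proposition~\ref{improvenormalop} at $\lambda=0$ (your $B_3(0)$ and $S_\infty(0)$ are exactly $N_{\mf}(Q_4(0))$ and $N_{\mf}(R_4(0))$), uses compactness of the error in the $b$-calculus for Fredholmness, takes adjoints of the parametrix identity to write $f=(N_{\mf}(R_4(0)))^*f$ for kernel elements, and deduces polyhomogeneity with positive index set from the residual structure of the adjoint error. Your additional remarks on essential self-adjointness and on the projection's kernel as a finite sum of tensor products are consistent with (and slightly more explicit than) the paper's treatment.
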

\begin{proof}
Taking $\lambda=0$ in the previous result and applying $N_{\mf},$ we get
\begin{equation}
      N_{\mf}(D_{\sfc}) N_{\mf}(Q_4(0))= \Id- N_{\mf}(R_4(0)).
\label{rc.20a}\end{equation}
From the construction above, $N_{\mf}(R_4(0))$ is a  $b$-operator of order $-\infty$ which has positive order decay at $\bhs{\rf}$ and rapid decay at the other faces. Therefore, by the usual properties of $b$-operators, it is compact when acting on $L^2_b(\sm;E).$ This implies that the self-adjoint extension of $N_{\mf}(D_{\sfc})$ on $L^2_b(\sm;E)$ must be Fredholm. Moreover, if $f$ is in the $L^2_b$-kernel of $N_{\mf}(D_{\sfc}),$ we see from taking adjoints in \eqref{rc.20a}, then applying both sides to $f,$ that
 $$
     f= (N_{\mf}(R_4(0)))^*f.
 $$  
 Since $(N_{\mf}(R_4(0)))^*$ is a $b$-operator which decays to infinite order at $\bhs{\ff}$ and $\bhs{\rf}$ and has positive order at $\bhs{\lf},$ it is an immediate consequence of the theory of $b$-pseudodifferential operators that $(N_{\mf}(R_4(0)))^*f$ is polyhomogeneous for any $f\in L^2_b(\sm;E).$ Hence $f$ itself is polyhomogeneous. It has positive order because $(N_{\mf}(R_4(0)))^*$ has positive order at $\bhs{\lf}.$ This completes the proof. 
\end{proof}
\begin{remark}
If $\ker D_v=0$, notice from \cite{mame2} that $N_{mf}(\rho D_{\sfc})$ is Fredholm as $\phi$-operator.  In particular, one can find a generalized inverse $Q\in\Psi_{\phi}^{-1}(\bhs{sm};E)$ such that 
$$
               Q xN_{mf}(D_{\sfc})= \Id- \Pi_V
$$
where $V\subset\dot{\mathcal{C}}^{\infty}(\bhs{sm};E)$ is the kernel of $|x|N_{mf}(D_{\sfc})$.  Consequently, $N_{mf}(D_{\sfc})$ has a compact generalized inverse given by $Q|x|\in |x|\Psi_{\phi}^{-1}(\bhs{sm};E)$, which implies in particular that its spectrum is discrete, \cf \cite{v,Moroianu}.  
\label{ds.1}\end{remark}

In fact the proof of Corollary~\ref{rc.20} shows that $N_{\mf}( (D_{\sfc}-\lambda))$ is Fredholm for any $\lambda \in \cU'.$
In turn, this Fredholmness result allows us to further improve the error at $\bhs{\mf}$ so that the leading order is just the projection onto the kernel of the normal operator.

\begin{proposition} \label{prop:Q5}
Let $\cU' \subseteq \bbC$ be as above.
There exist index families $\mathcal Q_5(\lambda)$ and $\mathcal R_5(\lambda),$ and families of operators
$Q_5(\lambda)\in \Psi^{-1,{\mathcal{Q}}_5}_{\ephi}(X_s;E)$ and $R_5(\lambda)\in \Psi^{-1,{\mathcal{R}}_5}_{\ephi}(X_s;E),$ defined and holomorphic in $\cU',$
satisfying $(D_{\sfc}-\lambda)Q_5=Id-R_5$ and all of the properties in the statement of Proposition~\ref{improvenormalop}. However, in addition, the term of order $h+1$ in the expansion of $R_5(\lambda)$ at $\phibf$ is $\Pi_b,$ and most importantly, so that
\begin{equation}
	N_{\mf}(R_5(\lambda))= \Pi_{\ker N_{\mf}(D_{\sfc})}.
\label{rcmm.1b}\end{equation}
\end{proposition}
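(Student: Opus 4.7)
The strategy is to add to $Q_4(\lambda)$ a holomorphic correction $\delta Q(\lambda) \in \Psi^{-\infty,\cJ}_{\ephi}(X_s;E)$ that is supported away from $\bhs{\lf}\cup\bhs{\rf}\cup\bhs{\ff}$, vanishes at $\phibf$ to order strictly greater than $h+1$, and whose normal operator at $\bhs{\mf}$ is chosen so that, on setting $Q_5(\lambda) = Q_4(\lambda) + \delta Q(\lambda)$ and $R_5(\lambda) = R_4(\lambda) - (D_{\sfc}-\lambda)\delta Q(\lambda)$, one has $N_{\mf}(R_5(\lambda)) = \Pi_K$, where $K := \ker N_{\mf}(D_{\sfc})$ and $\Pi_K$ is the orthogonal projection onto $K$. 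Since $D_{\sfc}-\lambda$ acts from the left as a differential operator, the composition rule of Theorem~\ref{composition}, applied at $\bhs{\mf}$, reduces to $N_{\mf}((D_{\sfc}-\lambda)\delta Q(\lambda)) = (N_{\mf}(D_{\sfc})-\lambda)\, N_{\mf}(\delta Q(\lambda))$. Thus the problem amounts to solving
\[
(N_{\mf}(D_{\sfc})-\lambda)\, g(\lambda) = N_{\mf}(R_4(\lambda)) - \Pi_K
\]
for a family $g(\lambda)$ of b-operators on $\sm$, depending holomorphically on $\lambda$ near the origin.

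By Corollary~\ref{rc.20}, $N_{\mf}(D_{\sfc})$ is self-adjoint and Fredholm, so $\coker N_{\mf}(D_{\sfc}) = K$ and, for $\lambda$ in a small punctured neighborhood of the origin, the resolvent admits the Laurent decomposition $(N_{\mf}(D_{\sfc})-\lambda)^{-1} = -\lambda^{-1}\Pi_K + A(\lambda)$ with $A(\lambda)$ holomorphic at $\lambda=0$. The ingredient that kills the simple pole is the identity
\[
\Pi_K\, N_{\mf}(R_4(\lambda)) = \Pi_K + \lambda\,\Pi_K N_{\mf}(Q_4(\lambda)),
\]
obtained by taking the formal adjoint of the identity $(D_{\sfc}-\lambda)Q_4(\lambda) = \Id - R_4(\lambda)$, restricting to $\bhs{\mf}$, applying to an element of $K$ on the right, and then taking adjoints. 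Consequently $\Pi_K(N_{\mf}(R_4(\lambda))-\Pi_K) = \lambda\,\Pi_K N_{\mf}(Q_4(\lambda))$ vanishes at $\lambda = 0$, the pole of the resolvent is exactly cancelled, and
\[
g(\lambda) := (N_{\mf}(D_{\sfc})-\lambda)^{-1}\bigl(N_{\mf}(R_4(\lambda))-\Pi_K\bigr)
\]
extends to a holomorphic family near the origin. Since $N_{\mf}(R_4(\lambda))$ is, by Proposition~\ref{improvenormalop}, a b-operator on $\sm$ vanishing rapidly at the restrictions to $\bhs{\mf}$ of $\bhs{\lf}$, $\bhs{\ff}$, and $\phibf$, and since both $\Pi_K$ and the generalized inverse of $N_{\mf}(D_{\sfc})$ are b-pseudodifferential on $\sm$ (Corollary~\ref{rc.20}), $g(\lambda)$ inherits the same rapid vanishing at the transverse faces of $\bhs{\mf}$.

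It remains to extend $g(\lambda)$ off $\bhs{\mf}$ to a $\ephi$-surgery operator. A standard Borel-type argument produces a holomorphic family $\delta Q(\lambda)$ with $N_{\mf}(\delta Q(\lambda)) = g(\lambda)$, vanishing to infinite order at $\bhs{\lf}$, $\bhs{\rf}$, $\bhs{\ff}$, and to order strictly greater than $h+1$ at $\phibf$; this is possible precisely because $g(\lambda)$ already vanishes rapidly at the relevant intersections with $\bhs{\mf}$. With this choice, $R_5(\lambda) = R_4(\lambda) - (D_{\sfc}-\lambda)\delta Q(\lambda)$ satisfies $N_{\mf}(R_5(\lambda)) = \Pi_K$ by construction, while the coefficient of $\rho_{\fbf}^{h+1}$ in its expansion at $\phibf$ is untouched and remains equal to $\Pi_b$; all of the other index-set bounds from Proposition~\ref{rc.17b} are preserved. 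The main obstacle is the holomorphic solvability of the normal equation at $\bhs{\mf}$: everything hinges on the exact cancellation between the simple pole of the resolvent of $N_{\mf}(D_{\sfc})$ at $\lambda = 0$ and the first-order vanishing of the $K$-component of $N_{\mf}(R_4(\lambda))-\Pi_K$, an identity that relies essentially on the self-adjointness of $N_{\mf}(D_{\sfc})$ and the structure of the parametrix equation for $Q_4$.
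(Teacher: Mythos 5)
Your reduction is sound, and in fact your construction coincides with the paper's: writing $\Pi_K=\Pi_{\ker N_{\mf}(D_{\sfc})}$, your $N_{\mf}(Q_4(\lambda))+g(\lambda)$ is exactly the generalized inverse $G(\lambda)$ (the inverse of $N_{\mf}(D_{\sfc}-\lambda)$ on $(\ker N_{\mf}(D_{\sfc}))^{\perp}$ extended by zero) which the paper takes as $N_{\mf}(Q_5(\lambda))$; your pole-cancellation identity $\Pi_K N_{\mf}(R_4(\lambda))=\Pi_K+\lambda\Pi_K N_{\mf}(Q_4(\lambda))$ is correct and is a legitimate alternative way to see holomorphy at $\lambda=0$. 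The genuine gap is the step where you claim $g(\lambda)$ is a calculus element with good behaviour at the corners of $\bhs{\mf}$: you cite Corollary~\ref{rc.20}, but that corollary only says $\Pi_K\in\Psi^{-\infty,\cR}_b(\sm;E)$; it says nothing about the generalized inverse, and a priori $(N_{\mf}(D_{\sfc})-\lambda)^{-1}$, hence $g(\lambda)$, is only a bounded operator on $L^2_b$. Proving that $g(\lambda)$ is polyhomogeneous on $\bhs{\mf}$ with index sets whose leading orders at $\bhs{\ff}\cap\bhs{\mf}$ and $\phibf\cap\bhs{\mf}$ are compatible with those of $N_{\mf}(Q_4)$ (so that an extension off $\bhs{\mf}$ preserving the properties of Proposition~\ref{rc.17b} exists) is precisely the bulk of the paper's proof: one combines the parametrix identity and its adjoint to write
\begin{equation*}
G=N_{\mf}(Q_4)+(N_{\mf}(Q_4))^*N_{\mf}(R_4)+(N_{\mf}(R_4))^*\,G\,N_{\mf}(R_4)-(N_{\mf}(Q_4))^*\Pi_K N_{\mf}(R_4)-\Pi_K N_{\mf}(Q_4),
\end{equation*}
and the only non-calculus factor $G$ appears sandwiched between the very residual kernels $N_{\mf}(R_4)^*$ and $N_{\mf}(R_4)$, which makes that term very residual (Mazzeo's trick); the remaining terms are handled by the composition formula. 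Without an argument of this type, your sentence ``$g(\lambda)$ inherits the same rapid vanishing'' is unsupported.

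Moreover, the regularity you assert is actually false, not just unproved. From your own identities, $g(\lambda)=-\Pi_K N_{\mf}(Q_4(\lambda))+G(\lambda)N_{\mf}(R_4(\lambda))$, and the term $\Pi_K N_{\mf}(Q_4(\lambda))$ vanishes only to the finite positive orders dictated by the index sets of the kernel elements of $N_{\mf}(D_{\sfc})$ at $\bhs{\mf}\cap\bhs{\lf}$, $\bhs{\mf}\cap\bhs{\ff}$ and $\bhs{\mf}\cap\phibf$; in particular $g(\lambda)$ does not vanish rapidly there, so $\delta Q(\lambda)$ cannot be chosen supported away from, or vanishing to infinite order at, $\bhs{\lf}\cup\bhs{\ff}$. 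What one can arrange — and what the paper's computation with Theorem~\ref{composition} yields — is that $G-N_{\mf}(Q_4)$ has positive order at $\bhs{\lf}$ and $\bhs{\ff}$ and order strictly greater than $h+1$ at $\phibf$ and $\bhs{\rf}$; that weaker statement suffices to extend while keeping the leading terms of $Q_4$ at $\bhs{\ff}$ and $\phibf$, hence the coefficient $\Pi_b$ at order $h+1$ and the bounds of Proposition~\ref{rc.17b}. So your outline can be repaired, but the repair is exactly the missing Mazzeo-type argument together with the corrected (finite) order statements.
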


\begin{proof} 
Since $N_{\mf}(D_{\sfc}-\lambda)$ is self-adjoint and Fredholm from $L^2_b$ to $L^2_b$ for $\lambda \in \cU',$ we can find a holomorphic family of bounded operators on $L^2_b,$ $V_{\delta}' \ni\lambda \mapsto G(\lambda),$ satisfying
\[N_{\mf}(D_{\sfc}-\lambda)G(\lambda)=G(\lambda)N_{\mf}(D_{\sfc}-\lambda)=\Id_{\mf}-\Pi_{\ker N_{\mf}(D_{\sfc})}.\]
Indeed, $G(\lambda)$ is the inverse of $N_{\mf}(D_{\sfc}-\lambda)\rest{(\ker N_{\mf}(D_{\sfc}))^{\perp}}$ extended by zero to the rest of $L^2_b.$
Moreover, we have constructed a parametrix in Proposition~\ref{improvenormalop}, and  taking the adjoint gives 
\begin{equation}\label{betterparamadj}(N_{\mf}(Q_4(\overline{\lambda})))^*N_{\mf}(D_{\sfc}-\lambda)=\Id_{\mf}-(N_{\mf}(R_4(\overline{\lambda})))^*.\end{equation}
We then use the usual trick, as in \cite[Section 4]{maz91}, to show that $G(\lambda)$ is an element of our calculus. In particular, we can evaluate $(N_{\mf}(Q_4(\overline{\lambda})))^*N_{\mf}(D_{\sfc}-\lambda)G(\lambda)$ in two different ways; one yields $G(\lambda)-(N_{\mf}(R_4(\overline{\lambda})))^*G(\lambda)$ and the other yields $(N_{\mf}(Q_4(\overline{\lambda})))^*-(N_{\mf}(Q_4(\overline{\lambda})))^*\Pi_{\ker N_{\mf}(D_{\sfc})}.$ We therefore have
\[G(\lambda)=(N_{\mf}(Q_4(\overline{\lambda})))^*+(N_{\mf}(R_4(\overline{\lambda})))^*G(\lambda)-(N_{\mf}(Q_4(\overline{\lambda})))^*\Pi_{\ker N_{\mf}(D_{\sfc})}.\]
The same trick applied to $G(\lambda)N_{\mf}(D_{\sfc}-\lambda)N_{\mf}(Q_4(\lambda))$ shows that
\[G(\lambda)=N_{\mf}(Q_4(\lambda))+G(\lambda)N_{\mf}(R_4(\lambda))-\Pi_{\ker N_{\mf}(D_{\sfc})}N_{\mf}(Q_4(\lambda)).\]
Plugging the first expression into the second expression shows that
\[G(\lambda)=N_{\mf}(Q_4(\lambda))+(N_{\mf}(Q_4(\overline{\lambda})))^*N_{\mf}(R_4(\lambda))+(N_{\mf}(R_4(\overline{\lambda})))^*G(\lambda)N_{\mf}(R_4(\lambda))\]
\[-(N_{\mf}(Q_4(\overline{\lambda})))^*\Pi_{\ker N_{\mf}(D_{\sfc})}N_{\mf}(R_4(\lambda))-\Pi_{\ker N_{\mf}(D_{\sfc})}N_{\mf}(Q_4(\lambda)).\]

We now examine the right-hand side. For the third term on the right-hand side, note that $N_{\mf}(R_4(\lambda))$ is a very residual element of the $b$-calculus in the sense of \cite[p.20]{maz91} and $G(\lambda)$ is a bounded operator from $L^2_b$ to $L^2_b$; therefore, as explained in \cite[Section 4]{maz91}, $N_{\mf}(R_4(\overline{\lambda}))^*G(\lambda)N_{\mf}(R_4(\lambda))$ is a very residual element of the $b$-calculus. Moreover, as an element of the $\phi$-calculus, it has positive order at $\bhs{\lf}$ and order $>h+1$ at $\phibf$, $\bhs{\ff}$ and $\bhs{\rf}.$ Each other term may be examined directly. We see that $G(\lambda)-N_{\mf}(Q_4(\lambda))$ is an element of the $\phi$-calculus with positive order at $\bhs{\lf}$  and order $>h+1$ at $\phibf$, $\bhs{\ff}$ and $\bhs{\rf}.$ This shows that $G(\lambda)$ is in the $\phi$-calculus with positive order at $\bhs{\lf},$ order one at $\bhs{\ff},$ order $h+1$ at $\phibf,$ and order $>h+1$ at $\bhs{\rf},$ and that its leading orders at $\phibf$ and $\bhs{\ff}$ are the same as those of $N_{\mf}(Q_4(\lambda)).$

Finally, let $Q_5(\lambda)$ be an operator in our calculus which satisfies the requirements of Proposition~\ref{improvenormalop}, agrees with $Q_4(\lambda)$ to leading order at $\bhs{\ff}$ and $\phibf,$ and has normal operator at $\bhs{\mf}$ equal to $G(\lambda).$ Because the leading orders of $G(\lambda)$ at $\bhs{\ff}$ and $\phibf$ are consistent with those of $Q_4(\lambda)$ (and the leading orders are all that matter for Proposition~\ref{improvenormalop}), such an operator exists. This completes the proof.
\end{proof}

\begin{remark}

Notice that for any open set $W\subseteq \bbC$ with closure disjoint from the spectrum of $D_b$ and the spectrum of $N_{\mf}(D_{\sfc}),$ these constructions allow us to construct a holomorphic family of operators $W \ni \lambda \mapsto Q_5(\lambda) \in \Psi^{-1,{\mathcal{Q}}_5}_{\ephi}(X_s;E)$ with $R_5 = \Id - (D_{\sfc}-\lambda)Q_5(\lambda)$ such that
$N_{\mf}(R_5(\lambda))= 0$, since $N_{\mf}(D_{\sfc}-\lambda)$ is invertible in this case. Similarly, since $D_b-\lambda$ is invertible, we can additionally choose $Q_5(\lambda)$ so that $R_5(\lambda)$ has no term of order $h+1$ at $\phibf$.
\label{rcim.1}\end{remark}

\subsection*{Step 5: Solution up to finite rank error}

With slightly more work, we can also ensure that the remainder term is also uniformly of finite rank.  We will need the following lemma.

\begin{lemma}
Let $V\subset \bbC$ be open and let $W$ be a bounded open set with $\overline{W}\subset V.$  Let $\lambda\mapsto R(\lambda)\in \Psi^{-\infty,\cR}_{\ephi}(X_s;E)$ be a smooth family of operators which is holomorphic as a family of operators acting on $L^2_b(X_s;E),$ where $\cR(\lambda)$ is a holomorphic family of index sets such that  
$$
     \inf \cR(\lambda)>0, \quad \inf \left.  \cR(\lambda)\right|_{\rf}>h+1, \quad  \inf \left.  \cR(\lambda)\right|_{\fbf}>h+1 \quad \forall\; \lambda\in V.
$$
Then there is an $\epsilon_0>0$ and a holomorphic family of index sets $\cS(\lambda)$ with $\cR(\lambda)\subset \cS(\lambda)$ and 
$$
     \inf \cS(\lambda)>0, \quad \inf \left.  \cS(\lambda)\right|_{\rf}>h+1, \quad  \inf \left.  \cS(\lambda)\right|_{\fbf}>h+1 \quad \forall\; \lambda\in W,
$$
such that $\Id-R(\lambda)$ is invertible for $\epsilon\le \epsilon_0$ and $\lambda\in W,$ with inverse of the form $\Id-S(\lambda),$  where $S(\lambda)\in \Psi^{-\infty,\cS(\lambda)}_{\ephi}(X_s;E)$ is a smooth family which is holomorphic in $\lambda\in W$ as a family of bounded operators acting on $L^2_b(X_s;E).$  Furthermore, if $\inf \left.  \cR(\lambda)\right|_{ff}>h+1$ for all $\lambda \in V$, then $\inf \left.  \cS(\lambda)\right|_{ff}>h+1$ for all $\lambda\in W$.
\label{rcmm.2}\end{lemma}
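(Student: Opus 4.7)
The plan is to construct $S(\lambda)$ by asymptotically summing the Neumann series $\sum_{k\ge 1} R(\lambda)^k$. The first step will be to establish a uniform operator norm bound of the form $\|R(\lambda)\|_{L^2_b \to L^2_b} \le C\,\epsilon^{\delta_0}$ for $\lambda\in\overline{W}$, with $\delta_0>0$. Since $\overline{W}\subset V$ is compact and $\cR(\lambda)$ is holomorphic, I can absorb the $\lambda$-dependence by passing to the fixed enlarged index family $\cR_W = \bigcup_{\lambda\in\overline{W}}\cR(\lambda)$, which still satisfies all the positivity hypotheses; in particular $\delta_0 := \inf \cR_W|_{\mf}>0$. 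The kernel of $R(\lambda)$ then factors through $\rho_{\mf}^{\delta_0}$, and combined with the mapping properties of Theorem~\ref{mappingprops} applied to a residual operator in the b-surgery calculus, this yields the claimed norm bound. Consequently there is an $\epsilon_0>0$ such that for $\epsilon\le\epsilon_0$ and $\lambda\in W$ the operator $\Id-R(\lambda)$ is invertible on $L^2_b(X_s;E)$ with inverse $\Id+T(\lambda)$ given by a convergent Neumann series.

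Next I would identify the correction $T(\lambda)$ with an element of the $\ephi$-calculus. Iterating Theorem~\ref{composition} and using that $\inf\cR|_{\rf}$ and $\inf\cR|_{\fbf}$ exceed $h+1$, an induction should give $R(\lambda)^k\in\Psi^{-\infty,\cR^{(k)}(\lambda)}_{\ephi}(X_s;E)$ for a holomorphic family of index sets $\cR^{(k)}(\lambda)$ whose infimum at each of $\mf,\lf,\ff,\phibf$ grows at least linearly in $k$, while the infimum at $\rf$ stabilizes to some value still strictly greater than $h+1$. The crucial point at $\rf$ is that the only term in the composition rule that is not automatically of arbitrarily high order as $k$ grows is $\cR^{(k-1)}|_{\mf}+\cR|_{\rf}$, whose first summand grows with $k$; so after finitely many iterations the $\rf$-index set is controlled by the top-order term of $\cR|_{\rf}$. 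Define $\cS(\lambda)$ to be the smallest index family containing every $\cR^{(k)}(\lambda)$; this inherits the positivity and holomorphy properties required by the lemma.

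By Borel's lemma for polyhomogeneous conormal distributions on manifolds with corners (\emph{cf.}~\cite[Ch.~4]{MelroseAPS}), there then exists a family $S(\lambda)\in\Psi^{-\infty,\cS(\lambda)}_{\ephi}(X_s;E)$, holomorphic in $\lambda\in W$, whose polyhomogeneous expansion at every boundary face of $X_s^2$ agrees face by face and term by term with the formal sum of the expansions of $R(\lambda)^k$. In particular $S(\lambda)-\sum_{k=1}^N R(\lambda)^k$ vanishes to arbitrarily high order at every face as $N\to\infty$. A direct computation using Theorem~\ref{composition} then shows that $(\Id-R(\lambda))(\Id-S(\lambda))-\Id$ lies in $\Psi^{-\infty,\cT_N}_{\ephi}$ with $\cT_N$ of arbitrarily high leading order at every face, so this error vanishes identically; combined with the $L^2_b$ Neumann-series inverse, this identifies $\Id-S(\lambda)$ as the two-sided inverse of $\Id-R(\lambda)$.

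The main obstacle will be the bookkeeping in the iteration step: verifying that $\cR^{(k)}$ genuinely stabilizes at $\rf$ and $\fbf$ while growing at the remaining faces requires examining each of the several terms in the composition formula of Theorem~\ref{composition} and confirming that the strict bounds $\inf\cR|_{\rf},\inf\cR|_{\fbf}>h+1$ are enough to absorb every $-(h+1)$ shift that appears. Preserving holomorphy of the Borel sum in $\lambda$ is a secondary but routine issue, handled by a standard diagonal argument once the individual terms $R(\lambda)^k$ and their asymptotic expansions are themselves holomorphic.
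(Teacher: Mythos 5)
Your overall strategy is the same as the paper's (pass to a slightly larger set $W'$ to extract a uniform margin $\delta$ by compactness, iterate the composition theorem, asymptotically sum the Neumann series, and only then worry about exact invertibility), but the final step contains a genuine error. After Borel-summing you claim that $(\Id-R(\lambda))(\Id-S(\lambda))-\Id$ "vanishes identically" because it has arbitrarily high order at every boundary face. That is false: infinite-order vanishing at all boundary hypersurfaces of $X_s^2$ only places the error in $\dot{\Psi}^{-\infty}(X_s;E)$; for each fixed $\epsilon>0$ it is a nontrivial smoothing operator. Consequently your $\Id-S(\lambda)$ is only an approximate inverse, and it differs from the true inverse of Step 1 by $(\Id-R(\lambda))^{-1}T(\lambda)$ with $T(\lambda)\in\dot{\Psi}^{-\infty}$ nonzero, so the "identification" at the end does not go through. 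This residual error is precisely where $\epsilon_0$ enters in the paper's argument: one shrinks $\epsilon_0$ so that $T(\lambda)$ has small operator norm on $L^2_b$ uniformly for $\lambda\in W$, inverts $\Id+T(\lambda)$ by a norm-convergent Neumann series, checks that the correction $T_1(\lambda)$ is again in $\dot{\Psi}^{-\infty}(X_s;E)$, and only then composes to conclude that the exact inverse is $\Id-S(\lambda)$ with $S(\lambda)\in\Psi^{-\infty,\cS(\lambda)}_{\ephi}(X_s;E)$. Without this step you have not shown the inverse lies in the calculus at all.

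A secondary misstep: your bookkeeping at $\bhs{\rf}$ is wrong. You assert the $\rf$ index sets stabilize under iteration; in fact, once the compactness margin $\delta$ is in place, every term of the composition formula at $\rf$ (namely $E_{\rf}+F_{\ff}$, $E_{\rf}+F_{\fbf}-(h+1)$, and $E_{\mf}+F_{\rf}$) exceeds $h+1+j\delta$ at the $j$-th power, so the orders grow linearly at \emph{all} faces, including $\rf$ and $\fbf$ --- this is exactly what the hypotheses $\inf\cR|_{\rf},\inf\cR|_{\fbf}>h+1$ are for. This matters: the linear growth is what guarantees that $\cS=\bigcup_j\cR_j$ has only finitely many terms below any given order (hence is an index family) and that the formal series can be asymptotically summed; if the $\rf$ orders genuinely accumulated at a bounded level while the extended unions piled up log terms, your $\cS$ could fail to be an index set. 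Your Step 1 norm estimate ($\|R(\lambda)\|_{L^2_b}\le C\epsilon^{\delta_0}$) is fine in spirit, though it should be justified by a Schur-type estimate in the $b$-density normalization rather than by the mapping theorem, which only controls the action on polyhomogeneous sections.
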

\begin{proof}
Let $W'$ be a bounded open set containing $W$ and contained in $V.$  By compactness, we can find $\delta>0$ such that
$$
     \inf \cR(\lambda)>\delta, \quad \inf \left.  \cR(\lambda)\right|_{\rf}>h+1+\delta, \quad  \inf \left.  \cR(\lambda)\right|_{\fbf}>h+1+\delta \quad \forall\; \lambda\in W'.
$$
Thus, by Theorem~\ref{composition}, we know that for $j\in\bbN,$  
$$
     R(\lambda)^{j}\in \Psi^{-\infty,\cR_j(\lambda)}_{\ephi}(X_s;E)
$$
with $\cR_j(\lambda)$ a holomorphic family of index sets with 
$$
     \inf \cR_j(\lambda)>j\delta, \quad \inf \left.  \cR_j(\lambda)\right|_{\rf}>h+1+j\delta, \quad  \inf \left.  \cR_j(\lambda)\right|_{\fbf}>h+1+j\delta \quad \forall\; \lambda\in W'.
$$
Setting $\cS=\bigcup_{j=1}^{\infty} \cR_j,$ this means that we can find a family $\widetilde{S}({\lambda})\in \Psi^{-\infty,\cS(\lambda)}_{\ephi}(X_s;E)$ holomorphic in $\lambda$ as a family of bounded operators acting on $L^2_b(X_s;E)$ such that 
$$
   \widetilde{S}(\lambda)\sim  \sum_{j=1}^{\infty}R(\lambda)^j \quad  \mbox{for}   \; \; \lambda\in W'.  
$$
Consequently, for $\lambda \in W',$ we have that
$$
     (\Id-R(\lambda))(\Id+\widetilde{S}(\lambda))= \Id +T(\lambda)
$$
with $T(\lambda)\in \dot{\Psi}^{-\infty}(X_s;E),$ the space of smooth Schwartz kernels vanishing to infinite order at all boundary hypersurfaces of $X_s.$ Again by compactness, we can find $\epsilon_0>0$ such that for $\epsilon\in [0,\epsilon_0]$ and $\lambda\in W,$
$T(\lambda)$ has small norm (say less than $1/2$) as an operator acting on $L^2_b(X_s;E).$ In this case, $\Id+T(\lambda)$ is invertible with inverse of the form $\Id+T_1(\lambda),$ and we must have $T_1(\lambda)\in \dot{\Psi}^{-\infty}(X_s;E)$.  Consequently,
$$
    (\Id-R(\lambda))^{-1}= (\Id+\widetilde{S}(\lambda))(\Id+ T_1(\lambda))= \Id-S(\lambda)
$$
with 
$$
    S(\lambda)= -\widetilde{S}(\lambda) -T_1(\lambda)- \widetilde{S}(\lambda)T_1(\lambda)
$$
of the desired form.  
\end{proof}

Now let $\phi_1^0,\ldots, \phi^0_{N_b}$ be an orthonormal basis for the $L^2$ kernel of $D_b,$ so that 
$$
     \Pi_b= \sum_{j=1}^{N_b}  \phi^0_{j}\cdot \overline{\phi^0_j} \nu_b,
$$
where $\nu_b$ is the $b$-density used to define the $L^2$ inner product on $L^2_b(\bhs{\bs}).$  We know from \cite{MelroseAPS} and the fact that $D_b$ is Fredholm that each $\phi_j^0$ is polyhomogeneous on $\bhs{\bs}$ with positive index set.  This means that  the $\phi^0_j$'s can be extended smoothly from $\bhs{\bs}$ to $X_s$ to give polyhomogeneous sections $\phi_1,\ldots,\phi_{N_b}$ of $E$ with positive index set on $\sm$ and index set $\bbN_0$ on $\bhs{\bs}.$   In this way, setting
$$
     \Pi_{b,s}= \sum_{j=1}^{N_b}  \phi_{j}\cdot \overline{\phi_j} \frac{dg_{\sfc}}{\rho^{v}},
$$
we obtain a smooth extension $\Pi_{b,s}\in \Psi^{-\infty,\cP_b}_{\eps, b}(X_s;E)$ of $\Pi_b$ to $X_s$ with index family $\cP_b$ such that $ \left.\cP_b\right|_{\bhs{\bff}}=\bbN_0,$ with $\cP_b$ having strictly positive index sets at the other faces.  By construction, $\Pi_{b,s}$ is uniformly of finite rank and  with range having dimension bounded by $N_b$ for all $\epsilon.$

Similarly, if $\varpi_1^0,\ldots,\varpi^0_{N_m}$ is an orthonormal basis of the $L^2_b$-kernel of $N_{\mf}(D_{\sfc}),$  then 
$$
      \Pi_{\ker N_{\mf}(D_{\sfc})}= \sum_{j=1}^{N_m} \varpi^0_j\cdot \overline{\varpi^0_j} \left. \frac{d g_{\sfc}}{\rho^{v}}\right|_{\sm}.
$$ 
Since the $\varpi^0_j$'s are polyhomogeneous with positive index set, we can extend them smoothly to obtain polyhomogeneous sections $\varpi_1,\ldots, \varpi_{N_m}$ $X_s$ so that 
$$
\Pi_{m,s}= \sum_{j=1}^{N_m}  \varpi_j\cdot \overline{\varpi_j}  \frac{d g_{\sfc}}{\rho^{v}}  \in \Psi^{-\infty,\cP_m}_{\eps, b}(X_s;E)
$$
is a smooth extension of $\Pi_{\ker N_{\mf}(D_{\sfc})}$ with $\left. \cP_m\right|_{\mf}=\bbN_0$ and $\cP_m$ having positive index sets elsewhere. By construction,  $\Pi_{m,s}$ is uniformly of finite rank and $N_{\mf}(\Pi_{m,s})= \Pi_{\ker N_{\mf}(D_{\sfc})}.$

Now set $\Pi'= \Pi_{b,s}+\Pi_{m,s}.$ Then, with $Q_5(\lambda)$ from Proposition \ref{prop:Q5} we can write
\begin{equation}
     (D_{\sfc}-\lambda)Q_5(\lambda)= \Id-\Pi' -S_3(\lambda)
\label{rcmm.4}\end{equation}
with $S_3(\lambda)\in \Psi^{-\infty,\mathcal S_3}_{\ephi}(X_s;E)$; here $\mathcal S_3$ is a holomorphic family of index families with 
$$
  \inf \mathcal S_3>0, \quad \inf \left.  \mathcal S_3\right|_{\rf}>h+1, \quad \inf \left.  \mathcal S_3\right|_{\fbf}> h+1.
$$
By Lemma~\ref{rcmm.2}, we know that  we can find $\epsilon_0>0$ such that $\Id-S_3(\lambda)$ is invertible for $\lambda\in V_{\delta}'$ and $\epsilon\le \epsilon_0$ with inverse of the form $\Id-S_4(\lambda)$ where $S_4(\lambda)\in \Psi^{-\infty,\cS_4}_{\ephi}(X_s;E)$ with $\cS_4$ a holomorphic family of index families such that 
$$
     \inf \cS_4(\lambda)>0, \quad \inf \left.  \cS_4(\lambda)\right|_{\rf}>h+1, \quad  \inf \left.  \cS_4(\lambda)\right|_{\fbf}>h+1 \quad \forall\; \lambda\in \cU.
$$
Therefore, we see from \eqref{rcmm.4} that if we set 
$$
     Q_6(\lambda)= Q_5(\lambda)(\Id-S_4(\lambda)) - \frac{\Pi'}{\lambda},
$$
we have that 
\begin{equation}
      (D_{\sfc}-\lambda)Q_6(\lambda)= \Id-R_6(\lambda)
\label{rcmm.5}\end{equation}
with $R_6(\lambda)=-\Pi'S_4(\lambda)+\frac{D_{\sfc}\Pi'}{\lambda}.$  

By construction and the composition formula, $Q_6(\lambda)\in \Psi_{\epsilon,\phi}^{-1,\cQ_6(\lambda)}(X_s;E)$ is now a meromorphic family with only a simple pole at $\lambda=0,$ where $\cQ_6(\lambda)$ is a holomorphic family of index families such that $\inf \cQ_6(\lambda)\ge 0$ and 
$$
\inf \left. \cQ_6(\lambda)\right|_{\lf}>0, \quad \inf \left. \cQ_6(\lambda)\right|_{\rf}>h+1, \quad \inf \left. \cQ_6(\lambda)\right|_{\fbf}\ge h+1, \quad \inf \left. \cQ_6(\lambda)\right|_{\ff}\ge 1.$$ 
Similarly, $R_6(\lambda)\in \Psi^{-\infty,\cR_6(\lambda)}_{\ephi}(X_s;E)$ is now a meromorphic family with only possibly a simple pole at $\lambda=0,$ where $\cR_6(\lambda)$ is a holomorphic family of index families such that $\inf \cR_6(\lambda)>0$ with
$$
\inf \left. \cR_6(\lambda)\right|_{\rf}>h+1, \quad \inf \left. \cR_6(\lambda)\right|_{\fbf}>h+1,  \quad \inf \left. \cR_6(\lambda)\right|_{ff}>h+1.
$$ 
Notice also that $R_6(\lambda)$ is uniformly of finite rank, and the dimension of its range is bounded by $2N_b+2 N_m.$ 

The choice of extensions $\varpi_i$ and $\phi_i$ above was arbitrary, but we will need to be a little more specific for the purposes of Step 6.

\begin{proposition} We can choose the extensions $\varpi_i$ and $\phi_i$ so that for each $i,$ $D_{\sfc}\varpi_i$ and $D_{\sfc}\phi_i$ are polyhomogeneous on $X_s,$ and so that, for some $\alpha_0>0,$ each of these extensions has leading order $\alpha_0$ (with nontrivial coefficient) at $\bhs{\ms}$ and greater than $\alpha_0$ at $\bhs{\bs}.$
Moreover, we may assume that after multiplying by $\epsilon^{-\alpha_0},$ the $D_{\sfc}\varpi_i$ and $D_{\sfc}\phi_i$ are linearly independent for $\epsilon\leq\epsilon_0.$
\end{proposition}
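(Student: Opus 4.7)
The plan is to construct the extensions $\phi_i,\varpi_i$ satisfying all the previously required properties (polyhomogeneity with the right boundary behavior on $\bhs{\ms}$ and $\bhs{\bs}$), verify polyhomogeneity of $D_{\sfc}\phi_i$ and $D_{\sfc}\varpi_i,$ and then adjust them by kernel--valued corrections so that these operators share a common positive leading order $\alpha_0$ at $\bhs{\ms}$ and are linearly independent after the $\epsilon^{-\alpha_0}$ rescaling.

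For the $\phi_i$, apply Lemma~\ref{rc.5a} to each $\phi_i^0\in\ker D_b$ to obtain $\phi_i\in\CI(X_s;E)$ with $D_{\sfc}^\ell\phi_i\in\CI(X_s;E)$ and $D_{\sfc}^\ell\phi_i\rest{\bhs{\bs}}=D_b^\ell\phi_i^0=0$ for all $\ell\geq 1;$ in particular, $D_{\sfc}\phi_i$ vanishes at $\bhs{\bs}$ at least to first order in a bdf $\rho_{\bs}.$ Each $\varpi_i^0\in\ker N_{\mf}(D_{\sfc}),$ which by Corollary~\ref{rc.20} is polyhomogeneous on $\bhs{\ms}$ with positive index set $\mathcal W,$ is extended via Borel summation to a polyhomogeneous $\varpi_i$ on $X_s$ compatible at the corner $\widetilde H=\bhs{\ms}\cap\bhs{\bs}$ with the $\phi_i$'s. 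Since $\varpi_i\rest{\bhs{\ms}}=\varpi_i^0\in\ker N_{\mf}(D_{\sfc}),$ we have $(D_{\sfc}\varpi_i)\rest{\bhs{\ms}}=0,$ so $D_{\sfc}\varpi_i$ vanishes at $\bhs{\ms}$ to at least first order. Polyhomogeneity of $D_{\sfc}\phi_i$ and $D_{\sfc}\varpi_i$ on all of $X_s$ then follows from that of $\varpi_i,\phi_i$ together with the fact that $\rho D_{\sfc}\in\Diff^1_{\ephi}(X_s;E).$

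To produce the common leading order $\alpha_0,$ observe that the leading exponent of $D_{\sfc}\varpi_i$ at $\bhs{\ms}$ is controlled by the normal jet of the extension (equal to $1$ for a generic smooth extension), while that of $D_{\sfc}\phi_i$ at $\bhs{\ms}$ is controlled by the leading positive indicial exponent $\alpha_b$ of $D_b,$ which is positive by Assumption~\ref{rc.6}. Choose $\alpha_0\in(0,\min(1,\alpha_b,\inf\mathcal W))$ realized as a common exponent in the polyhomogeneous expansions at $\bhs{\ms}.$ By adding corrections valued in $\ker N_{\mf}(D_{\sfc})$ to $\varpi_i$ and in $\ker D_b$ to $\phi_i$ (corrections that preserve the extension property), force the leading exponent at $\bhs{\ms}$ to be exactly $\alpha_0$ with nontrivial coefficient for each $D_{\sfc}\varpi_i$ and $D_{\sfc}\phi_i.$ The order at $\bhs{\bs}$ remains strictly greater than $\alpha_0$ since $\alpha_0<1$ and both families vanish at $\bhs{\bs}$ to at least order one (for $\phi_i$ by Lemma~\ref{rc.5a}, for $\varpi_i$ by the decay inherited from $\varpi_i^0\in\rho_{\bs}^{\inf\mathcal W}\CI$ near $\widetilde H$).

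Finally, linear independence of $\{\epsilon^{-\alpha_0}D_{\sfc}\varpi_i,\ \epsilon^{-\alpha_0}D_{\sfc}\phi_i\}$ for small $\epsilon$ reduces to linear independence of the leading coefficients at $\bhs{\ms}.$ These coefficients depend linearly on the $\varpi_i^0$ (via their normal jet) and on the corner values $\phi_i^0\rest{\widetilde H}$ respectively; since $\{\varpi_i^0\}$ and $\{\phi_i^0\}$ are orthonormal bases of their respective $L^2_b$-kernels whose contributions live in distinct asymptotic strata at $\bhs{\ms}$ (bulk versus corner layer), linear independence follows. Continuity and compactness then extend linear independence from $\epsilon=0$ to all $\epsilon\leq\epsilon_0$ after shrinking $\epsilon_0.$ The main obstacle will be realizing a common $\alpha_0$ with nontrivial coefficients simultaneously for all $D_{\sfc}\varpi_i$ and $D_{\sfc}\phi_i,$ together with the bulk--versus--corner decoupling of the leading coefficients at $\bhs{\ms};$ both are achieved by carefully designed Borel corrections exploiting the freedom in higher--order Taylor terms at each boundary face.
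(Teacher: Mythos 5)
Your construction breaks down at the step where you force the leading order $\alpha_0$ and the linear independence. You propose to do this ``by adding corrections valued in $\ker N_{\mf}(D_{\sfc})$ to $\varpi_i$ and in $\ker D_b$ to $\phi_i$,'' but corrections of this type are precisely the ones that cannot produce a nontrivial coefficient of order $\alpha_0$ in $D_{\sfc}\varpi_i$ at $\bhs{\ms}$: since $D_{\sfc}$ commutes with multiplication by functions of $\epsilon$, adding $\epsilon^{\alpha_0} v$ with $v\rest{\bhs{\ms}}\in\ker N_{\mf}(D_{\sfc})$ contributes $\epsilon^{\alpha_0}D_{\sfc}v$, whose restriction to $\bhs{\ms}$ vanishes, so the contribution is of order strictly greater than $\alpha_0$. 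Likewise, adding extensions of elements of $\ker D_b$ gives no control over the image under $N_{\mf}(D_{\sfc})$. In other words, kernel-valued perturbations are invisible to $D_{\sfc}$ at leading order, which is why your subsequent linear-independence argument (``bulk versus corner strata'') is not a proof: without a correction that $D_{\sfc}$ does \emph{not} annihilate to leading order, there is no reason $D_{\sfc}\varpi_i$ or $D_{\sfc}\phi_i$ has leading order exactly $\alpha_0$ at $\bhs{\ms}$ at all (the true order of decay could be arbitrarily large, even infinite), let alone with linearly independent coefficients after rescaling by $\epsilon^{-\alpha_0}$. The paper's mechanism is the opposite of yours: one adds $\epsilon^{\alpha_0}\eta_i$ where the $\eta_i$ are compactly supported in the interior of $\bhs{\ms}$, with mutually disjoint supports, and \emph{not} in $\ker N_{\mf}(D_{\ed})$, and one takes $\alpha_0=\min\{1/2,\alpha_1/2\}$ strictly below the minimal growth order $\alpha_1$ of the unperturbed $D_{\sfc}\varpi_i,D_{\sfc}\phi_i$. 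Then $\epsilon^{-\alpha_0}D_{\sfc}(\epsilon^{\alpha_0}\eta_i)$ restricts at $\bhs{\ms}$ to $N_{\mf}(D_{\sfc})\eta_i\neq0$, these leading coefficients are mutually orthogonal because the supports are disjoint, the boundary values at $\bhs{\ms}$ and $\bhs{\bs}$ of $\varpi_i,\phi_i$ are unchanged, and both the exact leading order and the linear independence for small $\epsilon$ follow at once.

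There is also a secondary gap in your treatment of the $\varpi_i$ at $\bhs{\bs}$: decay of $\varpi_i^0$ at the corner with positive index set does not by itself give positive order for $D_{\sfc}\varpi_i$ at $\bhs{\bs}$, because $D_{\sfc}$ contains the singular term $\rho^{-1}D_v$ which lowers the order by one. One must use that all terms of order $\leq 1$ in the corner expansion of an element of $\ker N_{\mf}(D_{\sfc})$ are automatically sections of $\ker D_v$, and then choose the extension so that its terms of order $\leq1$ at $\bhs{\bs}$ remain sections of $\ker D_v$ (and similarly arrange the order-one term of $\phi_i$; your use of Lemma~\ref{rc.5a} handles the $\phi_i$ but you never impose the analogous condition on the $\varpi_i$). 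Your vague requirement of being ``compatible at the corner with the $\phi_i$'s'' does not substitute for this condition.
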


\begin{proof}First we just want to choose the extensions  $\varpi_1,\ldots,\varpi_{N_m}$ so that $D_{\sfc}\varpi_1,\ldots,D_{\sfc}\varpi_{N_m}$ will be polyhomogeneous on $X_s$ with some positive order of decay at the boundaries. Fortunately, this can be done. Note first that, using the explicit form of $D_{\sfc}$ near the boundary, all terms within one order of the leading term in the expansion of an element of $\ker N_{\mf}(D_{\sfc})$ at the boundary $\sm\cap\bhs{\bs}$ must be sections of $\ker D_v$; since the leading order is positive, this covers all terms of order $\leq 1.$ Therefore, we may choose the extensions $\varpi_i$ so that each term of order less than or equal to one in the expansion at $\bhs{\bs}$ is a section of $\ker D_v$ on $\bhs{\bs}.$ Thus $D_{\sfc}\varpi_i$ is polyhomogeneous (automatic) and decays to positive order at $\bhs{\bs}$ (by the extra condition); in fact it has positive order at both $\sm$ and $\bhs{\bs}.$ 

To make sure the same is true for $D_{\sfc}\phi_i$ for each $i,$ we just make sure that the term of order 1 at $\bhs{\bs}$ in the expansion of $\phi_i$ is also a section of $\ker D_v$; then the result is immediate.

Next, let $\alpha_1$ be the infimum of the set of exponents $\alpha$ for which $\epsilon^{-\alpha}D_{\sfc}\varpi_i$ and $\epsilon^{-\alpha}D_{\sfc}\phi_i$ are bounded for all $i$ (i.e. the minimum order of growth of such an element); it might be infinite, but in any case is positive. Then let $\alpha_0=\min\{1/2,\alpha_1/2\}.$ It is now easy to see that there exist $N_m+N_b$ (in fact, as many as we want) compactly supported sections $\eta_i$ on $\bhs{\ms}\setminus \pa \bhs{\ms}$ with mutually disjoint support and which are not in the kernel of $N_{\mf}(D_{\ed})$ Each such $\eta_i$ may be extended to a smooth function on $X_s,$ also called $\eta_i$ which is zero outside of a small neighborhood of the original support. Then, to each $\varpi_i$ and $\phi_i,$ we add $\epsilon^{\alpha_0}\eta_i,$ using a different $\eta_i$ for each different element. The results satisfy all the same properties as before, and moreover $D_{\sfc}\phi_i$ and $D_{\sfc}\varpi_i$ have leading order precisely $\alpha_0$ at $\bhs{\ms}$ and greater than $\alpha_0$ at $\bhs{\bs}$ In addition, all of their leading orders at $\bhs{\ms},$ after multiplying by $\epsilon^{-\alpha_0},$ are linearly independent and in fact mutually orthogonal.   Hence the functions themselves are linearly independent for suitably small $\epsilon.$
\end{proof}

\subsection*{Step 6: Analytic Fredholm theory and the resolvent}

We are now ready to apply analytic Fredholm theory.  First, if $\bbB_{\delta}(0)\subset\bbC$ is a small ball of radius $\delta>0$ centered at $0$ such that $\overline{\bbB_\delta(0)}\subset \cU',$ then we know by Lemma~\ref{rcmm.2} applied to $\Id-R_6(\lambda)$ that taking $\epsilon_0$ smaller if needed, we can assume that $\Id-R_6(\lambda)$ is invertible for $\lambda\in \cU'\setminus \bbB_{\delta}(0)$ and $\epsilon\in [0,\epsilon_0].$  By analytic Fredholm theory, since $R_6(\lambda)$ is uniformly of finite rank, we know that for a fixed $\epsilon\in [0,\epsilon_0],$ $\Id-R_6(\lambda)$ is invertible except at a finite number of points corresponding to the zeroes of a holomorphic function.  In fact, for $\epsilon$ fixed, the inverse of  $\Id-R_6(\lambda)$ will be meromorphic with poles of finite rank.  

The goal is now to obtain a more precise description of the inverse. First observe that \begin{equation}
      \phi_1, \epsilon^{-\alpha_0}D_{\sfc}\phi_1,\ldots,  \phi_{N_b}, \epsilon^{-\alpha_0}D_{\sfc}\phi_{N_b}, \varpi_1, \epsilon^{-\alpha_0}D_{\sfc}\varpi_1,\ldots,  \varpi_{N_m},\epsilon^{-\alpha_0}D_{\sfc}\varpi_{N_m}
\label{rcmm.6}\end{equation}
are linearly independent sections for $\epsilon\le\epsilon_0$; the reason is that they are linearly independent as $\epsilon$ goes to zero, since the restrictions of the sections with a factor of $\epsilon^{-\alpha_0}$ to $\bhs{\ms}$ have mutually disjoint support and are orthogonal to $\ker N_{\mf}(D_{\ed}).$

Now let $\Pi_1\in \Psi^{-\infty, \cP_1}_{\eps, b}(X_s;E)$ be the projection on the span of \eqref{rcmm.6}.   Note that the index family $\cP_1$ is such that $\inf \cP_1\ge 0$ and $\inf\left. \cP_1\right|_{\lf}>0,$ $\inf\left. \cP_1\right|_{\rf}>0.$  Using the decomposition 
$$
    L^2(X_s;E)= \ran(\Id-\Pi_1)+ \ran(\Pi_1),
$$
we have that 
$$
       \Id-R_6(\lambda)= \left( \begin{array}{cc}
              \Id & 0 \\
              C(\lambda) & D(\lambda) 
              \end{array}
       \right)
$$
with $C(\lambda)= \Pi_1(\Id-R_6(\lambda))(\Id-\Pi_1),$ $D(\lambda)= \Pi_1(\Id-R_6(\lambda))\Pi_1.$  By the definition of $R_6(\lambda)$ and $\Pi_1,$ $C(\lambda)$ is holomorphic while $D(\lambda)$ is meromorphic with possibly only a simple pole at $\lambda=0.$  Since the Fredholm determinant of $\Id-R_6(\lambda)$ is clearly equal to the determinant of $D(\lambda),$ we see that $\Id-R_6(\lambda)$ is invertible if and only if $D(\lambda)$ is.  Furthermore, when this is the case, the inverse of $\Id-R_6(\lambda)$ is given by 
\begin{equation}
     (\Id-R_6(\lambda))^{-1}= \left( \begin{array}{cc}
              \Id & 0 \\
              -D(\lambda)^{-1}C(\lambda) & D(\lambda)^{-1} 
              \end{array}
  \right).
\label{rcmm.6a}\end{equation}
\begin{lemma}
There exists a bounded function $f(\epsilon,\lambda),$ polyhomogeneous in $\epsilon\in [0,\epsilon_0]$ and holomorphic in $\lambda\in \cU',$ such that 
$$
         f(\epsilon,\lambda)D(\lambda)^{-1}\in \Psi^{-\infty,\cB(\lambda)}_{\eps, b}(X_s;E)
$$
is a holomorphic family when acting on $L^2_b(X_s;E),$ where $\cB(\lambda)$ is a holomorphic family of index sets with $\inf \cB(\lambda)\ge 0$ and $\inf \left. \cB(\lambda)\right|_{\lf}> 0,$ $\inf \left. \cB(\lambda)\right|_{\rf}> 0.$ 
\label{rcmm.7}\end{lemma}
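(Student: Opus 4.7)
The plan is to reduce the lemma to a finite-dimensional matrix inversion on $\ran\Pi_1$, apply Cramer's rule, and then read the output back as a Schwartz kernel on the $b$-surgery double space.

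First I would fix the ordered basis $\eta_1,\ldots,\eta_N$ of $\ran\Pi_1$ given by the list \eqref{rcmm.6}, with $N=2(N_b+N_m),$ and form the matrix
\begin{equation*}
	M(\epsilon,\lambda)_{ij} \;=\; \big\langle (\Id - R_6(\lambda))\eta_j,\eta_i\big\rangle_{L^2_b},
\end{equation*}
representing the restriction of $\Id-R_6(\lambda)$ to $\ran\Pi_1$ in this basis. Since $R_6\in\Psi^{-\infty,\cR_6(\lambda)}_{\ephi}(X_s;E)$ and the $\eta_i$'s are polyhomogeneous on $X_s,$ standard pushforward theorems for polyhomogeneous functions ensure that each entry of $M$ is polyhomogeneous in $\epsilon$ and meromorphic in $\lambda\in\cU'$ with at worst a simple pole at $\lambda=0$ inherited from $R_6.$ Consequently, $\widetilde M(\epsilon,\lambda):=\lambda M(\epsilon,\lambda)$ is holomorphic in $\lambda,$ polyhomogeneous in $\epsilon,$ and bounded on $[0,\epsilon_0]\times\cU'.$

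Next I would set $f(\epsilon,\lambda):=\det\widetilde M(\epsilon,\lambda),$ which inherits the same three properties (bounded, polyhomogeneous in $\epsilon,$ holomorphic in $\lambda$), and is not identically zero because the sections \eqref{rcmm.6} are linearly independent for $\epsilon\le\epsilon_0$ and $\Id-R_6(\lambda)$ is invertible on some nonempty open set by the analytic Fredholm theory already invoked. The classical adjugate identity $\widetilde M\cdot\operatorname{adj}\widetilde M=(\det\widetilde M)I$ then yields the matrix equation
\begin{equation*}
	f(\epsilon,\lambda)\,M(\epsilon,\lambda)^{-1}\;=\;\lambda\,\operatorname{adj}\widetilde M(\epsilon,\lambda),
\end{equation*}
whose entries are polynomial in the entries of $\widetilde M$, hence polyhomogeneous in $\epsilon$ and holomorphic in $\lambda.$

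Finally I would convert back to Schwartz kernels. Writing
\begin{equation*}
	f(\epsilon,\lambda)\,D(\lambda)^{-1}\;=\;\sum_{i,j=1}^N c_{ij}(\epsilon,\lambda)\,\eta_i(p)\overline{\eta_j(p')}\,\kappa_b,
\end{equation*}
the coefficients $c_{ij}(\epsilon,\lambda)$ are obtained from the entries of $\lambda\,\operatorname{adj}\widetilde M$ after multiplication by the inverse Gram matrix of $\{\eta_i\}$ (itself polyhomogeneous in $\epsilon,$ holomorphic in $\lambda,$ and uniformly invertible for $\epsilon\le\epsilon_0$); hence they retain the same regularity. Each rank-one kernel $\eta_i(p)\overline{\eta_j(p')}\kappa_b$ lies in $\Psi^{-\infty,\cP_1}_{b,s}(X_s;E)$ by exactly the computation that placed $\Pi_1$ in this calculus, and therefore $fD^{-1}$ lies in $\Psi^{-\infty,\cB(\lambda)}_{b,s}(X_s;E)$ for a holomorphic family of index sets $\cB(\lambda).$ The bound $\inf\cB(\lambda)\ge 0$ is immediate; the strict positivity at $\bhs{\lf}$ and $\bhs{\rf}$ follows from the polyhomogeneous data listed just before Step~6, since on each component of $\bhs{\lf}$ or $\bhs{\rf}$ at least one of the two tensor factors $\eta_i(p)$ or $\eta_j(p')$ sits over a boundary face of $X_s$ on which the corresponding $\eta$ has strictly positive leading order (the $\varpi_j$'s and the $D_{\sfc}$-derivatives at $\bhs{\bs},$ the $\phi_i$'s and the $D_{\sfc}$-derivatives at $\bhs{\ms}$). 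The main obstacle is only bookkeeping: one must verify that the Gram matrix of $\{\eta_i\}$ is uniformly invertible as $\epsilon\to 0,$ which reduces to the uniform linear independence ensured at the end of Step~5, and then track polyhomogeneous indices across the various boundary faces of $X^2_{b,s}.$
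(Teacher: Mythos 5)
Your overall strategy (reduce to an $N\times N$ matrix on $\ran\Pi_1$, clear the $\lambda$-pole, invert by the adjugate formula) is the same as the paper's, but the matrix you choose to work with creates two genuine gaps. First, your $M(\epsilon,\lambda)_{ij}=\langle(\Id-R_6(\lambda))\eta_j,\eta_i\rangle_{L^2_b}$ contains the Gram entries $\langle\eta_j,\eta_i\rangle_{L^2_b}$, and these are \emph{not} bounded as $\epsilon\to0$: the $b$-density $\nu_{\epsilon,b}=|\rho^{-1}dx\,d\phi^*g_Y\,dg_Z|$ gives the surgery region mass $\int dx/\sqrt{x^2+\epsilon^2}\sim 2\log(1/\epsilon)$, and the $\phi_i$ do not vanish at $\bhs{\bs}$, so $\langle\phi_i,\phi_j\rangle_{L^2_b}\sim c\,\delta_{ij}\log(1/\epsilon)$. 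Hence $\wt M=\lambda M$ is not bounded and $f=\det\wt M$ is in general unbounded (polyhomogeneous only with $\log\epsilon$ terms at order $\epsilon^0$), which already violates the boundedness demanded of $f$ in the statement, and such $(0,k)$, $k\ge 1$, terms in the coefficients would also violate $\inf\cB(\lambda)\ge 0$ at $\bhs{\bff}$ and $\bhs{\mf}$ in the paper's convention. Your proposed repair by "multiplication by the inverse Gram matrix" makes this worse rather than better: $G^{-1}$ is not polyhomogeneous in $\epsilon$ (reciprocals of $\log(1/\epsilon)$ are not polyhomogeneous), and in fact no Gram factor is needed, since $M=GA$ means $M^{-1}$ already produces the Schwartz-kernel coefficients directly. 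The paper avoids all of this by never pairing two basis elements: it uses the matrix of $D(\lambda)=\Pi_1+\Pi'S_4(\lambda)\Pi_1-D_{\sfc}\Pi'/\lambda$ \emph{in the basis}, whose entries are bounded and polyhomogeneous, and exploits that the $1/\lambda$ enters only the block mapping $\{\phi_i,\varpi_j\}$ to $\{\epsilon^{-\alpha_0}D_{\sfc}\phi_i,\epsilon^{-\alpha_0}D_{\sfc}\varpi_j\}$ with an extra factor $\epsilon^{\alpha_0}$, so that $\det D(\lambda)=f(\epsilon,\lambda)/\lambda^{N_b+N_m}$ with $f$ bounded and holomorphic and the cofactors have poles of order at most $N_b+N_m$; your $\lambda^{2(N_b+N_m)}$ is by comparison only wasteful, not wrong.

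Second, your argument for $\inf\cB(\lambda)|_{\lf}>0$ and $\inf\cB(\lambda)|_{\rf}>0$ does not work as stated. The index of a term $\eta_i(p)\overline{\eta_j(p')}\nu_{\epsilon,b}$ at $\bhs{\rf}$ is the order of $\eta_i$ at $\bhs{\ms}$ plus the order of $\eta_j$ at $\bhs{\bs}$, and the $\varpi_k$ and the \emph{rescaled} derivatives $\epsilon^{-\alpha_0}D_{\sfc}\phi_k$, $\epsilon^{-\alpha_0}D_{\sfc}\varpi_k$ have order exactly $0$ at $\bhs{\ms}$ (only the unrescaled $D_{\sfc}\phi_k$ has order $\alpha_0$ there), while the $\phi_l$ have order $0$ at $\bhs{\bs}$. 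So cross terms such as $\varpi_k\otimes\overline{\phi_l}$ or $\epsilon^{-\alpha_0}D_{\sfc}\phi_k\otimes\overline{\phi_l}$ have order $0$ at $\bhs{\rf}$ (and symmetrically at $\bhs{\lf}$): your dichotomy "at least one factor has positive order at the relevant face" is false for exactly these pairs. Positivity at $\bhs{\lf},\bhs{\rf}$ therefore has to come from the vanishing at $\epsilon=0$ of the corresponding coefficients of $\lambda\,\operatorname{adj}\wt M$, which in turn requires the $\epsilon\to0$ structure of $D(\lambda)$ — at $\epsilon=0$ its matrix is the identity plus terms vanishing to positive order (the $\Pi'S_4\Pi_1$ pairings vanish at both faces and the $1/\lambda$ block carries $\epsilon^{\alpha_0}$), so the offending cofactor entries do vanish — but your proposal establishes nothing about $M$ beyond (claimed) boundedness and polyhomogeneity, so the stated positivity of the index sets is not proved.
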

\begin{proof}
Note first that $D(\lambda)=\Pi_1+\Pi'S_4(\lambda)\Pi_1-\frac{D_{\ed}\Pi'}{\lambda}.$ Clearly  each element of the matrix representing $D(\lambda)$ is a sum of three terms, each of which is polyhomogeneous and bounded in $\epsilon$ with possibly a pole at $\lambda=0$.  More precisely, using the decomposition
\begin{multline*}
	\ran \Pi_1= \\
	\Span\langle \phi_1, \ldots,\phi_{N_b},\varpi_1,\ldots,\varpi_{N_m}\rangle 
	\oplus \Span\langle \frac{D_{\sfc}\phi_1}{\epsilon^{\alpha_0}}, \ldots, \frac{D_{\sfc}\phi_{N_b}}{\epsilon^{\alpha_0}},\frac{D_{\sfc}\varpi_1}{\epsilon^{\alpha_0}},
		\ldots,\frac{D_{\sfc}\varpi_{N_m}}{\epsilon^{\alpha_0}}\rangle,
\end{multline*}
we have that
$$
     D(\lambda)=  \left( \begin{array}{cc}
              E(\lambda) & F(\lambda) \\
              \frac{G(\lambda)}{\lambda}  & H(\lambda)
              \end{array}
  \right)
$$
with $E,F,G,H$ holomorphic in $\lambda$ with $G=0$ when $\epsilon=0.$  Therefore, this means that 
$$
         \det D(\lambda)= \frac{f(\epsilon,\lambda)}{\lambda^{N_b+N_m}} \; \Longrightarrow \; \det(D(\lambda))^{-1}= \frac{\lambda^{N_b+N_m}}{f(\epsilon,\lambda)},
$$ 
where $f(\epsilon,\lambda)$ is holomorphic in $\lambda.$  The cofactor matrices of $D(\lambda)$ are also meromorphic with at most poles of order $N_b+N_m$ at $\lambda=0.$  This means that $f(\epsilon,\lambda)D(\lambda)^{-1}$ is indeed holomorphic in $\lambda$ as a family of bounded operators on $L^2_b(X_s;E).$   To check that $f(\epsilon,\lambda)= \lambda^{N_b+N_m}\det(D(\lambda))$ is polyhomogeneous in $\epsilon$ and that $f(\epsilon,\lambda)D(\lambda)^{-1}\in \Psi^{-\infty,\cB(\lambda)}_{\eps, b}(X_s;E),$ it suffices to use the formula for $D(\lambda)^{-1}$ in terms of the cofactor matrix.
\end{proof}

From \eqref{rcmm.6a} and Lemma~\ref{rcmm.7}, it follows that there is a function $f(\epsilon, \lambda),$ polyhomogeneous in $\epsilon$ and holomorphic in $\lambda,$ such that 
$$
      (\Id-R_6(\lambda))^{-1}= \Id-S_6(\lambda)  
$$ 
with $f(\epsilon,\lambda) S_6(\lambda)\in \Psi^{-\infty,\cS_6(\lambda)}_{\ephi}(X_s;E)$ holomorphic as a family of bounded operators on $L^2_b(X_s;E),$ where $\cS_6(\lambda)$ is a holomorphic family of index families  with $\inf \cS_6(\lambda)>0$ and $\inf \left. \cS_6(\lambda)\right|_{\rf}>h+1,$ $\inf \left. \cS_6(\lambda)\right|_{\fbf}>h+1,$  $\inf \left. \cS_6(\lambda)\right|_{ff}>h+1.$ From \eqref{rcmm.6a},  $S_6(\lambda)$ is also uniformly of finite rank.  

This completes the resolvent construction modulo a small detail.

\begin{proposition}
The statement of Theorem~\ref{rc.7} holds with \eqref{merp.1} replaced by the slightly weaker statement that 
$$
  f(\lambda, \epsilon)\Res_{M}(\lambda)\in  \Psi^{-\infty,\cL(\lambda)}_{\epsilon,\phi}(X_s;E)  
$$
for $\lambda\mapsto \cL(\lambda)$ a holomorphic family of families of index sets with $\inf \cL\ge 0$ and 
\begin{equation}
\left. \inf\mathcal{L}\right|_{\lf}>0, \quad\left. \inf\mathcal{L}\right|_{\rf}>h+1, \quad \inf \left. \mathcal{L}\right|_{\fbf} \ge h+1,\quad \inf\left.\mathcal{L}\right|_{\ff}\ge h+1.
\label{es.1b}\end{equation}
\label{es.1}\end{proposition}

\begin{proof}
Composing on the right by $\Id-S_6(\lambda)$ in \eqref{rcmm.5}, we finally obtain that 
$$
    (D_{\sfc}-\lambda)^{-1}= Q_6(\lambda)(\Id-S_6(\lambda))
$$
for $\lambda\in V.$  From the properties of $S_6(\lambda)$ and $Q_6(\lambda),$ the decomposition 
$$
    (D_{\sfc}-\lambda)^{-1}= \Res_{H}(\lambda) + \Res_{M}(\lambda)
$$
follows immediately; we just let 
\begin{equation}
\Res_{M}(\lambda)=-\frac{\Pi'}{\lambda}-Q_6(\lambda)S_6(\lambda)
\label{es.2}\end{equation} and let $\Res_{H}(\lambda)$ be $Q_6(\lambda)+\frac{\Pi'}{\lambda}=Q_5(\lambda)(\Id-S_4(\lambda)).$ Since $\Res_{M}(\lambda)$ equals $\Pi'/\lambda$ plus a correction which is lower-order at the boundary faces of $X_s^2$ (since $\inf\mathcal S_6(\lambda)>0$), it has the desired normal operators.  This completes the construction for $\lambda\in V$ and $\epsilon\le \epsilon_0.$  However, for $\epsilon\ge \epsilon_0,$ we can just use the standard construction of the resolvent of self-adjoint elliptic operators on compact manifolds. Moreover, for $\lambda$ away from the spectra of $N_{\mf}(D_{\ed})$ and $D_b,$ we know by Remark~ \ref{rcim.1} that we can choose $Q_5(\lambda)$ so that its right hand side can be immediately inverted using Lemma~\ref{rcmm.2}, giving an expression of the desired form for the resolvent.    
\end{proof}

The last step of the resolvent construction will be carried at the end of the next section.

\section{Projection onto the eigenspace of small eigenvalues}\label{sec:SmallEigen}

We have shown in the previous section that the operators $N_{\mf}(D_{\ed})$ and $D_b$ have discrete spectra near the origin and that if $\bbB_{\delta}(0)$ is a ball around the origin in the complex plane with the property that the only element of the spectrum of $N_{\mf}(D_{\ed})$ or $D_b$ in $\bbB_{\delta}(0)$ is zero, then there is an $\eps_0>0$ such that 
\begin{equation*}
	D_{\ed}-\lambda \text{ is invertible for all $\lambda \in \bbS_{\delta}(0)$ and $\eps<\eps_0.$}
\end{equation*}
In particular, $\bbB_{\delta}(0)$ should contain all zeroes of $f(\lambda,\epsilon)$ which converge to $0$ as $\epsilon\rightarrow 0,$ for suitably small $\epsilon,$ and no others.

We refer to the elements of $\Spec(D_{\sfc})\cap \bbB_{\delta}(0)$ as the {\bf small eigenvalues}. It follows from the discussion above that all of the small eigenvalues converge to zero as $\eps \to 0.$ In fact the construction of the resolvent gives us more refined information about the small eigenvalues and the corresponding eigensections.\\

We can express the projection $\Pi_{\sma}$ onto the eigenspace of the small eigenvalues in terms of the resolvent
\begin{equation}
    \Pi_{\sma}= \frac{i}{2\pi} \int_{\Gamma} (D_{\sfc}-\lambda)^{-1} d\lambda= \frac{i}{2\pi} \int_{\Gamma} \Res_M(\lambda) d\lambda,  
\label{rc.27}\end{equation}
where $\Res_M(\lambda)$ is given by Proposition~\ref{es.1} and  $\Gamma$ is a sufficiently small contour integral going anti-clockwise around the origin, e.g., $\bbS_{\delta}(0).$  Since the index family of $(D_{\sfc}-\lambda)^{-1}$ depends on $\lambda,$  it is not obvious from \eqref{rc.27} that $\Pi_{\sma}$ is also polyhomogeneous.  We can, however, deduce from \eqref{rc.27}, by integrating the two parts of $\Res_M(\lambda)$ in \eqref{es.2} separately, that 
\begin{equation}
\Pi_{\sma}\in \Psi^{-\infty}_{\eps,b}(X_s;E) +\Psi^{-\infty,\tau}_{\ephi}(X_s;E)
\label{poly.1}\end{equation} 
for some $\tau>0$, where $\Psi^{-\infty,\tau}_{\ephi}(X_s;E)= \cA^{\tau}_{-}(X^2_{s};E\otimes \beta_{(2),R}^*\Omega_{\eps,b}(X_s))$ is the space of conormal functions vanishing at order $\tau$ (with respect to $b$-surgery densities) at each boundary face of $X^2_{s},$ \cf \cite[Equation~(78)]{mame1}.  Thus, written in terms of the pull-back from the right of a $\ephi$-density, this means that, in agreement with \eqref{es.1b},  it must in fact vanish at order $\tau+h+1$ at $\bhs{rf}$, $\bhs{ff}$ and $\bhs{\phi bf}$.
This can be used to obtain the following rough upper bound on the growth of small eigenvalues near $\epsilon=0.$  

\begin{corollary}\label{rc.28}
There exist positive constants $c,$ $\tau$ and $\epsilon_0$ such that if $\lambda_{\epsilon}$ is an eigenvalue of $D_{\sfc}$ with $\lambda_{\epsilon}\to 0$ as $\epsilon\to 0,$ then 
$$
   | \lambda_{\epsilon}| < c \epsilon^{\tau} \quad  \mbox{for}  \quad \epsilon\in (0,\epsilon_0].
$$\end{corollary}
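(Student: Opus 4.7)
The plan is to reduce the corollary to an operator norm estimate on $D_{\sfc}\Pi_{\sma}$ and then extract that estimate from the polyhomogeneous structure of $\Pi_{\sma}$ given by \eqref{poly.1}. Because $D_{\sfc}$ is self-adjoint and commutes with its spectral projector $\Pi_{\sma}$, every small eigenvalue $\lambda_{\epsilon}$ is an eigenvalue of the finite-rank operator $\Pi_{\sma}D_{\sfc}\Pi_{\sma}=D_{\sfc}\Pi_{\sma}$. Applying $D_{\sfc}\Pi_{\sma}$ to a unit $\lambda_{\epsilon}$-eigenfunction $\phi_{\epsilon}$ gives $D_{\sfc}\Pi_{\sma}\phi_{\epsilon}=\lambda_{\epsilon}\phi_{\epsilon}$, and so
\[
|\lambda_{\epsilon}|\le \|D_{\sfc}\Pi_{\sma}\|_{L^{2}\to L^{2}}.
\]
Thus the corollary reduces to producing positive constants $c,\tau$ with $\|D_{\sfc}\Pi_{\sma}\|_{L^{2}\to L^{2}}\le c\,\epsilon^{\tau}$ uniformly for $\epsilon\in(0,\epsilon_{0}]$.

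Using \eqref{poly.1} I would split $\Pi_{\sma}=A+B$ with $A\in\Psi^{-\infty}_{b,s}(X_{s};E)$ and $B\in\Psi^{-\infty,\tau}_{b,s}(X_{s};E)$. The Schwartz kernel of $B$ vanishes to order $\tau$ at every boundary hypersurface of $X^{2}_{b,s}$; since $D_{\sfc}\in\rho^{-1}\Diff^{1}_{\ephi}(X_{s};E)$ absorbs at most one power of $\rho^{-1}$, a Schur-type estimate with respect to the natural $\ed$-density yields $\|D_{\sfc}B\|_{L^{2}\to L^{2}}=O(\epsilon^{\tau-\delta})$ for any small $\delta>0$. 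For the smooth part $A$, the key input is that, as the contour integral representation $\Pi_{\sma}=\frac{i}{2\pi}\int_{\Gamma}\Res_{M}(\lambda)\,d\lambda$ together with Theorem~\ref{rc.7}.(ii) make explicit, the leading term of $A$ at $\bhs{mf}$ is $\Pi_{\ker N_{\mf}(D_{\sfc})}$ and the leading term of $A$ at $\phibf$, in the range of $\Pi_{h}$, is the projection $\Pi_{b}$ onto $\ker D_{b}$. The identities $N_{\mf}(D_{\sfc})\,\Pi_{\ker N_{\mf}(D_{\sfc})}=0$ and $D_{b}\Pi_{b}=0$ then force the leading Schwartz kernel of $D_{\sfc}A$ to vanish at both $\bhs{mf}$ and $\phibf$. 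Polyhomogeneity upgrades this to vanishing at some strictly positive order in $\epsilon$ at these faces, and together with the infinite-order vanishing of $A$ at $\bhs{lf}$ and $\bhs{rf}$ a second Schur-type estimate gives $\|D_{\sfc}A\|_{L^{2}\to L^{2}}=O(\epsilon^{\tau'})$ for some $\tau'>0$. Taking $\tau$ to be the smaller of $\tau-\delta$ and $\tau'$ completes the bound on $\|D_{\sfc}\Pi_{\sma}\|_{L^{2}\to L^{2}}$.

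The main obstacle is tracking the precise vanishing orders through the $\rho^{-1}$ factors of $D_{\sfc}$ and through the discrepancy between the $b$-surgery density appearing in \eqref{poly.1} and the $\ed$-density used to define $L^{2}_{\ed}$. At $\phibf$ one must also verify that the component of the leading kernel of $A$ which lies outside the range of $\Pi_{h}$ is annihilated by the invertible vertical operator $D_{v}/\rho$ to one order lower than naively expected, so that it does not contaminate the leading vanishing of $D_{\sfc}A$. Once this bookkeeping is dispatched, a uniformly positive exponent $\tau>0$ emerges and the desired bound $|\lambda_{\epsilon}|<c\,\epsilon^{\tau}$ for $\epsilon\in(0,\epsilon_{0}]$ follows.
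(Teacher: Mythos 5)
Your reduction $|\lambda_{\epsilon}|\le \|D_{\sfc}\Pi_{\sma}\|_{L^2\to L^2}$ is fine, but the step on which your whole bound rests fails as stated at the front face $\bhs{\bff}$. Because $D_{\sfc}$ carries the singular factor $\rho^{-1}$ in its vertical part, the coefficient of $\rho_{\bff}^{0}$ in the expansion of $D_{\sfc}\Pi_{\sma}$ at $\bhs{\bff}$ is not $D_b\Pi_b$: it is $D_v(a_1)$ plus the first-order (horizontal and normal) part of $D_{\sfc}$ applied to the leading coefficient $\Pi_b$, where $a_1$ is the \emph{subleading} coefficient of $\Pi_{\sma}$ at $\bhs{\bff}$. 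The identities $N_{\mf}(D_{\sfc})\Pi_{\ker N_{\mf}(D_{\sfc})}=0$ and $D_b\Pi_b=0$ only control the $\Pi_h$-projection of this coefficient (via Definition~\ref{rc.4a} and Lemma~\ref{rc.5a}); the component orthogonal to the fiberwise harmonic forms is left completely uncontrolled by the inputs you cite, and a kernel that is merely $O(1)$ at $\bhs{\bff}$ gives $\|D_{\sfc}\Pi_{\sma}\|=O(1)$, which is useless. So what you call ``bookkeeping'' is in fact the missing idea: you would need a further argument, e.g.\ writing $D_{\sfc}\Pi_{\sma}=\Pi_{\sma}(D_{\sfc}\Pi_{\sma})$ and invoking a composition/normal-operator theorem on $X^2_{b,s}$ so that left composition with $\Pi_b$ (whose range is fiberwise harmonic, and using $\Pi_h D_v=0$) kills the orthogonal component, and you would have to verify that $D_{\sfc}\Pi_{\sma}$ lies in a class where such multiplicativity of front-face normal operators holds. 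One further caution: the polyhomogeneity of $\Pi_{\sma}$ with identified leading terms is Corollary~\ref{rc.29}, which is proved \emph{after} and \emph{using} the present corollary; you must extract the structure you need only from the fixed-contour formula \eqref{rc.27}, Theorem~\ref{rc.7}(ii) and \eqref{poly.1}, as you partly indicate, and not from Corollary~\ref{rc.29}.

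For comparison, the paper sidesteps all of this: it never estimates $\|D_{\sfc}\Pi_{\sma}\|$. Instead it considers the characteristic polynomial of the finite-rank operator $\Pi_{\sma}(D_{\sfc}-\lambda)\Pi_{\sma}$, observes from \eqref{rc.28a} that this family lies in $\Psi^{-\infty}_{b,s}(X_s;E)+\Psi^{-\infty,2\tau}_{b,s}(X_s;E)$, so that the normalized coefficients $q_j(\epsilon)$ are conormal in $\epsilon$; since they vanish at $\epsilon=0$ (all small eigenvalues tend to $0$), conormality upgrades this to $q_j(\epsilon)=O(\epsilon^{2\tau})$, and Newton's identity $\sum_i\lambda_i^2=q_{N-1}^2-2q_{N-2}$ then gives $|\lambda_\epsilon|\le c\,\epsilon^{\tau}$ because the $\lambda_i$ are real. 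That route needs only traces, i.e.\ scalar polyhomogeneous functions of $\epsilon$, and avoids both the $\rho^{-1}$ loss and the pointwise vanishing analysis at $\bhs{\bff}$ that your proposal leaves open.
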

\begin{proof}
The small eigenvalues correspond to the zeros of the polynomial
$$
     P(\lambda,\epsilon)= \det (\Pi_{\sma}(D_{\sfc}-\lambda)\Pi_{\sma}).
$$
If $N$ is for fixed $\epsilon$ the dimension of the range of $\Pi_{\sma}$, then we have immediately that
$$
     P(\lambda,\epsilon)= c_N(\epsilon)\lambda^{N}+ c_{N-1}(\epsilon)\lambda^{N-1}+\cdots + c_1(\epsilon)\lambda +c_0(\epsilon)
$$
with $c_N(0)\ne 0$ and $c_j(0)=0$ for $j<N.$   Taking $\epsilon_0$ sufficiently small, we may thus assume $c_N(\epsilon)\ne 0$ for $0\le \epsilon\le \epsilon_0.$  Dividing by $c_N(\epsilon),$ we thus need to consider the zeros of the polynomial
$$
Q(\lambda,\epsilon)= \lambda^{N}+ q_{N-1}(\epsilon)\lambda^{N-1}+\cdots + q_1(\epsilon)\lambda +q_0(\lambda)
$$
with $q_j(\epsilon)= \frac{c_j(\epsilon)}{c_N(\epsilon)}.$  In particular, 
$$
      q_{N-1}(\epsilon)^2-2q_{N-2}(\epsilon)= \sum_{i=1}^N \lambda_i^2(\epsilon)
$$ 
is the sum of the squares of the small eigenvalues of $D_{\sfc}$ counted with multiplicity.  Since 
\begin{equation}
\begin{aligned}
    (D_{\sfc}-\lambda)\Pi_{\sma} &= \frac{i}{2\pi} \int_{\Gamma} (D_{\sfc}-\zeta+\zeta-\lambda)(D_{\sfc}-\zeta)^{-1}d\zeta = \frac{i}{2\pi} \int_{\Gamma} (\zeta-\lambda)(D_{\sfc}-\zeta)^{-1}d\zeta \\
    &=  \frac{i}{2\pi} \int_{\Gamma} (\zeta-\lambda)\Res_M(\zeta)d\zeta ,
\end{aligned}    
\label{rc.28a}\end{equation}
we see that $(D_{\sfc}-\lambda)\Pi_{\sma}$ and $\Pi_{\sma}(D_{\sfc}-\lambda)\Pi_{\sma}$ are also in $\Psi^{-\infty}_{\eps, b}(X_s;E)+ \Psi^{-\infty,2\tau}_{\ephi}(X_s;E)$ for some small $\tau>0$.
Since $q_{i}(0)=0$ for $i\le N-1,$ this implies that 
$$
q_{N-1}^2(\epsilon)-2q_{N-2}(\epsilon)= \mathcal{O}(\epsilon^{2\tau})
$$  
as $\epsilon\searrow 0$.  Thus, $\sum_{i=1}^N\lambda_i^2=\mathcal{O}(\epsilon^{2\tau}).$    Since the $\lambda_i$ are real, the result follows.    
\end{proof}

With this upper bound, we can now deduce that $\Pi_{\sma}$ is polyhomogeneous.  

\begin{corollary}
There is an index family $\cK$ with $\inf \cK\ge 0$ and 
$$
 \inf \left. \mathcal{K}\right|_{\lf}>0, \quad \inf \left. \mathcal{K}\right|_{\rf}>0,
 $$
 such that $\Pi_{\sma}\in \Psi^{-\infty,\cK}_{\eps, b}(X_s;E).$  
\label{rc.29}\end{corollary}
\begin{proof}
Recall from \eqref{rcmm.4} that $\Pi'$ is a finite rank operator which restricts to $\Pi_{\ker N_{\mf}(D_{\sfc})}$ on $\bhs{\mf}$ and $\Pi_{\ker D_b}$ on $\fbface$; we have $\Pi'\in \Psi^{-\infty,\cK}_{\eps, b}(X_s;E)$ for some index family $\cK$ with 
$\inf \cK\ge 0.$ By Cauchy's formula and the definition of $\Pi_{\sma},$
$$
   \Pi_{\sma}-\Pi' =\frac{i}{2\pi} \int_{\Gamma} \left(\Res_M(\lambda)+\frac{\Pi'}{\lambda}\right) d\lambda.
$$
For $\epsilon>0,$  we can now change the contour $\Gamma$ to $\widetilde{\Gamma}(\epsilon),$ where
\begin{equation}
   \widetilde{\Gamma}(\epsilon)= \{ \lambda\in  \bbC \; | \;  |\lambda|= 2c\epsilon^{\frac{\tau}{k}} \},
\label{rc.29a}\end{equation}
with $c$ and $\tau$ being the positive constants of Corollary~\ref{rc.28} and $k$ a large natural number. We then have
\begin{equation}
 (\Pi_{\sma}- \Pi')_{\epsilon} = \frac{i}{2\pi} \int_{\widetilde{\Gamma}(\epsilon)} (\Res_M(\lambda)_{\epsilon} +\frac{\Pi'_{\epsilon}}{\lambda}) d\lambda
\label{rc.30}\end{equation}
for $\epsilon>0,$ since the small eigenvalues are contained inside $\widetilde{\Gamma}(\epsilon)$ by our choice of contour. Notice however that \eqref{rc.30} makes sense all the way down to $\epsilon=0.$  Indeed, if $\lambda_{\epsilon}$ is a small eigenvalue continuous in $\epsilon$ and $\Pi_\epsilon$ is the projection (continuous in $\epsilon$) onto an eigenspace of dimension 1, then taking $\tau>0$ smaller if needed, we know from Corollary~\ref{rc.28} that  $\lambda_{\epsilon}=\mathcal{O}(\epsilon^{\tau})$ and $\Pi_{\epsilon}-\Pi_0=\mathcal{O}(\epsilon^{\tau}).$  Since for $\lambda\in \widetilde{\Gamma}(\epsilon),$ we have that $\lambda=\mathcal{O}(\epsilon^{\frac{\tau}{k}}),$ this means that
$$
         \frac{\Pi_{\epsilon}}{\lambda-\lambda_{\epsilon}}- \frac{\Pi_0}{\lambda}= \Pi_{\epsilon}\left( \frac{1}{\lambda-\lambda_{\epsilon}} -\frac{1}{\lambda} \right) + \frac{\Pi_{\epsilon}-\Pi_0}{\lambda}= \Pi_{\epsilon} \left( \frac{\lambda_{\epsilon}}{\lambda(\lambda-\lambda_{\epsilon})}  \right) + \frac{\Pi_{\epsilon}-\Pi_0}{\lambda}= \mathcal{O}(\epsilon^{\frac{k-2}{k}\tau}).
$$
Consequently, this means that $\Res_{M}(\lambda)+\frac{\Pi'}{\lambda}$ is uniformly bounded on $\widetilde{\Gamma}(\epsilon)$ as $\epsilon\to 0,$ so we can say that
\begin{equation}
\Pi_{\sma}- \Pi' = \frac{i}{2\pi} \int_{\widetilde{\Gamma}} (\Res_M(\lambda) +\frac{\Pi'}{\lambda}) d\lambda
\label{rc.31}\end{equation}
where $\widetilde{\Gamma}$ is the family of contour integrals \eqref{rc.29a}.  Now, $(\Res_M(\lambda) +\frac{\Pi'}{\lambda})= Q_6(\lambda)S_6(\lambda)$ is in  $\Psi^{-\infty,\cE(\lambda)}_{\ephi}(X_s;E)$  for some holomorphic family $\cE(\lambda)$ of index families with $\inf \cE(\lambda)> 0$ and
$$
\left. \inf\mathcal{E}(\lambda)\right|_{\rf}> h+1, \quad \inf \left. \mathcal{E}(\lambda)\right|_{\fbf} > h+1,\quad \inf\left.\mathcal{E}(\lambda)\right|_{\ff} > h+1.
$$

  On the other hand, for $\lambda\in \widetilde{\Gamma},$ we have
$\lambda= e^{i\theta}2c\epsilon^{\frac{\tau}{k}},$ where $\arg \lambda =\theta.$  Thus, if $\varrho$ is one of the boundary defining functions for a particular face of $X^2_{s},$ then for such a $\lambda\in\widetilde{\Gamma},$
$$
   \varrho^{\lambda}= \exp\left( \lambda \log \varrho   \right) = \exp\left( e^{i\theta}2c\epsilon^{\frac{\tau}{k}} \log \varrho   \right) = \sum_{j=0}^{\infty} \frac{e^{ij\theta}}{j!} (2c\epsilon^{\frac{\tau}{k}} \log \varrho)^j
$$
is clearly polyhomogeneous on $X^2_{s}$ with index set independent of $\theta.$  More generally, if $g(\lambda)$ is holomorphic, then $\varrho^{g(\lambda)}$ is polyhomogeneous on $X^2_{s}$ when $\lambda=e^{i\theta}2c\epsilon^{\frac{\tau}{k}}$ with index set independent of $\theta.$  Expanding in this way each term in the polyhomogeneous expansions of $\Res_M(\lambda)+\frac{\Pi'}{\lambda}$ when $\lambda=e^{i\theta}2c\epsilon^{\frac{\tau}{k}},$ we see that
$\widetilde{\Gamma}$ parametrizes a circle family of operators in $\Psi^{-\infty, \cF}_{\ephi}(X_s; E)$ with index set $\cF$ independent of $\theta,$ with $\inf \cF> 0$ and
$$
\left. \inf\mathcal{F}\right|_{\rf}> h+1, \quad \inf \left. \mathcal{F}\right|_{\fbf} > h+1,\quad \inf\left.\mathcal{F}\right|_{\ff} > h+1,
$$
 by the uniform boundedness.  
We thus deduce from \eqref{rc.31} that $(\Pi_{\sma}-\Pi')\in \Psi^{-\infty,\cF}_{\ephi}(X_s;E).$  Since we already know that $\Pi'$ is polyhomogeneous, this means that $\Pi_{\sma}\in \Psi^{-\infty,\cG}_{\ephi}(X_s;E)$ for some other index family $\cG$ with $\inf \cG\ge 0$ and    
$$
\left. \inf\mathcal{G}\right|_{\lf}> 0, \quad\left. \inf\mathcal{G}\right|_{\rf}> h+1, \quad \inf \left. \mathcal{G}\right|_{\fbf} \ge h+1,\quad \inf\left.\mathcal{G}\right|_{\ff} \ge h+1.
$$
Finally, using Theorem~\ref{mappingprops}, we see that the range of $\Pi_{\sma}$ can be written as the span of bounded polyhomogeneous sections on $X_s$ either vanishing on $\bhs{sm}$ or $\bhs{sb}$.  Since $\Pi_{\sma}$  is a projection, this means that in fact $\Pi_{\sma} \in \Psi^{-\infty,\cK}_{\epsilon,b}(X_s;E)$ for some index family $\cK$  as claimed.    
\end{proof}
\begin{remark}
In light of \cite[Remark~6.9a]{Kato}, it is not clear that small eigenvalues themselves must necessarily be polyhomogeneous in $\epsilon$.  
\label{pse.1}\end{remark}

Note also that since $\Res_{M}(\lambda)$ is $-\frac{\Pi'}{\lambda}$ plus a term which is lower order at the boundaries of $X_{b,s}^2,$ the leading orders of $\Pi_{\sma}$ are $\Pi_{\ker N_{\mf}(D_{\sfc})}$ and $\Pi_{\ker D_b}$ at $\bhs{\mf}$ and $\fbface$ respectively, and zero at all other boundary hypersurfaces of $X_{b,s}^2.$

This result about $\Pi_{\sma}$ allows us to complete the construction of the resolvent of the previous section.

\subsection*{Step 7: Sharper statement concerning the meromorphic part of the resolvent}

Using the resolvent $(D_{\eps,d}-\lambda)^{-1}$ of Proposition~\ref{es.1}, we can use $\Pi_{\sma}$ to redefine the holomorphic and meromorphic parts of the resolvent by
$$
     \Res_M(\lambda)= \Pi_{\sma}(D_{\eps,d}-\lambda)^{-1}\Pi_{\sma}  \quad \mbox{and} \quad \Res_H(\lambda)= (\Id-\Pi_{\sma})(D_{\eps,d}-\lambda)^{-1}(\Id-\Pi_{\sma}). 
$$
Using Corollary~\ref{rc.29}, Proposition~\ref{es.1}, the fact that $\Pi_{\sma}$ is a projection and Theorem~\ref{mappingprops}, we see that \eqref{merp.1} follows with this new definition of $\Res_M(\lambda)$, completing the proof of Theorem~\ref{rc.7}.

\section*{Heat kernel under degeneration}
\section{Surgery heat space} \label{sec:SurgeryHeatSpace}

As we have pointed out above, our study of analytic torsion proceeds by understanding its behavior on a compact manifold as it undergoes degeneration to a space with fibered  cusps. In particular we need to understand the asymptotic behavior of analytic torsion as a function of the degeneration parameter $\eps.$ We will achieve this by obtaining a precise description of the asymptotic behavior of the heat kernel of the Hodge Laplacian directly and then deriving the asymptotics of the torsion. Following Melrose \cite{MelroseAPS, MelroseScat} we will understand these asymptotics geometrically by replacing the space $M^2 \times [0,1]_{\eps} \times \bbR^+_t,$ on which the heat kernel is a priori defined, with another space on which the singularities of the heat kernel are `resolved'. Precisely, we will construct a space $HX_s$ from $M^2\times[0,1]_{\eps}\times \bbR^+$ and then, in section \ref{sec:SolveHeat}, show that the heat kernel lifts to $HX_s$ to a polyhomogeneous density.\\

We begin our construction of the heat space by first identifying a compactification of 
$M^2 \times (0,1)_{\eps} \times (0,\infty)_t$  having nice left and right pushforward maps onto the single surgery space $X_s$, the space where the sections on which the heat kernel acts live.  A natural candidate is to take
\begin{equation}
       X^2_{b,s}\times \bbR^+_t,
\label{hk.1}\end{equation}
since then we have left and right projections $\pr_1\circ \pi_{b,s,L}^2$ and $\pr_1\circ \pi_{b,s,R}^2$ onto $X_s$, where $\pr_1: X^2_{b,s}\times \bbR^+_t$ is the projection on the first factor and 
$$
\pi^2_{b,s,R}: X^2_{b,s}\to X_s, \quad \pi^2_{b,s,L}: X^2_{b,s}\to X_s,
$$ 
are the $b$-fibrations of \cite[(71)]{mame1}.  As composite of $b$-fibrations,  $\pr_1\circ \pi_{b,s,L}^2$ and $\pr_1\circ \pi_{b,s,R}^2$ are automatically $b$-fibrations.

Recall that the heat kernel of the Euclidean Laplacian on $\bbR^n$ is 
\begin{equation*}
	\frac1{(4\pi t)^{n/2}}\exp\lrpar{ - \frac{|\zeta-\zeta'|^2}{4t} }.
\end{equation*}
The simplest analogue on a Riemannian manifold is obtained by replacing the Euclidean distance with the Riemannian distance.  To obtain the right heat space, we need to further blow up $X^2_{b,s}\times \bbR^+_t$ in such a way that
the functions $\sqrt t$ and $e^{-d(\zeta,\zeta')^2/4t}$ extend `smoothly'.
The first is easily achieved by declaring $\tau = \sqrt t$ to be the smooth global function on $\bbR^+.$  Technically, this is a change of the smooth structure on $X^2_{b,s} \times \bbR^+_t$.    Alternatively, one could insist on working with $t$, but then the blow-ups need to be don parabolically, see  \cite[Chapter 7]{MelroseAPS} for this approach. To ensure that  $e^{-d(\zeta,\zeta')^2/4t}$ extend `smoothly', we need to blow up twice, which leads us to consider the following heat space,
\begin{equation}\label{eq:DefHX}
	HX_{s} 
	= [ X^2_{b,s} \times \bbR^+_{\tau}; D_{fib} \times \{  \tau =0 \} ;  \Delta_s ].
\end{equation}
Here, $D_{fib}$ is the lifted fibered diagonal defined in \eqref{fib.1}, so after the first blow-up, the lift of the face $\tau=0$ is naturally identified with the surgery double space $X^2_s$.  The second blow-up corresponds to blowing up the lifted diagonal
$$
\Delta_s\subset X^2_s\subset   [ X^2_{b,s} \times \bbR^+_{\tau}; D_{fib} \times \{  \tau =0 \} ]
$$
seen as p-submanifold of the lift of the face $\{\tau=0\}$.

The space $HX_{s}$ has natural blow-down maps
\begin{equation*}
	\xymatrix @C=50pt
	{ & HX_s \ar[ld]_-{\beta_H'} \ar[d]^-{\beta_H} \ar[rd]^-{\beta_{H,b}} \\
	[X_{b,s}^2\times \bbR^+_\tau ; D_{fib} \times \{ \tau=0\} ] \ar[r]^-{\beta_H''} & X^2_{b,s} \times \bbR^+_{\tau} \ar[r]^-{\beta_b\times\Id_\tau} 
		& M^2 \times [0,1]_{\eps} \times \bbR^+_{\tau} }
\end{equation*}
and seven boundary hypersurfaces,
\begin{equation*}
\begin{gathered}
	\bhs{tb} = \beta_H^{\sharp}( \{ \tau=0 \} )\\
	\bhs{hmf} = \beta_H^{\sharp}(  \bhs{\mf}\times \bbR^+_{\tau}  )  \\
	\bhs{hbf} =  \beta_H^{\sharp}( \bhs{\bff} \times \bbR^+_{\tau} )  \\
	\bhs{tff} = (\beta_H')^{\sharp}(\beta_H'')^{-1}( D_{fib} \times \{ \tau=0 \} )  \\
	\bhs{tf} = (\beta_H')^{\sharp}( \Delta_s ) \\
	\bhs{hlf}= \beta_H^{\sharp}( \bhs{\lf}) \\
	\bhs{hrf}= \beta_H^{\sharp}( \bhs{\rf}) \\
\end{gathered}
\end{equation*}
where we innocuously ignore the lift of the face  $\eps = 1$.
There is also a natural $b$-fibration
\begin{equation*}
	\pi_{\eps}:HX_{s} \lra [0,1]_{\eps}.
\end{equation*}
We depict the heat space at $\eps=0$ in Figure \ref{fig:heatspace}.
\begin{figure}
\centering
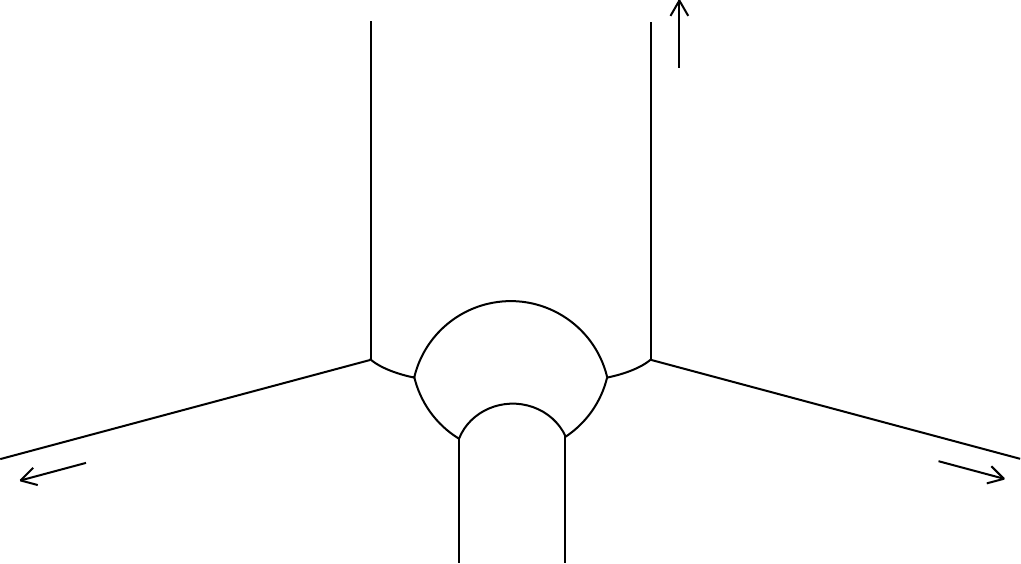
\caption{The heat surgery space $HX_s$ at $\epsilon=0$. }
\label{fig:heatspace}
\end{figure}

$ $

It will be very useful to have clear descriptions of the bhs's of $HX_{s}.$
As our constructions will only involve those that intersect the lift of the diagonal of $M,$ we focus on these.

First note that the boundary hypersurface $\bhs{tf}$ is the front face corresponding to the blow-up of $\Delta_s\subset [ X^2_{b,s} \times \bbR^+_{\tau}; D_{fib} \times \{  \tau =0 \} ]$, so it is naturally identified with the radial compactification of the bundle $N\Delta_s\to \Delta_s$, that is, the radial compactification of the bundle $\Ephi TX_{s}\to X_s$ by Corollary~\ref{cor:normdiagonal}.  This identification will allow appropriate surgery differential operators to act through their $\ephi$ principal symbol on models at this face.

Next, there is a natural identification 
\begin{equation}
\bhs{tff}\setminus(\bhs{tb}\cap\bhs{hbf}\cap\bhs{tff})\cong [(\bhs{\ff}\setminus\phibf)\times [0,\pi/2]_{\hat\theta}; \Delta_s\cap \bhs{\ff}\times \{0\}],
\label{tff.1}\end{equation}
where $\hat\theta= \arctan \left(\tau/\rho_{\bff} \right)$.  In fact, making use of the rescaled square root of time $\tau/\rho,$ we will conveniently think of  $\bhs{tff}(HX_{s})$ as heat space for the normal operators defined on  $\bhs{\ff}(X^2_{s})$.

Similarly the face $\bhs{hbf}$ is obtained from $\bhs{\bff}\times \bbR^+_{\tau}$ by blowing-up $D_{fib}\times \{0\}$. In particular, for $\tau>0$ $\bhs{hbf}$ can be identified with the $b$-front face of the heat space of \cite[\S6.1]{mame1}.
Correspondingly, the model operator at this face will be a $b$-heat kernel.  As expected, the face $\bhs{hmf}(HX_{s})$ is easily seen to coincide with the $d$-heat space of Vaillant \cite[\S4]{v} of the manifold with fibered boundary $\bhs{\ms} = [M;H].$  
$ $\\


We can define left and right projections $\beta_{H,L}$ and $\beta_{H,R}$ onto $X_s$ by precomposing the $b$-fibrations $\pr_1\circ \pi_{b,s,L}^2$ and $\pr_1\circ \pi_{b,s,R}^2$ with the blow-down map $\beta_H$,
\begin{equation*}
	\xymatrix{ & & X_s  \\
	HX_{s} \ar@/^15pt/[rru]^{\beta_{H,L}} \ar@/_15pt/[rrd]_{\beta_{H,R}} \ar[r]^-{\beta_H}  & X^2_{b,s}\times \bbR^+_{\tau} 
		\ar[ru]_{\pr_1\circ \pi_{b,s,L}^2} \ar[rd]^{\pr_1\circ \pi_{b,s,R}^2} & \\
	&  & X_s.}
\end{equation*}
The maps $\beta_{H,L}$ and $\beta_{H,R}$ are $b$-fibrations as can be readily seen from \cite[Lemma~12]{hmm}.

Given a bundle $E \lra X_s$, we define a homomorphism bundle of $E$ over $HX_s$ by
\begin{equation*}
	\Hom(E) = \beta_{H,L}^*E \boxtimes \beta_{H,R}^*E^*.
\end{equation*}

To define the action of a section of this bundle, consider first the case of a closed manifold $M$ without any degeneration.
Here we have
\begin{equation*}
	\xymatrix{
	 & & M \times \bbR^+ \\
	HM= [M^2\times \bbR^+_{\tau};\diag_M\times \{ 0 \}] \ar@/^15pt/[rru]^{\bar\beta_{H,L}} \ar@/_15pt/[rrd]_{\bar\beta_{H,R}} \ar[r]^-{\bar\beta_H} 
		& M^2 \times \bbR^+ \ar[rd]^{\bar\pi_{R}} \ar[ru]^{\bar\pi_L} \\
	& & M\times \bbR^+ \ar[r]^-{\bar \pi_M} & M}
\end{equation*}
and every $\cK \in \CI(M^2\times \bbR^+; \Hom(E) \otimes \bar\pi_R^*\Omega(M))$ defines an operator on polyhomogeneous sections of $E$
\begin{equation*}
	\begin{array}{ccc}
	\cA^*(M;E) & \longrightarrow & \cA^*(M\times \bbR^+;E) \\
	f & \longmapsto & (\bar\beta_{H,L})_*\lrpar{\cK \cdot \bar\beta_{H,R}^*\bar \pi_M^*f}
	\end{array}
\end{equation*}
Recall that $\bar \beta_{H,R}^*$ acts on polyhomogeneous functions whenever $\bar\beta_{H,R}$ is a $b$-map, and that $(\bar \beta_{H,L})_*$ acts on polyhomogeneous functions whenever $\bar\beta_{H,L}$ is a $b$-fibration. We can also allow $\cK$ to be a polyhomogeneous section of $\Hom(E) \otimes \bar\pi_R^*\Omega(M)$ and the same formula defines an operator.\\

For the case of $M$ with degeneration, we can pull-back a polyhomogeneous section of $E$ from $X_s$ to a polyhomogeneous section over $HX_s.$
The left projection to $X_s \times \bbR^+_\tau$ is not a $b$-fibration, since the boundary hypersurface $\bhs{tff}$ of $HX_s$ is sent to a corner, so we introduce the space
\begin{equation}\label{eq:DefDeltaHX}
\begin{gathered}
	\Delta_{HX}   
	= [ X_{s} \times \bbR^+_{\tau}; \bhs{\bs}\times \{\tau=0\}]
\end{gathered}
\end{equation}
pictured in Figure \ref{fig:deltahx}, as well as the extended heat space
$$
\wt{HX}_s := [HX_s; \bhs{hbf}\cap \bhs{tb}; \bhs{lf}\cap \bhs{tb}; \bhs{rf}\cap\bhs{tb}].
$$
Indeed, since $D_{fib}\times \{0\}\subset \bhs{bf}\times \{0\}$, we see, using the commutativity of blow-ups which are nested or transverse (see for instance  \cite[Lemma 5]{hmm}),  that there is a canonical isomorphism
\begin{equation}
\wt{HX}_s \cong [X^2_{b,s}\times \bbR_{\tau}^+; \bhs{bf}\times \{0\}; \bhs{lf}\times \{0\}; \bhs{rf}\times \{0\}; D_{fib}\times \{0\}; \Delta_s ].
\label{nbo.1}\end{equation}
By \cite[Lemma~10]{hmm}, we know that the $b$-fibration $\pi^2_{b,s,L}: X^2_{b,s}\times \bbR^+\to X_s\times \bbR^+$ lifts to a $b$-fibration
$$
[X^2_{b,s}\times \bbR_{\tau}^+; \bhs{bf}\times \{0\}; \bhs{lf}\times \{0\}; \bhs{rf}\times \{0\}]\to \Delta_{HX}.
$$ 
Hence, by \cite[Lemma~12]{hmm} and equation \eqref{nbo.1}, this lifts to a $b$-fibration $\wt{HX}_s\to \Delta_{HX}$.  
On $\Delta_{HX}$, we denote the blow-down maps by
\begin{equation*}
	\xymatrix{ 
	& \Delta_{HX} \ar[ld]_-{\beta_{\Delta}} \ar[rd]^-{\beta_{\Delta, (1)}} \\
	X_s \times \bbR^+_\tau \ar[rr]^{\beta_{(1)}\times\Id_{\tau}} &
	& M \times [0,1]_{\eps}\times \bbR^+_{\tau} }
\end{equation*}
\begin{figure}
\centering
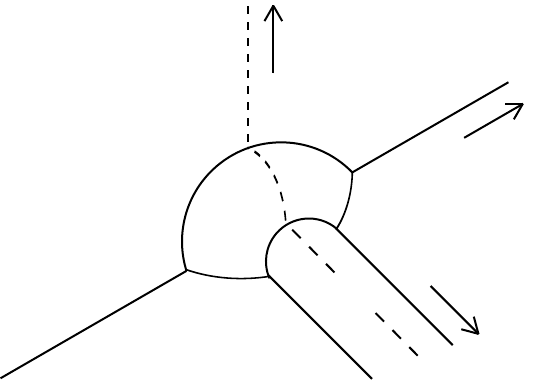
\caption{The auxiliary space $\Delta_{HX}$; the dotted line represents $x=0$.}
\label{fig:deltahx}
\end{figure}
and  label the boundary hypersurfaces of $\Delta_{HX}$ (ignoring, as usual, $\eps=1$) by
\begin{equation*}
\begin{gathered}
	\bhs{hmf} = \beta_{\Delta,(1)}^{\sharp}(\{\eps = 0\}), \quad
	\bhs{hbf} = \beta_{\Delta}^{\sharp}( \bhs{\bs} \times \bbR^+_{\tau} ) \\
	\bhs{tf} = \beta_{\Delta}^{\sharp}(\{\tau = 0 \}), \quad
	\bhs{tff} = \beta_{\Delta}^{-1}(\bhs{\bs}\times \{\tau=0\}),
\end{gathered}
\end{equation*}
consistently with the labels on $HX_s,$ and  extend the diagram above to
\begin{equation*}
	\xymatrix @C=1pt{
	& \Delta_{HX} \ar[rr]^-{\beta_{\Delta}}  & & X_s \times \bbR^+_{\tau} \\
	\wt{HX}_{s} \ar[ur]^-{\wt\beta_{H,L}} \ar[rr]^-{\wt\beta_H} \ar@/_15pt/[rrrd]_{\wt\beta_{H,R}} & & X^2_{b,s} \times \bbR^+_{\tau} 
		\ar[ru]_{\pi^2_{b,s,L}\times \Id_{\tau}} \ar[rd]^{\pr_1\circ \pi_{b,s,R}^2} \\
	&  & & X_s. & &  }
\end{equation*}
Then every density $\cK \in \cA^*(HX_s; \Hom(E) \otimes \beta_{H,R}^*\Omega(M))$ defines an operator on (appropriate) polyhomogeneous sections of $E,$
\begin{equation*}
	\xymatrix @R=1pt {
	\cA^*(X_s;E) \ar[r] & \cA^*(\Delta_{HX};E) \\
	f \ar@{|->}[r] & (\wt\beta_{H,L})_*\lrpar{\wt\beta^*\cK \cdot \wt\beta_{H,R}^*f,}
	}
\end{equation*}
where $\wt\beta: \wt{HX}_s \to HX_s$ is the natural blow-down map.  
In the next section we show that the solution operator of the heat equation on $X_s$ is an example of such an operator, with $\cK$ the heat kernel.

\section{Solving the heat equation} \label{sec:SolveHeat}

Our study of analytic torsion proceeds through a careful understanding of the asymptotics of the heat kernel of the Hodge Laplacian as the metric on $M$ degenerates.
In \S\ref{sec:SurgeryHeatSpace} we have described the `surgery heat space' $HX_s.$ Our refined description of the asymptotics of the heat kernel is that it is a weighted smooth density on $HX_s$ (with explicit weights). In particular this will allow us to deduce the behavior of the trace of the heat kernel as a function of $\eps$ and $t.$\\

\subsection{The heat kernel of a Laplace-type operator} \label{sec:HeatKerLaplaceType}
In this section, we will construct the heat kernel of the operator $\eth^2_{\ed}$, where $\eth_{\ed}$ is the operator of \S\ref{rc.0}.  As in \S\ref{rc.0}, in order to work with $b$-densities, it is convenient to work instead with the shifted operator
$$
       \Delta_{\ed}:= \rho^{v/2} \eth^2_{\ed} \rho^{-v/2}= D^2_{\ed}  
$$
seen as an unbounded operator acting on $L^2_b(X_{\eps};E).$  

Note that the operator $\Delta_{\ed}$ is singular at $\bhs{\bs}.$
We define
\begin{equation*}
	\Delta_v = \rho_{\bs}^2\Delta_{\ed}\rest{\bhs{\bs}}= D_v^2 \in \Diff^2(\bhs{\bs}/Y\times[-\pi/2,\pi/2]; E), \quad
	\Delta_d = \Delta_{\ed}\rest{\bhs{\ms}} \in \rho_{\bs}^{-2}\Diff^2_{\phi}([M;H]).
\end{equation*}
As in \S\ref{rc.0}, we make the fundamental  constant rank assumption (\textbf{Assumption~1}) that 
\begin{equation*}
	 \ker\Delta_v= \ker D_v \text{ forms a bundle over }Y \times [-\pi/2,\pi/2].
\end{equation*}
Notice however that in the construction of the heat kernel, we will not require the operator $D_b$ of Definition~\ref{rc.4a} to be Fredholm, but we will make use of the corresponding second order operator $\Delta_b:=D_b^2$.
\\

Given a section $f \in \CI(X_{s};E),$ a solution to the heat equation is a section $u \in \CI(\Delta_{HX}; E)$ satisfying
\begin{equation*}
	\begin{cases}
	(\pa_t  + \Delta_{\ed})u = 0,\\
	u\rest{t=0} = f.
	\end{cases}
\end{equation*}
The heat operator of $\Delta_{\ed}$ is the map $f \mapsto u$ and we will show that its distributional kernel is a smooth section of an appropriate bundle over $HX_{s}.$

%
%

\begin{theorem}\label{thm:HeatKernelConst}
There exists an operator $A:\CI(X_s;E) \lra \CI(\Delta_{HX};E)$ solving the heat equation in that
\begin{equation*}
	\begin{cases}
	\beta_{\Delta}^*(t\pa_t + t\Delta_{\ed})(Af) = 0 \\
	Af\rest{\bhs{tf}(\Delta_{HX})} = f.
	\end{cases}
\end{equation*}
The integral kernel of $A$ is a weighted smooth density on $HX_s,$
\begin{equation}\label{eq:DefAlpha}
\begin{gathered}
	\cK_A \in \rho^{\alpha}\CI \lrpar{ HX_s; \beta_{H,R}^*\lrpar{ \Omega_{\epsilon,b}(X_s)} \otimes \Hom(E) }\\
	\Mwith \alpha = (\alpha_{tf} = -m, \alpha_{tff} = -h-1, \alpha_{hbf} =0, \alpha_{hmf} = 0, \alpha_{tb}=\alpha_{hlf}=\alpha_{hlr}=\infty)
\end{gathered}
\end{equation}
and leading term at each boundary hypersurface
\begin{equation}\label{eq:AllNormalOps}
\begin{gathered}
	N_{tf}(A) = \frac1{(4\pi)^{m/2}}\exp\lrpar{ - \frac{|\cdot|_{\Ephi TX_s}^2}4 } \mu_{\Ephi TX_s/X_s}\\
	N_{tff}(A) = e^{-\sigma^2 \Delta_v}\frac1{(4\pi\sigma^2)^{(h+1)/2}}\exp\lrpar{ -\frac{|\cdot|^2_{\Ephi NY} }{2\sigma^2}} \mu_{\Ephi NY} \\
	N_{hbf}(A) = e^{-\sigma^2 \Delta_{b}}, \quad
	N_{hmf}(A) = e^{-t\Delta_d}
\end{gathered}
\end{equation}
where $\sigma = \tau/\rho$ is a rescaled time variable, 
$\mu_{\Ephi TX_s/X_s}$ is a vertical density on the fibers of $\Ephi TX_s \lra X_s,$
$\mu_{\Ephi NY}$ is a vertical density on the fibers of $\Ephi NY\to Y\times [-\pi/2,\pi/2],$ and $e^{-t\Delta_d}$ is the heat kernel of the operator $N_{\mf}(\Delta_{\ed})$ acting on $L^2_b$-sections. 
\end{theorem}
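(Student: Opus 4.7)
The plan is to build the heat kernel by the standard Melrose-style parametrix method adapted to the surgery double space, solving away the error at each boundary hypersurface of $HX_s$ that meets the lifted diagonal, and then closing the construction via a Volterra/Neumann series argument. The fundamental point is that, by the very design of $HX_s$, each of the prescribed normal operators in \eqref{eq:AllNormalOps} is the natural model operator for $t\pa_t + t\Delta_{\ed}$ at the corresponding boundary hypersurface, and these models commute on overlaps, so we can solve them in sequence.

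First, at $\bhs{tf}$, which is the radial compactification of the bundle $N\Delta_s\cong \Ephi TX_s$ by Corollary~\ref{cor:normdiagonal}, the leading order behavior of $t\pa_t + t\Delta_{\ed}$ is the Euclidean heat operator on the fibers with metric inherited from the $\ephi$-metric. I would therefore take as an initial parametrix $A_0$ the smooth section of $\beta_{H,R}^*\Omega_{\epsilon,b}(X_s)\otimes \Hom(E)$ whose restriction to $\bhs{tf}$ is the Euclidean heat kernel displayed in \eqref{eq:AllNormalOps} (extended off the diagonal to a smooth function), and multiplied by parallel transport along radial geodesics. Then $(t\pa_t + t\Delta_{\ed})A_0$ vanishes at $\bhs{tf}$ to the leading order needed, i.e.\ lives to next order there. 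Next, at $\bhs{tff}$: by \eqref{tff.1} and the description of $\bhs{\ff}$ in \S\ref{doublespsec}, the model operator factorizes as the vertical operator $\Delta_v$ along the fibers $Z$ (independent of $\sigma$) plus the Euclidean Laplacian in the $\Ephi NY$-directions with rescaled time $\sigma=\tau/\rho$. Using Assumption~\ref{kernelbundle} to make sense of $e^{-\sigma^2\Delta_v}$ as a smooth family of heat kernels on the fibers of $\phi_+$, I solve the model problem exactly by the product $e^{-\sigma^2\Delta_v}\cdot (\text{Euclidean heat kernel on }\Ephi NY)$; this is consistent with the $\bhs{tf}$ model by the standard rescaling identity. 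Adjust $A_0$ by adding a correction term of the required weight to obtain $A_1$ with $(t\pa_t+t\Delta_{\ed})A_1$ vanishing to one further order at $\bhs{tff}$, and proceed inductively in the Taylor expansion there. An asymptotic sum gives $A_2$ with error vanishing to infinite order at $\bhs{tf}\cup \bhs{tff}$.

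Next I solve away at $\bhs{hbf}$, whose interior is naturally identified with the b-heat space of the face $\bhs{\bff}$ of $X^2_{b,s}$; the model operator is the b-heat operator $\partial_{\sigma^2}+\Delta_b$ acting on sections of the flat bundle $\ker D_v$, and I use the well-known polyhomogeneous structure of $e^{-\sigma^2\Delta_b}$ on that b-heat space (as in \cite{MelroseAPS}) to subtract a correction and push the error to infinite order at $\bhs{hbf}$. Finally, at $\bhs{hmf}$, the model operator is exactly Vaillant's $d$-heat operator $\partial_t + \Delta_d$ on $[M;H]$; by \cite{v}, $e^{-t\Delta_d}$ is polyhomogeneous on the $d$-heat space, and since $\bhs{hmf}\cong $ $d$-heat space, one more correction removes the error to infinite order there. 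Throughout I need to check that at each mixed corner the various normal operators agree: this is a matter of tracing through the rescalings (e.g., $\sigma=\tau/\rho$ at $\bhs{tff}\cap \bhs{hbf}$ gives the transition between the $\Delta_v$-heat kernel and the $\Delta_b$-heat kernel on the fiberwise harmonic sections by the standard adiabatic argument), and these compatibilities are forced by the uniqueness of the model problems.

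After these four steps I have a parametrix $A_\infty$ with $(t\pa_t+t\Delta_{\ed})A_\infty = R\in \dot\cC^\infty$ (a remainder vanishing to infinite order at every boundary face that meets the lifted diagonal, away from $\bhs{tb}$) and with $A_\infty|_{\bhs{tf}(\Delta_{HX})}=\Id$. To correct this I invoke a Volterra series: the formal inverse $\sum_k (-1)^k R^{*k}$ converges to a polyhomogeneous operator of the same class with respect to the Duhamel/Volterra composition (an easy variant of the composition formula of Theorem~\ref{composition} which converges because of the infinite-order vanishing of $R$ at $\bhs{tb}$, so that each additional composition picks up an extra factor of $t$). Composing $A_\infty$ with this correction produces the exact heat kernel $A$ with integral kernel of the claimed form \eqref{eq:DefAlpha} and normal operators \eqref{eq:AllNormalOps}. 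The main obstacle I anticipate is step three, the matching at $\bhs{tff}\cap \bhs{hbf}$ and the verification that the $\Delta_v$-rescaled model kernel indeed lifts to a smooth polyhomogeneous density on $HX_s$ with leading term $e^{-\sigma^2\Delta_b}\Pi_h$; this is precisely the `adiabatic' compatibility between Vaillant's $d$-calculus and the $b$-calculus of Mazzeo--Melrose and requires careful bookkeeping of the weights coming from the various density bundles defined in \S\ref{sec:Densities}.
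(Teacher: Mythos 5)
Your outline follows the same route as the paper: a Melrose-type parametrix construction solving model problems at $\bhs{tf}$, $\bhs{tff}$, $\bhs{hbf}$, $\bhs{hmf}$, followed by a Volterra-series removal of the residual error. The two genuine differences are organizational: the paper runs a simultaneous induction in powers of $\rho_{tf}$ at the three faces $\bhs{tf},\bhs{tff},\bhs{hbf}$, carrying the error with a factor $\eps/\rho_{hmf}$ so that it is finally concentrated at $\bhs{hmf}$, where the heat kernel of $\Delta_d$ is produced internally by a Volterra argument on that face; you instead propose to quote Vaillant's polyhomogeneity of $e^{-t\Delta_d}$ on the $d$-heat space. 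Citing \cite{v} is logically acceptable (the paper avoids it only for self-containedness), so that step is a legitimate shortcut rather than a flaw.

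The genuine gap is exactly the step you defer as "the main obstacle": the model problem at $\bhs{hbf}$. Near that face $t\Delta_{\ed}=\eta^{-2}\Delta_v+\cdots$ with $\eta=\rho/\tau$ a boundary defining function, so the operator is singular there and its naive restriction is \emph{not} the $b$-heat operator for $\Delta_b$; saying "the model operator is $\pa_{\sigma^2}+\Delta_b$ on sections of $\ker D_v$" presupposes what has to be proved. The paper resolves this with two concrete devices that are absent from your proposal: (i) the ansatz $\cK_{hbf}=\Pi_h\,\cK_{hbf}\,\Pi_h$ together with the extension result of Lemma~\ref{rc.5a}, which guarantees that one can extend data off $\bhs{hbf}$ so that every power of $D_{\ed}$ acts on the face through the corresponding power of the operator $D_b$ of Definition~\ref{rc.4a} (recall $D_b$ is itself defined only via choices of extensions, so this is not bookkeeping but the key well-posedness statement); and (ii) in the inductive removal of errors at $\bhs{hbf}$, the component of $N_{hbf}(R_j)$ lying off the range of $\Pi_h$ cannot be removed by solving a $\Delta_b$-heat equation at all — it is killed by adding a correction two orders higher, $\rho_{hbf}^2A_j'$, and using invertibility of $D_v^2$ on $(\ker D_v)^{\perp}$ so that $N_{hbf}(R_j)-D_v^2N_{hbf}(A_j')=\Pi_hN_{hbf}(R_j)$. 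Without (i) and (ii) the induction at $\bhs{hbf}$ does not close, and the claimed leading term $N_{hbf}(A)=e^{-\sigma^2\Delta_b}$ (more precisely $\Pi_he^{-\tau^2\Delta_b}\Pi_h$) is not justified; the compatibility of $N_{tff}(A)$ with this term at $\bhs{tff}\cap\bhs{hbf}$, which you correctly identify as the delicate corner, is then the easy part, following from the explicit Gaussian computation once (i) and (ii) are in place.
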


The operator $A$ in this theorem will be denoted
\begin{equation*}
	H_{\ed} = e^{-t \Delta_{\ed}}.
\end{equation*}
\begin{remark}
In particular, notice that if $D_b=0$, then $N_{hbf}(e^{-t \Delta_{\ed}})=0$, which implies that for $t>0$, the $b$-operator $e^{-t \Delta_{\ed}}$ is trace class.
\label{tc.1}\end{remark}

\begin{proof}
As in the statement of the theorem, it is convenient to start by replacing the operator $\pa_t + \Delta_{\ed}$ with the operator $t(\pa_t + \Delta_{\ed})$ as the latter is made up from vector fields tangent to $\bhs{tf}$ and $\bhs{tff}.$ In particular $t(\pa_t + \Delta_{\ed})(Af)=0$ will induce model problems at these boundary hypersurfaces and we can begin our construction by solving these simpler problems.

The solution to the problem at $\bhs{tf}$ proceeds identically to the treatment in \cite[Chapter 7]{MelroseAPS}. Nevertheless we will go through the details as it illustrates the procedure we follow at the other boundary hypersurfaces, and as we will need to verify that our model problem solutions are mutually compatible.

Let us start by considering the action of $A$ on $f$ near $\bhs{tf}.$
Recall that the interior of $\bhs{tf}$ can be identified with the vector bundle $\Ephi TX_s \lra X_s.$
Away from the other boundary faces, a convenient choice of coordinates is
\begin{equation}\label{eq:CoordChartTf}
	\zeta = (x,y,z), \quad \Theta' = \frac{\zeta'-\zeta}{\tau}, \quad \eps, \quad \tau = \sqrt t
\end{equation}
in which $\tau$ is a bdf for $\bhs{tf}.$
In these coordinates, with $\Theta_x' = \frac{x'-x}{\tau},$ we have for $\zeta,\epsilon$ and $\tau$ fixed that  
\begin{equation*}
	\beta_{H,R}^*(d\zeta') = \tau^m d\Theta',
\end{equation*}
so let us write $\cK_A = \wt \cK_A \tau^m d\Theta'$  so that the action of $A$ on $f$ is through
\begin{equation}\label{eq:LocalCoordActTfA}
	(\bar \beta_{H,L})_* \cK_A f(\zeta,\tau) \\
	= \int_{\bbR^m} \wt\cK_A(\zeta, \Theta', \eps,\tau) f(\tau\Theta' + \zeta) \; \tau^m d\Theta'
\end{equation}
for $f$ supported in the coordinate chart of $\zeta=(x,y,z)$.  Clearly, the restriction of $Af$ at $t=0$ makes sense as long as $\tau^m \wt\cK_A\rest{\bhs{tf}(HX)}$ makes sense.
Thus we set $\alpha_{tf} = -m,$ and let us write $\wt\cK_A = \tau^{-m} \wt\cK_{tf} + \tau^{1-m}\wt\cK_A'$ with 
$\wt\cK_{tf}$ independent of $\tau$ and $\wt\cK_A' = \cO(\tau^0).$
The restriction of $Af$ to $\bhs{tf}(\Delta_{HX})$ is then given by
\begin{equation*}
	f(\zeta, \eps) \int_{\bbR^n} \wt\cK_{tf}(\zeta, \Theta', \eps) 
	\; d\Theta'
\end{equation*}
and so the requirement that $Af\rest{t=0}=f$ comes down to asking that the fiberwise integral of $\cK_A\rest{\bhs{tf}}$ is equal to one, at least away from the other boundary hypersurfaces.

Let us consider the action of $\beta_{\Delta}^*(t\pa_t + t\Delta)$ on $Af$ at $\beta_{tf}(\Delta_{HX}).$
First $\beta_{\Delta}^*(t\pa_t)=\frac12 \tau\pa_{\tau}$ will act by (in the interest of space we will not write the arguments of $\cK_A$ and $f$ when they coincide with those in \eqref{eq:LocalCoordActTfA})
\begin{equation*}
	\int_{\bbR^m} \lrpar{ \frac12\tau\pa_{\tau} (\tau^m\wt \cK_A(\cdot)) } f(\cdot) \; d\Theta' 
	+\frac12 \int_{\bbR^m} \tau^m\wt \cK_A(\cdot)  \lrpar{\lrpar{ \Theta'\cdot\pa_{\Theta'} } f(\cdot)} \; d\Theta'
\end{equation*}
the first integral is $\cO(\tau),$ and
integrating by parts in the second integral yields
\begin{equation*}
	\int_{\bbR^m} \lrpar{ \frac12\lrpar{-m - \sR}(\tau^m\wt \cK_A(\cdot))} f(\cdot) \; d\Theta'
\end{equation*}
where $\sR= \Theta'\cdot\pa_{\Theta'}$ denotes the radial vector field on $\bbR^m.$
Restricting this expression at $\tau =0$, we find
\begin{equation*}
	\beta_{\Delta}^*(t\pa_t)(Af)\rest{\bhs{tf}} = 
	f(x,y,z) \int_{\bbR^m} -\frac12(m+\sR)\cK_{tf}(\zeta,\Theta', \eps) \; d\Theta',
\end{equation*}
so that
$$
  \cK_{\beta_{\Delta}^*(t\pa_t) \circ A}\rest{\bhs{tf}(HX)}= \bar \beta_{H,L}^*(t\pa_t)\cK_A\rest{\bhs{tf}(HX)}= -\frac12(m+\sR)\cK_{tf}(\zeta,\Theta', \eps) \; d\Theta'
$$
in the coordinates \eqref{eq:LocalCoordActTfA}.

Next consider the action of $t\Delta_{\ed}.$
First note that if $V$ is any vector field on $X_s,$ the action of $\tau V$ is equal to
\begin{multline}\label{eq:StartingSymbol}
  \tau V Af(\zeta,\tau)= 
	\int_{\bbR^m} (\tau V_{\zeta})\wt\cK_A(\zeta, \Theta', \eps) f(\tau \Theta' + \zeta,\eps) \tau^{m}  \; d\Theta'  \\
	+ \int_{\bbR^m} \wt\cK_A(\zeta, \Theta', \eps) (\tau V_{\zeta})f(\tau\Theta' + \zeta,\eps) \tau^{m} \; d\Theta',
\end{multline}
where $V_{\zeta}$ indicates that  $V$ acts through the left coordinates $\zeta$.  
The first term is $\cO(\tau)$ as $\tau \to 0,$ while in the second term we can replace $\tau V_{\zeta}$ with $V_{\Theta'}=v^j(\zeta)\frac{\pa}{\pa (\Theta')^j}$ if $V_\zeta=v^j(\zeta)\frac{\pa}{\pa \zeta^j}$.  Thus $V_{\Theta'}$ is a vector field on $\Ephi TX_s,$ tangent to the fibers of the projection onto $X_s,$ with constant coefficients on each fiber. 
Integrating by parts, the second term is thus equal to
\begin{equation*}
	\int_{\bbR^m} \wt \cK_A(\zeta, \Theta', \eps) V_{\Theta'}f(\tau\Theta' + \zeta,\eps) \tau^{m} \; d\Theta' \\
	= - \int_{\bbR^m} V_{\Theta'} \cK_A(\zeta, \Theta',\eps) f(\tau\Theta'+\zeta, \eps) \tau^m \; d\Theta'
\end{equation*}
and so has leading term at $\bhs{tf}(\Delta_{HX})$ given by
\begin{equation*}
	- f(\zeta, \eps) \int_{\bbR^m} V_{\Theta'} \cK_{tf}(\zeta, \Theta',\eps) d\Theta'.
\end{equation*}
Thus 
$$
   \cK_{\tau V\circ A}\rest{\bhs{tf}(HX)}=   -V_{\Theta'} \cK_{tf}(\zeta, \Theta',\eps) d\Theta'$$
with $V_{\Theta'}$ a family of translation invariant vector fields on the fibers of $\Ephi TX_s\cong \bhs{tf}\setminus \bhs{tb}$.  In particular, notice that the principal symbol of $V$ is given by the Fourier transform of $V_{\Theta'}$ in the fibers of $\Ephi TX_s$.     
Thus a vector field acts on $\cK_A$ at $\bhs{tf}(HX)$ through the inverse Fourier transform of its principal symbol. 
This same reasoning shows that the action on the left of $(t\Delta_{\ed})$ on the leading term of $K_{A}$ at $\bhs{tf}$ is through the inverse Fourier transform of its principal symbol  $\Ephi\sigma_2(\rho^2\Delta_{\ed})$, which is just a family of Euclidean Laplacians
$\Delta_{\Ephi TX_s}= \cF^{-1}(\Ephi\sigma_2(\rho^2\Delta_{\ed}))$ on the fibers of $\Ephi TX_s\cong  \bhs{tf}\setminus \bhs{tb}.$
Hence altogether at $\bhs{tf}$ away from other boundary hypersurfaces we need to solve fiberwise
\begin{equation}
	\begin{cases}
	(\Delta_{\Ephi TX_s} - \tfrac12(m+ \sR))N_{tf}(A)=0,\\
	\int_{\Ephi TX_s/X_s} N_{tf}(A) = 1.
	\end{cases}
\label{moeq.1}\end{equation}

To see that this makes sense all the way down to $\bhs{tf}\cap \bhs{tff}$, we need to consider coordinates near $\bhs{tf}\cap \bhs{tff}$ of the form
\begin{equation}
          \rho=\sqrt{x^2+\epsilon^2}, \quad
        \theta = \arctan \frac{x}{\eps}, \quad
	y, 
	z, \quad
	\wt S' = \frac{x' -x}{\rho^2\sigma}, \quad
	\wt U' = \frac{y'-y}{\rho\sigma}, \quad
	\wt Z' = \frac{z'-z}{\sigma}, \quad
	\sigma = \frac{\tau}\rho, \quad	
\label{coor.1}\end{equation}
in which $\sigma$ is a bdf for $\bhs{tf}(HX)$ and $\rho$ is a bdf for $\bhs{tff}(HX).$
Note that since $f$ is a function on $X_s$ near $\bhs{\bs}$, we can evaluate it on, e.g., $(\rho,\theta, y, z).$ 
In these coordinates and with $\theta,y,z,\rho,\sigma$ fixed, 
\begin{equation*}
	\beta_{H,R}^*\lrpar{ \frac{dxdydz}{\sqrt{x^2 + \eps^2}} } = \frac{\sigma^m\rho^{h+1} d\wt S'd\wt U'd\wt Z'}{\sqrt{(\sin\theta+\rho\sigma\wt S')^2 + \cos^2\theta}}
\end{equation*}
so we will write $\cK_A = \wt \cK_A \lrpar{\sigma^m\rho^{h+1} d\wt S'd\wt U'd\wt Z'}$ and $\wt\cK_{tf} = \sigma^{m}\wt\cK_A\rest{\bhs{tf}}.$  For $f$ supported in the coordinate chart of $(\rho,\theta,y,z)$ in $X_s$,  the action of $A$ on $f$ is then given by
\begin{multline}\label{eq:LocalCoordActTfB}
	(\wt\beta_{H,L})_*(\cK_A\beta_{H,R}^*f)(\rho,\theta,y,z,\sigma)= \\
	 \int_{\bbR^m} \wt \cK_A(\rho, \theta,y, z, \wt S',\wt U',\wt Z',\sigma) f(\rho', \theta',\rho\sigma \wt U'+y,\sigma\wt Z'+z) \;
	\lrpar{ \sigma^m\rho^{h+1} d\wt S'd\wt U'd\wt Z'},
\end{multline}
where $\rho'=\rho\sqrt{(\sin\theta+\rho\sigma\wt S')^2 + \cos^2\theta}$ and $\theta'=\arctan\left( \frac{\sin\theta+ \rho\sigma\wt S'}{\cos\theta} \right)$.  Thus, the restriction at $\sigma=0$ is equal to
\begin{equation*}
	\rho^{h+1}f(\rho,\theta,y,z) \int_{\bbR^m} \wt \cK_{tf}(\rho,\theta,y,z,\wt S', \wt U', \wt Z') \; d\wt S'd\wt U'd\wt Z'.
\end{equation*}
Thus requiring the fiberwise integral of $\cK_A\rest{\bhs{tf}}$ to be equal to one necessitates $\wt \cK_{tf} = \cO(\rho^{-h-1})$ and we see that we should set
$\alpha_{tff} = -h-1.$\\

Considering the action $t\pa_t = \frac12 \tau\pa_{\tau}= \frac12 \sigma\pa_{\sigma}$ on $Af$, we can integrate by parts as before to obtain this time 
$$
  \cK_{t\pa_t \circ A}\rest{\bhs{tf}}=  -\frac12(m+\sR)\wt \cK_{tf}(\rho,\theta,y,z,\wt S', \wt U', \wt Z') \; \rho^{h+1}d\wt S' d\wt U' d \wt Z',
$$
where $\sR= \wt S'\pa_{\wt S'} + \wt U'\cdot \pa_{\wt U'} + \wt Z\cdot\pa_{\wt Z'}$ in the coordinate $\wt S',\wt U',\wt Z'$.  To compute the action of $t\Delta_{\ed}$, consider first an $\ed$-vector field
\begin{equation}
        V= a\rho\pa_x + b^i\pa_{y^i} + \frac{c^j}{\rho}\pa_{z^j}
\label{vect.1}\end{equation}
with $a,b^i$ and $c^j$ smooth sections of $\End(E)$ on $X_s$.  We want to see how $\tau V=\sigma\rho V$ acts on $K_{A}$ from the left.  Now, $\rho V$ is a $\ephi$-vector field,
so keeping in mind that $\rho$ and $\sigma$ in \eqref{coor.1} depend on $x$ and integrating by parts as in \eqref{eq:StartingSymbol}, a careful computation shows that 
\begin{equation}
\tau V Af(\rho,\theta,y,z)=  \int_{\bbR^m}  V'\cK_A(\rho, \theta,y, z, \wt S',\wt U',\wt Z',\sigma) f(\rho', \theta',\rho\sigma \wt U'+y,\sigma\wt Z'+z) 
	 \sigma^m\rho^{h+1} d\wt S'd\wt U'd\wt Z',
\label{cc.1}\end{equation}
with 
\begin{multline}
V'= a \left(\sigma \rho^2\pa_x- (1+2x\sigma\wt S')\pa_{\wt S'} -\sigma x\sum (\wt U')_i\pa_{(\wt U')_i} + \sigma x(m-h-2-\sR) \right) \\
+ \sum b^i\pa_{(\wt U')^i} + \sum c^j \pa_{(\wt Z')^j}.
\label{cc.2}\end{multline}
Hence, restricting to $\bhs{tf}$, that is, to $\sigma=0$, this gives
$$
\tau V Af\rest{\sigma=0}(\rho,\theta,y,z)=  -\rho^{h+1}f(\rho,\theta,y,z) \int_{\bbR^m} \lrpar{ V'\rest{\sigma=0}} \wt\cK_{tf}(\rho,\theta,y,z,\wt S', \wt U', \wt Z') \; d\wt S'd\wt U'd\wt Z',
$$
where 
$$
 V'\rest{\sigma=0}= a(\rho,\theta,y,z)\pa_{\wt S'} + b^i(\rho,\theta,y,z)\pa_{(\wt U')^i} + c^j(\rho,\theta,y,z)\pa_{(\wt Z')^j}
$$
is as before the family of translation invariant vector fields in the fibers of $\Ephi TX_s\cong \bhs{tf}\setminus \bhs{tb}$ corresponding to the inverse Fourier transform of the principal symbol $\Ephi\sigma_1(\rho V)$.  Thus, 
$$
  \cK_{\tau\pa_{\tau} \circ A}\rest{\bhs{tf}}= \bar \beta_{H,L}^*(\tau\pa_{\tau})\cK_A\rest{\bhs{tf}}= -V'\wt \cK_{tf}(\rho,\theta,y,z,\wt S', \wt U', \wt Z') \; \rho^{h+1}d\wt S' d\wt U' d \wt Z'.$$

Similarly, we deduce that  $\Delta_{\ed}$ acts on the left on $K_A\rest{\bhs{tf}}$ via the inverse Fourier transform  $\Delta_{\Ephi TX_s}= \cF^{-1}(\Ephi\sigma_2(\rho^2\Delta_{\ed}))$, so that the equation  \eqref{moeq.1}  is also satisfied by $\cK_A\rest{\bhs{tf}}$ near $\bhs{tf}\cap\bhs{tff}$.  This equation is readily solved by using the Fourier transform on the fibers of $\Ephi TX_s$ and the solution is
\begin{equation}
	N_{tf}(A) = \frac1{(4\pi)^{m/2}}\exp\lrpar{ - \frac{|\cdot|_{\Ephi TX_s}^2}4 } \mu
\label{ntf.1}\end{equation}
with $\mu$ the vertical Euclidean density on the fibers of   $\Ephi TX_s \lra X_s$ induced by $\rho^2g_{\ed}$.  
\\

Next, let us consider the action of $(t\pa_t + t\Delta_{\ed})$ from the left on the interior of $\bhs{tff}.$
Recall that the identification \eqref{tff.1} allows to regard $\bhs{tff}$ as the heat space for the normal operator $N_{\ff}(\rho^2\Delta_{\ed})$.  In fact, notice that the restriction of \eqref{ntf.1} to $\bhs{tf}\cap\bhs{tff}$ (the term of order $\rho^{-h-1}$ when we use $b$-densities) is the same as the restriction of the heat kernel of $N_{ff}(\rho^2\Delta_{\ed})$ seen as Schwartz kernel on 
$$
\bhs{tff}\setminus(\bhs{tb}\cap\bhs{hbf}\cap\bhs{tff})\cong [(\bhs{\ff}\setminus\phibf)\times [0,\pi/2]_{\theta}; \Delta_s\cap \bhs{\ff}\times \{0\}].
$$ 
This suggests that the term of order $\rho_{tff}^{-h-1}$ at $\bhs{tff}$ of the heat kernel of $\Delta_{\ed}$ should be precisely the heat kernel of $N_{\ff}(\rho^2\Delta_{\ed})$.  To see this, we need to investigate what equation must be satisfied by the heat kernel on $\bhs{tff}$ away from $\bhs{tf}$.  For this purpose, let us introduce the coordinates 

\begin{equation}\label{eq:CoordsTff}
         \rho= \sqrt{x^2+\epsilon^2}, \quad
	\theta = \arctan\left(\frac x\eps\right), \quad
	y, \quad
	z, \quad
	S' = \frac{x' -x}{\rho^2}, \quad
	U' = \frac{y'-y}{\rho}, \quad
	z', \quad
	\sigma = \frac{\tau}\rho,
\end{equation}
near $\bhs{tff}$, where $\rho$ is a bdf for $\bhs{tff}.$  
In these coordinates, 
\begin{equation*}
	\beta_{H,R}^*\lrpar{ \frac{dxdydz}{\sqrt{x^2 + \eps^2}} } = \frac{\rho^{h+1} d S'd U'dz'}{\sqrt{(\sin\theta+\rho S')^2 + \cos\theta}},
\end{equation*}
so we will write $\cK_A = \wt \cK_A \lrpar{\rho^{h+1} d S'd U'dz'}$ and $\wt\cK_{tff} = \rho^{h+1}\wt\cK_A\rest{\bhs{tff}}.$
The action of $A$ on $f$ is given by
\begin{equation*}
	\int_{\bbR^{h+1}\times Z} \wt\cK_A(\rho,\theta, y, z, S', U', z', \sigma) f(\rho',x+\rho^2S', \rho U' + y, z') \lrpar{\rho^{h+1} d S'd U'dz'}
\end{equation*}
with this time $\rho'=\rho\sqrt{(\sin\theta+ \rho S')^2+\cos^2\theta}$ and $\theta'=\arctan\left(  \frac{\sin\theta + \rho S'}{\cos\theta}\right)$.  Restricting to $\bhs{\bs}$ gives
\begin{equation*}
	(Af)\rest{\bhs{sb}} = 
	\int_{\bbR^{h+1}\times Z} \wt\cK_{tff}(\theta, y, z, S', U', z', \sigma) f(0, \theta, y, z') \;  d S'd U'dz'.
\end{equation*}

Let us compute the action of $(t\pa_t + t\Delta_{\ed})$ on $Af$ at $\bhs{tff}(\Delta_{HX}).$
The action of $t\pa_t =\frac12\sigma\pa_{\sigma}$ on $(Af)\rest{\bhs{tff}}$ is straightforward.
To compute the action of $t\Delta_{\ed},$ first consider the action of $\tau V = \sigma\rho V$ with $V$ given by \eqref{vect.1} in local coordinates.
Keeping in mind that $\sigma$ and $\rho$ in \eqref{eq:CoordsTff} depend on $x$ and integrating by parts as in \eqref{eq:StartingSymbol}, a careful computation shows that
\begin{equation}
 \tau V Af(\theta,y,z)= \int_{\bbR^{h+1}\times Z} (V'\wt\cK_{tff}(\theta, y, z, S', U', z', \sigma)) f(\rho, \theta, y, z') \;  d S'd U'dz'
 \label{cc.3}\end{equation}
 with
 \begin{multline}
    V'= a\lrpar{\sigma \rho^2 \pa_x -\sigma(1+2xS') \pa_{S'} -x\sigma \sum U_i'  \pa_{U_i'} -x\sigma \left( \sigma\pa_{\sigma}+ h+2 \right)}  \\
    +\sigma\sum b^i(\rho\pa_{y^i}-\pa_{(U')^i}) + \sigma c^j\pa_{z^j}.
 \label{cc.4}\end{multline}
 Hence, restricting to $\bhs{\bs}$, that is, setting $\rho=0$,  gives
 $$
\lrpar{\tau V Af}\rest{\bhs{\bs}}(\theta,y,z)= \int_{\bbR^{h+1}\times Z} (V'\rest{\rho=0}\wt\cK_{tff}(\theta, y, z, S', U', z', \sigma)) f(0, \theta, y, z') \;  d S'd U'dz'
 $$
 with 
 $$
    V'\rest{\rho=0}= \sigma\lrpar{-a(0,\theta,y,z)\pa_{S'}  -b^i(0,\theta,y,z)\pa_{(U')^i} + c^j(0,\theta,y,z)\pa_{z^j}}
 $$
 
corresponding to $\sigma N_{\ff}(W)$ for the $\ephi$ vector field
$$
    W= a(\rho,\theta,y,z)\rho^2\pa_x  -b^i(\rho,\theta,y,z)\rho\pa_{y} + c^j(0,\theta,y,z)\pa_{z^j}.
$$
Clearly, this implies that
$$
 \tau^2\Delta_{\ed} Af\rest{\bhs{\bs}}(\theta,y,z)= \int_{\bbR^{h+1}\times Z} (\sigma^2 N_{\ff}(\rho^2\Delta_{\ed})\wt\cK_{tff}(\theta, y, z, S', U', z', \sigma)) f(0, \theta, y, z') \;  d S'd U'dz',$$
so that away from $\bhs{tf}$,
$$
  \cK_{t\Delta_{\ed}\circ A}\rest{\bhs{tff}(HX)}=  \sigma^2 N_{\ff}(\rho^2\Delta_{\ed})\circ N_{tff}(A).
$$
Thus, the model problem at $\bhs{tff}(HX_s)$ is
\begin{equation*}
	\begin{cases}
	\lrpar{\frac12\sigma\pa_{\sigma} + \sigma^2 N_{\ff}(\rho^2\Delta_{\ed})} N_{tff}(A) =0\\
	N_{tff}(A)\rest{\bhs{tf}\cap \bhs{tff}}
	= N_{tf}(A)\rest{\bhs{tf}\cap \bhs{tff}}
	\end{cases}
\end{equation*}
with solution
\begin{equation}\label{eq:SolHeatTff}
	N_{tff}(A) = e^{-\sigma \Delta_v}\frac1{(4\pi\sigma^2)^{(h+1)/2}}\exp\lrpar{ -\frac{|\cdot|^2_{\Ephi NY} }{4\sigma^2}} \mu
\end{equation}
the heat kernel of $N_{ff}(\rho^2\Delta_{\ed})$,
where $\mu$ is the density on the fibers of 
$$
\Ephi NY \btimes_{Y\times[-\pi/2,\pi/2]} \bhs{\bs} \to Y\times [-\pi/2,\pi/2]
$$
induced by the metric $\rho^2 g_{\ed}$. \\

Next we want to solve the model problem at $\bhs{hbf}$ and the first thing to do is to determine the compatibility condition at $\bhs{tff}\cap \bhs{hbf}.$
A convenient choice of coordinates near $\bhs{tff}\cap \bhs{hbf}$ is
\begin{equation*}
	\theta= \arctan \left( \frac{x}{\epsilon} \right), \quad
	y, \quad
	z, \quad
	\bar S' = \frac{x' -x}{\eta\tau^2}, \quad
	\bar U' = \frac{y'-y}{\tau}, \quad
	z', \quad
	\eta = \frac{\rho}\tau= \frac{\sqrt{x^2+\eps^2}}{\tau}, \quad
	\tau
\end{equation*}
in which $\tau$ is a bdf for $\bhs{tff}$ and $\eta$ is a bdf for $\bhs{hbf}.$
Let us write the solution to the heat equation at $\bhs{tff}$ in the coordinates \eqref{eq:CoordsTff} and then change coordinates to this set:
\begin{multline*}
	N_{tff}(A) 
	= e^{-\sigma \Delta_v}\frac1{(4\pi\sigma)^{(h+1)/2}}\exp\lrpar{ -\frac{|(S',U')|^2_{\Ephi NY} }{4\sigma^2}} dS'dU'dz' \\
	= e^{-\frac1\eta \Delta_v}\frac{\eta^{h+1}}{(4\pi)^{(h+1)/2}}\exp\lrpar{ -\frac{|(\bar S',\bar U')|^2_{\Ephi NY} }{4}} \lrpar{ \frac{d\bar S'd\bar U'dz'}{\eta^{h+1}} }.
\end{multline*}
It follows that the restriction of $N_{tff}(A)$ to $\bhs{tff}\cap\bhs{hbf} = \{ \eta = 0 \}$ is given by
\begin{equation}
	\cP_{\ker \Delta_v}\frac{1}{(4\pi)^{(h+1)/2}}\exp\lrpar{ -\frac{|(\bar S',\bar U')|^2_{\Ephi N\bhs{\bs}} }{4}} \; d\bar S'd\bar U'dz'.
\label{ini.1}\end{equation}
Significantly, this tells us that $\alpha_{hbf}=0$ and suggests that we set up the model problem at $\bhs{hbf}$ entirely in terms of  sections of $ \ker \Delta_v.$  Indeed, using the natural identification
$$
     \bhs{hbf}\cong [\bhs{bf}\times \bbR^+; D_{fib}\times\{0\}],
$$
notice that the fibration $\phi:H\to Y$ naturally induces a fibration $$\phi_{hbf}: \bhs{hbf}\to H_{ob}Y$$ over the overblown $b$-heat space of $Y\times [-\pi/2,\pi/2]$,
$$
    H_{ob}Y=  [(Y\times [-\pi/2,\pi/2]])^2_{ob}\times \bbR^+; D_b\times\{0\}],
$$
where
$$
    (Y\times [-\pi/2,\pi/2])^2_{ob}= [(Y\times [-\pi/2,\pi/2])^2; (\pa(Y\times [-\pi/2,\pi/2]))^2]
 $$
is the overblown $b$-double space of $Y\times [-\pi/2,\pi/2]$ and $D_b$ is the lift of the diagonal to this overblown $b$-double space.  From this point of view,  \eqref{ini.1} is precisely the restriction of the $b$-heat kernel of $\Delta_b= D^2_b$ to the temporal front face of $H_{ob}Y$.  This suggests that the restriction of the heat kernel of $\Delta_{\ed}$ to $\bhs{hbf}$ should be to the heat kernel of $\Delta_b$.  To see this, we need to see what equation is satisfied away from $\bhs{tff}$.     
Clearly , $\cK_{\frac12\tau\pa_{\tau}A}\rest{\bhs{hbf}}=\frac12\tau\pa\tau \cK_A\rest{hbf}$ away from $\bhs{tff}$.  To compute the action of $t\Delta_{\ed}$, we need to make the ansatz that $\cK_{hbf}=K_A\rest{\bhs{hbf}}=\Pi_h K_A\rest{\bhs{hbf}}\Pi_h$ comes from a Schwartz kernel on $H_{ob}Y$ acting on sections  $\ker D_v$ and that $\cK_{A}$ is obtained by extending $\cK_{hbf}$ off $\bhs{hbf}$ as in Lemma~\ref{rc.5a} , so that 
$$
          \cK_{D_{\ed}^{\ell}A}\rest{\bhs{hbf}}=   \Pi_h \bar\beta_{H_b,L}^*(D_b^{\ell})\cK_{hbf} \Pi_h,  \quad \forall \ell\in\bbN_0,
$$
where $\bar\beta_{H_b,L}: \pr_L\circ \beta_{H_b}$ with $\beta_{H_{ob}}: H_b Y\to (Y\times [-\pi/2,\pi/2])^2\times \bbR^+$ the blow-down map and $\pr_L: (Y\times [-\pi/2,\pi/2])^2\times \bbR^+\to Y\times [-\pi/2,\pi/2]$ the projection on the left factor.  
With this ansatz, the model problem to solve is
\begin{equation*}
	(\frac12\tau\pa_{\tau} + \tau^2\Delta_b) \Pi_h \cK_{hbf} \Pi_h = 0.
\end{equation*}
Combining with the initial condition \eqref{ini.1}, this gives the solution
\begin{equation*}
	N_{hbf}(A) = \Pi_he^{-\tau^2 \Delta_b}\Pi_h
\end{equation*}
as claimed.

Before proceeding with the construction of the heat kernel, let us point out that the computations above show that 
\begin{equation*}
	Af \in \CI(\Delta_{HX};E).
\end{equation*}
The smoothness is a consequence of the matching of the solutions at the various corners; the reason that the solution is evaluated on $\Delta_{HX}$ rather than $X_s \times \bbR^+_\tau$ is that the boundary hypersurface $\bhs{tff}(HX_s)$ corresponds to the blow-up of $\bhs{\bs}(X_s)$ at $\{ \tau =0 \}.$\\

Notice also that all our model heat kernels so far vanish rapidly at the faces $\bhs{tb}, \bhs{hlf}$ and $\bhs{hrf}$, which means we can look for a heat kernel that vanishes rapidly at these faces.  

%

Given consistent solutions of the model problems at $\bhs{tf},$ $\bhs{tff}$ and $\bhs{hbf},$ it is now relatively straightforward to complete the construction of the heat kernel (cf. \cite{MelroseAPS}).
First note that smooth extension of these solutions to $HX_{s}$ which are as in Lemma~\ref{rc.5a} for the extension off $\bhs{hbf}$ yields a polyhomogeneous function $A_0 \in \rho^{\alpha}\CI(HX_{s};\Hom(E))$ with $\alpha$ given by \eqref{eq:DefAlpha} whose leading terms at each boundary hypersurface solve the corresponding model problem. These computations show that
\begin{equation*}
	t(\pa_t + \Delta_{\ed})A_0 = - R_1, \quad 
	R_1 \in \frac{\eps \rho_{tf}}{\rho_{hmf}}\rho^{\alpha}\CI(HX_{s};\Hom(E)).
\end{equation*}
Proceeding inductively, we point out that the knowledge of the normal operators allows us to solve
\begin{equation*}
	t(\pa_t + \Delta_{\ed})A_j =R_j-  R_{j+1}, \quad \Mwith A_j, R_j \in \left(\frac{\eps\rho_{tf}}{\rho_{hmf}}\right)^{j}\rho^{\alpha}\CI(HX_{s};\Hom(E))
\end{equation*}
for all $j \in \bbN.$  Indeed, always using Lemma~\ref{rc.5a} to extend off $\bhs{hbf}$,  this amounts to solving inductively the equations 
\begin{gather}
	(\Delta_{\Ephi TX_s} - \tfrac12(m+ \sR))N_{tf}(A_j)=N_{tf}(R_j), \\
	\lrpar{\frac12\sigma\pa_{\sigma} + \sigma^2 N_{\ff}(\rho^2\Delta_{\ed})} N_{tff}(A_j) =N_{tff}(R_j),\\
	(\frac12\tau\pa_{\tau} + \tau^2\Delta_b) \Pi_h N_{hbf}(A_j) = N_{hbf}(R_j)= \Pi_hN_{hbf}(R_j), 
\label{problem.1}\end{gather}
using the model heat kernel at each of these faces, where in the last equation, we can reduce to the case $N_{hbf}(R_j)= \Pi_hN_{hbf}(R_j)$ by first adding to $A_j$ a term of order $2$, $\rho_{hbf}^2A_j'$, with the property that $N_{hbf}(R_j)-D_v^2N_{hbf}(A_j')= \Pi_hN_{hbf}(R_j)$.

Now let $H_0$ be an asymptotic sum of the series $\sum \left(\frac{\eps \rho_{tf}}{\rho_{hmf}}\right)^j A_j;$ that is, let \linebreak $H_0 \in \rho^{\alpha}\CI(HX_{s};\Hom(E))$ coincide in Taylor expansion with this formal sum at each boundary hypersurface. The existence of $H_0$ follows from a natural extension of Borel's lemma.
By construction we have
\begin{equation*}
	t(\pa_t+\Delta_{\ed})H_0 = -R_{\infty} \in \dCI_{hmf}(HX_{s};\Hom(E)).
\end{equation*}
where the dot in $\dCI_{hmf}$ indicates vanishing to infinite order at all boundary hypersurfaces except $\bhs{hmf}$.  Restricting to $\bhs{hmf}$, this gives the equation
\begin{equation*}
	t(\pa_t+\Delta_{d})N_{hmf}(H_0) = -N_{hmf}(R_{\infty}) \in \dCI(\bhs{hmf};\Hom(E)),
\end{equation*}
so that $N_{hmf}(H_0)$ can be seen as a parametrix for the heat equation associated to the operator $\Delta_d$ on $[M;H]$.  
To further improve $N_{hmf}(H_0)$ and obtain the actual heat kernel on $\bhs{hmf}$, it is convenient to think of a heat operator on $\bhs{hmf}$ as a map
\begin{equation*}
	\dCI([M;H] \times \bbR^+;E)\lra \dCI([M;H]\times \bbR^+;E)
\end{equation*}
via convolution in $t.$ The identity for this product is the kernel with the Dirac distribution over the diagonal at time $t=0.$
Since $N_{tf}(H_0)=\Id,$ it is easy to see, as in \cite[Proposition 7.17]{MelroseAPS}, \cite[\S7.1]{mame1} that 
\begin{equation*}
	(\pa_t+\Delta_{d})N_{hmf}(H_0) = \Id - t^{-1}N_{hmf}(R_{\infty})
\end{equation*}
and that the inverse of this operator has the form $\Id - S,$ with $S \in \dCI(\bhs{hmf};\Hom(E)).$  Thus, adding a smooth extension of $N_{hmf}(H_0)(-S)$ off $\bhs{hbf}$ to $H_0$ we can assume that $H_0$ in fact satisfies the equation
\begin{equation*}
	t(\pa_t+\Delta_{\ed})H_0 = -R_{\infty} \in \dCI(HX_{s};\Hom(E))= \dCI(X^2_{b,s}\times\bbR^+;E).
\end{equation*}
As before, in terms of the convolution product, this equation can be written
$$
(\pa_t+\Delta_{\ed})H_0 = \Id - t^{-1}R_{\infty}$$
where now $\Id - t^{-1}R_{\infty}$ has inverse of the form $\Id-S$ with $S\in \dCI(HX_{s};\Hom(E))$.
It follows that the heat kernel satisfies
\begin{equation*}
	H_{\ed} = H_0(\Id - S) \in \rho^{\alpha} \CI(HX_{s};\Hom(E))
\end{equation*}
as required.

\end{proof}

\subsection{The trace of the heat kernel} \label{sec:TraceHeatKer}

Lidskii's theorem \cite[Proposition 4.55]{MelroseAPS} shows that the trace of an operator with kernel 
$\cK_G \in \rho^{\gamma}\CI(HX_{s};\Hom(E)\otimes \beta_{H,R}^*(\tfrac1{\sqrt{x^2+\eps^2}}\Omega(M)))$ acting by $f\mapsto (\wt\beta_{H,L})_*(\cK_G\cdot\wt\beta_{H,R}^*f),$ when it exists, is given by
\begin{equation*}
	\Tr(G) = \int_M \tr (\beta_{H,b*}\cK_G)\rest{\diag_M\times [0,1]_{\eps}\times \bbR^+_{\tau}} 
\end{equation*}
where $\tr$ denotes the trace in $\Hom(E)$ and the integral is the push-forward along 
\begin{equation*}
	p_{\eps,\tau}: M\times [0,1]_{\eps}\times \bbR^+_\tau \lra [0,1]_{\eps}\times \bbR^+_\tau.
\end{equation*}
The trace is a function of $\eps$ and $\tau,$ but will not generally be smooth in $\eps$ and $\tau.$

We can use our description of the heat kernel to determine the regularity of its trace.
First note that 
\begin{equation*}
	\beta_{H,b}^{\sharp}(\diag_M\times [0,1]_{\eps\times \bbR^+_{\tau}}) = \Delta_{HX}
\end{equation*}
so that the trace is equal to
\begin{equation*}
	p_{\eps,\tau*}\tr (\beta_{H,b*}\cK_G)\rest{\diag_M} 
	= (p_{\eps,\tau}\circ\beta_{\Delta, (1)})_*\tr (\cK_G\rest{\beta_{H,b}^{\sharp}(\diag_M)}).
\end{equation*}
Since the map $p_{\eps,\tau}\circ\beta_{\Delta,(1)}:\Delta_{HX} \lra [0,1]_\eps\times \bbR^+_{\tau}$ is not a $b$-fibration, we will need to resolve it.
Let us set
\begin{equation*}
\begin{gathered}
	\wt \Delta_{HX} = [ \Delta_{HX}; \bhs{hmf}\cap\bhs{tf} ] = [\Delta_{HX}; \beta_{\Delta,b}^{\sharp}(\{ \eps=0, \tau=0 \})] \\
	\sE\sT = \lrspar{ [0,1]_{\eps} \times \bbR^+_{\tau}; \{ \eps, \tau =0 \} }
\end{gathered}
\end{equation*}
so that the map $p_{\eps,\tau}$ lifts to a map
\begin{equation*}
	\pi_{\eps,\tau}:\wt \Delta_{HX} \lra \sE\sT
\end{equation*}
which is a $b$-fibration, see figure \ref{fig:pushforward}.

\begin{figure}
\centering
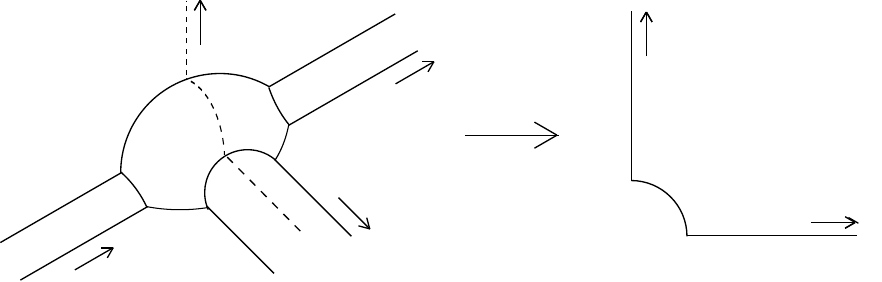
\caption{The pushforward from $\wt\Delta_{HX}\lra\sE\sT$, given by integration in $x$. Note that $\sE\sT$ may be identified with the interior lift of $x=0$ in $\wt\Delta_{HX}$. }
\label{fig:pushforward}
\end{figure}

Let us denote the blow-down maps by
\begin{equation*}
	\xymatrix{
	& \wt\Delta_{HX}\ar[ld]_-{\beta_{\wt\Delta}} \ar[rd]^-{\beta_{\wt\Delta, (1)}} \\
	\Delta_{HX} \ar[rr]^-{\beta_{\Delta, (1)}} & & M\times [0,1]_{\eps} \times \bbR^+_{\tau} } \quad \Mand \quad
	\xymatrix{ \sE\sT \ar[d]^{\beta_{\eps,\tau}} \\ [0,1]_{\eps}\times \bbR^+_{\tau},}
\end{equation*}
so that we have a commutative diagram
\begin{equation}\label{eq:DiagDeltasEsT}
	\xymatrix{
	\wt\Delta_{HX} \ar[rr]^-{\beta_{\wt\Delta, (1)}} \ar[d]_-{\pi_{\eps,\tau}} & & M \times [0,1]_{\eps} \times \bbR^+_{\tau} \ar[d]^{p_{\eps,\tau}} \\
	\sE\sT \ar[rr]^-{\beta_{\eps,\tau}} \ar[dr]_-{\pi_{\eps}} & & [0,1]_{\eps} \times \bbR^+_\tau \ar[dl] \\
	& [0,1]_{\eps} & }
\end{equation}
in which the horizontal arrows are blow-down maps and the vertical arrows are $b$-fibrations.
The trace of $G$ is given by
\begin{equation*}
	p_{\eps,\tau*}\tr (\beta_{H,b*}\cK_G)\rest{\diag_M} 
	= (\beta_{\eps,\tau}\circ \pi_{\eps,\tau})_* \beta_{\wt\Delta}^*(\tr \cK_G\rest{\Delta_{HX}} )
\end{equation*}
and so we see that $\beta_{\eps,\tau}^*\Tr(G)$ will be a polyhomogeneous function on $\sE\sT.$\\

Let us label the new boundary hypersurface of $\wt\Delta_{HX}$ compared to $\Delta_{HX}$ by 
\begin{equation*}
	\bhs{\eps,\tau}= \beta_{\wt\Delta}^{-1}(\bhs{hmf}\cap\bhs{tf})
\end{equation*}
and label the other boundary hypersurfaces of $\wt \Delta_{HX}$ with the same label as the corresponding face in $\Delta_{HX}.$
Let us label the boundary hypersurfaces of $\sE\sT$ by
\begin{equation*}
	\bhs{tf} = \beta_{\eps,\tau}^{\sharp}(\{ \tau =0 \}), \quad
	\bhs{tff} = \beta_{\eps,\tau}^{-1}(0,0), \quad
	\bhs{af} = \beta_{\eps,\tau}^{\sharp}( \{ \eps = 0 \}).
\end{equation*}
Note that the boundary hypersurfaces of $\wt \Delta_{HX}$ are related to those of $\sE\sT$ by
\begin{equation*}
\begin{gathered}
	\pi_{\eps,\tau}^{-1}(\bhs{tf}(\sE\sT)) = \bhs{tf}(\wt \Delta_{HX}), \quad
	\pi_{\eps,\tau}^{-1}(\bhs{tff}(\sE\sT)) = \bhs{tff}(\wt \Delta_{HX}) \cup \bhs{\eps,\tau}(\wt\Delta_{HX}), \\
	\pi_{\eps,\tau}^{-1}(\bhs{af}(\sE\sT)) = \bhs{hbf}(\wt \Delta_{HX}) \cup \bhs{hmf}(\wt \Delta_{HX}).
\end{gathered}
\end{equation*}
So the push-forward along $\pi_{\eps,\tau}$ may introduce log terms at $\bhs{af}$ from accidental multiplicities at $\bhs{hbf} \cap \bhs{hmf},$ and 
log terms at $\bhs{tff}$ from accidental multiplicities at $\bhs{tff} \cap \bhs{\eps,\tau}.$
Let us compute the push-forward for $G$ as above.
First note that
\begin{multline*}
	\cK_G \in \rho^{\gamma}\CI(HX_{s};\Hom(E)\otimes \beta_{H,R}^*(\tfrac1{\sqrt{x^2+\eps^2}}\Omega(M))) \\
	\implies
	\beta_{\wt\Delta}^*(\tr \cK_G\rest{\Delta_{HX}})
	\in \rho^{\wt\gamma}\CI(\wt\Delta_{HX}; \beta_{\wt\Delta,(1)}^*(\tfrac1{\sqrt{x^2+\eps^2}}\Omega(M) ) )
\end{multline*}
where $\wt\gamma_{\eps,\tau} = \gamma_{hmf}+\gamma_{tf}.$
Next if we denote $\cK_G = \wt\cK_G \beta_{H,R}(\frac1{\sqrt{x^2+\eps^2}}\mu_M)$ and multiply by $\beta_{\wt\Delta,(1)}^*(\mu_{[0,1]_{\eps}\times\bbR^+_\tau})$ we get
\begin{multline*}
	\beta_{\wt\Delta}^*(\tr \cK_G\rest{\Delta_{HX}})\beta_{\wt\Delta,(1)}^*(\mu_{[0,1]_{\eps}\times\bbR^+_\tau}) \\
	= 
	\beta_{\wt\Delta}^*(\tr \wt \cK_G\rest{\Delta_{HX}})
	\beta_{\wt\Delta,(1)}^*\lrpar{\frac1{\sqrt{x^2+\eps^2}}}
	\beta_{\wt\Delta,(1)}^*(\mu_{M \times [0,1]_{\eps}\times\bbR^+_\tau}) \\
	\in \rho^{\wt\gamma}(\rho_{hbf}\rho_{tff})^{-1}(\rho_{hbf}\rho_{\eps,\tau}\rho_{tff}^2)\CI(\wt\Delta_{HX};\Omega(\wt\Delta_{HX}))
\end{multline*}
which implies by the pushforward theorem of \cite[Theorem~5]{me1} that 
\begin{equation*}
\begin{gathered}
	\beta_{\eps,\tau}^*(\Tr(G) \; d\eps d\tau)
	\in \cA^{\wt \gamma_{tf}, (\wt \gamma_{tff}\bar \cup \wt\gamma_{\eps,\tau})+1, \wt\gamma_{hbf} \bar\cup \wt\gamma_{hmf}}(\sE\sT;\Omega(\sE\sT)) \\
	=  \cA^{\wt\gamma_{tf}, \wt\gamma_{tff} \bar\cup \wt\gamma_{\eps,\tau}, \wt\gamma_{hbf} \bar\cup \wt\gamma_{hmf}}(\sE\sT;\beta_{\eps,\tau}^*\Omega([0,1]_{\eps}\times \bbR^+_{\tau}))
\end{gathered}
\end{equation*}
and we can cancel out the densities on both sides.
For the heat kernel this yields
\begin{equation}\label{eq:HeatTrace}
	\beta_{\eps,\tau}^*\Tr(e^{-t\Delta_{\ed}})
	\in   \cA^{-m, -h-1 \bar\cup -m, 0 \bar\cup 0}(\sE\sT).
\end{equation}

This gives a complete understanding of the trace of the heat kernel for finite time as $\epsilon\searrow 0$.  We also need to have a uniform control of the trace of the heat kernel $t\to \infty$ and $\epsilon\searrow 0$, which is the object of the next proposition.

\begin{proposition}
Suppose that the family $\eth_{\ed}$ also satisfies Assumption 2 in \S\ref{rc.0}.  Then there exists a positive constant $C$ such that for $t>1$,
\begin{equation}
  \Tr(e^{-t\Delta_{\ed}}-\Pi_{\sma})= c_{-1}(t)\log \epsilon +c_{0}(t) + \mathcal{O}(e^{-Ct}),
\label{rt.2}\end{equation}
where 
\begin{gather}
\label{rt.3} c_{-1}(t)= \frac{1}{2\pi} \int_{-\infty}^{\infty} \Tr( I(e^{-tD_b^2},\lambda))d\lambda= \mathcal{O}(e^{-Ct}), \\
\label{rt.4} c_0(t)= {}^R\!\Tr(e^{-tD^2_{d}}-\Pi_{\ker_{L^2}D_{d}}) + {}^R\!\Tr(e^{-tD_b^2}-\Pi_{\ker_{L^2}D_b})= \mathcal{O}(e^{-Ct}),
\end{gather}
where $D_d= N_{mf}(D_{\ed})$.
\label{rt.1}\end{proposition}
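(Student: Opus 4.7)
The plan is to combine the polyhomogeneous description of $e^{-t\Delta_{\ed}}$ from Theorem~\ref{thm:HeatKernelConst} and of $\Pi_{\sma}$ from Corollary~\ref{rc.29} with the spectral-gap consequences of Assumption~2 (Fredholmness of $D_b$) and the discreteness of $\spec(D_d)$ (Remark~\ref{ds.1}) to extract leading behavior at $\epsilon=0$ for $t$ bounded away from zero. First I would take the diagonal restriction of the heat kernel to $\wt\Delta_{HX}$ and push it forward along $\pi_{\epsilon,\tau}:\wt\Delta_{HX}\to \sE\sT$ as in \S\ref{sec:TraceHeatKer}. For $t>1$, only the face $\bhs{af}\subset \sE\sT$, i.e.\ $\epsilon\searrow 0$ with $\tau$ away from $0$, is relevant. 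Its preimage under $\pi_{\epsilon,\tau}$ is $\bhs{hbf}\cup\bhs{hmf}$, and the pushforward theorem of \cite{me1} together with the leading orders recorded in~\eqref{eq:DefAlpha} shows that $\Tr(e^{-t\Delta_{\ed}})$ admits a polyhomogeneous expansion in $\epsilon$ whose potential accidental multiplicity is exactly the collision of the simple orders from $\bhs{hmf}$ and $\bhs{hbf}$, producing at worst a single $\log\epsilon$ term.

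Next I would identify the coefficients. The constant coefficient is the sum of two renormalized integrals: over the diagonal of $\bhs{hmf}$, giving the Melrose renormalized trace $\RTr(e^{-tD_d^2})$ associated to the $\phi$-manifold $[M;H]$; and over the diagonal of $\bhs{hbf}$, giving the b-renormalized trace $\RTr(e^{-tD_b^2})$ on $Y\times[-\pi/2,\pi/2]$. The $\log\epsilon$ coefficient is, by the standard b-pushforward residue formula (cf.\ \cite[Prop.~5.8 and Ch.~4]{MelroseAPS}), the integral
\[ c_{-1}(t)= \tfrac{1}{2\pi}\int_{-\infty}^{\infty}\Tr(I(e^{-tD_b^2},\lambda))\,d\lambda,\]
which encodes the failure of the restriction of $e^{-tD_b^2}$ at $\bhs{hbf}$ to be trace class. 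Subtracting $\Pi_{\sma}$ then uses Corollary~\ref{rc.29}: the leading restrictions of $\Pi_{\sma}$ to $\bhs{mf}$ and $\bhs{bff}$ are the orthogonal projections onto $\ker_{L^2}D_d$ and $\ker_{L^2}D_b$ respectively, so this subtraction replaces each model heat trace by its kernel-projection-removed version, yielding the stated formulas for $c_0(t)$ and leaving $c_{-1}(t)$ unchanged (since $\Pi_{\ker D_b}$ is trace class and contributes no log term).

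The main obstacle, and the step that does not follow purely from the polyhomogeneous structure, is the \emph{uniform in $\epsilon$} exponential decay of the remainder and of each coefficient. To control the individual $c_{-1}(t), c_0(t)$ by $\mathcal{O}(e^{-ct})$, I would invoke the spectral gap of $D_b^2$ modulo $\ker D_b$ (Assumption~2) and of $D_d^2$ modulo $\ker D_d$ (discrete spectrum, Remark~\ref{ds.1}); these yield the stated bounds by writing $e^{-tD_b^2}-\Pi_{\ker D_b}$ and $e^{-tD_d^2}-\Pi_{\ker D_d}$ via contour integrals of the respective resolvents along a contour at distance bounded below by the spectral gap. To get the $\mathcal O(e^{-Ct})$ bound uniformly as $\epsilon\to 0$, I would use Theorem~\ref{rc.7}: the existence of $Res_H(\lambda)+Res_M(\lambda)$ uniformly down to $\epsilon=0$, with only simple poles at the small eigenvalues in $\mathbb B_\delta(0)$, provides a uniform resolvent bound on any contour encircling the small eigenvalues at fixed distance $\delta'<\delta$. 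Writing
\[ e^{-t\Delta_{\ed}}-\Pi_{\sma}=\tfrac{i}{2\pi}\int_{\Gamma_{\delta'}} e^{-t\lambda}(\Delta_{\ed}-\lambda)^{-1}\,d\lambda,\]
with $\Gamma_{\delta'}$ a contour in $\mathbb C\setminus \overline{\mathbb B_{\delta'}(0)}$ enclosing $\spec(\Delta_{\ed})\setminus \mathbb B_{\delta'}(0)$ (deformed so that $\Re\lambda\geq c'>0$ on most of its length), and using the operator-valued resolvent estimates uniform in $\epsilon$ that come from Theorem~\ref{rc.7} together with standard elliptic a priori estimates for the high-frequency part, produces the $\mathcal{O}(e^{-Ct})$ bound uniformly as $\epsilon\to 0$. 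Assembling Steps 1--3 with this remainder bound gives the claim.
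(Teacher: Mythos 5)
Your identification of the expansion at $\bhs{af}$ and of the coefficients $c_{-1}(t)$, $c_0(t)$ is consistent with the paper (this part is essentially the renormalized push-forward statement already quoted in \S\ref{sec:AsympAT}), but the heart of the proposition is the \emph{uniform in $\epsilon$} bound $\mathcal{O}(e^{-Ct})$ on the remainder together with the exponential decay of the renormalized traces in $c_0(t)$, and here your argument has a genuine gap. The contour-integral representation of $e^{-t\Delta_{\ed}}-\Pi_{\sma}$ plus the uniform resolvent bounds of Theorem~\ref{rc.7} only give an \emph{operator-norm} estimate $\|e^{-t\Delta_{\ed}}-\Pi_{\sma}\|=\mathcal{O}(e^{-Ct})$ uniformly in $\epsilon$; this cannot control the trace, since $\Tr(e^{-t\Delta_{\ed}})$ itself diverges like $\log\epsilon$ for fixed $t$ (the trace norm blows up as $\epsilon\to 0$ because the limiting $b$-type heat kernels are not trace class). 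Likewise, "standard elliptic a priori estimates for the high-frequency part" do not address the degeneration at $\epsilon=0$, which is exactly where trace-class control fails. Similarly, for $c_0(t)$: operator-norm decay from a spectral gap does \emph{not} imply decay of a \emph{renormalized} trace, since ${}^R\!\Tr$ is not dominated by any operator norm; so your contour-integral justification of ${}^R\!\Tr(e^{-tD_d^2}-\Pi_{\ker})=\mathcal{O}(e^{-ct})$ and the analogous statement for $D_b$ is insufficient as stated. Finally, the polyhomogeneous expansion in $\epsilon$ for fixed $t$ only yields an $o(1)$ error with no uniformity as $t\to\infty$, so it cannot be combined naively with the large-time bound.

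The missing mechanism, which is what the paper's proof supplies, is to isolate the divergent corner contribution at the level of Schwartz kernels: one extends the restriction of $e^{-t\Delta_{\ed}}$ at $\bhs{hmf}\cap\bhs{hbf}$ (identified with the heat kernel of the Fredholm indicial family $I_b(\Delta_{\ed})$, which decays exponentially in the $b$-calculus by \cite[Prop.~7.36]{MelroseAPS}) by a fixed, time-independent extension map to an operator $A(t)$ on $X^2_{b,s}$, and then proves the \emph{trace-norm} estimate $\|e^{-t\Delta_{\ed}}-\Pi_{\sma}-A(t)\|_{\Tr}=\mathcal{O}(e^{-Ct})$ uniformly in $\epsilon$ via the semigroup splitting
\begin{equation*}
B(t)-\tilde A(t)= B(\tfrac t2)\bigl(B(\tfrac t2)-\tilde A(\tfrac t2)\bigr)+\tilde A(\tfrac t2)\bigl(B(\tfrac t2)-\tilde A(\tfrac t2)\bigr)-\bigl(\tilde A(t)-\tilde A(\tfrac t2)^2\bigr),
\end{equation*}
run recursively: the operator-norm factors decay exponentially (this is where Theorem~\ref{rc.7} and the spectral gap enter), and $\tilde A(t)-\tilde A(t/2)^2$ has vanishing indicial family, hence is trace class with exponentially small trace norm. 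The same comparison applied on the faces $\bhs{hmf}$ and $\bhs{hbf}$ is what proves the exponential decay of the renormalized traces in $c_0(t)$, and the expansion \eqref{rt.2} then follows by computing $\Tr(A(t))$ directly, which produces the $\log\epsilon$ term and the two renormalized traces. Without some substitute for this kernel-level comparison, your Steps 1--3 do not assemble into the claimed uniform estimate.
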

\begin{remark}
If we assume furthermore that $\ker D_v=\{0\}$ so that $e^{-tD_{d}}$ is trace class and $D_{d}$ has a discrete spectrum, then the proof is significantly simpler and leads to a stronger result, namely for $t>1$,
$$
     \Tr(e^{-t\Delta_{\ed}}-\Pi_{\sma})= \Tr(e^{-tD^2_{d}}-\Pi_{\ker_{L^2}D_{d}}) + \mathcal{O}(e^{-Ct}\epsilon)= \mathcal{O}(e^{-Ct})
$$
as $\epsilon\searrow 0$.  
\end{remark}
\begin{proof}
Note that $I(e^{-tD_b^2},\lambda) = e^{-t(\lambda^2+ I(D^2_b,0))}$ and $I(D^2_b,0)$ is a Laplace-type operator on the compact manifold $Y$ which by Assumption 2 in \S\ref{rc.0} has no null space; it follows that $c_{-1}(t)=\mathcal{O}(e^{-Ct})$ for some positive constant $C.$ 
Now, because $\tr( e^{-tD_d^2} )\rest{\diag}$ and $\tr( e^{-tD_b^2} )\rest{\diag}$ are $b$-densities (for $t>0$), we know that it is the leading term of $e^{-t\Delta_{\ed}}$ at $\bhs{hmf}\cap\bhs{hbf}$ which prevents $e^{-tD_{d}^2}$ and $e^{-tD_b^2}$ from being trace class in general.  We will denote the leading term of $e^{-t\Delta_{\ed}}$ at $\bhs{hmf}\cap\bhs{hbf}$ by $\left.e^{-t\Delta_{\ed}}\right|_{\bhs{hmf}\cap{\bhs{hbf}}}.$
We wish to isolate this term so we will define 
\begin{equation*}
	A \in \CI \lrpar{ HX_s\rest{t\geq 1}; \beta_{H,R}^*\lrpar{ \Omega_{\epsilon,b}(X_s)} \otimes \Hom(E) }
	\Mwith \left.A \right|_{\bhs{hmf}\cap{\bhs{hbf}}} = \left.e^{-t\Delta_{\ed}}\right|_{\bhs{hmf}\cap{\bhs{hbf}}}.
\end{equation*}
We can identify $HX_s\rest{t\geq 1} = X^2_{b,s} \times [1,\infty)_t$ and think of $A$ as a map
\begin{equation*}
	[1,\infty) \ni t \mapsto A(t) \in \CI \lrpar{ X^2_{b,s}; \beta_{R}^*\lrpar{ \Omega_{\epsilon,b}(X_s)} \otimes \Hom(E) }.
\end{equation*}

Let $c: \bhs{mf}\cap\bhs{bf}\times [0,1)^2\to X^2_{b,s}$ be a tubular neighborhood of $\bhs{mf}\cap\bhs{bf}$ in $X^2_{b,s}$ and let $\pr_1: \bhs{mf}\cap\bhs{bf}\times [0,1)^2\to \bhs{mf}\cap\bhs{bf}$ be the projection on the first factor.  If $\chi\in\CI_c(\bhs{mf}\cap\bhs{bf}\times [0,1)^2)$ is a choice of cut-off function such that 
$\chi\equiv 1$ in a neighborhood of $\bhs{mf}\cap\bhs{bf}$, then we can define an extension map
$$
       \begin{array}{lccl}
       \Xi: & \CI(\bhs{mf}\cap\bhs{bf}) & \to &\CI(X^2_{b,s}) \\
              & f & \mapsto & c_*(\chi \pr_1^*f)
       \end{array}
$$
such that $\left. \Xi(f)\right|_{\bhs{mf}\cap\bhs{bf}}=f$.  More generally, for any vector bundle $V\to X^2_{b,s}$, we use a smooth vector bundle isomorphism $\nu: c^*V\to \pr_1^*(\left. V\right|_{\bhs{mf}\cap\bhs{bf}})$ to define an extension map
$$
 \begin{array}{lccl}
       \Xi: & \CI(\bhs{mf}\cap\bhs{bf};V) & \to &\CI(X^2_{b,s};V) \\
              & \sigma& \mapsto & c_*(\nu^{-1}(\chi \pr_1^*\sigma))
       \end{array}
$$
such that $\left. \Xi(\sigma)\right|_{\bhs{mf}\cap\bhs{bf}}=\sigma$.  In particular, using such an extension map and the fact that $\left.HX_s\right|_{\{t>1\}}\cong X^2_{b,s}\times (1,\infty)$, we can for $t>1$ define a smooth extension of $\left.e^{-t\Delta_{\ed}}\right|_{\bhs{hmf}\cap{\bhs{hbf}}}$ to $HX_s$ by
$$
        A(t):= \Xi(\left.e^{-t\Delta_{\ed}}\right|_{\bhs{hmf}\cap{\bhs{hbf}}}),
$$
so that $\left. A(t)\right|_{\bhs{hmf}\cap\bhs{hbf}}= \left.e^{-t\Delta_{\ed}}\right|_{\bhs{hmf}\cap{\bhs{hbf}}}$.  Since the extension map $\Xi$ does not depend on time, the advantage of considering $A(t)$ instead of 
$e^{-t\Delta_{\ed}}$ is that the dependence of $A$ in $t$ is completely determined by $\left.e^{-t\Delta_{\ed}}\right|_{\bhs{hmf}\cap{\bhs{hbf}}}.$

Recall that  $\left.e^{-t\Delta_{\ed}}\right|_{\bhs{hmf}\cap{\bhs{hbf}}}$ can be identified with the heat kernel of the indicial family $I_b(\Delta_{\ed})$ which, by Assumption 2 in \S\ref{rc.0}, is a Fredholm $b$-operator. Thus, from \cite[Proposition 7.36]{MelroseAPS}, as elements of $\Psi^{-\infty}_b([M;H];E)$ and $\Psi^{-\infty}_b(H\times[-1,1];E)$,  $\left. A(t)\right|_{\bhs{hmf}}$ and $\left. A(t)\right|_{\bhs{hbf}}$ are uniformly $\mathcal{O}(e^{-Ct})$ for some positive constant $C$. 
In particular, the renormalized traces of $\left. A(t)\right|_{\bhs{hmf}}$ and $\left. A(t)\right|_{\bhs{hbf}}$ are $\mathcal{O}(e^{-Ct})$ for some positive constant $C$.  Thus, to show that ${}^R\!\Tr(e^{-tD^2_{d}}-\Pi_{\ker_{L^2}D_{d}})$ and ${}^R\!\Tr(e^{-tD_b^2}-\Pi_{\ker_{L^2}D_b})$ are $\mathcal{O}(e^{-Ct})$, it suffices to show that 
\begin{gather}
\label{rt.5} \| e^{-tD^2_{d}}-\Pi_{\ker_{L^2}D_{d}}- \left.A(t)\right|_{\bhs{hmf}}\|_{\Tr}= \mathcal{O}(e^{-Ct}), \\
\label{rt.6} \| e^{-tD^2_{b}}-\Pi_{\ker_{L^2}D_{b}}- \left.A(t)\right|_{\bhs{hbf}}\|_{\Tr}= \mathcal{O}(e^{-Ct}),
\end{gather}
where $\|\cdot\|_{\Tr}$ is the trace norm.
For \eqref{rt.5}, writing $B(t)= e^{-tD^2_{d}}-\Pi_{\ker_{L^2}D_{d}}$ and $\widetilde{A}(t)= \left.A(t)\right|_{\bhs{hmf}}$, we have that
\begin{equation}
\begin{aligned}
\| B(t)- \widetilde{A}(t)\|_{\Tr}&= \| B(\tfrac{t}2)(B(\tfrac{t}2)-\widetilde{A}(\tfrac{t}2)) + \wt A(\tfrac t2)(B(\tfrac{t}2)-\widetilde{A}(\tfrac{t}2)) -(\widetilde{A}(t)- \widetilde{A}(\tfrac{t}2) \widetilde{A}(\tfrac{t}2)) \|_{\Tr}\\
&\le \left(  \| B(\tfrac{t}2)\| + \|\widetilde{A}(\tfrac{t}2)\|\right)\|B(\tfrac{t}2)-\widetilde{A}(\tfrac{t}2)\|_{\Tr} + \|\widetilde{A}(t)- \widetilde{A}(\tfrac{t}2)\widetilde{A}(\tfrac{t}2)\|_{\Tr},
\end{aligned}
\label{rt.7}\end{equation}
where $\|\cdot \|$ is the operator norm.  By Corollary~\ref{rc.20}, equation \eqref{rcmm.4} and the spectral theorem, we know that 
$\| B(\frac{t}2)\|= \mathcal{O}(e^{-\frac{Ct}2})$ for some positive constant $C$.  Directly from the definition of $\widetilde{A}(t)$, $\widetilde{A}(\frac{t}2)$ is $\mathcal{O}(e^{-\frac{Ct}2})$ as an element of $\Psi^{-\infty}_b([M;H];E)$ for some positive constant $C$.  Thus, using Schur's lemma, \eg as in the proof of \cite[Theorem~{5.34}]{MelroseAPS}, we have that
$\| \widetilde{A}(\frac{t}2)\|= \mathcal{O}(e^{-\frac{Ct}2})$ for some positive constant $C$.  Similarly, since $\widetilde{A}(t)$ and $\widetilde{A}(\frac{t}2)^2$ have the same indicial family, we have that  their difference is uniformly  $\mathcal{O}(e^{-Ct})$ in $|x|\Psi^{-\infty}_b([M;H];E)$ for some positive constant $C$.  In particular, their difference 
is trace class with
$$
 \|\widetilde{A}(t)- \widetilde{A}(\tfrac{t}2)\widetilde{A}(\tfrac{t}2)\|_{\Tr}= \mathcal{O}(e^{-Ct})
 $$     
 for some positive constant $C.$  Thus, we can find a positive constant $C$ such that 
\begin{equation}
  \| B(t)- \widetilde{A}(t)\|_{\Tr}\le e^{-\frac{Ct}2}\left( \| B(\tfrac{t}2)-\widetilde{A}(\tfrac{t}2)\|_{\Tr}+1 \right), \quad \forall \ t>\frac{1}{C}.
  \label{rt.8}\end{equation}
We can then use this estimate recursively to first show that $\| B(t)- \widetilde{A}(t)\|_{\Tr}$ is $\mathcal{O}(1)$ as $t\to \infty$.  Using this fact, we can then use the estimate once more to show that there is a positive constant $C$ such that  
\begin{equation}
     \| B(t)- \widetilde{A}(t)\|_{\Tr}\le \frac{e^{-Ct}}{C}, \quad \forall \ t>1,
     \label{rt.9}
\end{equation}
establishing \eqref{rt.5}.  To establish \eqref{rt.6}, we can proceed exactly in the same way or use instead \cite[Proposition~7.36]{MelroseAPS}.  By the discussion above, these estimates give \eqref{rt.4}.  

Finally, to see that \eqref{rt.2} holds, notice that by Theorem~\ref{rc.7} and the spectral theorem, there exists $C>0$ such that $\|e^{-t\Delta_{\ed}}-\Pi_{\sma}\|=\mathcal{O}(e^{-Ct})$ uniformly in $\epsilon\ge 0$.  By our choice of $A(t)$, there is a positive constant such that $A(t)=\mathcal{O}(e^{-Ct})$ as an element of $\Psi^{-\infty}_{\epsilon,b}(X_s;E)$, in particular $\| A(t)\|= \mathcal{O}(e^{-Ct})$ uniformly in $\epsilon\ge 0$ for some positive constant $C$.    Thus, replacing $B(t)$ with $e^{-t\Delta_{\ed}}-\Pi_{\sma}$ and $\widetilde{A}(t)$ with $A(t)$ in \eqref{rt.7} and \eqref{rt.8}, 
we can apply the argument leading to \eqref{rt.9} uniformly
 in $\epsilon$ to find a positive constant $C$ such that for all $\epsilon\ge 0$,
\begin{equation}
  \| e^{-t\Delta_{\ed}}-\Pi_{\sma} - A(t)\|_{\Tr}\le \frac{e^{-Ct}}C \quad \forall \ t>1.  
\label{rt.10}\end{equation}
Thus, \eqref{rt.2} follows from this estimate and the fact that for some positive constant $C$,  
$$
   \Tr(A(t))= c_{-1}(t)\log\epsilon + c_2(t)+ \mathcal{O}(e^{-Ct}\epsilon\log\epsilon)
$$
where 
$$
   c_2(t)= {}^R\!\Tr(\left.A(t)\right|_{\bhs{hmf}})+ {}^R\!\Tr(\left. A(t) \right|_{\bhs{hbf}}).
$$

\end{proof}

\subsection{Symmetry for even metrics} \label{sec:EvenMets}

There is a class of metrics for which the asymptotic behavior of $D_{\dR}$ as $\rho = \sqrt{x^2+\eps^2} \to 0$ is particularly well-behaved. 
In this section we describe this class of metrics and the resulting asymptotic behavior and then we use this to simplify the asymptotics of the heat trace.

Notice first that a product-type $\ed$-metric in a tubular neighborhood $\sT$ of $H$ in $M$ induces in $\beta_{(1)}^{-1}(\sT\times [0,1]_{\eps})$ a decomposition
$$
  {}^{\ed}T^*X_s= {}^{\ed} T^*_HX_s \oplus {}^{\ed} T^*_VX_s
$$
in terms of horizontal and vertical forms with respect to the fiber bundle  
$$
[(-1,1)_x\times [0,1]_{\eps};\{0\}\times \{0\}]\times H\to [(-1,1)_x\times [0,1]_{\eps};\{0\}\times \{0\}]\times Y
$$ 
induced by $\phi: H\to Y$.  In particular, $\frac{dx}{\rho^2}$ is a horizontal form in this decomposition.  

\begin{definition}\label{def:EvenEd}
We say that $g_{\ed}$ is an {\bf even $\ed$-metric } if it is an $\ed$-metric differing from  a product-type $\ed$-metric $g_{pt}$ by elements of $\rho^2\CI(\beta_{(1)}^{-1}(\sT\times [0,1]_{\eps}); S^2({}^{\ed}T_{H}^*X_s))$ and $\rho^2\CI(\beta_{(1)}^{-1}(\sT\times [0,1]_{\eps}); S^2({}^{\ed}T_{V}^*X_s))$ having only even powers of $\rho$ in their expansion at $\bhs{\bs}$, where the decomposition in terms of horizontal and vertical forms is the one induced by $g_{pt}$.   We say that $g_{\ed}$ is an {\bf even $\ed$-metric to order $\ell$} if there is an even $\ed$-metric $g_{\ed,\ev}$ such that
\begin{equation*}
	g_{\ed} - g_{\ed,\ev} \in \rho^{\ell}\CI(X_s; S^2(\Ed TX_s)). 
\end{equation*}
\end{definition}

\begin{definition}\label{def:EvenFMet}
Let $F\lra X_s$ be a vector bundle with flat connection $\nabla^F$ and $g_F$ a bundle metric, not necessarily compatible.
We say that $g_F$ is even in $\rho$ if its Taylor expansion at $\bhs{sb}$ has only even powers of $\rho.$
\end{definition}

Let $\CI_{\ev} \lrpar{ HX_s; \beta_{H,R}^*\lrpar{ \Omega_{\epsilon,b}(X_s)} \otimes \Hom(\Lambda(\Ed T^*X_s)\otimes F) }$ be the subspace of 
\begin{equation}
\CI \lrpar{ HX_s; \beta_{H,R}^*\lrpar{ \Omega_{\epsilon,b}(X_s)} \otimes \Hom(\Lambda(\Ed T^*X_s)\otimes F) }
\label{ev.1}\end{equation}  consisting of elements $\kappa$ having an expansion at $\bhs{tff}$ of the form
\begin{equation}
     \kappa \sim \sum_{j=0}^{\infty} \rho_{tff}^j \kappa_j 
\label{ev.2}\end{equation}
with $\kappa_j$ a smooth section on $\bhs{tff}$ such that 
$$
     \kappa_j(\theta,y,z,-S',-U',z',\sigma)= (-1)^{j+\bN_{H/Y}} \kappa_j(\theta,y,z,S',U',z',\sigma)
$$
in the coordinates \eqref{eq:CoordsTff}, where the number operator $N_{H/Y}$ gives the shift in vertical degree induced by $\kappa_j$.  Similarly, let $\CI_{\odd} \lrpar{ HX_s; \beta_{H,R}^*\lrpar{ \Omega_{\epsilon,b}(X_s)} \otimes \Hom(\Lambda(\Ed T^*X_s)\otimes F) }$ be the subspace of  \eqref{ev.2} consisting of elements $\kappa$ having an expansion at $\bhs{tff}$ of the form
\begin{equation}
     \kappa \sim \sum_{j=0}^{\infty} \rho_{tff}^j \kappa_j 
\label{ev.3}\end{equation}
with this time $\kappa_j$ a smooth section on $\bhs{tff}$ such that 
$$
     \kappa_j(\theta,y,z,-S',-U',z',\sigma)= (-1)^{j+\bN_{H/Y}+1} \kappa_j(\theta,y,z,S',U',z',\sigma)
$$
in the coordinates \eqref{eq:CoordsTff}.

\begin{proposition}\label{eveness.1}
Let $\alpha$ be the family of index sets from the construction of the heat kernel \eqref{eq:DefAlpha}.
If $g_{\ed}$ and $g_F$ are even metrics, the heat kernel of $D_{\ed}^2$ has an (infinite-order) parametrix at $\bhs{tff}$ in
\begin{equation}\label{eq:EEETff}
	\rho^{\alpha}\CI_{\ev} \lrpar{ HX_s; \beta_{H,R}^*\lrpar{ \Omega_{\epsilon,b}(X_s)} \otimes \Hom(\Lambda(\Ed T^*X_s)\otimes F) }.
\end{equation}
If $g_F$ is even and $g_{\ed}$ is an even $\ed$-metric to order $\ell \geq 2,$ then the heat kernel has 
a parametrix at $\bhs{tff}$ in \eqref{eq:EEETff} plus a term in 
\begin{equation*}
	\rho^{\alpha}\rho_{tff}^{\ell}\CI \lrpar{ HX_s; \beta_{H,R}^*\lrpar{ \Omega_{\epsilon,b}(X_s)} \otimes \Hom(\Lambda(\Ed T^*X_s)\otimes F) }.	
\end{equation*}
\end{proposition}

\begin{proof}
Let us review our implementation of the Hadamard parametrix construction in constructing the heat kernel, focusing on $\bhs{tff}.$
We first established that the action of $\beta_{H,L}^*(t\pa_t + tD_{\dR}^2)$ at this face was through
\begin{equation*}
	N_{tff}(t\pa_t+tD_{\dR}^2) = \frac12 \sigma\pa_\sigma + \sigma^2 N_{ff}(\rho^2D_{\dR}^2)
\end{equation*}
where $\sigma = \tau/\rho$ is a rescaled time variable and with initial condition given by the model heat kernel at $\bhs{tf}.$
Thus the model heat kernel at $\bhs{tff}$ is the product of the heat kernels on $\bbR^{h+1}$ and the vertical family of heat kernels on $Z.$
We extend this model heat kernel to a section over $HX_s$ in any way we like and when we apply $\beta_{H,L}^*(t\pa_t + t D_{\dR}^2)$ to this extended section we get a section that vanishes to first order at $\bhs{tff}.$ 
The first non-vanishing coefficient $w$ induces our next model problem: we look for a section $u$ such that $N_{tff}(t\pa_t+tD_{\dR}^2)u=w$ and this gives us an improved parametrix. This process is continued until we have a parametrix that vanishes to infinite order at $\bhs{tff}.$\\

Thus, if we denote the space in \eqref{eq:EEETff} by $\sE_{\ev},$ to prove the proposition it suffices to show that the parametrix construction can be done entirely in $\sE_{\ev}.$  
First we point out that the model $N_{tff}(e^{-tD_{\dR}^2})$ given in \eqref{eq:AllNormalOps} is clearly even, so can be extended to an element of $\sE_{\ev}$.  Now, if we also set 
$$
    \sE_{\odd}:= \rho^{\alpha}\CI_{\odd} \lrpar{ HX_s; \beta_{H,R}^*\lrpar{ \Omega_{\epsilon,b}(X_s)} \otimes \Hom(\Lambda(\Ed T^*X_s)\otimes F) }, 
$$ 
then it follows from \eqref{cc.3}, \eqref{cc.4} and \cite[Proposition~10.1]{BGV} that the left action $\tau d_F$ interchanges parity, that is, it induces maps
$$
  \beta_{H,L}^*(\tau d_F): \sE_{\ev}\to \sE_{\odd}, \quad \beta_{H,L}^*(\tau d_F): \sE_{\odd}\to \sE_{\ev}.
 $$ 
 For its adjoint $d^*_F$ given in \eqref{eq:MullerDelta}, notice that since $g_{\ed}$ and $g_F$ are even metrics, the operator $\#$ will preserve parity, while $*$ will preserve or reverse parity depending on whether $v$ is even or odd, so that  $\tau d_F^*$ will also interchange parity.  Hence, we see that $\tau D_{\ed}$ interchanges parity, so that $\tau^2 D_{\ed}$ preserves parity.  On the other hand, the action on the left of $\sigma \pa_{\sigma}$ clearly preserves parity, so that the left action of the heat operator preserves parity and induces a map
\begin{equation}
           \beta_{H,L}^*( t\pa_t + tD^2_{\ed}): \sE_{\ev}\to \sE_{\ev}.
\label{ev.4}\end{equation}              
This means that  the parametrix construction can be done in $\sE_{\ev}$ as claimed.  Now, if we only assume that $g_{\ed}$ is even up to order $\ell$, then \eqref{ev.4} remains true  up to order $\ell$, namely $\beta_{H,L}^*( t\pa_t + tD^2_{\ed})$ maps
\begin{equation}
         \sE_{\ev}+ \rho^{\alpha}\rho_{tff}^{\ell}\CI \lrpar{ HX_s; \beta_{H,R}^*\lrpar{ \Omega_{\epsilon,b}(X_s)} \otimes \Hom(\Lambda(\Ed T^*X_s)\otimes F) }
\label{ev.5}\end{equation}         
to itself, so that the parametrix construction can indeed be performed in \eqref{ev.5} as claimed.
\end{proof}

The key consequence is that we can refine the expansion of the pointwise trace of the heat kernel of an even $\ed$-metric from
\begin{equation*}
	\tr\lrpar{e^{-t\Delta}\rest{\Delta_{HX}}} \sim \lrpar{ \rho_{tff}^{-h-1}\sum A_{k} \rho_{tff}^{k} }\mu
\end{equation*}
which follows from the heat kernel construction, to
\begin{equation}\label{eq:EvenDVanishingTff}
	\tr\lrpar{e^{-t\Delta}\rest{\Delta_{HX}}} \sim 
	\lrpar{ \rho_{tff}^{-h-1}\sum A_{2k} \rho_{tff}^{2k} }
\end{equation}
since odd terms in $S'$ and $U'$ vanish on the diagonal and do not contribute to the trace.
If the metric is even to order $\ell$ then the asymptotics of the pointwise trace are
\begin{equation*}
	\tr\lrpar{e^{-t\Delta}\rest{\Delta_{HX}}} \sim 
	\lrpar{ \rho_{tff}^{-h-1}\sum_{2k<\ell} A_{2k} \rho_{tff}^{2k} 
	+\rho_{tff}^{-h+\ell-1}\sum A'_k \rho_{tff}^k }\mu
\end{equation*}
In particular, this has the following consequence.

\begin{corollary} \label{eveness.2}
If $g_F$ is even in $\rho,$ $g_{\ed}$ is an even $\ed$-metric to order $h+2,$ and $Y$ is even dimensional
then $\tr\lrpar{e^{-t\Delta}\rest{\Delta_{HX}}}$ does not have a constant term at $\bhs{tff}.$
\end{corollary}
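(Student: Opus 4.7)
The plan is to read the asymptotic expansion of $\tr(e^{-t\Delta}\rest{\Delta_{HX}})$ at $\bhs{tff}$ directly off of Proposition \ref{eveness.1}, then use a parity argument that combines the $\bbZ_2$-grading of the homomorphism bundle with the parity of the admissible exponents in $\rho_{tff}$.

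First I would invoke Proposition \ref{eveness.1} with $\ell=h+2$. It gives a parametrix of the heat kernel at $\bhs{tff}$ whose leading weight is $\rho^{\alpha}$, with $\alpha_{tff}=-h-1$, and which lies in
\[
\rho^{\alpha}\bigl(\sE_{\mathrm{ev},\mathrm{ev}}\oplus \sE_{\mathrm{odd},\mathrm{odd}}\bigr) \;+\; \rho^{\alpha}\,\rho_{tff}^{\,h+2}\,\CI(H\sT_s;\Hom(\Lambda^*(\Ed T^*\sT_s)\otimes F))\,\mu,
\]
where the first summand consists of sections that are simultaneously even in $\rho_{tff}$ and even as homomorphisms, or simultaneously odd in both senses.

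Next I would note that the fibrewise pointwise trace of an odd homomorphism of $\Lambda^*(T^*Y\oplus\langle dx/\rho\rangle)\otimes\rho^{\bN}\Lambda^*T^*H/Y$ vanishes, because such a map reverses the $\bbZ_2$-grading on forms. Consequently, under $\tr$, the piece $\sE_{\mathrm{odd},\mathrm{odd}}$ contributes zero and only sections of $\sE_{\mathrm{ev},\mathrm{ev}}$ (which are $\rho_{tff}$-even) survive. Therefore the exponents of $\rho_{tff}$ appearing in $\tr(e^{-t\Delta}\rest{\Delta_{HX}})$ are contained in
\[
\{-h-1+2k : k\in\bbN_0\}\;\cup\;\{-h-1+(h+2)+k : k\in\bbN_0\}=\{-h-1+2k\}\cup\{k+1\}.
\]

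Finally, I would observe that since $Y$ is even-dimensional, $h$ is even, so $-h-1$ is odd, and hence every exponent of the form $-h-1+2k$ is odd and distinct from $0$. The perturbation contributes only exponents $\geq 1$. In both cases $\rho_{tff}^{0}$ never appears, so the expansion of $\tr(e^{-t\Delta}\rest{\Delta_{HX}})$ at $\bhs{tff}$ has no constant term. The content of the argument is entirely carried by Proposition \ref{eveness.1}; the only non-routine step is making sure that the vertical density $\mu$ is $\rho_{tff}$-even (it is, since the volume form of an even $\ed$-metric has an even expansion in $\rho$), and this should be the sole sticking point to verify carefully.
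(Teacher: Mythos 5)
Your proposal is correct and follows essentially the same route as the paper: invoke Proposition \ref{eveness.1} with $\ell=h+2$, discard the odd-homomorphism summand because its pointwise trace vanishes, observe that the surviving exponents $-h-1+2k$ are odd since $h=\dim Y$ is even, and note the order-$(h+2)$ correction only contributes exponents $\geq 1$. The evenness issue for $\mu$ that you flag is handled implicitly in the paper by stating all expansions relative to the fixed reference density $\mu$, exactly as in \eqref{eq:EvenDVanishingTff}.
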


Next let us consider the contribution of the expansion of $\beta_{\wt\Delta}^*\tr\lrpar{e^{-t\Delta}\rest{\Delta_{HX}}}$ at $\bhs{\eps,\tau}(\wt\Delta_{HX})$ to the trace of the heat kernel.
We know that 
$\tau^m\tr\lrpar{e^{-t\Delta}\rest{\Delta_{HX}}}$ is smooth in $\tau$ and $\eps$ near $\bhs{hmf} \cap \bhs{tf}.$ The expansion in $\tau$ is of the form
\begin{equation*}
	\sum_{k\geq 0} a_k \tau^{2k}
\end{equation*}
with each $a_k$ a polynomial in the curvature of the product-type metric and its covariant derivatives.
Hence the joint expansion of $\tau^m\tr\lrpar{e^{-t\Delta}\rest{\Delta_{HX}}}$ in $\tau$ and $\eps$ is of the form
\begin{equation*}
	\sum_{k, \ell \geq 0} a_{k,\ell} \tau^{2k}\eps^{\ell}.
\end{equation*}
If we lift now to $\wt\Delta_{HX}$ by introducing the coordinates $\kappa = \sqrt{\tau^2+\eps^2},$ $\gamma = \tan^{-1}(\eps/\tau)$ this expansion becomes
\begin{equation*}
	\kappa^m \cos^m(\gamma) \beta_{\wt\Delta}^*\tr\lrpar{e^{-t\Delta}\rest{\Delta_{HX}}} 
	\sim \sum_{k,\ell \geq 0} a_{k,\ell} \kappa^{2k+\ell}\cos^{2k}(\gamma)\sin^{\ell}(\gamma).
\end{equation*}
Thus we have that, for even $\ed$-metrics, the trace of the heat kernel satisfies
\begin{equation}\label{eq:ImprovedTraceAsymp}
	\beta_{\eps,\tau}^*\Tr(e^{-t\Delta}) \in \cA^{-m, (-m+\bbN_0) \bar\cup (-h-1+2\bbN_0), 0 \bar\cup 0}(\sE\sT)
\end{equation}
and for $g_F$ even and $g_{\ed}$ even to order $\ell\geq 2,$
\begin{multline}\label{eq:ImprovedTraceAsymp.2}
	\beta_{\eps,\tau}^*\Tr(e^{-t\Delta}) \in \cA^{-m, (-m+\bbN_0) \bar\cup (-h-1+Ev(0, \ell)), 0 \bar\cup 0}(\sE\sT)\\
	\Mwith Ev(0,k) = \{ n \in \bbN_0: n \geq k \Mor n\text{ is even and $<k$} \}.
\end{multline}

This shows that the expansion at $\bhs{\eps,\tau}$ induces terms in the expansion of $\beta_{\eps,\tau}^*\Tr(e^{-t\Delta})$ at $\bhs{tff}(\sE\sT).$ 
However the heat kernel is polyhomogeneous on $\Delta_{HX}$ before blowing-up $\bhs{hmf}\cap\bhs{tf}$ and so the induced terms in the expansion at $\bhs{tff}(\sE\sT)$ are also lifts of polyhomogeneous expansions on $[0,1]_{\eps}\times\bbR^+_\tau.$ We record this as a lemma.

\begin{lemma}\label{lem:ImprovedTraceAsymp}
If $\chi$ is a smooth function on $X_s$ supported away from $\bhs{sb}$ then
\begin{equation*}
	\Tr(\chi e^{-t\Delta}) \in \tau^{-m}\CI([0,1]_{\eps} \times \bbR^+_\tau)
\end{equation*}
\end{lemma}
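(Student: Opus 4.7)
The plan is to reduce the statement to the classical small-time heat trace expansion for the smooth family of Laplacians $\Delta_{\ed}$, $\eps\in[0,1]$, using that the cutoff $\chi$ annihilates all contributions from the surgery faces.

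First I would note that $\supp\chi$ is contained in an open neighborhood $U\subset X_s$ disjoint from $\bhs{sb}$, on which the blow-down $\beta_{(1)}:X_s\to M\times[0,1]_\eps$ is a diffeomorphism onto its image. Pulling back via $\beta_{H,L}$, the lifted section $(\beta_{H,L}^*\chi)\cdot e^{-t\Delta}$ on $HX_s$ is supported in $U_H:=\beta_{H,L}^{-1}(U)$, which is disjoint from neighborhoods of the surgery faces $\bhs{hbf},\bhs{tff},\bhs{hlf},\bhs{hrf}$, each of which lies over $\bhs{sb}$ under $\beta_{H,L}$.

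The key step is then to identify $U_H$ with an open subset of the classical heat space of the smooth family $(M,g_{\ed})$. Indeed, the $b$-blow-ups in the construction of $X^2_{b,s}$ all occur at $H\times H\times\{0\}$ and $H\times M\times\{0\}\cup M\times H\times\{0\}$, hence in $\bhs{bff}\cup\bhs{lf}\cup\bhs{rf}$, and the first blow-up in \eqref{eq:DefHX} lies in $\bhs{bff}\times\bbR^+_\tau$; none of these intersect $U_H$. The second blow-up of $\Delta_s$ in \eqref{eq:DefHX} restricts on this region to the parabolic blow-up of the spatial diagonal at $\tau=0$. Therefore $U_H$ is canonically diffeomorphic to an open subset of $[M^2\times[0,1]_\eps\times\bbR^+_\tau;\Delta_M\times[0,1]_\eps\times\{0\}]$, the classical heat space of the smooth family $(g_{\ed})_{\eps\in[0,1]}$ of Riemannian metrics (smooth on the relevant region, which avoids $H$ at $\eps=0$). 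By Theorem~\ref{thm:HeatKernelConst}, the restriction of the heat kernel to $U_H$ is polyhomogeneous with Euclidean leading term $\tau^{-m}N_{tf}(A)$ at $\bhs{tf}$, with all coefficients smooth down to $\eps=0$ (where the restriction $e^{-t\Delta_d}$ is smooth in the interior of $[M;H]$).

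Finally, the trace is obtained by restricting to the diagonal, multiplying by $\chi$, and pushing forward to $[0,1]_\eps\times\bbR^+_\tau$. Under the identification above, this reduces to the standard small-time heat trace expansion applied uniformly in $\eps$ to the smooth family $\Delta_{\ed}$, yielding $\Tr(\chi e^{-t\Delta})=\tau^{-m}g(\eps,\tau)$ with $g\in\CI([0,1]_\eps\times\bbR^+_\tau)$. The only point requiring care is the geometric identification of $U_H$ with an open subset of the classical heat space in the second step; once established, the lemma is a direct consequence of the classical smooth-family small-time heat trace expansion.
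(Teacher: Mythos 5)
Your argument is essentially the paper's own proof: the paper restricts $\beta_{H,L}^*\chi\, e^{-t\Delta}$ to $\Delta_{HX}$, observes that its support misses $\bhs{tff}\cup\bhs{hbf}$, so by the structure of the heat kernel in Theorem~\ref{thm:HeatKernelConst} the restricted density is the pull-back of an element of $\tau^{-m}\CI(M\times[0,1]_{\eps}\times\bbR^+_\tau;\hom(E))$, and then pushes forward using the commutative diagram relating $\wt\Delta_{HX}$, $\sE\sT$ and $[0,1]_{\eps}\times\bbR^+_\tau$; your identification of the region over $\supp\chi$ with the classical heat space is the same observation. Two small inaccuracies, neither fatal: under the \emph{left} projection $\beta_{H,L}$ the face $\bhs{hrf}$ does not lie over $\bhs{sb}$ but maps onto $\bhs{sm}$, so $U_H=\beta_{H,L}^{-1}(U)$ does meet $\bhs{hrf}$ and your diffeomorphism claim for all of $U_H$ is only valid near the lifted diagonal --- which is all the trace sees, and the infinite-order vanishing of the kernel at $\bhs{hrf}$ disposes of the rest. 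Also, the closing appeal to the ``classical small-time expansion for the smooth family $\Delta_{\ed}$'' should not be read as an independent input, since the family degenerates along $H$ as $\eps\to 0$; what you actually use, and what suffices, is the uniform description of Theorem~\ref{thm:HeatKernelConst} together with the fact that the blow-down is a diffeomorphism on the region in question.
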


\begin{proof}
The lift of $\chi$ to $\wt \Delta_{HX}$ is supported away from $\bhs{tff} \cup \bhs{hbf}$ hence $\beta_{\wt\Delta}^*((\beta_{H,L}^*\chi e^{-t\Delta})\rest{\Delta_{HX}})$ is the pull-back of a density with coefficients in $\tau^{-m} \CI(M\times[0,1]_{\eps}\times \bbR^+;\hom(E)).$
The result follows from the commutativity of the diagram
\begin{equation*}
	\xymatrix{
	\wt\Delta_{HX} \ar[r]^-{\beta_{\wt\Delta, (1)}} \ar[d]_-{\pi_{\eps,\tau}} &  M \times [0,1]_{\eps} \times \bbR^+_{\tau} \ar[d]^{p_{\eps,\tau}} \\
	\sE\sT \ar[r]^-{\beta_{\eps,\tau}}  & [0,1]_{\eps} \times \bbR^+_\tau }
\end{equation*}
\end{proof}

Below we shall make use of the proof of this lemma to simplify the asymptotics of analytic torsion for even metrics.

\section*{Torsion under degeneration}

\section{The $R$-torsion on manifolds with boundary} \label{sec:RTorsion}

Let us first recall basic facts about Reidemeister torsion.  For more details, the reader is encouraged to look at \cite{Milnor1966, Cheeger1979, Dar, Muller1993}.  Let $C_*$ be a finite dimensional  real chain complex 
\begin{equation}
\xymatrix{
     0\ar[r] & C_n\ar[r]^-{\pa_n}& C_{n-1} \ar[r]^-{\pa_{n-1}}&\cdots \ar[r]^{\pa_{1}} & C_0\ar[r]^-{d_0}& 0.
}
\label{rto.1}\end{equation}
Suppose that we are given a preferred basis $c_i$ of $C_i.$  As usual, let $Z_i$ denote the kernel of $\pa_i$ and let $B_i$ be the image of $\pa_{i+1}$ so that the homology group of degree $i$ is given by
$$
     \mathrm H_i(C_*)= Z_i/ B_i.
$$ 
Choose a basis $b_i$ of $B_i$ and let $h_i$ be a basis of $\mathrm H_i(C_*).$  Under the decomposition $C_i= B_i \oplus \mathrm H_i(C)\oplus B_{i-1},$ we see that the bases $b_i,$ $h_i$ and $b_{i-1}$ induce another basis of $C_i$ that we will denote $b_ih_ib_{i-1}.$
\begin{definition}
For a preferred basis $c_i$ of $C_i,$ a basis $h_i$ of $\mathrm H_i(C_*)$ and any choice of basis $b_i$ of $B_i,$ we define the $R$-torsion of the complex $C_*$ by
$$
    \tau(c_i,h_i):= \prod_{i=0}^n [b_ih_ib_{i-1}|c_i]^{(-1)^i},
$$
where $[u|v]:= |\det W|$ for $u,v$ two basis of a real vector space $V$ related by $\displaystyle u_i= \sum_{j}W_{ij} v_j.$  
\label{rto.2}\end{definition}
As can be checked directly, this definition does not depend on the choice of the basis $b_i.$  
Instead of the complex $C_*,$ we can look at the dual complex of cochains $C^*,$
\begin{equation}
\xymatrix{
     0\ar[r]& C^0 \ar[r]^-{d_0}& C^1\ar[r]^-{d_1}& \cdots \ar[r]^-{d_{n-1}}& C_n\ar[r]^-{d_n}& 0,
}
\label{rto.3}\end{equation}
where $C^i$ is the dual of $C_i.$  As a preferred basis of $C^i,$ we can simply take the basis $c^i$ dual to $c_i.$  Denote by $Z^i$ the kernel $d_i$ and by $B^i$ the image of $d_{i-1}$ so that the cohomology groups of $C^*$ are given by $\mathrm H^i(C^*)=Z^i/B^i.$ 
\begin{definition}
For a preferred basis $c^i$ of $C^i,$ a basis $h^i$ of $\mathrm H^i(C^*)$ and any choice of basis $b^i$ of $B^i,$ we define the $R$-torsion of $C^*$ by
$$
     \tau(c^i,h^i)= \prod_{i=0}^n [c^i|b^i h^i b^{i+1}]^{(-1)^i}.
$$
\label{rto.4}\end{definition}  
As the following example shows, although the $R$-torsion is defined in terms of real vector spaces, it can recapture information about the torsion when those vector spaces come from free abelian groups, explaining in part the origin of the terminology.  
\begin{example}
Let $0\to F^0\to F^{1}\to\cdots \to F^n\to 0$ be a complex of free abelian groups and set $C^i= F^i\otimes_{\bbZ}\bbR.$  In this case, basis for $F^i$ and $\mathrm H^i(F^*,\bbZ)$ induces preferred basis $c^i$ for $C^i$ and $h^i$ for $\mathrm H^i(C^*).$  For such a choice, the $R$-torsion of the complex $C^*$ is given by 
\begin{equation}
    \tau(c^i, h^i)= \prod_{i=0}^n O_i^{(-1)^{i-1}},
\label{rto.6}\end{equation}
where $O_i$ is the order of the torsion subgroup of $\mathrm H^i(F^*,\bbZ),$ see \cite[(1.4)]{Cheeger1979} for details.  
\label{rto.5}\end{example}
In the previous example, we can in particular take $F^*$ to be the complex of cochains of a $CW$-complex, in which case the torsion \eqref{rto.6} is a homotopy invariant (though generally it is not).  More generally, we can twist by flat vector bundles to get other invariants.  More precisely, let $K$ be a finite $CW$-complex and let $C_*(K)$ be the cellular chain complex associated to $K.$  Let $\pi_1(K)$ be the fundamental group of $K.$  Then $\pi_1(K)$ naturally acts on the universal cover $\widetilde{K}$ of $K.$  This induces an action of $\pi_1(K)$ on the corresponding cellular chain complex $C_*(\widetilde{K})$ giving it the structure of a $\bbZ\pi_1(K)$-module.  In fact, for each $i,$ $C_*(\widetilde{K})$ is a free $\bbZ\pi_1(K)$-module finitely generated by a choice of lifts $\{\widetilde{e}^j_i\}$ of the $i$-cells $\{e^j_i \}$ of $K.$  Let now $\alpha: \pi_1(K)\to \GL(k,\bbR)$ be a representation of the fundamental group of $K.$  We will assume that  it is unimodular, which means $|\det(\alpha(\nu)|=1$ for all $\nu\in \pi_1(K)$.  This ensures that the action of $\pi_1(K)$ preserves the volume of $\bbR^k$ for its canonical volume form.  Notice that orthogonal and unitary representations are special examples of unimodular representations.  In particular, $\alpha$ gives $\bbR^k$ the structure of a $\bbZ\pi_1(K)$-module, so that we can consider the finite dimensional real chain complex
\begin{equation}
        C_i(K; \alpha)= C_i(\widetilde{K})\otimes_{\bbZ\pi_1(K)}\bbR^k.
\label{rto.7}\end{equation}
  By taking a basis $\{x_{\ell}\}$ of $\bbR^k,$ we then have a preferred basis $\{\widetilde{e}^j_i\otimes x_{\ell}\}$ of $C_i(K; \alpha).$ We will assume in fact that $\{x_{\ell}\}$ is a \textbf{unimodular} basis of $\bbR^k$, that is, of volume one in the sense that in terms of the canonical basis $\{v_i\}$ of $\bbR^n$, $x_{\ell}=\sum_iA_{\ell i}v_i$ with matrix $A$ such that  $|\det A|=1$. \begin{definition}
For a choice of basis $h_*(\alpha)$ of the homology of the complex $C_*(K; \alpha),$ we define the $R$-torsion $\tau(K; \alpha, h_*(\alpha))$ associated to $K$ and $\alpha:\pi_1(K)\to \GL(k,\bbR)$ to be the $R$-torsion
$\tau(\{\widetilde{e}^j_i\otimes x_{\ell}\},h_*(\alpha))$ of the complex $C_*(K; \alpha).$  Similarly, for a choice of basis $h^*(\alpha)$ of the cohomology of the dual complex $C^*(K; \alpha),$ we define the $R$-torsion 
$\tau(K; \alpha,h^*(\alpha))$ to be the $R$-torsion $\tau(c^*, h^*(\alpha))$ of the complex $C^*(K; \alpha)$ with $c^i$ the basis of $C^i(K; \alpha)$ dual to $\{\widetilde{e}^j_i\otimes x_{\ell}\}.$  
\label{rto.8}\end{definition}
Thanks to the fact that $\alpha$ is unimodular, the definition of the $R$-torsion does not depend on the choice of lifts $\widetilde{e}^j_i$.
On the other hand, for a non-trivial unimodular representation $\alpha,$ there is no reason for the $R$-torsion $\tau(K,\alpha,h_*(\alpha))$ to be a homotopy invariant.  It is however a combinatorial invariant, that is, it is invariant under subdivision of $K,$ see \cite{Milnor1966, Muller1993} for a proof.  In fact, when the complex $C_*(K; \alpha)$ is acyclic, it is shown in \cite{Chapman1974} to be even a topological invariant.

To compute the $R$-torsion of a complex, one important tool is the following formula due to  Milnor. 

\begin{theorem}[Milnor \cite{Milnor1966}]
Let 
\begin{equation}
0\to C'\to C\to C''\to0
\label{rto.8a}\end{equation} be a short exact sequence of finite dimensional real cochain complexes with preferred bases $c^i$ $(c')^i$ and $(c'')^i$ such that $c^i= (c')^i (c'')^i.$ If $h,$ $h'$ and $h''$ are choices of bases for the cohomology 
of $C,$ $C'$ and $C'',$ then
\begin{equation}
   \frac{\tau(c,h)}{\tau(c',h') \tau(c'',h'')}= \tau(\cH)
\label{rto.10}\end{equation}
where $\cH$ is the acyclic complex given by the long exact sequence in cohomology associated to the short exact sequence \eqref{rto.8a} and $\tau(\cH)$ is the $R$-torsion of $\cH$ with preferred basis given by $h,h'$ and $h''.$       
\label{rto.9}\end{theorem}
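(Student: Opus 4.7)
The plan is to prove the formula by choosing compatible bases across the three complexes and computing each torsion explicitly, then identifying the ratio with $\tau(\cH)$ by direct comparison of determinants. Since all complexes are finite dimensional real vector spaces, the short exact sequence $0 \to C' \to C \to C'' \to 0$ splits at the level of graded vector spaces, and I would begin by fixing such a splitting so that the preferred basis identity $c^i = (c')^i(c'')^i$ is literally the block-decomposition basis of $C^i \cong (C')^i \oplus (C'')^i$. Under this identification the differential of $C$ takes the block upper-triangular form $d_C = \begin{pmatrix} d' & \kappa \\ 0 & d'' \end{pmatrix}$ for some twisting map $\kappa : (C'')^i \to (C')^{i+1}$, and computing $\det$ in this basis reduces to tracking how $\kappa$ interacts with the auxiliary choices.

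Next I would build compatible choices of bases for coboundaries, cocycles, and cohomology lifts across all three complexes. Choose bases $(b')^i$ of $(B')^i$ freely, and for $C''$ choose a basis $(b'')^i$ of $(B'')^i$ together with preimages $(\tilde b'')^{i-1} \in (C'')^{i-1}$. The delicate piece is the $C$ complex: lift $(\tilde b'')^{i-1}$ to elements of $C^{i-1}$; their differentials lie in $C^i$ but only \emph{project} to $(b'')^i$, with the $(C')^i$-component measured by $\kappa$. Invoking the snake lemma, the cocycles $Z^i$ fit into an exact sequence $0 \to Z'^i \to Z^i \to \ker\delta^i \to 0$ where $\delta^i : H''^i \to H'^{i+1}$ is the connecting map, so lifts of a basis of $\ker\delta^i$ (viewed inside $H''^i$) to $Z^i$ together with $Z'^i$ give a basis of $Z^i$. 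Similarly $B^i$ is built from $(b')^i$ plus the image in $B''^i$ of the $B^i$-classes. The cohomology basis $h^i$ is then determined (up to the usual ambiguity that does not affect $\tau$) by the bases $h'^i, h''^i$ and the splitting of the long exact sequence.

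With these matched bases in place, the torsion identity becomes a product of determinants of certain block matrices. Each $[c^i \mid b^i h^i b^{i+1}]$ for $C$ factors as the product $[(c')^i \mid (b')^i(h')^i(b')^{i+1}] \cdot [(c'')^i \mid (b'')^i(h'')^i(b'')^{i+1}]$, accounting for the $C'$ and $C''$ contributions, times a correction determinant arising from (i) the twist $\kappa$ linking lifts of $H''^i$-classes to $Z^i$, and (ii) the fact that classes in $\mathrm{image}(\delta^{i-1}) \subseteq H'^i$ cease to be cohomology classes in $C$. Taking the alternating product $\prod_i (\cdots)^{(-1)^i}$, the diagonal contributions telescope to $\tau(c',h') \tau(c'',h'')$, and the correction determinants telescope, by the very definition of the long exact sequence, to the $R$-torsion of the acyclic complex
\begin{equation*}
\cH : \ \cdots \to H'^i \to H^i \to H''^i \xrightarrow{\delta^i} H'^{i+1} \to \cdots
\end{equation*}
computed in the bases $h, h', h''$.

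The main obstacle is the bookkeeping: the three-fold periodicity of $\cH$ interacts subtly with the $(-1)^i$ signs in the definition of $\tau$, and one must verify that independent choices (lifts of $H''^i$-classes, the splitting, extensions of bases of $\ker\delta^i$ to $H^i$) drop out of the final formula. A clean way to handle this, which I would use as a sanity check, is to first verify the theorem in the special case that $C'$ and $C''$ are acyclic (in which case $\tau(\cH)$ is itself trivial and the equality reduces to multiplicativity of $\tau$ under extensions of acyclic complexes), and then reduce the general case via the standard trick of replacing the bases $h, h', h''$ by bases of cocycles and iterating; alternatively one may invoke the mapping cone of the quotient map $C \to C''$ and recognize $\tau(\cH)$ as the comparison of two natural bases on its cohomology.
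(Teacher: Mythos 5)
The paper does not actually prove this statement; it is quoted from Milnor's paper and used as a black box, so the only comparison available is with the classical argument, and your plan is indeed that classical route: split the sequence of graded vector spaces, choose bases of cocycles, coboundaries and cohomology compatibly with the filtration by $C'$, and compare determinants, with the discrepancy organized into the torsion of the long exact sequence $\cH$. As a strategy this is sound.

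However, as written two of your steps are not correct and the decisive step is assumed rather than proved. First, the exact sequence you extract from the snake lemma is wrong: the image of $Z^i$ in $Z''^i$ is the full preimage of $\ker\delta^i$ under $Z''^i\to H''^i$, hence contains all of $B''^i$, so the sequence is $0\to Z'^i\to Z^i\to q^{-1}(\ker\delta^i)\to 0$ rather than $0\to Z'^i\to Z^i\to\ker\delta^i\to 0$; in particular "lifts of a basis of $\ker\delta^i$ together with $Z'^i$" do \emph{not} span $Z^i$ whenever $B''^i\neq 0$ — you must also adjoin lifts of a basis of $B''^i$. Dually, $B^i\cap C'^i$ is strictly larger than $B'^i$ in general, by cocycles representing $\im\,\delta^{i-1}$, and these must enter your basis of $B^i$; you mention this phenomenon only at the level of cohomology. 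Second, the remark that $h^i$ is "determined" by $h'^i$, $h''^i$ and the splitting is misleading: in the theorem $h$, $h'$, $h''$ are arbitrary, and $\tau(\cH)$ is precisely the measure of their incompatibility with the connecting maps; if you work with a preferred $h$ you must separately track how passing to the given $h$ rescales both sides. Third, and most importantly, the assertion that the correction determinants "telescope, by the very definition of the long exact sequence, to $\tau(\cH)$" \emph{is} the content of the theorem; without carrying out the alternating-product bookkeeping over the $3$-periodic complex $\cH$ (where the interaction of the $(-1)^i$ with the three-fold indexing, and the invariance under the auxiliary choices of lifts, must be checked) the plan stops exactly where the proof has to begin. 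All of this is repairable — the repaired argument is Milnor's — but in its present form the middle step fails and the final identity is assumed.
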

\begin{remark}
We will not need it, but a similar formula holds for short exact sequences of finite dimensional real chain complexes. 
\end{remark}

This can be used to see what happens to the $R$-torsion when a fibered  cusp surgery is performed on a closed manifold $M$ along a hypersurface $H\subset M$ with trivial normal bundle and equipped with a fibration $\phi: H\to Y$ over a closed manifold $Y.$   For a choice of triangulation on $M$ compatible with the decomposition 
$$
M= \sm \cup_{H\sqcup H} H\times [0,1],
$$
we have a short exact sequence of complexes
\begin{equation}
\xymatrix{
  0 \ar[r] & C^*(\sm,\pa \sm; \alpha) \ar[r] & C^*(M; \alpha) \ar[r] &   C^*(H\times [0,1]; \alpha)\ar[r] & 0.
}
\label{ac.3}\end{equation} 
If we assume that $\mathrm H^*(H; \alpha)=0,$ then the long exact sequence in cohomology associated to \eqref{ac.3} gives a natural identification
\begin{equation}
        \mathrm H^*(\sm,\pa\sm; \alpha)\cong \mathrm H^*(M; \alpha).
\label{ac.4}\end{equation}
Thus, we see from \eqref{ac.4} that a choice of basis $h^*(\alpha)$ for $\mathrm H^*(\sm,\pa\sm; \alpha)$ induces one for $\mathrm H^*(M; \alpha).$  This can be used to relate the $R$-torsion on $M$ and $\sm$ as follows.
\begin{theorem}
Let $M$ be a closed manifold with a hypersurface $H\subset M$ having a trivial normal bundle.  Let $\phi: H\to Y$ be a fibration with $Y$ a closed manifold.  Suppose $\alpha:\pi_1(M)\to \GL(k,\bbR)$ is a unimodular representation such that $\mathrm H^*(\phi^{-1}(y); \alpha)=\{0\}$ for each $y\in Y$.  Then $\mathrm H^*(H;\alpha)=0$ and for any choice of basis $h^*(\alpha)$ for 
$\mathrm H^*(M; \alpha),$ we have the relation
\begin{equation}
\tau(M,\alpha,h^*(\alpha))= \tau(\sm,\pa\sm,\alpha,h^*(\alpha))\tau(H,\alpha)
\label{ac.5c}\end{equation}
where $h^*(\alpha)$ is seen as a basis of $H^*(\sm,\pa\sm;\alpha)$  via the identification \eqref{ac.4}. 
\label{ac.5}\end{theorem}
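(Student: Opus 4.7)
The plan is to reduce the theorem to two applications of Milnor's short exact sequence formula (Theorem~\ref{rto.9}) to the sequences \eqref{ac.1} and \eqref{ac.3}, after first verifying the two vanishing assertions $\mathrm H^{*}(H;\alpha)=0$ and $\IH^{*}_{\bp}(\cC_{\phi}H;\alpha)=0$ so that the long exact cohomology sequences degenerate into the clean isomorphisms \eqref{ac.2} and \eqref{ac.4}.

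First I would prove $\mathrm H^{*}(H;\alpha)=0$ by using the Leray--Serre spectral sequence of the fiber bundle $\phi:H\to Y$. The local coefficient system determined by $\alpha$ restricts on each fiber $Z_{y}=\phi^{-1}(y)$ to the representation of $\pi_{1}(Z_{y})$ induced by the inclusion $Z_{y}\hookrightarrow M$, and by hypothesis the cohomology $\mathrm H^{*}(Z_{y};\alpha)$ of every fiber vanishes. Hence the $E_{2}$-page $E_{2}^{p,q}=\mathrm H^{p}(Y;\underline{\mathrm H}^{q}(Z;\alpha))$ is identically zero, which forces $\mathrm H^{*}(H;\alpha)=0$. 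Next I would prove $\IH^{*}_{\bp}(\cC_{\phi}H;\alpha)=0$. The mapping cylinder $\cC_{\phi}H$ is stratified with singular stratum $Y$ of codimension $\ell=\dim Z+1$ and smooth part fiber--homotopy equivalent to $H$. Using an open cover $\{\cU_{i}\}$ of $Y$ by contractible sets over which the fibration is trivial, a Mayer--Vietoris argument reduces the computation to the local model $\bbB^{h}\times \cC(Z_{y})$. There the standard cone formula (which is the content of the local computation used inside the proof of Proposition~\ref{dt.12}) gives $\IH^{i}_{\bp}(\cC(Z_{y});\alpha)=\mathrm H^{i}(Z_{y};\alpha)$ in degrees $i\le\ell-\bp(\ell)-2$ and zero otherwise; by our hypothesis both alternatives vanish, and the Mayer--Vietoris induction over $\{\cU_{i}\}$ then yields $\IH^{*}_{\bp}(\cC_{\phi}H;\alpha)=0$, and hence the same for the disjoint union $\cC_{\phi}H\sqcup\cC_{\phi}H$.

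Having established the two vanishings, I would then extract the two required equalities. For the first, Corollary~\ref{dt.10b} applied to the short exact sequence \eqref{ac.1}, using that $\IH^{*}_{\bp}(\cC_{\phi}H\sqcup\cC_{\phi}H;\alpha)=0$, gives directly
\[
   \frac{I\tau^{\bp}(X,\alpha,h^{*}(\alpha))}{I\tau^{\bp}(\cC_{\phi}H\sqcup\cC_{\phi}H,\alpha)}
   \;=\;\tau(\sm,\pa\sm,\alpha,h^{*}(\alpha)),
\]
where the basis $h^{*}(\alpha)$ on $\mathrm H^{*}(\sm,\pa\sm;\alpha)$ is the one transported from $\IH^{*}_{\bp}(X;\alpha)$ via the canonical isomorphism \eqref{ac.2}. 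Observing that $I\tau^{\bp}(\cC_{\phi}H\sqcup\cC_{\phi}H,\alpha)=\tau(H,\alpha)\cdot I\tau^{\bp}(\cC_{\phi}H\sqcup\cC_{\phi}H,\alpha)/\tau(H,\alpha)$ is harmlessly tautological, the two written forms of the right hand side of \eqref{ac.5c} agree once the second equality is proved.

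For the second equality, I would apply Milnor's formula (Theorem~\ref{rto.9}) to the short exact sequence \eqref{ac.3}. The middle term $C^{*}(H\times[0,1];\alpha)$ is chain--homotopy equivalent to $C^{*}(H;\alpha)$, so its $R$-torsion is $\tau(H,\alpha)$ and its cohomology vanishes by the first step. Consequently the connecting homomorphism in the long exact sequence is an isomorphism $\mathrm H^{*}(\sm,\pa\sm;\alpha)\xrightarrow{\sim}\mathrm H^{*}(M;\alpha)$, so we may take the basis $h^{*}(\alpha)$ on either side to be the same; with this choice the torsion of the associated acyclic long exact sequence $\cH$ equals $1$. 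Milnor's formula then reads
\[
   \tau(M,\alpha,h^{*}(\alpha))
   \;=\;\tau(\sm,\pa\sm,\alpha,h^{*}(\alpha))\,\tau(H,\alpha),
\]
which is the second equality in \eqref{ac.5c}, and combined with the first gives the full statement. The main technical point to watch is the careful identification of the cohomology bases across \eqref{ac.2} and \eqref{ac.4} so that the same $h^{*}(\alpha)$ may legitimately be used in all three $R$-torsions appearing in \eqref{ac.5c}; this is essentially bookkeeping, but it is where the sign/normalization pitfalls live, so the orientation and unimodularity of $\alpha$ must be invoked to ensure that the connecting isomorphisms preserve volumes and introduce no extra factor.
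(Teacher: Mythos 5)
Your proposal follows essentially the same route as the paper: the Leray--Serre spectral sequence gives $\mathrm H^*(H;\alpha)=0$, the local cone computation (as in Proposition~\ref{dt.12}) gives $\IH^*_{\bp}(\cC_{\phi}H;\alpha)=0$, and then Milnor's formula (Theorem~\ref{rto.9}) applied to \eqref{ac.1} and \eqref{ac.3} — your use of Corollary~\ref{dt.10b} for \eqref{ac.1} is just Milnor's formula in that case — yields both equalities in \eqref{ac.5c}, exactly as in the paper's proof. The argument is correct; the only cosmetic slips are calling the isomorphism $\mathrm H^*(\bhs{\ms},\pa\bhs{\ms};\alpha)\cong\mathrm H^*(M;\alpha)$ a "connecting homomorphism" and justifying $\tau(H\times[0,1],\alpha)=\tau(H,\alpha)$ by chain-homotopy equivalence rather than by combinatorial/simple-homotopy invariance.
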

\begin{proof}
Using the Leray-Serre spectral sequence of the fiberbundle  $\phi: H\to Y$, we see that $\mathrm H^*(H,\alpha)=0$.   Thus,
with the given choices of cohomology bases, we obtain \eqref{ac.5c} by applying Theorem~\ref{rto.9} to  \eqref{ac.3}.
\end{proof}

In this setting, we can also relate $L^2$-cohomology with the cohomology of the manifold with boundary.  More precisely, let $N$ be a smooth manifold with boundary such that $\pa N$ is equipped with a fiber bundle $\phi: \pa N\to Y$, where the base $Y$ and the fibers of $\phi$ are closed manifolds.  Let $g_d$ be a fibered cusp metric on $N\setminus \pa N$ compatible with the fiber bundle $\phi:\pa N\to Y$.   Let $\alpha: \pi_1(N)\to \GL(k,\bbR)$ be a unimodular representation and let $F\to N$ be the corresponding flat vector bundle of rank $k$ on $N$.  Let $g_F$ be a smooth Euclidean metric for $F\to N$.  Finally suppose that $F$ is \textbf{strongly acyclic at infinity}, namely that 
\begin{equation}
        \mathrm H^*(\phi^{-1}(y);F)=0   \quad \forall \ y\in Y.
\label{sac.1}\end{equation}
As in the proof of Theorem~\ref{ac.5}, we see from the Leray-Serre spectral sequence of the fiber bundle $\phi: \pa N\to Y$ that $H^*(\pa N;F)=0$.  Hence, we see from the long exact sequence associated to the pair $(N,\pa N)$ that there is a natural identification
$$
     \mathrm H^*(N;F)\cong \mathrm H^*(N,\pa N;F).
$$
Now, since the vector bundle $F$ is flat, we can consider the $L^2$-complex
\begin{equation}
\xymatrix{
  \cdots \ar[r] & L^2_{d}\Omega^{i-1}(N\setminus\pa N, F, g_d)\ar[r]^-{d} & L^2_{d}\Omega^{i}(N\setminus\pa N,F, g_d)\ar[r]^-{d} &L^2_{d}\Omega^{i+1}(N\setminus\pa N,F, g_d)\ar[r]^-{d} & \cdots} 
\label{l2.1}\end{equation}
where 
$$
    L^2_d\Omega^i(N\setminus \pa N,F,g_d)= \{ \omega\in L^2\Omega^i(N\setminus \pa N,F,g_d) \; | \; d\omega\in L^2\Omega^{i+1}(N\setminus \pa N,F,g_d)  \}
$$
and $L^2\Omega^i(N\setminus\pa N,F,g_d)$ is the space of forms of degree $i$ on $N\setminus \pa N$ with values in $F$ which are $L^2$ with respect to the metric $g_d$ and the metric $g_F$ of $F.$  Recall that the $L^2$-cohomology, denoted $H^*_{(2)}(N,F,g_d),$ is by definition the cohomology of the complex \eqref{l2.1}.  

\begin{proposition}
If $F$ is strongly acyclic at infinity, then there are natural identifications
$$
\mathrm H^*_{(2)}(N,F,g_d)\cong \mathrm H^*(N;F)\cong \mathrm H^*(N,\pa N;F).
$$
\label{l2.2}\end{proposition}
\begin{proof}
For $p\in \pa N$, let $\cU$ be an open neighborhood diffeomorphic to $[0,1)_x\times \bbB^h\times Z$ such that the restriction of $g_d$ to $\cU\setminus (\cU\cap\pa N)$ is quasi-isometric to $ dx^2 + g_{\bbB^h} + x^2 g_Z$, where $h=\dim Y$, $\bbB^h$ is an open ball in $\bbR^h$, $Z=\phi^{-1}(p)$, $g_{\bbB^h}$ is the restriction of the Euclidean metric on $\bbB^h$ and $g_Z$ is some Riemannian metric on $Z$.  Using the $L^2$-Künneth formula of \cite{Z1982} and the assumption that $F$ is strongly acyclic at infinity, we see that 
$$
        \mathrm H^*_{(2)}(\cU\setminus (\cU\cap \pa N),F,g_d)= \{0\}.
$$
The result then follows by applying a standard argument using commutative diagrams of Mayer-Vietoris sequences.  
\end{proof}
In particular, the $L^2$-cohomology is finite dimensional.  Combining with  the Kodaira decomposition gives the following.
\begin{corollary}
If $F$ is strongly acyclic at infinity, then the exterior derivative $d$ and its formal adjoint $\delta$ have closed range and induce the Kodaira decomposition
$$
   L^2\Omega^k(N,F,g_d)=  L^2\cH^k(N,F,g)\oplus  d\left( L^2_d\Omega^{k-1}(N,F,g_d) \right) \oplus  \delta\left( L^2_{\delta}\Omega^{k+1}(N,F,g_d) \right)
$$  
where $L^2\cH^k(N,F,g_d)$ is the space of $L^2$-harmonic forms of degree $k$  taking values in $F$ and 
$$
    L^2_{\delta}\Omega^i(N\setminus \pa N,F,g_d)= \{ \omega\in L^2\Omega^i(N\setminus \pa N,F,g_d) \; | \; \delta\omega\in L^2\Omega^{i-1}(N\setminus \pa N,F,g_d)  \}.  
$$
In particular, $d+\delta$ is a Fredholm operator  and there are canonical identifications
$$
      L^2\cH^k(N,F,g)\cong \mathrm H^k_{(2)}(N,F,g)\cong H^*(N;F)\cong \mathrm H^*(N,\pa N;F).
$$
\label{l2.3}\end{corollary}

\section{The intersection $R$-torsion of Dar and $L^2$-cohomology}

To discuss the intersection $R$-torsion of Dar \cite{Dar}, we need first to make a quick review of the intersection homology of Goresky and MacPherson in \cite{GM1980}.
Its original purpose  was to give an answer to  the following problem posed by
Sullivan \cite{Sullivan}: Can one find a class of spaces with singularities
for which the signature of manifolds extends as a cobordism invariant.
The class of singular spaces that Goresky and MacPherson considered were 
stratified pseudomanifolds with only even codimension strata, for instance
complex varieties.  The signature in this context was defined via intersection
homology, which is defined more generally for every stratified pseudomanifold.
To define intersection homology, we need to work in the piecewise linear 
category.  Therefore, all the spaces involved in the definitions of this
subsection are assumed to be piecewise linear.

In general, a \textbf{stratification} of a pseudomanifold $X$ is a filtration
by closed subspaces
\begin{equation}
   X_{0}\subset X_{1}\subset \cdots \subset X_{m-3}\subset X_{m-2}=X_{m-1}
\subset X_{m}=X
\label{ih.1}\end{equation} 
such that for each $p\in X_{i}\setminus X_{i-1},$ there is a filtered
space
\begin{equation}
 p=V_{i}\subset \cdots \subset V_{m-1}\subset V_{m}=V
\label{ih.2}\end{equation}
and a piecewise linear mapping $V\times B^{i}\to X$ which for each
$j$ takes $V_{j}\times B^{i}$ piecewise linearly homeomorphically to a 
neighbourhood of $p\in X_{j}.$  Here, $B^{i}$ is the piecewise linear
ball of dimension $i$ and $p$ corresponds to $V_{i}\times \{\tilde{p}\}$
where $\tilde{p}\in B^{i}$ is some interior point.

If $C^{T}_{*}(X)$ denotes the chain complex of simplicial chains with respect
to a triangulation $T,$ then one can compare chains 
$c\in C^{T}_{*}(X)$ and $c'\in C^{T'}_{*}(X)$ coming from two different
triangulations by looking at their canonical images in 
$C^{T''}_{*}(X)$ where $T''$ is a common refinement of $T$ and $T'.$
We denote by $C_{*}(X)$ the complex of piecewise linear chains given 
by taking the direct limit of the $C^{T}_{*}(X)$ over all triangulations
of $X$ compatible with the piecewise linear structure.

A \textbf{perversity} is a sequence of numbers $\bp=(p_{2},\ldots,p_{m-2})$
such that $p_{2}=0$ and $p_{k+1}=p_{k}$ or $p_{k}+1.$  If $i$ is an
integer and $\bp$ is a perversity, a subspace $N\subset X$ is 
said to be \textbf{$(\bp,i)$-allowable} if $\dim N\le i$ and 
$\dim (N\cap X_{m-k})\le i-k+p_{k}$ for all $k\ge 2.$  Denote
by $\IC^{\bp}_{i}(X)$ the subgroup of $C_{i}(X)$ consisting of
the chains $\xi$ such that $|\xi |$ is $(\bp,i)$-allowable and
$|\pa \xi|$ is $(\bp,i-1)$-allowable.     
\begin{definition}
The $i$th \textbf{intersection homology group} of perversity $\bp,$
denoted $\IH^{\bp}_{i}(X)$ is the $i$th homology group of the chain
complex $\IC^{\bp}_{*}(X).$
\label{ih.3}\end{definition}
One of the important features of intersection homology is that it is not
in general a homotopy invariant.  On the other hand, a non-trivial fact proved
in \cite{GM1980} is 
that for a given perversity $\bp,$ it does not depend on the choice of  
stratification \eqref{ih.1}.  The choice of terminology `intersection 
homology' comes from the fact that for perversities
$\bp,$ $\bq,$ and $\br$ such that $\bp+\bq\le \br,$ there is an intersection
product
\begin{equation}
  \IH^{\bp}_{i}\times \IH^{\bq}_{j} \to \IH^{\br}_{i+j-n}(X)
\label{ih.4}\end{equation} 
which corresponds to the intersection of two cocycles when they are in
`general position'.  In particular, this gives rise to a generalized
Poincar\'e duality.  If $\bp$ and $\bq$ are complementary perversities,
meaning that $\bp+\bq=\bt$ where $\bt=(0,1,\ldots,m-2)$ is the 
largest possible perversity, and if $i$ and $j$ are of complementary
dimensions ($i+j=m$), then the pairing
\begin{equation}
  \IH^{\bp}_{i}(X)\times \IH^{\bq}_{j}(X) \to \IH^{\bt}_{0}(X)\to \bbZ
\label{ih.5}\end{equation}
is non-degenerate when tensored with $\bbQ.$

If $\alpha: \pi_1(X)\to \GL(k,\bbR)$ is a unimodular representation, we can also form the complex
$$
                     \IC^{\bp}_*(X;\alpha)= \IC^{\bp}_*(\tX)\otimes_{\bbZ\pi_1(X)} \bbR^k,
$$  
\begin{definition}
The intersection homology groups $\IH^{\bp}_*(X; \alpha)$ are the homology groups of the complex $\IC^{\bp}_*(X; \alpha).$  Similarly, the cohomology groups $\IH^*_{\bp}(X; \alpha)$ are the cohomology groups of the complex $\IC^*_{\bp}(X; \alpha)$ dual to the complex $\IC^{\bp}_*(X; \alpha).$  
\label{dt.11}\end{definition}

Suppose now that $X$ admits a stratification of depth at most 1, that is, $X_k=B \; \forall \dim B\le k\le n-2$ and $X_k=\emptyset$ for $k<\dim B$ for some compact manifold $B$ of dimension $b\le n-2.$   Suppose further that there is a smooth manifold with boundary $N$ and a fiber bundle  $\phi: \pa N\to B$ such that $X$ is obtained from $N$ by collapsing the fibers of $\phi$ onto their bases.  In this case, we can relate  the intersection cohomology with local coefficients with the $L^2$-cohomology with local coefficients of a fibered cusp metric on $N\setminus \pa N=X\setminus B$.  First recall that by the Riemann-Hilbert correspondence, we can naturally associate to the unimodular representation $\alpha: \pi_1(X)\to \GL(k,\bbR)$ a flat vector bundle $F$ of rank $k$ on $N.$  Let us also equip $F$ with a metric $g_F$ so that it is a smooth Euclidean vector bundle on the manifold with boundary $N$.  Notice however that unless the representation $\alpha$ is orthogonal, this Euclidean structure cannot be compatible with the flat connection.      

On the other hand, let $gd$ be a fibered cusp metric on  $N\setminus \pa N$ compatible with the fiber bundle $\phi:\pa N\to B$ and a choice of boundary defining function $x\in\CI(N).$  Since the vector bundle $F$ is flat, we can then consider the $L^2$-complex \eqref{12.1} and the corresponding $L^2$-cohomology.
 Assuming that the flat vector bundle $F$ is Witt and that $g_F$ is a smooth metric on the manifold with boundary $N$, we can easily establish the following relation with intersection cohomology.
 \begin{proposition}
 Suppose  that the flat vector bundle $F$ is Witt.  Then there is a natural isomorphism
 $$
         \mathrm H^*_{(2)}(N,F,g)\cong \IH_{\bm}^*(X; \alpha)    
 $$
 where $\bm(k)= \lfloor \frac{k-1}2\rfloor$ is the upper middle perversity.  
 \label{dt.15}\end{proposition}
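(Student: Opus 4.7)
The plan is to establish the isomorphism by showing that $H^*_{(2)}(N,F,g)$ satisfies the same local Mayer--Vietoris axioms characterizing the middle-perversity intersection cohomology of the depth-one pseudomanifold $X$. Since the stratification of $X$ has a single singular stratum $B$, we only need a local analysis near a point of $B$ and a local analysis away from $B$, together with a gluing argument. Away from $B$, the metric $g$ is smooth on the open manifold $N\setminus\pa N$ and $F$ is a flat bundle with smooth Euclidean metric $g_F$, so the $L^2$-de Rham complex computes ordinary cohomology with local coefficients, which on small contractible open sets reduces to the (twisted) cohomology of the typical fiber in the obvious way; this matches the local behaviour of $\IH^*_{\bm}(\,\cdot\,;\alpha)$ on the smooth stratum. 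Hence the proof reduces to the local model near a point of $B$.

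First I would carry out the local computation near $B$. Choose a distinguished neighborhood $U\times \cC(L)\subset X$ of a point of $B$; in the fibered cusp picture this corresponds to a set of the form $U\times [0,\eps)_x\times L$ in $N$ on which
\[
g\;=\;\frac{dx^{2}}{x^{2}}+x^{2}g_{L}+\phi^{*}g_{B},
\]
and $F$, $g_F$ are smooth up to $\pa N$. Using the fibration structure $\phi:\pa N\to B$, decompose a form as $\omega=\alpha+\tfrac{dx}{x}\wedge\beta$ and further decompose $\alpha$, $\beta$ according to horizontal degree and the fiberwise Hodge decomposition on $(L,g_L)$ with coefficients in $F|_L$. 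A direct separation-of-variables argument (as in \cite{hhm, v}, with $\Pi_h$ replacing orthogonal projection onto fiberwise harmonic forms with values in $F$) shows that the only $L^2$-integrable contributions come from the fiberwise harmonic part $\mathcal H^{*}(L;F|_L)$, and that $L^2$-integrability at $x=0$ forces the vertical degree $q$ to satisfy $q>v/2$ on $\alpha$ and $q\geq v/2$ on $\beta$, while the equations $d\omega=0$, $\delta\omega=0$ near $x=0$ force the $\tfrac{dx}{x}$-component to vanish for the harmonic representative. The upshot is
\[
H^{k}_{(2)}\bigl(U\times\cC(L),F\bigr)\;\cong\;\bigoplus_{p+q=k,\ q\leq \bm(v+1)}H^{p}\bigl(U;\mathcal H^{q}(L;F|_L)\bigr),
\]
with the degree cut-off expressed using $\bm(v+1)=\lfloor v/2\rfloor$, and the Witt hypothesis $\mathrm H^{v/2}(L;F)=0$ ensures that this expression is well-defined and symmetric (no ambiguity at the middle degree).

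On the intersection cohomology side the cone formula for the upper middle perversity gives exactly the same answer: $\IH^{k}_{\bm}(U\times\cC(L);\alpha)$ is the truncation of $H^{*}(U\times L;F)$ at degrees $\leq \bm(v+1)$ in the $L$-direction, which by Leray--Serre equals $\bigoplus_{p+q=k,\ q\leq \bm(v+1)} H^p(U;\mathcal H^q(L;F|_L))$. The comparison map is constructed, following \cite{hhm, HR2012}, by first showing that an $L^2$-harmonic form on $N\setminus\pa N$ is polyhomogeneous on the resolved manifold (this is already implicit in the resolvent and heat kernel constructions we carry out in this paper), then integrating it against piecewise-linear chains in the smooth stratum $X\setminus B$ to obtain a cochain in $C^{*}_{\bm}(X;\alpha)$; the polyhomogeneous expansion together with the Witt condition guarantees that the resulting cochain is $(\bm,*)$-allowable, so one obtains a map $H^{*}_{(2)}(N,F,g)\to\IH^{*}_{\bm}(X;\alpha)$. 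A Mayer--Vietoris argument, using the cover of $X$ by a neighborhood of $B$ of the above form together with $N\setminus(\text{smaller collar of }\pa N)$, reduces to the local isomorphism established above. The main obstacle is the construction and polyhomogeneity of this comparison map in the presence of the flat (non-orthogonal) bundle $F$; once this is in place, the five lemma applied to the compatible Mayer--Vietoris sequences on both sides completes the argument.
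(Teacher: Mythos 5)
Your overall strategy is essentially the paper's: compute both $\mathrm H^*_{(2)}$ and $\IH^*_{\bm}$ on distinguished neighborhoods $\bbB^b\times\cC(L)$ of points of $B$, check that the answers agree (the Witt condition disposing of the middle vertical degree), and glue by Mayer--Vietoris and the five lemma. The differences are in implementation. The paper does not re-derive the local $L^2$-computation by separation of variables; it simply quotes the $L^2$-K\"unneth formula of Zucker \cite{Z1982} to get \eqref{dt.16}, and it never constructs a comparison map by integrating polyhomogeneous $L^2$-harmonic representatives over $(\bm,*)$-allowable chains --- it only uses the naturality of the local isomorphisms and the same Mayer--Vietoris induction over the cover $\{\cU_i\}$ of $\St_T(B)$ used in Proposition~\ref{dt.12}. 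Your HHM-style route through polyhomogeneity of harmonic forms and pairing with chains would work, but it imports a significant amount of analysis (regularity of harmonic forms, convergence of the pairings, allowability of the resulting cochains) that the statement does not need; what it buys is an explicit, geometric realization of the isomorphism, at the price of a much heavier proof.

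Two points in your sketch need repair. First, your $L^2$-integrability inequalities are reversed: for $g=\frac{dx^2}{x^2}+x^2g_L+\phi^*g_Y$ the volume density is $\sim x^{v-1}\,dx\,dy\,dz$ while a fiberwise-harmonic vertical $q$-form, extended constantly in $x$, has pointwise norm $\sim x^{-q}$, so it is square-integrable near $x=0$ precisely when $q<v/2$; the $\frac{dx}{x}$-components contribute nothing (those that are $L^2$ admit $L^2$ primitives). Thus it is the low vertical degrees $q\le\lfloor v/2\rfloor$ that survive --- consistent with your stated ``upshot'' and with the cone formula, but not with the conditions $q>v/2$ on $\alpha$ and $q\ge v/2$ on $\beta$ as written. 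Second, the gluing step is under-specified: a neighborhood of $B$ in $X$ is a cone bundle over $B$, not globally of the form $U\times\cC(L)$, so a two-set cover (neighborhood of $B$ plus the complement of a smaller collar) does not by itself reduce the global statement to the local model; you still need an induction, or a \v{C}ech/sheaf-theoretic argument, over a cover of $B$ by distinguished neighborhoods, compatible on intersections --- which is exactly what the paper carries out with the sets $\cU_i$ and repeated applications of the five lemma.
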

 \begin{proof}
 When $F$ is trivial, this is a standard result, see \cite{hhm}.  Otherwise, identify $N\setminus \pa N$ with $X\setminus B$.  Then there is a closed manifold $L$ such that any point $p\in B$ has a neighborhood $\cV \subset X$ homeomorphic $\bbB^b\times \cC L$ where $\bbB^b$ is the ball of dimension $b=\dim B$ in $\bbR^b$ and $\cC L $ is the cone over $L$.  Let $\{\cU_i\}$ be an open cover of $B$ in $X$ such that any non-empty intersection $\cU_I= \cap_{i\in I} \cU_i$ is homeomorphic to $\bbB^b\times L$.
 Appealing to the $L^2$-K\"unneth formula of \cite{Z1982} and using the definition of the intersection complex, we have that for any intersection $\cU_I=\cap_{i\in I} \cU_i,$ there are natural isomorphisms
 \begin{equation}
           \mathrm H^i_{(2)}(\cU_I\setminus \cU_I\cap B, F, g)\cong \IH_{\bm}^i(\cU_I; \alpha)\cong \left\{ \begin{array}{ll}\mathrm H^i(L; F),  & i\le \frac{v}2 \\
                               \{0\}, & i\ge \frac{v}2,  \end{array} \right.
 \label{dt.16}\end{equation}
 where $v=\dim L.$  We can then apply the five-lemma to suitable commutative diagrams of Mayer-Vietoris sequences  to conclude that there is a natural isomorphism
 $$
       \mathrm H^*_{(2)}(N,F,g)\cong \IH_{\bm}^*(X; \alpha).
 $$  
 \end{proof}
Since the $L^2$-cohomology is in particular finite dimensional, this has the following immediate consequence.
\begin{corollary}
Suppose that the flat vector bundle $F$ is Witt and $g_F$ is a smooth bundle metric on the manifold with boundary $N$.  Then the exterior derivative $d$ and its formal adjoint $\delta$ have closed range and we have the Kodaira decomposition
$$
   L^2\Omega^k(N,F,g)=  L^2\cH^k(N,F,g)\oplus  d\left( L^2_d\Omega^{k-1}(N,F,g) \right) \oplus  \delta\left( L^2_{\delta}\Omega^{k+1}(N,F,g) \right)
$$  
where $L^2\cH^k(N,F,g)$ is the space of $L^2$-harmonic forms taking values in $F$ and 
$$
    L^2_{\delta}\Omega^i(N\setminus \pa N,F,g)= \{ \omega\in L^2\Omega^i(N\setminus \pa N,F,g) \; | \; \delta\omega\in L^2\Omega^{i-1}(N\setminus \pa N,F,g)  \}.  
$$
In particular, $d+\delta$ is a Fredholm operator  and there is a canonical identification
$$
      \IH_{\bm}^*(X; \alpha) \cong \mathrm H^k_{(2)}(N,F,g)\cong L^2\cH^k(N,F,g).
$$
\label{kd.1}\end{corollary}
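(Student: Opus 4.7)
The plan is to derive this corollary directly from Proposition~\ref{dt.15} using standard Hilbert space techniques, exploiting the fact that the isomorphism $\mathrm H^*_{(2)}(N,F,g)\cong \IH_{\bm}^*(X;\alpha)$ furnishes a finite-dimensional target for the reduced $L^2$-cohomology. The central observation is that once the unreduced cohomology $\mathrm H^*_{(2)}(N,F,g)$ is known to be finite-dimensional, closed range of $d$ follows automatically, and the rest of the corollary is obtained by a formal argument.

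First, I would view $d$ as an unbounded, closed, densely defined operator $d\colon L^2_d\Omega^{k-1}(N,F,g)\to L^2\Omega^{k}(N,F,g)$ for each $k$; closedness is standard since $L^2_d\Omega^{k-1}$ is precisely the maximal domain. Because $d^2=0$, we have $\ran d\subseteq \ker d$, and the latter is closed in $L^2\Omega^k(N,F,g)$. By Proposition~\ref{dt.15} the quotient $\ker d/\ran d=\mathrm H^k_{(2)}(N,F,g)\cong \IH^k_{\bm}(X;\alpha)$ is finite-dimensional, so $\ran d$ is a finite-codimension subspace of the closed subspace $\ker d$, hence is itself closed in $L^2\Omega^k(N,F,g)$. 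By the closed range theorem, the Hilbert space adjoint $\delta=d^*$ also has closed range.

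Next I would assemble the Kodaira decomposition. The general theory of closed operators with closed range yields the orthogonal decomposition
\begin{equation*}
L^2\Omega^k(N,F,g)=\ker(\delta)\oplus \ran(d)=\ker(d)\oplus \ran(\delta),
\end{equation*}
applied in degree $k$. Intersecting these two decompositions and using $\ran d \perp \ran\delta$ (which is a consequence of $d^2=0=\delta^2$ applied to forms in appropriate domains) gives
\begin{equation*}
L^2\Omega^k(N,F,g)= L^2\cH^k(N,F,g)\oplus d\left( L^2_d\Omega^{k-1}(N,F,g) \right)\oplus \delta\left( L^2_{\delta}\Omega^{k+1}(N,F,g) \right),
\end{equation*}
where $L^2\cH^k=\ker d\cap \ker\delta$. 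This decomposition immediately yields the natural identification $\mathrm H^k_{(2)}(N,F,g)\cong L^2\cH^k(N,F,g)$ (each class has a unique harmonic representative), which when combined with Proposition~\ref{dt.15} gives the full chain of isomorphisms $\IH_{\bm}^*(X;\alpha)\cong \mathrm H^*_{(2)}(N,F,g)\cong L^2\cH^*(N,F,g)$.

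Finally I would deduce Fredholmness of $d+\delta$ by setting $D=d+\delta$ on the maximal common domain. The Kodaira decomposition shows that $\ker D= L^2\cH^*(N,F,g)$, which is finite-dimensional since it is isomorphic to $\IH^*_{\bm}(X;\alpha)$; formal self-adjointness of $D$ identifies the cokernel with the kernel; and closed range of $D$ follows from the closed range of $d$ and $\delta$ together with the orthogonality of $\ran d$ and $\ran\delta$ in the decomposition above. The only step requiring any thought is the closed range argument in the first paragraph; the rest is a direct transcription of the finite-dimensional Hodge theory to this setting, which works verbatim because we never need any regularity beyond the distributional definitions of $d$ and $\delta$.
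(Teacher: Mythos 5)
Your proposal is correct and is essentially the paper's own (largely implicit) argument: the paper presents the corollary as an immediate consequence of Proposition~\ref{dt.15}, exactly because finite-dimensionality of the unreduced $L^2$-cohomology forces $d$ to have closed range, after which the Kodaira decomposition, the identification $\IH^*_{\bm}(X;\alpha)\cong \mathrm H^*_{(2)}(N,F,g)\cong L^2\cH^*(N,F,g)$, and the Fredholmness of $d+\delta$ follow by the standard Hilbert-space argument you describe. One caution: a finite-codimensional subspace of a closed subspace need not be closed in general, so the closed-range step should be justified by the standard lemma that the range of a \emph{closed} operator having finite codimension in a closed subspace is closed (and the self-adjointness of $d+\delta$ used to identify the cokernel rests on the completeness of the fibered cusp metric, via Gaffney).
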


Under the assumption that $B$ is zero dimensional, we can relate $L^2$-cohomology and intersection cohomology with absolute and relative cohomology.

\begin{lemma} Suppose  that the flat vector bundle $F$ is Witt with bundle metric $g_F$ smooth on $N$.  If  $B$ is zero dimensional,  there is a natural isomorphism 
$$
         \mathrm H^k_{(2)}(N, F, g)\cong \IH^k_{\bm}(X; F)\cong \left\{  \begin{array}{ll} \mathrm H^k(N;F), &  k\le \frac{n-1}2,  \\
                                                                                                     \Im\left[ \mathrm H^k(N,\pa N;F) \to \mathrm H^k(N;F)\right], & k=\frac{n}2, \\
                                                                                                                                           \mathrm H^k(N,\pa N; F) & k> \frac{n-1}2. \end{array} \right.
$$ 
where $\IH^k_{\bm}(X;F):=  \IH^k_{\bm}(X; \alpha).$  
\label{se.1} \end{lemma}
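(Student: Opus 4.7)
The plan is to pass from $L^2$-cohomology to intersection cohomology via Proposition~\ref{dt.15}, and then compute $\IH^*_{\bm}(X;F)$ by comparing the long exact sequence attached to the short exact sequence \eqref{dt.9} with the long exact sequence of the pair $(N,\pa N)$.

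Since $B$ is zero-dimensional, the star $\St_T(B)$ decomposes as a disjoint union of open cones $\cC(L_b)$ on the links $L_b=\phi^{-1}(b)$, so $\pa N=\bigsqcup_b L_b$. The cohomological analogue of the cone formula recalled in the proof of Proposition~\ref{dt.12} specializes (with $\ell=n$ and $\bm(n)=\lfloor(n-1)/2\rfloor$) to
\[
\IH^k_{\bm}(\cC(L_b);F)=\begin{cases}\mathrm H^k(L_b;F) & k\le n-\bm(n)-2,\\ 0 & k\ge n-\bm(n)-1.\end{cases}
\]
A short case distinction on the parity of $n$, together with the Witt hypothesis $\mathrm H^{v/2}(L_b;F)=0$ (needed only when $v=n-1$ is even, i.e.~when $n$ is odd), rewrites this uniformly as
\[
\IH^k_{\bm}(\St_T(B);F)=\bigoplus_b\IH^k_{\bm}(\cC(L_b);F)=\begin{cases}\mathrm H^k(\pa N;F) & k<n/2,\\ 0 & k\ge n/2.\end{cases}
\]

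Next, \eqref{dt.9} induces a long exact sequence
\[
\cdots\to\IH^{k-1}_{\bm}(\St_T(B);F)\xrightarrow{\delta_{\IH}}\mathrm H^k(N,\pa N;F)\to\IH^k_{\bm}(X;F)\to\IH^k_{\bm}(\St_T(B);F)\to\cdots,
\]
which we compare to the long exact sequence of the pair $(N,\pa N)$,
\[
\cdots\to\mathrm H^{k-1}(\pa N;F)\xrightarrow{\delta_{\mathrm{pair}}}\mathrm H^k(N,\pa N;F)\to\mathrm H^k(N;F)\to\mathrm H^k(\pa N;F)\to\cdots.
\]
The restriction of intersection cochains in $\cR^*_{\bm}(\St_T(B);F)$ to the link $\pa N\subset\St_T(B)$ (well-defined in the range $k<n/2$, where the $\bm$-allowability conditions become vacuous) yields a cochain map that realizes the identification $\IH^k_{\bm}(\St_T(B);F)\cong\mathrm H^k(\pa N;F)$ above and fits into a morphism from the short exact sequence \eqref{dt.9} to the standard short exact sequence $0\to C^*(N,\pa N;F)\to C^*(N;F)\to C^*(\pa N;F)\to 0$ of the pair. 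By naturality of connecting homomorphisms, $\delta_{\IH}$ is therefore identified with $\delta_{\mathrm{pair}}$ in this range.

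The three cases of the lemma then follow from inspection of the two long exact sequences. For $k<n/2$ the five-lemma yields $\IH^k_{\bm}(X;F)\cong\mathrm H^k(N;F)$. For $k>n/2$ both $\IH^{k-1}_{\bm}(\St_T(B);F)$ and $\IH^k_{\bm}(\St_T(B);F)$ vanish (the borderline case $k=(n+1)/2$ for $n$ odd uses the Witt condition to kill $\IH^{(n-1)/2}_{\bm}(\St_T(B);F)$), giving $\IH^k_{\bm}(X;F)\cong\mathrm H^k(N,\pa N;F)$. Finally, for $k=n/2$ (which forces $n$ even), the fragment
\[
\mathrm H^{n/2-1}(\pa N;F)\xrightarrow{\delta_{\mathrm{pair}}}\mathrm H^{n/2}(N,\pa N;F)\to\IH^{n/2}_{\bm}(X;F)\to 0
\]
realizes $\IH^{n/2}_{\bm}(X;F)$ as the cokernel of $\delta_{\mathrm{pair}}$, which by the long exact sequence of the pair equals $\Im[\mathrm H^{n/2}(N,\pa N;F)\to\mathrm H^{n/2}(N;F)]$. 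The main obstacle is the matching of connecting maps; at the level of the basic-set complexes $\cR^*_{\bm}$ this demands a careful chain-level construction using the deformation retraction $\St_T(B)\setminus B\simeq\pa N$, but once it is in hand the remainder of the argument is a straightforward diagram chase.
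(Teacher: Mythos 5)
Your proof is correct and follows essentially the same route as the paper: after invoking Proposition~\ref{dt.15}, the paper also compares the long exact sequence of \eqref{dt.9} with that of the pair $(N,\pa N)$, using the cone formula \eqref{dt.16} and the Witt condition to identify $\mathrm H^k(L;F)\to \IH^k_{\bm}(\cC_{\phi}\pa N;F)$ as an isomorphism for $k\le\frac{n-1}2$ and to get vanishing for $k\ge\frac{n-1}2$, then applies the five-lemma in low degrees, reads the middle degree off the same diagram, and uses the bottom sequence alone in high degrees. Your treatment of the middle degree as a cokernel of the connecting map, and your explicit matching of connecting homomorphisms, simply spell out steps the paper leaves implicit.
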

\begin{proof}
The isomorphism between intersection cohomology and the $L^2$ cohomology is given in Proposition~\ref{dt.15}.
For $k\le \frac{n-1}2,$ the other isomorphism follows by applying the five-lemma to the commutative diagram of long exact sequences
\begin{equation}
\xymatrix{
  \cdots \ar[r] & \mathrm H^k(N,\pa N;F) \ar[r] \ar[d]  &   \mathrm H^k(N;F) \ar[r] \ar[d]  & \mathrm H^k(L;F) \ar[r]\ar[d]  &  \cdots \\
  \cdots  \ar[r] & \mathrm H^k(N,\pa N;F) \ar[r]   & \IH^k_{\bm}(X; F) \ar[r] & \IH^k_{\bm}(\cC_{\phi}\pa N; F) \ar[r]  & \cdots. 
}
\label{se.1a}\end{equation}
Note in particular that the Witt condition on $F$ implies that the map $\mathrm H^{k}(L ;F)\to \IH^k_{\bm}(\cC_{\phi}\pa N;F) $ is an isomorphism for $k\le \frac{n-1}2$ by \eqref{dt.16}.  For $k=\frac{n}2$ when $n$ is even, the result also follows from \eqref{se.1a}.  Finally, for $k>\frac{n}2,$ notice by \eqref{dt.16} that   $\IH^k_{\bm}(\cC_{\phi}\pa N; F)=\{0\}$ for $k\ge\frac{n-1}2,$ so we can just use the bottom long exact sequence in \eqref{se.1a} to obtain the isomorphism.  
\end{proof}

In order to define a $R$-torsion out of intersection homology, one needs to describe it in terms of a finitely generated complex of chains.  A natural choice is to take the basic sets $R^{\bp}_i$ of \cite[\S 3.4]{GM1980}.

\begin{definition}
Let $X$ be a pseudomanifold  of dimension $n$ with a fixed stratification and let $\bp$ be a choice of perversity.  Let $T$ be a triangulation of $X$ subordinate to the stratification, that is, such that $X_k$ is a subcomplex of $T$ for each $k.$  Let $T'$ be the first barycentric subdivision of $T$ and denote by $R^{\bp}_i$ the subcomplex of $T'$ consisting of all simplices which are $(\bp,i)$-allowable.  
\label{dt.1}\end{definition}

For such basic sets $R^{\bp}_i,$ there is a natural identification 
\begin{equation}
\IH^{\bp}_i(X) \cong \Im( \mathrm H_i(R^{\bp}_i)\to \mathrm H_i(R^{\bp}_{i+1})).
\label{dt.1b}\end{equation}
This suggests considering the abelian group
$$
        \cR^{\bp}_i(X)= \mathrm H_i(R^{\bp}_i, R^{\bp}_{i-1})
$$
finitely generated by chains $\{e^j_i\}$ in $R^{\bp}_i$ with boundary in $R^{\bp}_{i-1}.$  These groups form a complex with boundary map $\pa_i: \cR^{\bp}_i(X)\to \cR^{\bp}_{i-1}(X)$  induced by the boundary map of the long exact sequence in homology
$$
\xymatrix{
  \cdots \ar[r] & \mathrm H_i(R^{\bp}_{i-1}) \ar[r] & \mathrm H_i(R^{\bp}_i) \ar[r] 
  	& \mathrm H_i(R^{\bp}_i,R^{\bp}_{i-1}) \ar[r]^{\pa_i} & \mathrm H_{i-1}(R^{\bp}_{i-1}) \ar[r] & \cdots.
}
$$
In terms of this complex, intersection homology is canonically given by $\IH^{\bp}_i(X)\cong \mathrm H_{i}(\cR^{\bp}_*(X)).$  This can be proved directly from \eqref{dt.1b}.  

Let now $\tX$ be the universal cover of $X$ and let $\tR_i^{\bp}$ be the lift of $R^{\bp}_i$ to $\tX.$  The fundamental group of $X,$ denoted $\pi_i(X),$ naturally acts on $\tX$ and $\tR^{\bp}_i$ and we can as before define a complex by
$$
     \cR^{\bp}_i(\tX)= \mathrm H_i(\tR^{\bp}_i, \tR^{\bp}_{i-1}).
$$    
This complex is a free $\bbZ\pi_1(X)$-module generated by lifts of chains $\{e^j_i\}$ in $\cR^{\bp}_i(X).$  Let $\{\widetilde{e}^j_i\}$ be a preferred basis of $\cR^{\bp}_i(\tX)$ coming from a choice of lifts of the chains $\{e^j_i\}.$  
\begin{remark}
It is not completely clear a priori that one can choose the chains $\{e^j_i\}$ to be simply connected, so an obstruction to lift cannot be automatically ruled out.  However, in the case that  $X$ has one singular stratum $B$ of dimension 0, which is the situation considered in \cite{ARS2}, the chains $\{e^j_i\}$ can be chosen to be simply connected, in fact contractible.  Indeed, each of them can be chosen to be either a simplex not intersecting $B$, or else a chain retracting to one of the connected components of $B$.   
\label{gD.1}\end{remark}
Suppose that we are now given a unimodular representation of the fundamental group $\alpha:\pi_1(X)\to \GL(k,\bbR).$  This gives $\bbR^k$ the structure of a $\bbZ\pi_1(X)$-module so that we can introduce a twisted version of the complex $\cR^{\bp}_i(X),$ namely,
$$
      \cR^{\bp}_i(X; \alpha)= \cR^{\bp}_i(\tX)\otimes_{\bbZ\pi_1(X)} \bbR^k.
$$
Clearly, $\cR^{\bp}_i(X; \alpha)$ is a finite dimensional real vector space with preferred basis given by $\{\widetilde{e}^j_i\otimes x_{\ell}\}$ where $\{x_{\ell}\}$ is a choice of unimodular basis for $\bbR^k.$  
\begin{definition}[Dar \cite{Dar}]
With a choice of preferred basis in homology $h_*(\alpha),$ the \textbf{intersection $R$-torsion} $I\tau^{\bp}(X,\alpha,h_*(\alpha))$ associated to the perversity $\bp$ and the unimodular representation $\alpha$  is the torsion of the complex $\cR^{\bp}_i(X; \alpha)$ with preferred basis $\{\widetilde{e}^j_i\otimes x_{\ell}\}$ and basis in homology given by $h_*(\alpha).$  Similarly,  we can define the intersection torsion in cohomology $I\tau^{\bp}(X,\alpha, h^*(\alpha))$ by specifying a basis $h^*(\alpha)$ of the cohomology of the complex $\cR_{\bp}^i(X; \alpha)$ dual to $\cR^{\bp}_i(X; \alpha)$ and by using as a preferred basis  of $\cR_{\bp}^i(X; \alpha)$ the basis  dual to $\{\widetilde{e}^j_i\otimes x_{\ell}\}.$
\label{dt.2}\end{definition}
Using the fact that  the representation $\alpha$ is unimodular, we can proceed as in \cite{Dar} to show that  the intersection $R$-torsion does not depend on the choice of triangulation $T$ and is therefore a combinatorial invariant.

\section{Analytic torsion conventions} \label{sec:ATConventions}

We briefly recall the definition of analytic torsion for closed manifolds and manifolds with fibered  cusps.

Let $(M,g)$ be a closed oriented  Riemannian manifold of dimension $m,$ $F$ a flat bundle over $M$ with connection $\nabla$ and bundle metric $g_F,$ not necessarily compatible.
This data defines a twisted de Rham complex $\Omega^*(M;F)$ and a corresponding Hodge Laplacian 
\begin{equation*}
	\Delta_q: \Omega^q(M;F) \lra \Omega^q(M;F)
\end{equation*}
on forms of degree $q.$ The zeta function of this operator is, for $\Re s\gg 0,$
\begin{equation*}
	\zeta_q(s) = \frac1{\Gamma(s)} \int_0^{\infty} t^s\; \Tr(e^{-t\Delta_q}-\Pi_{\ker \Delta_q}) \; \frac{dt}t
\end{equation*}
where $\Pi_{\ker \Delta_q}$ denotes the orthogonal projection onto the null space of $\Delta_q.$ 
The short-time asymptotic expansion of the heat kernel can be used to see that this integral converges when $\Re s>m/2$ and that the resulting holomorphic function has a meromorphic continuation to the entire complex plane with at worst simple poles, for which $s=0$ is a regular point.
The (logarithm of) analytic torsion is defined in terms of the derivatives of these zeta functions at the origin,
\begin{equation*}
	\lAT(M,g,F, g_F) = \frac12 \sum_{q=0}^m (-1)^q q \zeta_q'(0).
\end{equation*}
If the null spaces $\cH^q(M;F)$ of the Hodge Laplacians are all trivial, and the representation is orthogonal \cite{Ray-Singer} or unimodular \cite{Muller1993}, then the analytic torsion is independent of the choice of metric.
Otherwise, as shown in \cite{Ray-Singer2}, we can remove the dependence on the metric by choosing a basis $\{ \mu_j^q \}$ of $\cH^q(M;F)$ for each $q,$ 
and an orthonormal basis of harmonic forms $\omega$ with respect to the metric $g,$ and then defining
\begin{equation*}
	\bar{\lAT}(M,\{\mu_j^q\},F) = \lAT(M,g;F)  - \log \left(  \prod_{q=0}^{n} [\mu^q |\omega^q]^{(-1)^q} \right),
\end{equation*}
where $[\mu^q|\omega^q ] =|\det W^q|$ with $W^q$ the matrix satisfying $$\displaystyle \mu^q_i= \sum_j W^q_{ij} \omega^q_j.$$ 
Another way of saying this is that $\lAT$ can be thought of invariantly as a metric on the determinant line
\begin{equation*}
	\det \cH^*(M;F) = \bigwedge_{q=0}^m \lrpar{\wedge^{\mathrm{top}} \cH^q(M;F)}^{(-1)^{q+1}},
\end{equation*}
where the inverse of a vector space denotes its dual.\\

For a manifold with  cusp ends, the definition of analytic torsion is similar but slightly more complicated.
Let $M'$ be a manifold with boundary $H$ and $x$ a boundary defining function for $H.$
Assume that $H$ participates in a fiber bundle
\begin{equation*}
	Z \fib H \xlra{\phi} Y
\end{equation*}
and fix a connection for $\phi.$
As above there is a bundle, the $d$-tangent bundle, ${}^d TM'$ whose sections are locally generated by $x\pa_x, \pa_y, \tfrac1x\pa_z$ and a dual bundle, the $d$-cotangent bundle, ${}^d T^*M'$ whose sections are locally generated by $\tfrac{dx}x, dy, x\; dz.$
(If $M$ is a closed manifold with hypersurface $H$ and $M' = M\setminus H$ then ${}^d TM' \cong \Ed TX_s\rest{\bhs{sm}}.$)

A general fibered cusp metric, or $d$-metric, is a bundle metric on ${}^d TM'$ but it is convenient to single out particularly well-behaved $d$-metrics.  First, a \textbf{product-type $d$-metric} is a metric of the form 
\begin{equation*}
	g_d = \frac{dx^2}{x^2} + x^2g_{H/Y} + \phi^*g_Y
\end{equation*}
in a collar neighborhood $\cC\cong [0,1)_x\times H$ of $H$,
with $g_Y$ a metric independent of $x$  on $Y$ and $g_{H/Y}$ a symmetric 2-tensor on $H$ restricting to a Riemannian metric on each fiber of $\phi$.  We say a $d$-metric is \textbf{product-type to order $k$} if it differs from a product-type metrics by a smooth section of $S^2({}^dT^*M')$ which vanishes to order $k$ at $\pa M'$.
 Analogously to Definition \ref{def:EvenEd}, we say that $g_d$ is an \textbf{even} $d$-metric if it is a $d$-metric which in a collar neighborhood $\cC$ of $\pa M'$ differs from a product-type $d$-metric $g_{pt}$ by elements of 
 $x^2\CI(\cU; S^2({}^{d}T_{H}M'))$ and  $x^2\CI(\cU; S^2({}^{d}T_{H}M'))$ having only even powers of $x$ in their expansion at $\pa M'$, where here the decomposition  in $\cC$
  $$
      {}^dT^*M'= {}^dT^*_HM' \oplus {}^dT^*_HM'
 $$
 in terms of vertical and horizontal forms is the one induced by $g_{pt}$ for the fiber bundle $[0,1)_x\times H\to [0,1)_x\times Y$ defined using $\phi:H\to Y$.  More generally, we say it is \textbf{even to order $\ell$} if it differs from even $d$-metric by a smooth section of $S^2({}^dT^*M')$ vanishing to order $\ell$ at $\pa M'$.

Let $(F,\nabla^F)$ be a flat vector bundle over $M'$ and let $g_F$ be a bundle metric on $F,$ not necessarily compatible with $\nabla^F.$
We say that $g_F$ is {\bf even in $x$} if its Taylor expansion at $H$ has only even powers of $x.$
Equivalently, $g_F$ is even in $x$ if when we extend $F$ to the double of $M'$ over $H,$ $g_F$ extends to a smooth bundle metric over the double.\\

We form the twisted $d$-differential forms
\begin{equation*}
	{}^d\Omega^*(M';F) = \CI(X_s; \Lambda^*({}^d T^*M') \otimes F),
\end{equation*}
with corresponding de Rham operators $\eth_{\dR}$
and Hodge Laplacians $\Delta_q.$  
We assume that $(F,\nabla^F, g_F)$ satisfy the hypotheses of Lemma \ref{lem:WittIsFredholm} so that $\eth_{\dR}$ is Fredholm on $L^2_d.$

We can not define the zeta function directly as above because the heat kernel of such a Laplacian is not trace-class.
However, as in \cite{MelroseAPS, Melrose-Nistor, Albin:RenInt}, we can define a renormalized trace by showing that the function
\begin{equation*}
	z\mapsto \Tr(x^z e^{-t\Delta_q})
\end{equation*}
is finite for $\Re z\gg 0$ and has a meromorphic continuation to $\bbC,$ so that we can set
\begin{equation*}
	\RTr{e^{-t\Delta_q}} = \FP_{z=0}\Tr(x^z e^{-t\Delta_q}).
\end{equation*}
This follows from the construction of the heat kernel above (and also from \cite[\S 4]{v}), as does the important fact that this renormalized trace has asymptotic behavior similar to that on closed manifolds (see \cite[Theorem A.1]{Albin-Rochon:ModSpace})
\begin{equation*}
\begin{gathered}
	\Mas t \to \infty, \quad 
	\RTr{ e^{-t\Delta_q} - \Pi_{\ker \Delta_q} } \to 0 \text{ exponentially } \\
	\Mas t \to 0, \quad
	\RTr{e^{-t\Delta_q}} \sim t^{-m/2}\sum_{k\geq 0} a_{k/2} t^{k/2} + t^{-(h+1)/2} \sum_{k\geq 0} b_{k/2}t^{k/2}\log t 
\end{gathered}
\end{equation*}
as long as $(F, g_F)$ satisfy the hypotheses of Lemma \ref{lem:WittIsFredholm}. 
The corresponding zeta function
\begin{equation*}
	\zeta_q(s) = \frac1{\Gamma(s)} \int_0^{\infty} t^s\; \RTr{ e^{-t\Delta_q} - \Pi_{\ker \Delta_q} }\; \frac{dt}t
\end{equation*}
is hence holomorphic for $\Re s>m/2$ with a meromorphic continuation to $\bbC$ with at worst double poles. 
There is a pole at $s=0,$ with residue $b_{h+1/2},$ but if $Y$ is even dimensional our analysis of the heat kernel for even metrics in \S\ref{sec:EvenMets} shows that $b_{h+1/2}$ vanishes if $g_d$ is even to order $\ell \geq h+2.$
For surfaces with cusps, this way of defining the zeta function was used for example in \cite{Albin-Aldana-Rochon:Ricci, Albin-Aldana-Rochon:Compact}.

We define
\begin{equation*}
	\lAT(M',g_d,F) = \frac12\sum_{q=0}^m (-1)^q q \lrpar{ \FP_{s=0} \frac{\zeta_q(s)}s }.
\end{equation*}
Just as on a closed manifold we have the following variation formula.
\begin{lemma}\label{lem:ATVariation}
Let $F\lra M'$ be a bundle with flat connection $\nabla^F$ and bundle metric $g_F,$ not necessarily compatible with $\nabla^F,$ but even in $x.$
Let $\sigma \mapsto g_{\sigma}$ be a family of fibered cusp metrics that are even to order $\ell\geq \dim Y + 2$ and whose difference is $\cO(x)$ as sections of $S^2({}^d T^*M'),$ and let
\begin{equation*}
	\alpha = *^{-1}\pa_{\sigma}\rest{\sigma =0} *.
\end{equation*}
If $m=\dim M$ is odd and $h= \dim Y$ is even then the variation of the analytic torsion is given by
\begin{equation*}
	\pa_\sigma\rest{\sigma=0} \lAT(M',g_{\sigma}, F) 
	=  \frac12\sum_{q=0}^m (-1)^q  \Tr( \alpha_q\Pi_{\ker \Delta_q})
\end{equation*}
where $\alpha_q$ is the restriction of $\alpha$ to forms of degree $q,$
and hence analytic torsion defines a metric on the Hodge cohomology determinant line independent of $\sigma.$
If the flat bundle $F$ is also acyclic, then analytic torsion is a number independent of $\sigma.$
\end{lemma}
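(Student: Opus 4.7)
The plan is to adapt Ray-Singer's classical heat-kernel proof of the variation formula to the renormalized-trace setting, using the parity hypotheses to control the boundary corrections that arise at the cusp. First I would carry out the standard Hodge-theoretic manipulation. Since the exterior derivative $d$ is $\sigma$-independent while $\partial_\sigma *$ is encoded in $\alpha$, the variation of the Laplacians can be written using Duhamel's formula, and combining with integration by parts in $t$ inside the Mellin transform defining $\zeta_q(s)$ yields the analogue of Ray-Singer's identity
$$\partial_\sigma \lAT(M',g_\sigma,F) = \tfrac12\sum_{q=0}^m (-1)^q\,\Tr(\alpha_q\,\Pi_{\ker\Delta_q}) + \tfrac12\sum_{q=0}^m (-1)^q\,\FP_{t\to 0}\RTr\bigl(\alpha_q\,e^{-t\Delta_q}\bigr),$$
exactly as on a closed manifold but with $\Tr$ replaced by $\RTr$ throughout. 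The kernel term is unchanged since $\Pi_{\ker\Delta_q}$ is of finite rank on $L^2_d$ by the Fredholm property from Lemma \ref{lem:WittIsFredholm}.

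The second step is to show that the $\FP_{t\to 0}$ term on the right vanishes under the hypotheses $m$ odd and $h$ even. By Theorem \ref{thm:HeatKernelConst}, the constant-in-$t$ coefficient of $\RTr(\alpha_q e^{-t\Delta_q})$ decomposes into an interior contribution from the diagonal asymptotics of the heat kernel at $\bhs{tf}$ and a boundary contribution from the asymptotics at $\bhs{tff}\cup\bhs{hbf}$. The interior piece is the integral over $M'$ of a pointwise local invariant of $(g_\sigma,\alpha)$; the classical Ray-Singer parity argument forces this density to vanish identically when $m$ is odd. For the boundary piece, Proposition \ref{eveness.1} together with the evenness of $g_F$ and $g_\sigma$ (to order $\ell\geq h+2$) implies that the heat kernel has a $\rho_{tff}$-even expansion at $\bhs{tff}$ up to order exceeding $h+1$, so in analogy with Corollary \ref{eveness.2} the constant-in-$t$ part of the renormalized trace reduces to the integral over $Y$ of a local heat invariant of the reduced horizontal $b$-operator $D_b$ paired with the boundary restriction of $\alpha$; a parity argument on $Y$, now using $h$ even, makes this vanish as well.

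The final step is to deduce the two stated consequences. The formula for $\partial_\sigma\lAT$ is precisely the logarithmic derivative at $\sigma=0$ of the $L^2$-norm, with respect to $g_\sigma$, of any fixed basis of $\cH^*(M';F)$: this is a linear-algebraic computation, already present in Ray-Singer, for the variation of the Hodge inner product on the finite-dimensional harmonic space, in which $\alpha$ appears as the variation of the Hodge star. Consequently the correction of $\lAT$ by the logarithm of this norm is $\sigma$-independent, which is exactly the statement that $\lAT$ defines a metric on $\det\cH^*(M';F)$ independent of $\sigma$; in the acyclic case $\cH^*(M';F)=0$ both terms on the right vanish and $\lAT$ itself is $\sigma$-independent.

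The main obstacle is the vanishing of the boundary contribution to $\FP_{t\to 0}\RTr(\alpha_q e^{-t\Delta_q})$. Unlike on a closed manifold the renormalized trace of a commutator is not automatically zero, and the constant-in-$t$ term acquires corrections localized on the cusp cross-section that are genuinely nontrivial in general. These corrections vanish only because of the combined parity hypotheses: $h$ even is needed for the parity argument on the boundary heat invariants of $D_b$, while the even-metric hypothesis to order $\ell\geq h+2$ is precisely what guarantees, through the refined expansion of Proposition \ref{eveness.1}, that no odd-order correction to the metric enters the $\rho_{tff}$-expansion of the heat kernel before the $\rho_{tff}^0$ coefficient which feeds into the finite part. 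The bookkeeping required to propagate the evenness from the metric through the Hadamard parametrix construction of the heat kernel to the final expansion of the trace is the technical core of the argument.
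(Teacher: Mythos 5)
Your overall architecture (a Ray--Singer-type variation identity, then vanishing of the finite part of the small-time trace using $m$ odd for the interior and $h$ even plus evenness for the boundary, then the determinant-line statement) matches the paper, and your interior parity remark and final linear-algebra step are fine. But there is a genuine gap at your first step, and it is precisely where the paper does its real work. The paper never runs the variation argument with renormalized traces of the $\alpha$-weighted operators: it first shows that since the metrics are even and differ by $\cO(x)$ they in fact differ by $\cO(x^2)$, and then, by the frame computation leading to \eqref{eq:Alpha}, that $\alpha$ is $\cO(x^2)$ as a section of $\Hom(\Lambda^*({}^dT^*M'))$ with a definite even parity type. This is the linchpin: it makes $\alpha_q e^{-t\Delta_q}$, $\alpha d e^{-t\Delta}$, $\alpha\delta e^{-t\Delta}$ and hence $\pa_\sigma e^{-t\Delta_q}$ honestly trace class, so the closed-manifold manipulation gives $\pa_\sigma\sum(-1)^q q\,\RTr{e^{-t\Delta_q}}=t\pa_t\sum(-1)^q\Tr(\alpha_q e^{-t\Delta_q})$ with ordinary traces on the right and no trace-defect corrections at any stage. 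You instead assert the Ray--Singer identity ``with $\Tr$ replaced by $\RTr{\cdot}$ throughout'' and defer the failure of cyclicity of the renormalized trace to the boundary analysis, claiming the resulting corrections vanish by parity. That is not justified: the defect of a renormalized trace under cyclic permutation is a $t$-dependent boundary term built from indicial data at $\bhs{hbf}$, it would contaminate the identity for every $t$ (not only the constant term), and neither $h$ even nor evenness of the metric is, by itself, a reason for it to vanish. Without first proving $\alpha=\cO(x^2)$ -- which is exactly what the hypotheses ``difference is $\cO(x)$'' plus evenness are there for -- your step 1 is unproven and the rest has nothing to stand on.

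Two further points where the missing structure result is needed. First, the boundary vanishing is not ``in analogy with Corollary~\ref{eveness.2}'' applied to the heat kernel alone: one needs the even parity type of $\alpha$ itself, combined with Proposition~\ref{eveness.1}, to get the expansion $\tr(\alpha e^{-t\Delta})\sim\rho_{tff}^{-h-1}\sum_{2k<\ell}A_{2k}(\alpha)\rho_{tff}^{2k}+\rho_{tff}^{-h+\ell-1}\sum_k A_k'(\alpha)\rho_{tff}^k$, from which $h$ even and $\ell\geq h+2$ kill the constant coefficient; your description of the boundary term as a $D_b$-heat invariant paired with the boundary restriction of $\alpha$ does not reflect this (indeed, once $\alpha=\cO(x^2)$ that restriction is zero). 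Second, before extracting the $s$-coefficient at $s=0$ you must rule out a $t^0\log t$ term coming from the corners of the pushforward; the paper does this using $m$ odd together with $\alpha\in\cO(x^2)$, and without it your $\FP_{t\to 0}$ identity for $\pa_\sigma\lAT$ acquires an extra term.
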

\begin{proof}
First, since $g_{\sigma}$ are even to order $\ell\geq 2$ and differ by $\cO(x),$ they actually differ by $\cO(x^2).$
Next let us determine the structure of $\alpha.$
We know that there is a collar neighborhood $\sC = [0,1)\times H$ of $\pa M'$ with respect to which
\begin{equation*}
	g_{\sigma} -\left( \frac{dx^2}{x^2} + x^2 g_{H/Y} + \phi^*g_Y\right) \in x^{2}\CI_{\ev}(\sC; S^2({}^d T^*M'))+ x^{\ell}\CI(\sC; S^2({}^d T^*M')),
\end{equation*}
where $g_{H/Y}$ and $\phi^*g_Y$ are constant in  $x$ and smooth in $\sigma,$ and where $\CI_{\ev}(\sC; S^2({}^d T^*M'))$ is the space of smooth sections with even expansion at $\pa M'$.  
Let $\{ e_i\}$ be a frame for $TH/Y$ orthonormal with respect to $g_{H/Y}\rest{\sigma=0}$ and $\{\eta_\mu\}$ be a frame for $TY$ orthonormal with respect to $g_Y\rest{\sigma=0}.$ Let $\{\bar e_i\}$ and $\{\bar \eta_\mu \}$ be their horizontal lifts to $\sC$ with respect to the connection induced by the metric.
Let $\cF =\{\tfrac1x\bar e_i, \bar \eta_\mu, x\pa_x \}$ be the resulting frame for ${}^dTM'$ (not necessarily orthonormal for any metric) and 
let $\cF^{\flat}$ be the dual frame for ${}^d T^*M'.$
We can write 
\begin{equation*}
	g_\sigma = \sum_{\theta^i, \theta^j \in \cF^{\flat}} g_{ij}(\sigma) \theta^i \wedge \theta^j 
	\Mwith g_{ij} \in \CI_{\ev}(\sC) + x^{\ell}\CI(\sC)
\end{equation*}
where $\CI_{\ev}$ denotes functions even in $x$
and hence, for any $s \in \CI(\sC;F),$
\begin{equation*}
	*( s(\zeta) \theta^{i_1} \wedge \cdots \wedge \theta^{i_p}) 
	= \frac{\sqrt{\det (g_{ij})}}{(m-p)!}g^{i_1\ell_1}\cdots g^{i_p\ell_p} 
	s(\zeta) \epsilon_{\ell_1\cdots \ell_m} \theta^{\ell_{p+1}}\wedge\cdots\wedge\theta^{\ell_{m}}.
\end{equation*}
This shows that
\begin{equation}\label{eq:Alpha}
	\alpha \in 
	\CI_{\ev}(\sC; \Hom(\Lambda^*({}^dT^*M') ) 
	+ x^{\ell}
	\CI(\sC; \Hom(\Lambda^*({}^dT^*M' ) ).
\end{equation}
In particular, since $\pa_{\sigma}g_{ij} = \cO(x^2),$ we point out that $\alpha$ is $\cO(x^2)$ as a section of $\Hom(\Lambda^*({}^d T^*M')).$


Duhamel's formula yields
\begin{equation*}
	\pa_{\sigma}e^{-t\Delta_q} = 
	-\int_0^t e^{-s\Delta_q}\pa_{\sigma}\Delta e^{-(t-s)\Delta_q} \; ds
\end{equation*}
and a direct computation using \eqref{eq:MullerDelta} shows that
\begin{equation*}
	\pa_{\sigma}\rest{\eps=0}\Delta = -\alpha \delta d + \delta \alpha d - d\alpha \delta + d\delta \alpha.
\end{equation*}
Since $\alpha$ is $\cO(x^2),$ the construction of the heat kernel above implies that $\alpha e^{-t\Delta}$ is trace-class for all $t>0$ (as are 
$\alpha de^{-t\Delta},$ $\alpha \delta e^{-t\Delta},$ and $\alpha d\delta e^{-t\Delta}$) and hence $\pa_{\sigma}e^{-t\Delta_q}$ is trace-class for all $t>0.$
Thus we can compute as in \cite[Theorem 2.1]{Ray-Singer} and find that
\begin{equation*}
	\pa_{\sigma} \sum (-1)^q q \; \RTr{e^{-t\Delta_q}} = t\pa_t \sum (-1)^q \; \Tr\lrpar{\alpha_q e^{-t\Delta_q}}
\end{equation*}
where $\alpha_q$ is the restriction of $\alpha$ to forms of degree $q$
and hence, for $\Re s>m/2,$
\begin{equation*}
	\pa_{\sigma} \sum (-1)^q q \zeta_q(s)
	= \frac s{\Gamma(s)} \int_0^{\infty} t^s \sum (-1)^{q+1} \; \Tr\lrpar{\alpha_q (e^{-t\Delta_q}-\Pi_{\ker \Delta_q})} \frac{dt}t.
\end{equation*}

Now our construction of the heat kernel in \S\ref{sec:HeatKerLaplaceType} and its refinement in \S\ref{sec:EvenMets} (by restricting to $\bhs{mf}$)
together with \eqref{eq:Alpha} imply that 
\begin{equation*}
	\tr(\alpha e^{-t\Delta}) \sim 
	\lrpar{ \rho_{tff}^{-h-1}\sum_{2k<\ell} A_{2k}(\alpha) \rho_{tff}^{2k} 
	+\rho_{tff}^{-h+\ell-1}\sum A'_k(\alpha) \rho_{tff}^k }\mu
\end{equation*}
and so $\ell>h+1$ guarantees that there is no constant term in this expansion.
 
Applying Melrose's push-forward theorem to $\alpha e^{-t\Delta}$ on $[M' \times \bbR^+_{\sqrt t}; H \times \{ 0 \}],$ yields
\begin{multline*}
	\Tr\lrpar{\alpha e^{-t\Delta}} 
	\sim 
	t^{-m/2} \sum_{k\geq 0}a_{k}(\alpha) t^k \\  
	+t^{-(h+1)/2} \lrspar{ \sum_{0\leq k < (\ell-1)/2} \wt a_{k}(\alpha) t^{k} 
	+t^{\ell/2} \sum_{k\geq 0} \wt a_k'(\alpha) t^{k/2}
	+\sum_{k\geq 0} b_{k/2}(\alpha_q)t^{k/2}\log t }
	\Mas t \to 0.
\end{multline*}
where the $\{a_k(\alpha_q)\}$ are the contribution from the interior lift of $\{ \sqrt t = 0\},$ the $\{\wt a_{k}(\alpha_q)\}$ are the contribution from the front face of the blow-up, and the $\{ b_{k/2}(\alpha_q) \}$ are the contribution from the corners (see the appendix of \cite{Albin-Rochon:ModSpace} for details). In particular this lets us deduce that, since $m$ is odd and $\alpha \in \cO(x^2),$ the coefficient of $t^0\log t$ vanishes since neither corner has a term at order $t^0.$

Thus we can meromorphically continue $\pa_{\sigma} \sum (-1)^q q \zeta_q(s)$ to the complex plane and we find that, near $s=0,$ it equals
\begin{equation*}
	s {\sum_q (-1)^{q} \Tr(\alpha_q \Pi_{\ker \Delta_q})} + \cO(s^2)
\end{equation*}
which proves the variation formula in the lemma. 
\end{proof}

We will prove our main results for product-type $d$ metrics, it follows from this lemma that they will hold for $d$-metrics that are of even to order $h+2.$\\

To relate the analytic torsion of $M$ with a smooth metric to the analytic torsion of $[M;H]$ with a fibered cusp metric, we will use our uniform construction of the resolvent and the heat kernel of  the Hodge Laplacian as $\eps \to 0.$  Putting these together we will see that the limit as $\eps \to 0$ of the analytic torsion is equal to the sum of the analytic torsion of $[M;H]$ and a contribution from each of the model operators.

In order to compute these limits, it will be very useful to rewrite $\displaystyle \FP_{s=0} \tfrac1s\zeta(s)$ as a renormalized integral. 
From
\begin{equation*}
	\RTr{e^{-t\Delta}} \sim t^{-m/2}\sum_{k \geq 0} a_{k/2} t^{k/2} + t^{-(h+1)/2} \sum_{k\geq 0} b_{k/2}t^{k/2}\log t
\end{equation*}
(where for closed manifolds we have ${}^R\mathrm{Tr} = \Tr,$ $a_{k/2} =0$ if $k$ is odd and $b_j=0$ for all $j$),
we see that, for $\Re s>m/2$ and any $L>-m/2,$
\begin{multline*}
	\int_0^1 t^s \; \RTr{e^{-t\Delta} - \Pi_{\ker \Delta}} \; \frac{dt}t
	=  f_L(s)\\
	+ \int_0^1 t^s \; \lrspar{-\dim\ker\Delta + \sum_{-m/2 \leq k < L} a_{(k+m)/2} t^{k/2} + \sum_{-(h+1)/2 \leq k < L} b_{(h+1+k)/2}t^{k/2}\log t } \; \frac{dt}t
\end{multline*}
with $f_L(s)$ holomorphic for $\Re s> -L.$
Since we have
\begin{equation*}
	\int_0^1 t^{s+k/2} \frac{dt}t = \frac1{s+k/2}, \quad
	\int_0^1 t^{s+k/2} \log t \frac{dt}t =-\frac1{(s+k/2)^2},
\end{equation*}
the integral above is a meromorphic function on $\bbC$ with at worst double poles and explicit singular parts.
Near $s=0$ we can write $\frac1{\Gamma(s)} = s + \gamma s^2 + \frac1{12}(6\gamma^2-\pi^2)s^3 + \cO(s^4)$ where $\gamma$ is the Euler-Mascheroni constant. It follows that the meromorphically continued zeta function is equal, near $s=0,$ to
{\small
\begin{equation*}
	\lrpar{ s + \gamma s^2 + \frac{6\gamma^2-\pi^2}{12}s^3 + \cO(s^4) }
	\lrpar{ -\frac{b_{(h+1)/2}}{s^2} + \frac{a_{m/2} - \dim \ker \Delta}s + \sideset{^R}{_0^\infty}\int\RTr{e^{-t\Delta}}\;\frac{dt}t + \cO(s)}
\end{equation*}
}
and so the coefficient of $s$ is 
\begin{equation*}
	\sideset{^R}{_0^\infty}\int\RTr{e^{-t\Delta}}\;\frac{dt}t - \frac{6\gamma^2-\pi^2}{12}b_{(h+1)/2} + \gamma (a_{m/2} - \dim \ker \Delta),
\end{equation*}
so that 
\begin{equation}
\zeta'(0) = \sideset{^R}{_0^\infty}\int\RTr{e^{-t\Delta}}\;\frac{dt}t - \frac{6\gamma^2-\pi^2}{12}b_{(h+1)/2} + \gamma (a_{m/2} - \dim \ker \Delta).
\label{zp.1}\end{equation}
In particular for a closed manifold (with $b_{(h+1)/2}=0$), this yields  
\begin{equation}\label{eq:ExplicitZetaPrime}
	\zeta'(0) =
	\sideset{^R}{_0^\infty}\int\RTr{e^{-t\Delta}}\;\frac{dt}t 
	+  \gamma (a_{m/2} - \dim \ker \Delta).
\end{equation}
More generally, we have seen that when $h= \dim Y$ is even, $g_F$ is even and $g_d$ is even to order $h+2,$ we have $b_{(h+1)/2}=0$ for the Hodge Laplacian, and hence
\eqref{eq:ExplicitZetaPrime} holds just as for a closed manifold in this case.

\section{Asymptotics of analytic torsion}\label{sec:AsympAT}

Having determined the behavior under fibered  cusp surgery of the resolvent of Dirac-type operators, and hence their small eigenvalues, and the heat kernels of Laplace-type operators, we can now determine the behavior of the corresponding zeta functions. Specializing to twisted de Rham operators we will find the asymptotics of analytic torsion. In particular we will show that its finite part as $\eps\to 0$ decomposes into a contribution from the various model operators. We will first describe the general case and then impose extra conditions on the representations to determine these contributions.\\

Let $D_{\ed}$ be a Dirac-type operator as in \S\ref{rc.0} satisfying Assumptions 1 and 2 of that section.
For positive $\eps,$ the derivative of the zeta function of $D_{\ed}^2$ at $s=0$ is given by \eqref{eq:ExplicitZetaPrime}
\begin{equation*}
	\zeta'(0) 
	=
	\gamma( a_{m/2} - \dim\ker D_{\ed}^2) + \sideset{^R}{_0^\infty}\int \Tr(e^{-tD_{\ed}^2}) \; \frac{dt}t
\end{equation*}
and we wish to determine the finite part as $\eps\to0.$

Recall from \S\ref{sec:TraceHeatKer} that the trace of the heat kernel of $D_{\ed}^2$ is polyhomogeneous on 
\begin{equation*}
	\sE\sT = [ [0,1]_{\eps} \times \bbR^+_{\tau}; \{ \eps, \tau = 0 \} ] 
\end{equation*}
where $\tau = \sqrt t,$ which has two boundary hypersurfaces over $\eps=0,$ $\bhs{af}$ and $\bhs{tff}.$
By the renormalized push-forward theorem of \cite[Lemma~2.4]{hmm}, at $\bhs{af},$ the trace of the heat kernel behaves like
\begin{equation*}
	\Tr(e^{-tD_{\ed}^2}) = a\log \rho_{af} + \RTr{e^{-t\Delta_d}} + \RTr{e^{-t\Delta_b}} + o(1),
\end{equation*}
while at $\bhs{tff},$ the expansion of the trace of the heat kernel has the form
\begin{equation}
	\Tr(e^{-tD_{\ed}^2}) \sim \rho_{tff}^{-m}\sum_{k= 0}^{m} A_k^{tff} \rho_{tff}^{k} + \rho_{tff}^{-h-1} \sum_{\ell=0}^{h+1} \wt A_k^{tff} \rho_{tff}^\ell \log\rho_{tff}+ o(1),
\label{ne.1}\end{equation}
and we have seen that for product-type metrics many of these coefficients vanish.

First let us consider the renormalized integral for large time.
On a manifold undergoing surgery we denote the set of small eigenvalues by $\Spec_{\sma}(\Delta)$ and we
can write
\begin{equation*}
	\Tr\lrpar{ e^{-tD_{\ed}^2} }
	= \Tr \Big( \sum_{\lambda \in \Spec_{\sma}(\Delta)} e^{-t\lambda}\Pi_{\lambda} \Big) + \cO(e^{-t\delta})
	= \Tr \lrpar{ e_{\sma}^{-tD_{\ed}^2} } + \cO(e^{-t\delta}) \Mas t\to\infty
\end{equation*}
with the advantage that $\delta$ is independent of $\eps,$ where $e_{\sma}^{-tD_{\ed}^2}= \Pi_{\sma}  e^{-tD^2_{\eps,d}}\Pi_{\sma}$.  Next notice that
\begin{equation*}
	\FP_{s=0} \int_1^\infty t^s e^{-t\lambda}\frac{dt}t= 	\begin{cases}
	\displaystyle
0, & \Mif \lambda=0, \\
	 & \\
	\displaystyle
	 \FP_{s=0} \frac1{\lambda^s}\int_\lambda^{\infty} t^s e^{-t} \frac{dt}t
	= \int_\lambda^\infty e^{-t}\frac{dt}t
	=: \Gamma(0,\lambda), 
	& \Mif \lambda \neq 0,
	\end{cases}
\end{equation*}
and so
\begin{multline*}
	\sideset{^R}{_1^\infty}\int \Tr(e^{-tD_{\ed}^2}) \; \frac{dt}t
	= \sideset{^R}{_1^\infty}\int \Tr(e^{-tD_{\ed}^2} - e_{\sma}^{-tD_{\ed}^2}) \; \frac{dt}t
	 + \sideset{^R}{_1^\infty}\int \Tr(e_{\sma}^{-tD_{\ed}^2}) \; \frac{dt}t \\
	= \int_1^{\infty} \Tr(e^{-tD_{\ed}^2} - e_{\sma}^{-tD_{\ed}^2}) \; \frac{dt}t
	 + \sum_{\lambda \in \Spec_{\sma}(D_{\ed}^2)\setminus \{0\}}
	 \Gamma(0,\lambda) \dim \ker(D_{\ed}^2-\lambda).
\end{multline*}
Now as $\eps \to 0$ we have that 
\begin{equation*}
	\FP_{\eps=0} \int_1^{\infty} \Tr(e^{-tD_{\ed}^2} - e_{\sma}^{-tD_{\ed}^2}) \; \frac{dt}t
	=
	\sideset{^R}{_1^\infty}\int {}^{R}\!\Tr(e^{-t\Delta_d}) \; \frac{dt}t
	+ \sideset{^R}{_1^\infty}\int {}^{R}\!\Tr(e^{-t\Delta_b}) \; \frac{dt}t,
\end{equation*}
while, as $\lambda \to 0,$ $\Gamma(0,\lambda) = -\log\lambda - \gamma + \cO(\lambda).$ 
Let us abbreviate 
\begin{equation*}
\begin{gathered}
	N_\eps = \dim \ker D_{\ed}^2, \quad
	N_b = \dim \ker \Delta_b, \quad
	N_d = \dim \ker \Delta_d, \\
	\log \zeta_{\sma}(D_{\ed}^2) = 
	 \log \prod_{\substack{\lambda \in \Spec_{\sma}(D_{\ed}^2)\setminus \{0\} \\ \text{repeated with multiplicity}}} \lambda.
\end{gathered}
\end{equation*}
Then $\FP_{\eps=0}\log \zeta_{\sma}(D_{\ed}^2)$ makes sense provided the logarithm of the product of the positive small eigenvalues has a polyhomogeneous expansion at $\epsilon=0$.  
In this case, we have that 
\begin{multline*}
	\FP_{\eps=0} \sideset{^R}{_1^\infty}\int \Tr(e^{-tD_{\ed}^2}) \; \frac{dt}t \\
	=
	\sideset{^R}{_1^\infty}\int \Tr(e^{-t\Delta_d}) \; \frac{dt}t
	+ \sideset{^R}{_1^\infty}\int \Tr(e^{-t\Delta_b}) \; \frac{dt}t
	-\gamma(N_b + N_d - N_{\eps}) -\FP_{\eps=0}\log \zeta_{\sma}(D_{\ed}^2).
\end{multline*}

Next let us consider the renormalized integral for small time
\begin{equation*}
	\sideset{^R}{_0^1}\int \Tr(e^{-tD_{\ed}^2}) \frac{dt}t,
\end{equation*}
for which we will use the following diagram
\begin{equation*}
	\xymatrix{
	\sE\sT\rest{\tau\leq1} \ar[rr]^-{\beta_{\eps,\tau}} \ar[rd]^{\pi_{\eps}} & & [0,1]_{\eps}\times [0,1]_\tau \ar[ld]_{p_{\eps}} \\
	& [0,1]_{\eps}. & }
\end{equation*}

\begin{lemma} 
i) If, in the push-forward along $\pi_{\eps},$ we renormalize using $\tau$ (as opposed to $\rho_{tf}$) the resulting map satisfies
\begin{equation*}
	\begin{array}{ccc}
	\cA^{(E_{tf}, E_{tff}, E_{af})}(\sE\sT\rest{\tau\leq 1}) & \longrightarrow & \cA^{F}([0,1]_{\eps}) \\
	f & \longmapsto & \displaystyle \FP_{s=0}\pi_{\eps*}(f\beta_{\eps,\tau}^*(t^s\tfrac{dt}t)) 
	\end{array}
\end{equation*}
where
\begin{equation*}
	F = E_{af} \bar\cup E_{tff} \cup
	\bigcup_{\{\ell : (0,\ell) \in E_{tf}\}} \{ (z,q): \exists (z,p) \in E_{tff}, q \in \{ p+1, \ldots, p+\ell+1\} \}.
\end{equation*}

ii)  We have that
\begin{equation*}
	\FP_{\eps=0}
	\sideset{^R}{_0^1}\int \Tr(e^{-tD_{\ed}^2}) \frac{dt}t
	=
	\sideset{^R}{_0^1}\int \; \RTr{e^{-t\Delta_d}} \frac{dt}t
	+\sideset{^R}{_0^1}\int \; \RTr{e^{-t\Delta_b}} \frac{dt}t
	+\sideset{^R}{_0^\infty}\int \; A^{tff}_{m} \frac{d\sigma}\sigma,
\end{equation*}
where $A^{tff}_{m}$ is defined in equation \eqref{ne.1}.
iii) As $\eps\searrow 0$, we have the following polyhomogeneous expansion 
\begin{equation}
\sideset{^R}{_0^1}\int \Tr(e^{-tD_{\ed}^2}) \frac{dt}t\sim \sum_{k\ge -h-1}^0 \eps^k(c_k+d_k\log\eps) + e_0(\log\eps)^2 +o(1). 
\label{phge.1}\end{equation}
Moreover, $d_{-h-1}=0$ when  the pushforward of $N_{tff}(e^{-tD_{\ed}^2})$ in \eqref{eq:AllNormalOps} to $\bhs{tff}(\sE\sT)$ has no term of order zero at $\bhs{tf}(\sE\sT)$, which holds in particular when $m$ is odd.
\label{ae.1}\end{lemma}

\begin{proof} $ $

i) This is a small generalization of \cite[Theorem A.1]{Albin-Rochon:ModSpace}.
We may use a partition of unity to restrict the support of $f.$ If $f$ is supported away from $\bhs{tf}$ then the renormalized push-forward coincides with the usual push-forward and the index set $E_{af} \bar\cup E_{tff}$ follows from Melrose's push-forward theorem \cite{me1}. Thus it suffices to consider $f$ supported near the corner $\bhs{tff}\cap \bhs{tf},$ say in $\{ \rho_{tf}, \rho_{tff} \leq \delta\}.$
We can use projective coordinates
\begin{equation*}
	\eps, \sigma = \frac{\tau}\eps
\end{equation*}
in which $\rho_{tff} = \eps$ and $\rho_{tf} = \sigma.$
We can write the pull-back of $t^{s-1}\; dt$ as $2(\sigma\eps)^{2s} \; (\eps^{-1} d\eps + \sigma^{-1} d\sigma).$
The integral along the slice $\eps=\eps_0$ of $\rho_{tff}^j (\log \rho_{tff})^p \rho_{tf}^k (\log \rho_{tf})^{\ell} \beta_{\eps,\tau}^*(t^s\frac{dt}t)$ is equal to
\begin{equation*}
	2 \eps_0^{2s+j} (\log \eps_0)^p 
	\int_0^\delta  \sigma^{k+2s} (\log \sigma)^{\ell} \; \frac{d\sigma}{\sigma}
	=
	2 \eps_0^{2s+j}  (\log \eps_0)^p 
	\sum_{j=0}^{\ell} (-1)^j \frac{ \ell!}{(\ell-j)!}\frac{\delta^{k+2s}(\log \delta)^{\ell-j}}{(k+2s)^{j+1}}.
\end{equation*}
If $k \neq 0$ then this has finite part at $s=0$ equal to $C \eps_0^j (\log \eps_0)^p$ for some constant $C.$
If $k=0$ then we can replace $(\eps_0\delta)^{2s}$ with its Taylor expansion $\sum_{n \geq 0} (2s\log \eps_0\delta)^n/n!$ and write this integral as
\begin{equation*}
	\eps_0^j (\log \eps_0)^p \sum_{n=1}^{\ell+1} C_n (\log \eps_0)^n 
\end{equation*}
for some constants $C_n.$
Thus every term of the form $\rho_{tff}^j (\log \rho_{tff})^p (\log \rho_{tf})^{\ell}$ contributes terms $\eps^j (\log \eps)^q,$ $q \in \{p+1, \ldots, p+\ell+1\}$ to the final expansion.

Note that if we had renormalized using $\rho_{tf}$ instead of $\tau$ the final index set would have been $E_{af} \bar\cup E_{tff}$ just as in the usual push-forward theorem. This was pointed out in \cite[page 128]{hmm}.

ii) It follows from ($i$) that the contribution to the expansion of $\sideset{^R}{_0^1}\int \Tr(e^{-tD_{\ed}^2}) \frac{dt}t$ in $\eps$ from the corners of $\sE\sT$ involves positive powers of $\log \eps$ and so will not contribute to the constant term in $\eps.$ Thus the constant term comes from the constant term in the expansion of $\Tr(e^{-tD_{\ed}^2}) \frac{dt}t$ at $\bhs{af},$ which is
\begin{equation*}
	\sideset{^R}{_0^1}\int \; \RTr{e^{-t\Delta_d}} \frac{dt}t
	+\sideset{^R}{_0^1}\int \; \RTr{e^{-t\Delta_b}} \frac{dt}t
\end{equation*}
and the constant term in its expansion at $\bhs{tff}$ which is 
\begin{equation*}
	\sideset{^R}{_0^\infty}\int \; A^{tff}_{m} \frac{d\sigma}\sigma.
\end{equation*}

iii) Recall that the coefficients in the expansion of $\RTr{e^{-tD_{\ed}^2}}$ at $\bhs{tff}(\sE\sT)$  comes from the coefficients of the expansion of $e^{-tD_{\ed}^2}$ at $\bhs{\eps,\tau}(\Delta_{HX})$ and $\bhs{tff}(\Delta_{HX})$.  However, as explained in Lemma~\ref{lem:ImprovedTraceAsymp}, the terms coming $\bhs{\eps,\tau}(\Delta_{HX})$ yields terms on $\sE\sT$ that are in fact lifts of polyhomogeneous terms on $[0,1]_{\eps}\times \bbR^+_{\tau},$  with singular expansion at $\tau=0$ but smooth expansion at $\eps=0$.  Thus, when we take the renormalized integral in $\tau$, these terms are bounded as $\eps
\searrow 0$.  Thus, when we want to compute the singular terms in the polyhomogeneous expansion of the renormalized integral in \eqref{phge.1}, instead of what is given in \eqref{eq:ImprovedTraceAsymp.2}, we can effectively assume that $\RTr{e^{-tD_{\ed}^2}}$ is an element of 
$$
\cA^{-m, -h-1, 0 \bar\cup 0}(\sE\sT)
$$
to apply  i), which yields \eqref{phge.1} as desired.   Moreover, since coefficient $d_{-h-1}$ of the term of order $\eps^{-h-1}\log \eps$ comes from the coefficient of the term of order zero at $\bhs{tf}(\sE\sT)$ in the pushforward of $N_{tff}(e^{-tD_{\ed}^2})$ in \eqref{eq:AllNormalOps} to $\bhs{tff}(\sE\sT)$, we see that $d_{-h-1}=0$ when this term is zero.  In particular, we see from the formula for $N_{tff}(e^{-tD_{\ed}^2})$ in \eqref{eq:AllNormalOps} that this the case when $m$ is odd.

\end{proof}

Applying these considerations to the twisted de Rham operator, we will obtain an expression for the asymptotics of the analytic torsion.
This will involve the analytic torsion of $[M;H]$ with the induced $d$-metric, and also an expression related to, but different from, the analytic torsion of $Y \times [-\pi/2, \pi/2]$ with the induced $b$-metric and the flat bundle $\rho^{\bN}\cH^*(H/Y;F).$ Namely, recall that
\begin{equation*}
	D_b \in \Diff^1_b(Y \times [-\pi/2,\pi/2]; \Lambda^*(Y \times [-\pi/2,\pi/2]) \otimes \rho^{\bN}\cH^*(H/Y;F))
\end{equation*}
is such that its square $D_b^2$ splits into a direct sum of operators $D_b^2 = \oplus (D_b^2)_{j,k},$ with
\begin{equation*} 
	(D_b^2)_{j,k} \in \Diff_b^2(Y \times [-\pi/2,\pi/2]; \Lambda^j(Y \times [-\pi/2,\pi/2]) \otimes \rho^{k}\cH^k(H/Y;F)).
\end{equation*}
Let $\zeta_{j,k}(s)$ denote the zeta function of $(D_b^2)_{j,k}$ defined using the renormalized trace of its heat kernel and define
\begin{equation*}
	\lAT(Y \times [-\pi/2,\pi/2], D_b, \cH^*(H/Y;F)) 
	= \frac12\sum_{j,k} (-1)^{j+k}(j+k)\zeta_{j,k}'(0).
\end{equation*}
Now we can state our conclusion regarding the asymptotics of the zeta function and analytic torsion.

\begin{theorem}\label{thm:FinalAsymp}
Let $M$ be a closed oriented manifold with a two-sided hypersurface $H \subseteq M$ participating in a fibration $H\xlra{\phi}Y$ and $x$ a fixed choice of defining function for $H.$ Let $g_{\ed}$ be a fibered  cusp surgery metric of product-type to order two, $E\lra X_s$ a $\Cl(\Ed TX_s)$-bundle with associated Dirac-type operator $\eth_{\ed}$ satisfying Assumptions 1 and 2 from \S\ref{rc.0}.   Then the zeta function of $D_{\ed}^2$ is such that
\begin{equation}
\zeta_{\ed}'(0) +\log \zeta_{\sma}(D_{\ed}^2)\sim \sum_{k=-h-1}^0 \eps^k(\widetilde{c}_k + \widetilde{d}_k \log\eps) + \widetilde{e}_0(\log\eps)^2 + o(1) \quad
\label{phge.2}\end{equation}
as $\eps\searrow 0$ with $\widetilde{d}_{-h-1}=0$ if  the pushforward of $N_{tff}(e^{-tD_{\ed}^2})$ in \eqref{eq:AllNormalOps} to $\bhs{tff}(\sE\sT)$ has no term of order zero at $\bhs{tf}(\sE\sT)$, which holds in particular when $m$ is odd.  Moreover, 
\begin{multline}\label{eq:ZetaAsymp}
	\FP_{\eps=0}\left( \zeta_{\ed}'(0) +\log \zeta_{\sma}(D_{\ed}^2)\right)	= \zeta_{d}'(0) + \zeta_b'(0)-\gamma a_{(h+1)/2}(D^2_b) \\ + \frac{6\gamma^2-\pi^2}{12} b_{(h+1)/2}(D^2_d) 
	+ \sideset{^R}{_{\bhs{tff}\cap\bhs{tf}}^{}}\int A^{tff\cap tf}
	+\sideset{^R}{_0^\infty}\int \; A^{tff}_{m} \frac{d\sigma}\sigma,
\end{multline}
where $\zeta_d$ and $\zeta_b$ are the zeta functions defined using $\RTr{e^{-t\Delta_d}}$ and $\RTr{e^{-t\Delta_b}},$ $a_{(h+1)/2}(D^2_b)$ is the coefficient of order $t^0$ as $t\searrow 0$ of ${}^R\!\Tr(e^{-t D^2_b})$,  $b_{(h+1)/2}(D^2_d)$ is the coefficient of order $\log t$ as $t\searrow 0$ of ${}^R\!\Tr(e^{-t D^2_d})$, $A^{tff\cap tf}$ is the coefficient order $\rho^0_{tff}\rho_{tf}^0$ of $\tr(e^{-t\Delta^2_{\ed}})$ at $\bhs{tff}\cap\bhs{tf}$ in $\Delta_{HX}$, and $A_m^{tff}$ was introduced in \eqref{ne.1}.
\end{theorem}

\begin{proof} 
The only term that we have yet to consider in the explicit formula \eqref{eq:ExplicitZetaPrime} for $\zeta'(0)$  is $a_{m/2}(\Delta_{\ed}),$ the constant term in the short time asymptotics of the trace of the heat kernel. Directly from our construction of the heat kernel and the pushforward theorem, we have that 
$$
  a_{m/2}(\Delta_{\ed})\sim q_{(0,1)}\log \eps + q_0 + o(1)     
$$
as $\eps\searrow 0$ and
\begin{equation}\label{eq:LimitCstTerm}
	q_0=\FP_{\eps=0} a_{m/2}(\Delta_{\ed}) =  a_{m/2}(\Delta_d)+  \sideset{^R}{_{\bhs{tff}\cap\bhs{tf}}^{}}\int A^{tff\cap tf}.
\end{equation}
Thus, combining \eqref{eq:ExplicitZetaPrime} with Lemma~\ref{ae.1} and Proposition~\ref{rt.1} yields the polyhomogeneous expansion \eqref{phge.2}.
To compute the finite part at $\eps=0$, recall that for $\eps>0$ we have
\begin{equation*}
	\zeta_{\ed}'(0) =
	\sideset{^R}{_0^\infty}\int\RTr{e^{-t\Delta_{\ed}}}\;\frac{dt}t 
	+  \gamma (a_{m/2}(\Delta_{\ed}) - N_{\eps}),
\end{equation*}
and we have computed that 
\begin{multline*}
	\FP_{\eps=0} 
	\sideset{^R}{_0^\infty}\int\RTr{e^{-t\Delta_{\ed}}}\;\frac{dt}t 
	=
	\sideset{^R}{_0^\infty}\int \Tr(e^{-t\Delta_d}) \; \frac{dt}t
	+ \sideset{^R}{_0^\infty}\int \Tr(e^{-t\Delta_b}) \; \frac{dt}t \\
	-\gamma(N_b + N_d - N_{\eps}) -\FP_{\eps=0}\log \zeta_{\sma}(D_{\ed}^2)
	+\sideset{^R}{_0^\infty}\int \; A^{tff}_{m} \frac{d\sigma}\sigma,
\end{multline*}
which together with \eqref{eq:LimitCstTerm} and \eqref{zp.1} establishes \eqref{eq:ZetaAsymp}. 
\end{proof}

\begin{corollary}
Let $F \lra X_s$ be a vector bundle with flat connection $\nabla^F$ and metric $g_F,$ not necessarily compatible.
If $h = \dim Y$ is even, $g_F$ is even in $\rho,$ $g_{\ed}$ is an even $\ed$-metric to order $h+2,$ and $F\lra X_s$ is Witt over $\bhs{\bs},$ then we have%
\begin{multline}\label{eq:GralATAsymp}
	\FP_{\eps=0}\left( \lAT(M,g_{\ed},F)  +\frac12\sum (-1)^q q
	\log \zeta_{\sma}(\Delta_q)\right)
	= \lAT([M;H],g_{d},F) \\
	+ \lAT(Y \times [-\pi/2,\pi/2], D_b, \cH^*(H/Y;F)).
\end{multline}
If moreover $F$ is such that 
\begin{equation*}
	\mathrm H^*(H/Y;F) = 0, 
\end{equation*}
then 
\begin{equation}\label{eq:AcyclicATAsymp}
	\FP_{\epsilon=0}\lAT(M,g_{\ed},F) =  \lAT([M; H],g_{d},F).
\end{equation}
\label{aat.1}\end{corollary}
\begin{proof}
The first three terms in \eqref{eq:ZetaAsymp} yield the terms in \eqref{eq:GralATAsymp}.
To see that the final terms do not contribute, first note that since $g_{\ed}$ is even to order $h+2$, the trace of the heat kernel simplifies as explained in \S\ref{sec:EvenMets}.
In particular, Corollary~\ref{eveness.2} and the fact that $Y$ is even-dimensional show that $A^{tff\cap tf}=a_{(h+1)/2}(\Delta_b)= b_{(h+1)/2}(\Delta_d)=0$ and that $A^{tff}_m$ has no contribution from $\bhs{tff}(\wt \Delta_{HX}).$
Next, Lemma \ref{lem:ImprovedTraceAsymp} shows that whenever $\chi$ is a smooth function on $X_s$ supported away from $\bhs{sb},$
$\Tr(\chi e^{-t\Delta}) \in \tau^{-m}\CI([0,1]_{\eps} \times \bbR^+_\tau)$ and hence
\begin{equation*}
	\FP_{\eps=0} \sideset{^R}{_0^1}\int \Tr(\chi e^{-t\Delta}) = \sideset{^R}{_0^1}\int \Tr(\chi\rest{\bhs{sm}}e^{-t\Delta_d}).
\end{equation*}
It follows that $\sideset{^R}{_0^\infty}\int \; A^{tff}_{m} \frac{d\sigma}\sigma$ has no contribution from $\bhs{\eps,\tau}(\wt\Delta_{HX})$ and so $\sideset{^R}{_0^\infty}\int \; A^{tff}_{m} \frac{d\sigma}\sigma=0.$
Finally if $\mathrm H^*(H/Y;F) = 0$ then $\lAT(Y \times [-\pi/2,\pi/2], D_b, \cH^*(H/Y;F))=0$ and $N_b=0.$  By Theorem~\ref{ac.5}, $\mathrm H_{(2)}^*(M\setminus H;F)=\mathrm H^*(M; F)$, so $N_d=\dim \mathrm H^*(M; F)$ and there are no positive small eigenvalues, from which the result follows. \end{proof}

\section{A Cheeger-M\"uller theorem for fibered cusp manifolds} \label{sec:CheegerMuller}

We now use the analysis of the previous sections to deduce a Cheeger-M\"uller theorem for strongly acyclic flat bundles on manifolds with fibered  cusp ends.

\begin{theorem}\label{thm:ATRTSurgery}
Let $M$ be a closed oriented odd-dimensional manifold with a hypersurface $H\subset M$ whose normal bundle is trivial. Let $\phi: H\to Y$ be a fiber bundle with $Y$ a closed oriented even-dimensional manifold and  let $g_{\ed}$ be a corresponding fibered  cusp surgery metric even to order $\dim Y +2.$
Suppose $\alpha:\pi_1(M)\to \GL(k,\bbR)$ is a unimodular representation and endow the associated flat vector bundle $F \lra X_s$ with an metric $g_F$ even in $\rho.$
Assume that $F \lra X_s$ is such  that
\begin{equation*}
	\mathrm H^*(H/Y;F) = 0.
\end{equation*}
Then 
 we have the equality of analytic and Reidemeister torsions
\begin{equation*}
	\bar \lAT([M;H], g_d,F, g_F,\mu^*)= \log \tau([M;H],\pa[M;H],F,\mu^*)+ \log \tau(H,\alpha),
\end{equation*}
where $\mu^j$ is any choice of basis of 
$$
\mathrm H^{j}([M;H],\pa [M;H];F)\cong \mathrm H^{j}([M;H];F)\cong L^2\cH^j([M;H],F, g_{d}).
$$
\end{theorem}

\begin{proof} 
Since $\mathrm H^*(H/Y;F)=0$, Theorem \ref{ac.5} can be applied and gives that $\mathrm H^*(H;F)=0$, so that $\mathrm H^{j}([M;H],\pa [M;H];F)\cong \mathrm H^{j}([M;H];F)$.  More importantly, it tells us that
\begin{equation}
	\tau(M,\alpha,\mu^*)=  \tau([M;H],\pa[M;H],F,\mu^*) \tau(H,\alpha).
\label{fin.1}\end{equation}
We can also apply Corollary~\ref{aat.1} to obtain that
\begin{equation}
	\FP_{\epsilon=0}\lAT(M,g_{\ed},F,g_F) = \lAT(M_0,g_{d},F,g_F).
\label{fin.2}\end{equation}
On the other hand, the Cheeger-M\"uller theorem \cite{Cheeger1979, Muller1978, Muller1993} implies that for all $\epsilon>0$
\begin{equation}
	\log \tau(M,\alpha,\mu^*) = \bar\lAT (M,g_{\ed},F,g_F,\mu^*)= \lAT (M,g_{\ed},g_F,F)-\log \left(  \Pi_{q=0}^{n} [\mu^q |\omega_{\epsilon}^q]^{(-1)^q} \right),
\label{fin.3}\end{equation}
where $\omega^q_{\epsilon}$ is any orthonormal basis of the harmonic forms of degree $q$ with respect to $g_{\ed}$.  By Corollary~\ref{rc.29} and the fact $H^{*}(H/Y;F)=0$, we see that 
\begin{equation}
\lim_{\epsilon\to 0} [\mu^q|\omega_{\epsilon}^q]= [\mu^q| \omega^q_{0}]
\label{fin.4}\end{equation}
with $\omega^q_{0}$ an orthonormal basis of $L^2$-harmonic forms of degree $q$ with respect to the metric $g_d$.  Combining \eqref{fin.1}, \eqref{fin.2}, \eqref{fin.3} and \eqref{fin.4} gives the result.  
\end{proof}

\begin{corollary}
Let $(M_0',g_d)$ be an oriented odd-dimensional non-compact Riemannian manifold with fibered  cusp ends and $\bar M_0'$ its compactification to a manifold with fibered boundary,
\begin{equation*}
	Z\fib H \xlra{\phi} Y,
\end{equation*}
where $Y$ is an oriented even-dimensional compact manifold.  
Assume that $g_{d}$ is an even fibered cusp metric to order $\dim Y +2.$
Let $\alpha:\pi_1(M_0') \lra \GL(k,\bbR)$ be a unimodular representation whose associated flat  vector bundle $F'\lra \bar M_0'$ is such that
\begin{equation}
	\mathrm H^*(H/Y;F') = 0.
\label{acyc.1}\end{equation}
Suppose $F'$ is equipped with a smooth bundle metric $g_{F'}$ on $\bar M_0'$ with having an even expansion in terms of the boundary defining function.
Then we have equality of analytic and Reidemeister torsions
\begin{equation*}
	\bar\lAT(M_0', g_d,F',\mu^*)
	= \log \tau(\bar M_0',\pa\bar M_0',\alpha,\mu^*)+\frac12 \log\tau(H,\alpha)
\end{equation*}
for any choice of basis $\mu^q$ of $ \mathrm H^q(\bar M_0', \pa \bar M_0';F)$ for $q=0,\ldots, \dim M_0$.  
\label{even.2}\end{corollary}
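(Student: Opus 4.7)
The plan is to deduce this from Theorem~\ref{thm:IntroCM} (i.e.\ Theorem~\ref{thm:ATRTSurgery}) by a doubling argument. Let $M = \bar M_0' \cup_H \bar M_0'$ be the double of $\bar M_0'$ across its boundary $H = \pa \bar M_0'$. Then $M$ is a closed odd-dimensional manifold containing $H$ as a two-sided hypersurface with trivial normal bundle, and the fibration $\phi: H \to Y$ equips $H$ with the required fiber bundle structure. I will produce a fibered cusp surgery metric $g_{\ed}$ on $M$ whose limit as $\eps \to 0$ recovers $g_d$ on each of the two copies of $M_0' \subset M \setminus H$, and show that doubling the data on $\bar M_0'$ gives data on $M$ satisfying all the hypotheses of Theorem~\ref{thm:ATRTSurgery}.

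First I would construct the doubled geometric data. Because $g_d$ is even as a fibered cusp metric to order $\dim Y + 2$, in a collar $[0,1)_x \times H$ of $H$ in $\bar M_0'$ it has an expansion in $x^2$ up to that order; replacing $x$ by $\sqrt{x^2+\eps^2}$ in this collar and gluing with a second copy produces a product-type (to order two) even $\ed$-metric $g_{\ed}$ on the surgery space of $(M,H)$ that restricts to $g_d \sqcup g_d$ at $\eps = 0$. The even bundle metric $g_{F'}$ on $F'$ similarly extends smoothly across $H$ in the double, and by van Kampen's theorem the representation $\alpha : \pi_1(M_0') = \pi_1(\bar M_0') \to \GL(k,\bbR)$ glues with its mirror copy along $\iota_*\pi_1(H)$ to give a representation of $\pi_1(M)$, whose associated flat bundle is $F$. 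Since $\mathrm H^*(H/Y; F) = \mathrm H^*(H/Y;F') = 0$ in each fiber, the hypotheses of Theorem~\ref{thm:ATRTSurgery} are satisfied.

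Next I would apply Theorem~\ref{thm:ATRTSurgery} with the basis $\mu^* \oplus \mu^*$ on
\[
\IH^*_{\bm}(\widehat{M\setminus H}; F) \cong \IH^*_{\bm}(\hat M_0';F') \oplus \IH^*_{\bm}(\hat M_0';F'),
\]
using the natural decompositions $[M;H] = \bar M_0' \sqcup \bar M_0'$ and $\widehat{M\setminus H} = \hat M_0' \sqcup \hat M_0'$. Multiplicativity of analytic torsion and Reidemeister (and intersection Reidemeister) torsion under disjoint unions then gives
\[
2\,\bar\lAT(M_0',g_d,F',g_{F'},\mu^*)
= \log \frac{I\tau^{\bm}(\hat M_0',\alpha,\mu^*)^2\, \tau(H,\alpha)}{I\tau^{\bm}(\cC_{\phi}H,\alpha)^2},
\]
and dividing by $2$ yields the first claimed equality. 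The second equality then follows from Theorem~\ref{ac.5} applied in the same doubled setting (noting $\mathrm H^*(H;F) = 0$ by the Leray--Serre spectral sequence), which provides the identification $I\tau^{\bm}(\hat M_0',\alpha,\mu^*)/I\tau^{\bm}(\cC_{\phi}H,\alpha) = \tau(\bar M_0',\pa \bar M_0',\alpha,\mu^*)$.

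The main technical obstacle will be verifying that the doubling construction genuinely produces an $\ed$-metric (and bundle metric) satisfying the evenness conditions of Theorem~\ref{thm:ATRTSurgery}, together with checking that the flat bundle obtained by gluing $F'$ to itself is indeed the flat bundle associated to the doubled representation on $\pi_1(M)$ and that its restriction to each fiber $Z$ of $\phi$ remains acyclic (so that $F$ is Witt on the surgery space, ensuring Assumption~\ref{rc.6} and the Fredholmness of $D_b$). Everything else is a bookkeeping exercise using multiplicativity of the various torsion invariants under disjoint unions.
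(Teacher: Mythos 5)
Your proposal is correct and follows essentially the same route as the paper: double $\bar M_0'$ across $H$, observe that the doubled flat bundle and (even/product-type) surgery metric satisfy the hypotheses of Theorem~\ref{thm:ATRTSurgery}, apply that theorem with the basis $\mu^*\sqcup\mu^*$, and use multiplicativity of the torsions under disjoint union to divide by two; the identification with $\tau(\bar M_0',\pa\bar M_0',\alpha,\mu^*)$ is already contained in Theorem~\ref{thm:ATRTSurgery}. The only cosmetic difference is that the paper first invokes Lemma~\ref{lem:ATVariation} to reduce to a product-type fibered cusp metric and justifies flatness of the doubled bundle via the collar description near $H$, whereas you build the even $\ed$-metric directly and glue the representation by van Kampen — both are fine.
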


\begin{proof}
By Lemma \ref{lem:ATVariation} we can assume that  the metric is a product-type fibered  cusp metric at $H.$

Let $M$ be the double of $M_0'$ across $H$ and let $g_{\ed}$ be a $\ed$-metric on $X_s = [M \times [0,1]_{\eps};H\times \{ 0\}]$ that restricts to $\bhs{\ms} = M_0' \sqcup M_0'$ to be $g_d$ on each copy.
Let $F \lra M$ be the double of $F'\lra M_0'.$ Note that $F \lra M$ is a flat bundle (since, e.g., near $H,$ $F'$ is the pull-back to a collar neighborhood of a flat Euclidean bundle over $H,$ and the same will be true for $F$ in a collar neighborhood of $H$), and hence so is the pull-back to $X_s,$ which we continue to denote $F.$ 
Hence, applying Theorem \ref{thm:ATRTSurgery} we have that 
\begin{equation*}
\begin{aligned}
	\bar\lAT(M_0', g_d,F',\mu^*)& = \frac12 \bar\lAT(\bhs{\ms}, g_d,F,\mu^*\sqcup\mu^*) \\
	&=  \log \tau(\bar M_0',\pa\bar M_0',\alpha,\mu^*) +\frac12\log\tau(H,\alpha).
\end{aligned}	
\end{equation*}
\end{proof}
\begin{remark}
If we assume instead that $M_0'$ is even-dimensional in Corollary~\ref{even.2}, then the condition $\cH^*(H/Y;F') = 0$ ensures by Remark~\ref{ds.1} that the Hodge Laplacian associated to $F'$ has compact resolvent and discrete spectrum.  Thus, if $\alpha$ is an orthogonal representation, we can apply the argument of \cite[Theorem~2.3]{Ray-Singer} to see that $\lAT(M_0', g_d,F')=0$. 
\label{even.1}\end{remark}
Notice that the condition \eqref{acyc.1} imposes some restrictions on the type of bundle $F$ one might consider, in particular it cannot be trivial.  Nevertheless, the condition is often satisfied.  Indeed, acyclicity is very common, since for any compact manifold $W$, acyclic representations are Zariski open in the set of representations $\pi_1(W)\to \GL(n,\bbC)$, so as soon as there is one acyclic representation, there are many.  Here is a simple way to construct flat Euclidean bundles on $3$-dimensional hyperbolic manifolds with cusp satisfying condition \eqref{acyc.1}.
\begin{example}
Pick a compact hyperbolic $3$-manifold $W=\Gamma \setminus \bbH^3$ with $\Gamma$ a discrete, torsion free co-compact subgroup of $\SL(2,\bbC)$.  By \cite[Lemma~4.6]{Muller1993}, we can find an acyclic representation $\rho: \pi_1(W)\to \GL(n,\bbC)$ for some $n\in\bbN$.  Since the set of acyclic representations is Zariski open, we can in fact assume that $\rho$ is unitary. Let $F$ be the corresponding unitary flat vector bundle.  Now, pick a simple closed geodesic $\gamma$ on which $F$ has a non-trivial holonomy and let $M_0'$ be the complement of $\gamma$ in $W$.  Then by a result of Sakai, see\cite[Proposition~4]{Kojima}, $M_0'$ naturally admits a finite volume hyperbolic metric $g_{\operatorname{hyp}}$ with a cusp.  The link $Z$ of the cusp is a torus and by our choice of $\gamma$, $F$ will be non-trivial on $Z$, so that $H^*(Z;F)=0$  
and Corollary~\ref{even.2} does apply to $(M_0',g_{\operatorname{hyp}},F)$.
\label{acyc.2}\end{example}

 \appendix

\section{Model cases: Euclidean Laplacians and Dirac operators} \label{euc.0}

In order to perform the resolvent construction, it is necessary to have a fairly refined understanding of the mapping properties of Laplacians and Dirac operators on Euclidean space, as well as on Euclidean space with a point removed. In this section, we develop the necessary theory.

\subsection{Mapping properties near infinity}

Let $\Delta$ be the standard Euclidean Laplacian on $\bbR^n$ with positive spectrum.  In polar coordinates, it is of the form
\begin{equation}
    \Delta= \frac{1}{r^2}\left( \Delta_{\bbS^{n-1}}- \left(r\frac{\pa}{\pa r}\right)^2 - (n-2)r\frac{\pa}{\pa r} \right),
\label{euc.1a}\end{equation}
where $r$ is the distance function from the origin and $\Delta_{\bbS^{n-1}}$ is the Laplacian with positive spectrum for the induced metric on the unit sphere.  If $\varrho$ is a boundary defining function for the radial compactification $\RC(\bbR^n)$ of $\bbR^n$ such that $\varrho=\frac{1}r$ near $\pa \RC(\bbR^n)$, this means that away from the origin,
$$
     \Delta= \varrho^2\left(    \Delta_{\bbS^{n-1}} -\left(  \varrho\frac{\pa}{\pa \varrho}\right)^2  + (n-2) \varrho \frac{\pa}{\pa \varrho}\right).
$$
Then let $A=\varrho^{-2}\Delta$. The operator $A$ is an elliptic $b$-differential operator of order 2 on $\RC(\bbR^n)$, with indicial family given by
$$
    \widehat{A}(\tau)= \Delta_{\bbS^{n-1}} +\tau^2 +i(n-2)\tau,
$$ 
so that, in the notation of \cite{MelroseAPS},
$$
\Spec_b(\widehat{A})=\left\{  -i\frac{n-2}2\pm i\sqrt{\nu+\left( \frac{n-2}2 \right)^2} \; | \; \nu \in \Spec(\Delta_{\bbS^{n-1}})\right\}.
$$
In particular, we have $\Spec_b(\widehat{A})=\{0,i\}$ when $n=1$ and $\Spec_b(\widehat{A})=i\bbZ$ when $n=2$.  When $n>2$, $\Spec_b(\widehat{A})\subset i\bbR$, with
\begin{equation}
0,n-2\in i\Spec_b(\widehat{A}),  \quad (0,n-2)\cap i\Spec_b(\widehat{A})=\emptyset.
\label{euc.1b}\end{equation}

Following \cite[(5.118)]{MelroseAPS}, we have, for each $\alpha\in \bbR$, two natural index sets associated to the indicial family $\widehat{A}$:
\begin{multline}
    \widehat{H}_n^{\pm}(\alpha)= \left\{  (z,k)\in \bbC\times \bbN_0\; | \; \exists m\in \bbN_0, \Re z> \pm\alpha+m,  \right.  \\
    \left. \pm(z-m) \in i\Spec_b(\widehat{A}), \quad 
   k+1 \le \sum_{j=0}^m \ord(-i(z-j)) \right\}.
\end{multline}
From \cite[Proposition~5.61]{MelroseAPS}, we have the following standard regularity result.

\begin{proposition}
If $G$ is an index set and if $u\in\varrho^{\alpha}H^m_b(\RC(\bbR^n))$ is such that $Au\in \cA_{\phg}^G(\RC(\bbR^n))$, then in fact $u\in \cA_{\phg}^{G\overline{\cup}\widehat{H}_n^+(\alpha)}(\RC(\bbR^n))$.
\label{euc.1}\end{proposition}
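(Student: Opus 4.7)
The plan is to invoke the standard polyhomogeneous regularity theorem for elliptic $b$-operators on a manifold with boundary, specifically Mazzeo's \cite[Theorem~4.20]{maz91}. To apply it I would verify three inputs. First, $A = \varrho^{-2}\Delta$ is an elliptic $b$-differential operator of order $2$ on $\RC(\bbR^n)$; this is immediate from the polar-coordinate expression \eqref{euc.1a} and the identification $\varrho = 1/r$ near the boundary. Second, the indicial family $\widehat{A}(\tau) = \Delta_{\bbS^{n-1}} + \tau^2 + i(n-2)\tau$ has already been computed above, and its boundary spectrum $\Spec_b(\widehat{A}) \subset i\bbR$ is discrete. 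Third, I would match conventions with \cite{maz91}: the paper works with $b$-densities, which absorbs the $1/2$-shift present in Mazzeo's original normalization (and which is exactly the reason no such shift appears in the definition of $\widehat{H}_n^+(\alpha)$).

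With these inputs aligned, the proof strategy is the standard iterative parametrix construction in the full $b$-calculus. I would first construct $Q \in \Psi_b^{-2,\widehat{H}_n^+(\alpha)}(\RC(\bbR^n))$ a right parametrix for $A$ with index set at the left boundary face dictated by $\Spec_b(\widehat{A})$, so that $AQ - \Id$ is residual on $\varrho^\alpha L^2_b$. Writing $u = Q(Au) + (u - Q(Au))$ and noting that $Au = f \in \cA^G_{\phg}$, the push-forward composition formula immediately produces a polyhomogeneous expansion of $Qf$ with index set $G \bar\cup \widehat{H}_n^+(\alpha)$. The remainder $u - Qf$ lies in the nullspace of $A$ modulo a residual term and, being a priori in $\varrho^\alpha H^m_b$, admits a polyhomogeneous expansion with index set contained in $\widehat{H}_n^+(\alpha)$ by the indicial-root analysis: its leading coefficient at each order must satisfy $\widehat{A}(-iz)v = 0$, which forces $z \in i\Spec_b(\widehat A)$ with $\Re z > \alpha$, and successive coefficients are obtained by solving shifted indicial equations iteratively and assembling the result via Borel's lemma.

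The main obstacle is bookkeeping rather than substance: one must check that the weight $\alpha$ is compatible with the parametrix construction (i.e.\ that $A$ is Fredholm on $\varrho^\alpha L^2_b$, which holds away from the discrete set $\alpha \in \Re\lrpar{\tfrac1i\Spec_b(\widehat{A})}$, and the exceptional weights are handled by a standard perturbation/limiting argument) and that the combined index set from the parametrix action on $f$ and from the homogeneous indicial expansion assembles precisely to $G \bar\cup \widehat{H}_n^+(\alpha)$. Both points are settled by careful accounting in the proof of \cite[Theorem~4.20]{maz91}, so the net content here is simply matching definitions and conventions.
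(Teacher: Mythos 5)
Your proposal takes essentially the same route as the paper: the paper's entire proof is the citation of Mazzeo's elliptic $b$-regularity theorem \cite[Theorem~4.20]{maz91}, together with precisely the convention point you identify, namely that working with $b$-densities removes the $1/2$-shift in the index sets. (One minor imprecision in your optional sketch of the underlying argument: to control $u-Q(Au)$ you need a \emph{left} parametrix, $QA=\Id-R$ with $R$ residual mapping $\varrho^{\alpha}H^m_b$ into $\cA_{\phg}^{\widehat{H}_n^+(\alpha)}$, since with only a right parametrix the regularity of the remainder is what you are trying to prove; but as you defer these details to the proof of the cited theorem, this does not affect the correctness of the proposal.)
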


The parametrix construction in \cite{MelroseAPS} can also be used to obtain the following result about the Euclidean Laplacian.  

\begin{proposition}
For $m\in\bbN_0$, the map
\begin{equation}
    \Delta: \varrho^{\epsilon}H^{m+2}_b(\RC(\bbR^n))\to \varrho^{\epsilon+2} H^m_b(\RC(\bbR^n)
\label{euc.2b}\end{equation}
is Fredholm if and only if $\epsilon\notin i\Spec_b(\widehat{A})$.  Moreover, it is injective for $\epsilon\ge 0$ and surjective for $\epsilon<n-2$ with $\epsilon\notin i\Spec_b(\widehat{A})$.  In particular, if $n>2$, it is bijective for $\epsilon\in (0,n-2)$.  
\label{euc.2}\end{proposition}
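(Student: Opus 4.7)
The plan is to deduce the Fredholm criterion from the standard $b$-calculus, and then to establish injectivity and surjectivity by combining the regularity result of Proposition~\ref{euc.1} with the classical Liouville/maximum principle on $\bbR^n$, using Fredholm duality in the $L^2_b$ pairing to convert surjectivity into an injectivity statement for the adjoint.

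First, since $A=\varrho^{-2}\Delta$ is an elliptic $b$-differential operator of order two on $\RC(\bbR^n)$ with indicial family computed above, the $b$-calculus Fredholm criterion \cite[Theorem~5.60]{MelroseAPS} gives that $A\colon \varrho^{\epsilon}H^{m+2}_b\to\varrho^{\epsilon}H^{m}_b$ is Fredholm precisely when $\epsilon\notin i\Spec_b(\widehat{A})$. Multiplying by $\varrho^2$ on the left yields the Fredholm statement for $\Delta$ with target $\varrho^{\epsilon+2}H^{m}_b$.

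For injectivity when $\epsilon\ge 0$, suppose $u\in\varrho^{\epsilon}H^{m+2}_b$ solves $\Delta u=0$. Then $Au=0$ is trivially polyhomogeneous, so Proposition~\ref{euc.1} shows $u\in\cA_{\phg}^{\widehat{H}_n^+(\epsilon)}(\RC(\bbR^n))$. Using \eqref{euc.1b} and the computation of $\Spec_b(\widehat{A})$, every element of $\widehat{H}_n^+(\epsilon)=\wt{H}_n^+(\epsilon)\bar\cup(\wt{H}_n^-(\epsilon)+2\epsilon)$ has strictly positive real part: the condition $\Re z>\epsilon\ge 0$ in $\wt{H}_n^+(\epsilon)$ rules out the root $z=0$, while the shift $+2\epsilon$ moves every element of $\wt{H}_n^-(\epsilon)$ into $\Re z>\epsilon\ge 0$ as well. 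Hence $u(r,\omega)\to 0$ as $r\to\infty$, and Liouville's theorem for harmonic functions on $\bbR^n$ forces $u\equiv 0$.

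For surjectivity when $\epsilon<n-2$ and $\epsilon\notin i\Spec_b(\widehat{A})$, invoke Fredholm duality in the $L^2_b$ pairing. A short computation with $d\mu_b=\varrho^{n}\,dx$ gives the formal $L^2_b$-adjoint
\begin{equation*}
\Delta^{*,b}=\varrho^{-n}\Delta\varrho^{n}\colon \varrho^{-\epsilon-2}H^{-m}_b\to \varrho^{-\epsilon}H^{-m-2}_b,
\end{equation*}
and the cokernel of $\Delta$ on $\varrho^{\epsilon}H^{m+2}_b$ is isomorphic to $\ker\Delta^{*,b}$. Setting $u=\varrho^{n}v$, elements of $\ker\Delta^{*,b}$ correspond bijectively to harmonic $u\in\varrho^{n-\epsilon-2}H^{-m}_b$. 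For $\epsilon<n-2$ one has $n-\epsilon-2>0$; Proposition~\ref{euc.1} applied to $u$ (noting $\Delta u=0$ so $Au=0$ is trivially polyhomogeneous) shows $u$ is polyhomogeneous with every exponent satisfying $\Re z>n-\epsilon-2>0$. Interior ellipticity of $\Delta$ gives $u$ smooth on $\bbR^n$, and polyhomogeneity at $\varrho=0$ gives $u\to 0$ at infinity, so Liouville forces $u\equiv 0$. Thus the cokernel is trivial, and bijectivity on $(0,n-2)\setminus i\Spec_b(\widehat{A})$ follows by combining with the injectivity statement.

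The main technical point is confirming that Proposition~\ref{euc.1} may be applied to the distributional adjoint solution $u\in \varrho^{n-\epsilon-2}H^{-m}_b$; this is unproblematic since $\Delta u=0$ combined with interior ellipticity gives interior smoothness, while $b$-elliptic regularity at $\partial\RC(\bbR^n)$ extends to all Sobolev orders once the indicial root analysis has been carried out. A minor secondary point is to check the index-set computation at the endpoint $\epsilon=0$ for small $n$ (where $0\in i\Spec_b(\widehat{A})$ for $n=2$, so the map is not Fredholm there, but injectivity nevertheless persists because the index set $\widehat{H}_n^+(0)$ still consists only of exponents with $\Re z\ge 1$, as verified directly from the explicit form of $\Spec_b(\widehat{A})$).
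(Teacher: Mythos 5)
Your proof is correct, and for the Fredholm criterion and for surjectivity it follows essentially the paper's route: the paper likewise identifies the cokernel through the $L^2_b$ pairing (it computes the adjoint of $A=\varrho^{-2}\Delta$ as $\varrho^{-n}\Delta\varrho^{n-2}$, which after your conjugation by $\varrho^{n}$ is the same reduction to harmonic functions at the reflected weight $n-\epsilon-2>0$) and then kills the adjoint kernel. Where you genuinely differ is injectivity. The paper anchors on the triviality of $\ker\Delta$ in $L^2(\bbR^n,g_{\euc})=\varrho^{n/2}L^2_b$, uses Proposition~\ref{euc.1} to push this down to all $\epsilon>0$ (invoking the maximum principle separately for $n=2$), and then handles the endpoint $\epsilon=0$ by the relative index theorem: for slightly negative weights the kernel is exactly the constants, which are not in $L^2_b$. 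You instead run a single regularity-plus-Liouville argument uniformly in $n$ and in $\epsilon\ge 0$, which is cleaner and avoids both the essential self-adjointness input and the relative index theorem; the one thing it buys you is precisely the endpoint $\epsilon=0$, where you are applying Proposition~\ref{euc.1} at a weight lying in $i\Spec_b(\widehat{A})$. Since the proposition as stated imposes no restriction on the weight, and since for the homogeneous equation membership in $\varrho^{0}H^{m+2}_b$ already excludes terms with $\Re z=0$, your key computation that every exponent of $\widehat{H}_n^+(\epsilon)$ has $\Re z>\epsilon\ge 0$ (hence bounded below by a positive number, by discreteness of the index set) does close the argument; just note that this indicial-weight application is exactly what the paper chooses to sidestep. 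A small inaccuracy in your closing remark: $0\in i\Spec_b(\widehat{A})$ for every $n$ (see \eqref{euc.1b} and the $n=1,2$ computations), not only for $n=2$, so the map fails to be Fredholm at $\epsilon=0$ in all dimensions; this does not affect your proof, since you never use Fredholmness there.
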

\begin{proof}
It is standard that $\Delta$ has no kernel in $L^2(\bbR^n; g_{\euc})$, where $g_{\euc}$ is the Euclidean metric.  Since $L^2(\bbR^n;g_{\euc})=\varrho^{\frac{n}2}L^2_b(\RC(\bbR^n))$, this means that the map \eqref{euc.2b} is injective for $\epsilon\ge \frac{n}2$.  But if $n\ne 2$, by Proposition~\ref{euc.1} and \eqref{euc.1b} (and, when $n=1$, the fact that $i\Spec_b(\widehat{A})=\{-1,0\}$), we see that in fact the map 
\eqref{euc.2b} is injective for $\epsilon>0$.  If $n=2$ and $\epsilon>0$, then we know by Proposition~\ref{euc.1} that any element in the kernel tends to zero at infinity, and therefore must be zero by the maximum principle. So, in all cases, the map \eqref{euc.2b} is injective for $\epsilon>0$.

Now, since $\Delta$ is essentially self-adjoint on $L^2(\bbR^n,g_{\euc})$, one computes that the formal adjoint of $A=\varrho^{-2}\Delta$ is given by 
$$
       A^*= \varrho^{-n}\Delta \varrho^{n-2}. 
$$
Consequently, the map 
$$
A^*: \varrho^{\epsilon} H^{m+2}_b(\RC(\bbR^n)) \to \varrho^{\epsilon} H^{m}_b(\RC(\bbR^n))
$$
is injective for $\epsilon+n-2>0$.  By the duality of $\varrho^{\epsilon}L^2_b(\RC(\bbR^n))$ and $\varrho^{-\epsilon}L^2_b(\RC(\bbR^n))$, this means that the map \eqref{euc.2} is surjective for $\epsilon<n-2$ with $\epsilon\notin i\Spec_b(\widehat{A})$.  

Finally, to see that it is also injective for $\epsilon=0$, notice that by the relative index theorem of \cite{MelroseAPS}, the kernel of the map \eqref{euc.2b} is precisely given by the constant functions when $\epsilon<0$ is sufficiently close to zero.  Since the constant functions are not in $L^2_b(\RC(\bbR^n))$, this means the map \eqref{euc.2b} is injective when $\epsilon=0$.  
\end{proof}

This has the following interesting application.  

\begin{corollary}
Let $H_n=\widehat{H}_n^{+}(\alpha)$, where $\alpha<n-2$ is chosen sufficiently large so that $(\alpha,n-2)\cap i\Spec_b(\widehat{A})=\emptyset$.Then there exists a natural linear continuous map 
$$
             \Delta^{-1}: \cS(\bbR^n) \to \cA^{H_n}_{\phg}(\RC(\bbR^n))
$$
such that $\Delta\circ \Delta^{-1} f=f$ for all $f\in \cS(\bbR^n)$, where $\cS(\bbR^n)$ is the space of Schwartz functions on $\bbR^n$. Here the norm on the image space is $C^{\infty}$ on the coefficients and $C^N$ on the remainders, as in the discussion after (5.150) of \cite{MelroseAPS}.
\label{euc.3}\end{corollary}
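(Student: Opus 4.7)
The approach is to solve $\Delta u = f$ in a suitable $b$-weighted Sobolev space on the radial compactification $\RC(\bbR^n)$ via Proposition~\ref{euc.2}, and then upgrade that weighted Sobolev solution to full polyhomogeneous regularity via Proposition~\ref{euc.1}. A Schwartz function $f\in\cS(\bbR^n)$ is precisely a smooth function on $\RC(\bbR^n)$ vanishing to infinite order at $\pa\RC(\bbR^n)$, so in particular $f\in\varrho^{\alpha_0+2}H^m_b(\RC(\bbR^n))$ for every $m\in\bbN_0$.

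Assuming $n>2$, the choice of $\alpha_0$ described just before the statement of the corollary satisfies $\alpha_0\in(0,n-2)\setminus i\Spec_b(\hat A)$, so Proposition~\ref{euc.2} asserts that
$$
\Delta:\varrho^{\alpha_0}H^{m+2}_b(\RC(\bbR^n))\lra\varrho^{\alpha_0+2}H^{m}_b(\RC(\bbR^n))
$$
is bijective, hence a topological isomorphism by the open mapping theorem. For each $m$, there is therefore a unique $u_m\in\varrho^{\alpha_0}H^{m+2}_b$ with $\Delta u_m=f$; injectivity of the map on overlapping weighted spaces forces $u_m$ to be independent of $m$. Write $u$ for this common solution and set $\Delta^{-1}f:=u$, which is by construction linear in $f$ and continuous from $\cS(\bbR^n)$ into $\varrho^{\alpha_0}H^{m+2}_b$ for every $m$. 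To promote $u$ to a polyhomogeneous conormal distribution, view $A=\varrho^{-2}\Delta$ as an elliptic $b$-operator on $\RC(\bbR^n)$, so that $Au=\varrho^{-2}f$ lies in $\cA^{G}_{\phg}(\RC(\bbR^n))$ for any index set $G$ (the expansion of $\varrho^{-2}f$ at $\pa\RC(\bbR^n)$ is trivial). Applying Proposition~\ref{euc.1} with this data and weight $\alpha_0$ yields $u\in\cA^{\widehat H_n^+(\alpha_0)}_{\phg}(\RC(\bbR^n))=\cA^{H_n}_{\phg}(\RC(\bbR^n))$, as desired.

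The continuity of $\Delta^{-1}:\cS(\bbR^n)\to\cA^{H_n}_{\phg}(\RC(\bbR^n))$ then follows from two ingredients: the continuity of the $b$-Sobolev inverse already noted, and the quantitative bounds built into the iterative proof of the polyhomogeneous regularity theorem (see \cite[Theorem~4.20]{maz91} and the discussion after \cite[(5.150)]{MelroseAPS}). These bounds give $C^{\infty}$ control of each expansion coefficient and $C^N$ control of the remainders of $u$ in terms of finitely many Schwartz seminorms of $f$. I expect the main (but essentially bookkeeping) obstacle to lie here: one must verify that the iterative construction of the coefficients of $u$ at the indicial roots in $\widehat H_n^+(\alpha_0)$ and the estimates of each remainder are uniform in $f$ in the correct topologies. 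The low-dimensional cases $n=1,2$, where Proposition~\ref{euc.2} fails to yield bijectivity, would require a separate treatment using the explicit Newtonian or logarithmic kernel together with an appropriately enlarged index set incorporating the linear or logarithmic growth at infinity.
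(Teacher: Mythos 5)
Your argument for $n>2$ is correct, and it is only a minor variant of the paper's: you solve in the weighted $b$-Sobolev space using the bijectivity of Proposition~\ref{euc.2} at the weight $\alpha_0\in(0,n-2)$ and then bootstrap to polyhomogeneity with Proposition~\ref{euc.1}, whereas the paper invokes \cite[Theorem 4.20]{maz91} directly to realize $A^{-1}$ as an element of the full $b$-calculus with index family $(\widehat H_n^+(\alpha_0),\widehat H_n^-(\alpha_0),\bbN_0)$ and sets $\Delta^{-1}=A^{-1}\varrho^{-2}$; the latter packaging delivers naturality and the claimed continuity (coefficients in $C^\infty$, remainders in $C^N$) in one stroke, while your route has to extract the same bounds from the iterative regularity argument, which is acceptable bookkeeping since both rest on the same theorem of Mazzeo.

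The genuine gap is the case $n\le 2$, which you defer and for which your proposed fix is off target. The corollary is stated for all $n$, and the low-dimensional cases are actually used downstream (the Euclidean factor has dimension $h+1=\dim Y+1$, which equals $1$ in the cusp case $Y=\mathrm{pt}$), so they cannot be waved away. Moreover, you cannot "enlarge the index set": $H_n=\widehat H_n^+(\alpha_0)$ is fixed in the statement. The correct mechanism, and the one the paper uses, is that for $n\le 2$ the weight satisfies $\alpha_0<n-2\le 0$, and Proposition~\ref{euc.2} still gives surjectivity of $A$ on $\varrho^{\alpha_0}H^{m}_b(\RC(\bbR^n))$ with kernel exactly the constants; one then defines $A^{-1}$ (hence $\Delta^{-1}=A^{-1}\varrho^{-2}$) unambiguously by projecting off the constants, and the same structure theorem applies with the same index sets. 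No enlargement is needed because $\inf \widehat H_n^+(\alpha_0)=n-2\le 0$ already encodes the growth at infinity (e.g.\ the linear growth $\varrho^{-1}$ when $n=1$). Your appeal to explicit Newtonian or logarithmic kernels is not wrong in spirit, but as written it neither proves the statement with the stated index set nor identifies the kernel/cokernel bookkeeping that makes the map well defined and natural in these dimensions.
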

\begin{proof}
First, for $n>2$, we can take any $\alpha\in(0,n-2)$.  By \cite[Proposition 5.64]{MelroseAPS}, the inverse of the bijective map
\begin{equation}
            A: \varrho^{\epsilon} H^{m+2}_b(\RC(\bbR^n)) \to \varrho^{\epsilon} H^{m}_b(\RC(\bbR^n))
\label{euc.3a}\end{equation}
is given by an element $A^{-1}\in \Psi^{-2, \widehat{H}_n^+(\alpha),\widehat{H}_n^-(\alpha) }_{b, os}(\RC(\bbR^n))$. Thus,  $\Delta^{-1}= A^{-1}\rho^{-2}$ has the desired properties. If instead $n\le2$ and $\alpha<n-2$ is such that  $(\alpha,n-2)\cap i\Spec_b(\widehat{A})=\emptyset$, then the map \eqref{euc.3a} is surjective with kernel given by the constant functions.  Thus we can still define $A^{-1}\in \Psi^{-2, \widehat{H}_n^+(\alpha),\widehat{H}_n^-(\alpha) }_{b, os}(\RC(\bbR^n))$ unambiguously by projecting off the constants, so that we can take again $\Delta^{-1}= A^{-1}\rho^{-2}$. 
\end{proof}

The corresponding property for Dirac operators follows. Let $E$ be a representation of the Clifford algebra $\Cl(\bbR^n,g_{\euc})$ and $D\in \Diff^1(\bbR^n;E)$ the corresponding Dirac operator.
\begin{corollary}
There is a natural continuous linear map
$$
      D^{-1}: \cS(\bbR^n;E)\to \cA^{J_n}_{\phg}(\RC(\bbR^n);E)
$$
such that $D\circ D^{-1} f=f$ for all $f\in \cS(\bbR^n;E)$, where
$$
        J_n=\left\{  \begin{array}{ll}
                 H_n+1, &  n\ne 2, \\
                 (H_n+1)\setminus \{(1,1)\}, & n=2,
        \end{array}
        \right.
$$
is such that $\inf J_n= n-1$.   
 
\label{euc.4}\end{corollary}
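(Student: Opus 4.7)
The plan is to reduce the problem to the already-established inversion of the Euclidean Laplacian, exploiting the algebraic identity $D^2 = \Delta$ (on flat $\bbR^n$ with $E$ a Clifford module, the Lichnerowicz-type curvature term vanishes, so the square of the Dirac operator is the Laplacian acting componentwise on sections of $E$, with sign conventions as in \eqref{euc.1a}).

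Given this, I would \emph{define} the right inverse by $D^{-1} := D \circ \Delta^{-1}$, where $\Delta^{-1}$ is extended componentwise from scalars to sections of $E$ using Corollary~\ref{euc.3}. The identity $D \circ D^{-1} f = D^2 \Delta^{-1} f = \Delta \Delta^{-1} f = f$ is then immediate for all $f \in \cS(\bbR^n;E)$. The nontrivial point is to verify the regularity: if $\Delta^{-1}f \in \cA^{H_n}_{\phg}(\RC(\bbR^n);E)$, I must show $D(\Delta^{-1}f) \in \cA^{J_n}_{\phg}(\RC(\bbR^n);E)$ with $J_n = H_n + 1$.

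The key observation for this step is that $D$ improves the order of vanishing at $\partial \RC(\bbR^n)$ by exactly one. Indeed, writing $\partial_{x_i}$ in polar coordinates and using $r = 1/\varrho$, one computes
\[
\partial_{x_i} = \varrho \Bigl( -\varrho \, \tfrac{x_i}{r}\,\partial_\varrho + W_i \Bigr),
\]
where $W_i$ is tangent to $\bbS^{n-1}$, so $\partial_{x_i} = \varrho \cdot V_i$ with $V_i \in \mathcal V_b(\RC(\bbR^n))$. Consequently $D = \sum c(e_i)\,\partial_{x_i} = \varrho \cdot \tilde D$, with $\tilde D \in \Diff^1_b(\RC(\bbR^n);E)$. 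Since b-differential operators preserve the space $\cA^G_{\phg}(\RC(\bbR^n);E)$ for any index set $G$, multiplication by the additional factor $\varrho$ shifts the index set by $+1$, giving $D : \cA^{G}_{\phg} \to \cA^{G+1}_{\phg}$. Applied with $G = H_n$ this yields the claimed mapping into $\cA^{J_n}_{\phg}$, and in particular $\inf J_n = \inf H_n + 1 = n-1$.

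Continuity of $D^{-1}$ in the topology described after (5.150) of \cite{MelroseAPS} then follows by composing the continuity of $\Delta^{-1}: \cS(\bbR^n;E) \to \cA^{H_n}_{\phg}(\RC(\bbR^n);E)$ from Corollary~\ref{euc.3} with the boundedness of $D: \cA^{H_n}_{\phg} \to \cA^{J_n}_{\phg}$ established above. I do not expect a genuine obstacle here: the only mild subtlety is to check that the Clifford multiplication coefficients $c(e_i)$, being constant matrices in the trivial flat frame, cause no loss of polyhomogeneity when commuted past $\varrho$ and the b-vector fields $V_i$; this is automatic because they are $\varrho$-independent endomorphisms.
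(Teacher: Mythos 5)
Your proposal is correct and is essentially the paper's own argument: the paper also sets $D^{-1}=D\Delta^{-1}$, notes $D\circ D^{-1}=\Delta\Delta^{-1}=\Id$ on Schwartz sections, and invokes the fact that $D$ is a scattering differential operator, hence shifts each index set by $+1$. Your explicit factorization $D=\varrho\,\tilde D$ with $\tilde D\in\Diff^1_b(\RC(\bbR^n);E)$ is just the unpacking of that scattering-operator fact, so the two proofs coincide in substance.
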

\begin{proof}
In this case, $\Delta:=D^2$ is just the Euclidean Laplacian acting on a trivial bundle with trivial connection on $\bbR^n$.  The previous corollary gives us a natural continuous linear map
$$
     \Delta^{-1}: \cS(\bbR^n;E)\to \cA^{H_n}_{\phg}(\RC(\bbR^n);E)
$$
such that $\Delta\circ \Delta^{-1} f=f $ for all $f\in \cS(\bbR^n;E)$.  Thus, it suffices to take $D^{-1}= D\Delta^{-1}$. Since $D$ is a scattering differential operator, it adds 1 to each element of the index set, which immediately completes the proof when $n\neq 2$.

When $n=2$, $\inf (H_n+1)=(n-1,1)$, so we still have to show we can take $J_2= (H_2+1)\setminus (1,1)$; that is, that there is no term of the form $\frac{\log r}{r}$ in the expansion at infinity of any element of the image of $D^{-1}$. To do this, let $f\in \cS(\bbR^n;E)$. The first term in the asymptotic expansion of $\Delta^{-1}f$ at infinity is of the form 
$$
       f_{\pa}\log r
$$
with $f_{\pa}\in \ker \Delta_{\bbS^1}$.  On the other hand, in polar coordinates, $D$ is of the form
$$
        D= \gamma_x\pa_x+ \gamma_y \pa y =\frac{1}{r}(\gamma_{\theta} \frac{\pa}{\pa \theta}+ \gamma_r r\frac{\pa}{\pa r});
$$
here $\gamma_x$ and $\gamma_y$ are fixed elements of the Clifford algebra ,while
$$
\gamma_{r}= \gamma_x\cos \theta+ \gamma_y \sin\theta\quad \mbox{and}  \quad \gamma_{\theta}= -\gamma_x \sin\theta+ \gamma_y\cos \theta
$$
are endomorphisms of $E$ (seen as a bundle on $\bbR^2$) depending only on $\theta$.  Since $f_{\delta}\in \ker \Delta_{\bbS^1}$, we must have $\frac{\pa f_{\pa}}{\pa \theta}=0$.  Consequently, 
$$
      D(f_{\delta}\log r)= \gamma_r \frac{f_{\delta}}{r}
$$ 
is the first term in the asymptotic expansion of $D^{-1}f= D\Delta^{-1}f$.  In particular, there are no term of the form $\frac{\log r}{r}$ in the asymptotic expansion of $D^{-1}f$, showing that we can take the smaller index set $J_2= (H_2+1)\setminus \{(1,1)\}$ when $n=2$. 
\end{proof}

\subsection{Mapping properties near a blown-up point}
We now introduce the space 
\begin{equation}
  \widetilde{\bbR}^n= [\bbR^n ; \{0\}] 
\label{bu.1}\end{equation}
formed by blowing up the origin in $\bbR^n$.   Let $\beta: \widetilde{\bbR}^n\to \bbR^n$ be the blow-down map. As before, let $\Delta$ be the standard Euclidean Laplacian. However, this time, we are interested in the mapping properties near the origin rather than those near infinity. From \eqref{euc.1a}, we see that, in polar coordinates, the operator 
$$
      B= r^2 \Delta = \Delta_{\bbS^{n-1}}- \left(r\frac{\pa}{\pa r}\right)^2 - (n-2)r\frac{\pa}{\pa r}
$$
is a $b$-operator near the boundary component $\beta^{-1}(0)$ in $\widetilde{\bbR}^n$. Its indicial family there is given by
\begin{equation}
     \widehat{B}(\tau)= \Delta_{\bbS^{n-1}} +\tau^2 -i(n-2)\tau
\label{bu.2}\end{equation}
so that
$$
\Spec_b(\widehat{B})=\left\{  i\frac{n-2}2\pm i\sqrt{\nu+\left( \frac{n-2}2 \right)^2} \; | \; \nu \in \Spec(\Delta_{\bbS^{n-1}})\right\}.
$$
From \cite[(5.118)]{MelroseAPS}, we have for each fixed $\alpha\in \bbR$ two natural index sets associated to the indicial family $\widehat{B}$:
\begin{multline}
    \widehat{G}_n^{\pm}(\alpha)= \left\{  (z,k)\in \bbC\times \bbN_0\; | \; \exists m\in \bbN_0, \Re z> \pm\alpha+m,  \right.  \\
    \left. \pm(z-m) \in i\Spec_b(\widehat{B}), \quad 
   k+1 \le \sum_{j=0}^m \ord(-i(z-j)) \right\}.
\label{bu.2a}\end{multline}

Since any Laplacian is locally modelled on the Euclidean Laplacian, it is not surprising that we obtain the same index sets if we apply the same blow-up construction to a Laplacian not necessarily coming from the Euclidean metric.

\begin{lemma}
Let $g$ be a Riemannian metric on $\bbR^n$ and $\Delta_g$ the corresponding Laplacian.  Then $r^2 \Delta_g$ is a $b$-operator near $\beta^{-1}(0)$, with indicial family having the same zeros with the same ranks and orders as the indicial family \eqref{bu.2}. 
\label{bu.3}\end{lemma}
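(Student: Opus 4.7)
The claim is local near $0 \in \bbR^n$, and the indicial family of a $b$-operator at the boundary is obtained by freezing coefficients at $r = 0$; thus it suffices to show that $r^{2}\Delta_{g}$ is a $b$-operator near $\beta^{-1}(0)$ and that $r^{2}(\Delta_{g}-\Delta)$ vanishes at $r=0$, i.e.\ lies in $r\Diff^{2}_{b}(\widetilde{\bbR}^{n})$ near $\beta^{-1}(0)$. I would do this in three steps.

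First, by a linear change of coordinates on $\bbR^{n}$ I may assume $g_{ij}(0)=\delta_{ij}$; this has no effect on the $b$-structure near $\beta^{-1}(0)$, since any such change lifts to a diffeomorphism of $\widetilde{\bbR}^{n}$ sending the spherical front face to itself. Taylor-expanding about $0$ I then write
\begin{equation*}
    \Delta_{g} = \Delta + a^{ij}(x)\,\pa_{i}\pa_{j} + b^{i}(x)\,\pa_{i},
\end{equation*}
where $a^{ij}(x)$ is smooth with $a^{ij}(0)=0$, and $b^{i}(x)$ is smooth, both coming from the standard expansion of $-|g|^{-1/2}\pa_i(|g|^{1/2}g^{ij}\pa_j)$. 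Since $a^{ij}$ vanishes at the origin, upon lifting to $\widetilde{\bbR}^{n}$ we have $\beta^{*}a^{ij} \in r\,\CI(\widetilde{\bbR}^{n})$, while $\beta^{*}b^{i}\in\CI(\widetilde{\bbR}^{n})$ near $\beta^{-1}(0)$.

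Second, I compute how Cartesian derivatives lift. Writing $\partial_i = \tfrac{x_i}{r}\,\pa_r + \tfrac{1}{r}W_i$ with $W_i$ a smooth sphere-tangent vector field and $a_i:=x_i/r$ smooth on the sphere, I get $\pa_i = \tfrac{1}{r}\,U_i$ where $U_i := a_i\,(r\pa_r)+W_i$ is a smooth $b$-vector field on $\widetilde{\bbR}^{n}$. A short computation using $U_{i}(1/r) = -a_{i}/r$ gives
\begin{equation*}
    r^{2}\,\pa_i\pa_j = U_iU_j - a_i U_j \in \Diff^{2}_{b}(\widetilde{\bbR}^{n}), \qquad r^{2}\,\pa_i = r\,U_i \in r\Diff^{1}_{b}(\widetilde{\bbR}^{n}).
\end{equation*}
Combined with the identity from the excerpt, $r^{2}\Delta = \Delta_{\bbS^{n-1}}-(r\pa_r)^{2}-(n-2)(r\pa_r) \in \Diff^{2}_{b}(\widetilde{\bbR}^{n})$, this shows
\begin{equation*}
    r^{2}\Delta_{g} = r^{2}\Delta + \beta^{*}a^{ij}\,(U_iU_j-a_iU_j) + \beta^{*}b^{i}\,(rU_i) \in \Diff^{2}_{b}(\widetilde{\bbR}^{n}),
\end{equation*}
with the perturbation terms lying in $r\,\Diff^{2}_{b}(\widetilde{\bbR}^{n})$ thanks to $\beta^{*}a^{ij}\in r\,\CI$ and the explicit factor of $r$ in $r^{2}\pa_i = rU_i$.

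Third, since $r^{2}(\Delta_{g}-\Delta)\in r\,\Diff^{2}_{b}$ near $\beta^{-1}(0)$, restricting coefficients to $r=0$ kills the perturbation, so the indicial family of $r^{2}\Delta_{g}$ at $\beta^{-1}(0)$ coincides with that of $r^{2}\Delta$, namely $\widehat{B}(\tau) = \Delta_{\bbS^{n-1}}+\tau^{2}-i(n-2)\tau$ from \eqref{bu.2}. In particular its indicial roots, their algebraic multiplicities, and the associated Jordan/order structure are identical to the Euclidean ones. The only subtlety in this argument is step two—verifying that $r^{2}$ exactly absorbs the two factors of $1/r$ hidden in the lifts of $\pa_i\pa_j$ without losing the extra power of $r$ needed at $\beta^{-1}(0)$—and this is the core computation driving the result.
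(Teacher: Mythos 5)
Your proof is correct and takes essentially the same route as the paper's: after normalizing the metric at the origin, you show $r^{2}(\Delta_{g}-\Delta)\in r\,\Diff^{2}_{b}$ near $\beta^{-1}(0)$, so restricting to the front face gives the same indicial family as the Euclidean case. The only (harmless) difference is that the paper passes to coordinates in which $g_{ij}=\delta_{ij}$ to second order and writes the difference as $f\Delta'$ with $f=\cO(r)$, whereas you normalize only $g_{ij}(0)=\delta_{ij}$ and observe that the first-order terms vanish at the front face automatically because $r^{2}\pa_{i}=r\,U_{i}$ already carries an extra factor of $r$.
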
   
\begin{proof}
Choose linear coordinates on $\bbR^n$ so that $g_{ij}(0)=\delta_{ij}$ at the origin.  With this choice, $\Delta_g= \Delta+ f\Delta'$, where $f$ is a smooth function with $f=\mathcal{O}(r)$ near the origin and $\Delta'$ is a  smooth differential operator of order $2$.  Thus,  $r^2\Delta_g$ has the same indicial family as $r^2\Delta$, from which the result follows.    
\end{proof}

As before, we need to convert this to a statement about Dirac operators. Let $g$ be a (possibly incomplete) smooth Riemannian metric on $\bbR^n$, $\Cl_g(T\bbR^n)$ the corresponding bundle of Clifford algebras, and $E\to \bbR^n$ a Clifford bundle with Clifford connection.  Let $\eth$ be the corresponding Dirac-type operator.  We will be interested in the operator $D:=\eth+a$ for some $a\in\CI(\bbR^n;\End(E))$.
\begin{lemma}
The operator $D:=\eth+a$ above is such that 
$r^2D^2$ is a $b$-operator near $\beta^{-1}(0)$ in $\widetilde{\bbR^n}$ with indicial family having the same zeros as \eqref{bu.2} with the same orders and ranks.  In particular, for $\alpha\in\bbR$, $r^2D^2$ has the same associated index sets \eqref{bu.2a}.
\label{bu.4}\end{lemma}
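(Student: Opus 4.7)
The plan is to reduce this statement to the scalar Lemma~\ref{bu.3} by showing that the lower-order perturbations of $\eth^2$ that produce $D^2$ all vanish at the front face $\beta^{-1}(0)$ as $b$-operators once multiplied by $r^2$. First I would expand
\[
 D^2 = (\eth+a)^2 = \eth^2 + [\eth,a] + 2a\eth + a^2,
\]
so that $D^2 - \eth^2$ is a first-order differential operator with smooth coefficients on $\bbR^n$ (the piece $2a\eth$ is first order, the others are of order zero). Next, by the Weitzenb\"ock formula for a Clifford connection, $\eth^2 = \nabla^*\nabla + \mathcal{R}$ with $\mathcal{R}\in\CI(\bbR^n;\End(E))$. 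Writing $\nabla_i=\partial_i+A_i$ in local coordinates where $g_{ij}(0)=\delta_{ij}$ to second order, a direct expansion of $\nabla^*\nabla=-g^{ij}(\nabla_i\nabla_j-\Gamma^k_{ij}\nabla_k)$ gives
\[
 \nabla^*\nabla = \Delta_g\otimes \mathrm{Id}_E + Q_1 + Q_0,
\]
where $\Delta_g$ is the scalar Laplace--Beltrami operator of $g$, and $Q_1$, $Q_0$ are a first-order, resp.\ zeroth-order, differential operator with smooth coefficients on $\bbR^n$. Combining, I obtain
\[
 D^2 = \Delta_g\otimes \mathrm{Id}_E + P_1 + P_0
\]
with $P_j$ a $j$-th order operator of smooth Cartesian coefficients.

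The key observation is that $r\,\partial_{x^j}$ lifts to be a smooth $b$-vector field near $\beta^{-1}(0)$ in $\widetilde{\bbR}^n$: using polar coordinates $x^j=r\omega^j$ one checks that $r\,\partial_{x^j}$ is a smooth combination of $r\partial_r$ and the angular vector fields $\partial_{\omega}$. Consequently, $r^2$ times any first-order differential operator with smooth Cartesian coefficients is equal to $r$ times a first-order $b$-differential operator, and therefore vanishes at $\beta^{-1}(0)$ as a $b$-operator; similarly $r^2P_0$ vanishes to second order at $\beta^{-1}(0)$. This gives
\[
 r^2 D^2 = r^2\bigl(\Delta_g\otimes \mathrm{Id}_E\bigr) + r\cdot R,
\]
where $R\in \Diff^2_b(\widetilde{\bbR}^n;E)$ near $\beta^{-1}(0)$. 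In particular, $r^2D^2$ is a $b$-operator near $\beta^{-1}(0)$, and its indicial family at that face coincides with the indicial family of $r^2\Delta_g$ acting componentwise on sections of $E$.

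Finally, applying Lemma~\ref{bu.3} to $\Delta_g$ identifies that indicial family (on each component) with the indicial family of $r^2\Delta$ for the Euclidean Laplacian, hence with \eqref{bu.2}, with identical zeros, orders and ranks. Tensoring with $\mathrm{Id}_E$ preserves the zeros and their multiplicities, yielding the stated conclusion, and the associated index sets \eqref{bu.2a} follow verbatim from the definition. I do not expect a serious obstacle: the only delicate point is verifying that the non-scalar corrections produced by the Clifford connection, the Weitzenb\"ock curvature endomorphism, and the perturbation $a$ all land in the ideal of operators whose $r^2$-multiples vanish at $\beta^{-1}(0)$ as $b$-operators, which is exactly what the $b$-vector-field computation above ensures.
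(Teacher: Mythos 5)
Your argument is correct, and it rests on the same mechanism as the paper — that $r^2$ times any first-order operator with smooth Euclidean coefficients is $r$ times a $b$-operator (since $r\pa_{x^j}$ lifts to a smooth $b$-vector field) and hence has vanishing indicial family at $\beta^{-1}(0)$ — but it packages the reduction differently. The paper works at the level of the Dirac operator itself: choosing coordinates with $g_{ij}(0)=\delta_{ij}$ to second order and a trivialization of $E$ extending the Clifford action at the origin, it writes $\eth=\eth_{\bbR^n}+f\eth'$ with $f=\cO(r)$, notes that $r(f\eth'+a)$ has vanishing indicial family, and then uses that $\eth_{\bbR^n}^2$ is literally the Euclidean Laplacian on the trivialized bundle, so no appeal to Lemma~\ref{bu.3} is needed. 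You instead split at the level of the square, $D^2=\Delta_g\otimes\Id+P_1+P_0$ (via the Weitzenb\"ock identity, though a principal-symbol comparison in a local trivialization would do just as well and spares you the Bochner formula), absorb $r^2(P_1+P_0)$ into $r\cdot\Diff^2_b$, and then quote Lemma~\ref{bu.3} to compare $\Delta_g$ with the Euclidean Laplacian. Your route reuses the scalar lemma and avoids the special trivialization adapted to the Clifford action at the origin; the paper's route is marginally more self-contained and directly parallels the proof of Lemma~\ref{bu.3}. One small point of precision, shared with the paper's own wording: after tensoring with $\Id$ on $E$ the indicial roots and their orders are unchanged but their ranks are multiplied by $\rank E$; this is harmless because the index sets \eqref{bu.2a} depend only on the location of the elements of $\Spec_b$.
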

\begin{proof}
As before, choose linear coordinates on $\bbR^n$ such that $g_{ij}(0)=\delta_{ij}$ and choose a trivialization of $E$ extending the Clifford action of $\left.\Cl_g(T\bbR^n)\right|_{0}$ at the origin to all of $E$.  We then have an Euclidean Dirac-type operator $\eth_{\euc}$ such that $\eth-\eth_{\euc}= f\eth'$, where $f$ is a smooth function with $f=\mathcal{O}(r)$ near the origin and $\eth'\in\Diff^1(\bbR^n;E)$.  Therefore, since $r(f\eth' +a)$ is a $b$-operator with vanishing indicial family at $\beta^{-1}(0)$, we see that the $b$-operator
$$
  r^2D^2= r^2(\eth_{\euc}+f\eth' +a)^2= r^2\eth_{\euc}^2+ r^2( (f\eth'+a)^2 + \eth_{\euc}(f\eth' +a) +(f\eth'+a)\eth_{\euc})
$$
has the same indicial family as $r^2\eth_{\euc}^2$.  But $\eth_{\euc}^2$ is just the Euclidean Laplacian acting on the sections of the trivialized vector bundle $E$.  This means that the indicial family of $r^2\eth_{\euc}^2$ (and therefore of $r^2D^2$) has the same zeros with the same orders as the indicial family \eqref{bu.2}.
\end{proof}

The fact that the zeros of the indicial family and their orders do not depend on the particular choice of $\eth$ and $a$ is a useful fact when we have a family of such objects. We will be particularly interested in the following situation. Let $\nu: F\to X$ be a real vector bundle of rank $n$ over a smooth compact manifold $X$, possibly with boundary.  Let $\widetilde{F}$ be obtained from the total space $F$ of the vector bundle by blowing up the zero section, and denote by $\beta:\widetilde{F}\to F$ the blow down map.  Let $E\to F$ be a vector bundle over $F$, and consider a family of fiberwise operators $D\in \Diff^1(F/X;E)$ which are of the form considered in Lemma~\ref{bu.4}.  

\begin{proposition}
For $\lambda\in\bbC$, let $\lambda\mapsto f_{\lambda}\in\varrho\CI_c(\widetilde{F};E)$ be a holomorphic family of sections, where $\varrho\in\CI(\widetilde{F})$ is a boundary defining function for the boundary face $\beta^{-1}(0)$ in $\widetilde{F}$.  Then there exists a holomorphic family $\lambda\mapsto u_{\lambda} \in\cA^{G_n}_{\phg}(\widetilde{F})$ with uniform compact support on $\widetilde{F}$ such that
\begin{equation}
       (D-\lambda)u_{\lambda}-f_{\lambda}
\label{bu.5a}\end{equation}
has a trivial polyhomogeneous expansion at the boundary face $\beta^{-1}(0)$ in $\widetilde{F}$.
Here
$$
    G_n=\left(\widehat{G}^{+}_n(\alpha)\overline{\cup} (\bbN_0+3)\right)-1,
$$
with $\alpha<2$ chosen so that $z\in i\Spec_b(\widehat{B})\Longrightarrow \Re z\notin (\alpha,2)$.  In particular, $\inf G_n=(1,0)$.   
\label{bu.5}\end{proposition}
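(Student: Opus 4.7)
\textbf{Proof plan for Proposition~\ref{bu.5}.}

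The plan is to construct $u_\lambda$ as a formal polyhomogeneous series at the boundary face $\beta^{-1}(0) \subseteq \widetilde{F}$ solving $(D-\lambda)u_\lambda - f_\lambda = 0$ modulo $r^{\infty}$, then to sum the series via Borel's lemma. Working locally on $X$ via a partition of unity, we may trivialize $\nu: F \to X$ as $\bbR^n \times V$ over a coordinate chart $V$, and work in polar coordinates $(r,\omega,y)$ on the local model $[\bbR^n;\{0\}] \times V$. In these coordinates, $rD$ is a $b$-differential operator of order one on $\widetilde F$, so $r(D - \lambda) = rD - r\lambda$ is again a $b$-operator whose indicial family $I(rD,z)$ is \emph{independent of} $\lambda$ (since $r\lambda$ vanishes at $r=0$). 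The composition rule $I(rD,z)^{2} = I((rD)^{2},z) = I(r^{2}D^{2},z)$, combined with Lemma~\ref{bu.4}, shows that every indicial root of $rD$ lies in $i\Spec_{b}(\widehat B)$ from \eqref{bu.2}, so the admissible exponents in any formal expansion of $u_\lambda$ are controlled by the index set $G_n^{+}(\alpha)$ attached to the Laplace model.

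Expand $f_\lambda$ in Taylor series at $\beta^{-1}(0)$ as $f_\lambda \sim \sum_{j\geq 0} r^{j+1}\, g_{j+1}(\lambda,\omega,y)$, and seek $u_\lambda$ with expansion
\begin{equation*}
u_\lambda \sim \sum_{(z,k)\in G_n} r^{z}(\log r)^{k}\, u_{z,k}(\lambda,\omega,y),
\end{equation*}
with each $u_{z,k}$ smooth in $(\omega,y)$, compactly supported in $y$, and holomorphic in $\lambda$. Matching powers of $r(\log r)^k$ on both sides of $(D-\lambda)u_\lambda = f_\lambda$ yields, at each stage of the iteration, an equation of the form $I(rD,z)\, u_{z,k} = h_{z,k}$ on the fiber bundle $\bbS^{n-1}\times V$, where $h_{z,k}$ is determined by $f_\lambda$ and by the already-constructed $u_{z',k'}$ at strictly lower order. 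When $z \notin i\Spec_b(\widehat B)$, $I(rD,z)$ is invertible fiberwise over $V$ and $u_{z,k}$ is uniquely determined; at an indicial root, the solvability obstruction is absorbed by introducing a log coefficient at $(z,k+1)$, producing the multiplicities recorded in $G_n^{+}(\alpha)$. The particular solution produced by matching against $f_\lambda$ has leading term at $r^{2}$ (giving the contribution $\bbN_0+2$), and the freedom of adding kernel elements of $I(rD,z)$ at earlier exponents, together with the identification of $G_n$ with $(G_n^{+}(\alpha)\,\overline\cup\,(\bbN_0+2)\times\{0\})-1$, is what pushes $\inf G_n$ down to $(1,0)$. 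Holomorphicity in $\lambda$ is preserved at each step because $I(rD,z)$ is $\lambda$-independent and the right-hand sides $h_{z,k}$ inherit holomorphic dependence on $\lambda$ inductively.

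Once the formal series is constructed, I would apply Borel's lemma for polyhomogeneous expansions (as in the proof of \cite[Lemma~5.44]{MelroseAPS}, or Lemma~\ref{rc.5a} above) to assemble the coefficients into a genuine section $u_\lambda \in \cA^{G_n}_{\phg}(\widetilde F)$ with the prescribed expansion; since the construction is linear in $f_\lambda$ and all inversions are $\lambda$-independent, $\lambda \mapsto u_\lambda$ can be arranged to be holomorphic. Uniform compact support of $u_\lambda$ follows from the uniform compact support of $f_\lambda$ together with the locality of each step of the iteration (each $u_{z,k}$ lives on the sphere bundle over the projection of $\supp f_\lambda$ to $V$), using a uniform cutoff in $r$. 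The hardest part of the argument is the bookkeeping in the second paragraph: verifying that the iterative solution stays within the index set $G_n$, keeping careful track of the accidental-multiplicity hypothesis on $\alpha$ (so that logarithms do not proliferate beyond the index set), and reconciling the first-order nature of $rD$ with the second-order Laplace-type indicial data provided by Lemma~\ref{bu.4}. Provided these are handled, the Borel summation and holomorphicity steps are standard.
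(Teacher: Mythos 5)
Your route is genuinely different from the paper's, but as written it has a gap at exactly the point you flag as the hardest. The identity $I(rD,z)^{2}=I((rD)^{2},z)=I(r^{2}D^{2},z)$ is false: the first equality is fine (indicial families of $b$-operators multiply), but $(rD)^{2}=r^{2}D^{2}+[D,r]\,rD$, and $[D,r]$ restricts to the boundary as Clifford multiplication by $dr$, a nowhere-vanishing endomorphism, so $I((rD)^{2},z)=I(r^{2}D^{2},z)+c(dr)\,I(rD,z)\neq I(r^{2}D^{2},z)$. Since Lemma~\ref{bu.4} only gives you the indicial data of $r^{2}D^{2}$, this broken identity is the sole bridge between that lemma and the first-order operator $r(D-\lambda)$ you actually iterate with; without it you have no control on the indicial roots of $rD$, nor on the pole orders of $I(rD,z)^{-1}$, and hence no way to carry out the bookkeeping showing the formal solution stays in $G_n$ (which you acknowledge but do not do). The claim is repairable: from $r^{2}D^{2}=(rD-[D,r])(rD)$ one gets $I(r^{2}D^{2},z)=(I(rD,z)-c(dr))\,I(rD,z)$, so the indicial roots of $rD$ are contained in $i\Spec_b(\widehat B)$ and $I(rD,z)^{-1}=I(r^{2}D^{2},z)^{-1}(I(rD,z)-c(dr))$ bounds the pole orders by those of the Laplace model; with that in hand your order-by-order scheme (or a direct appeal to \cite[Lemma~5.44]{MelroseAPS} for the elliptic $b$-operator $r(D-\lambda)$) could be pushed through.

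Two further remarks. First, your explanation of why $\inf G_n=(1,0)$ — "freedom of adding kernel elements of $I(rD,z)$ at earlier exponents" — is not right: adding a formal kernel element at exponent $1$ creates a new error at order $r$ and does nothing for you; in your construction the solution simply starts at exponent $2$, which is still consistent with membership in $\cA^{G_n}_{\phg}$. Second, for comparison, the paper sidesteps all of this by solving the \emph{squared} problem: it finds $v_\lambda\in\cA^{G_n+1}_{\phg}$ with $r^{2}(D-\lambda)^{2}v_\lambda-r^{2}f_\lambda$ vanishing to infinite order (here Lemma~\ref{bu.4} and the cited results of Melrose and Mazzeo apply verbatim, since the operator is Laplace-type), and then sets $u_\lambda=(D-\lambda)v_\lambda$; because $D-\lambda\in r^{-1}\Diff^1_b$, this automatically produces the shift by $-1$ in the index set and the value $\inf G_n=(1,0)$, and holomorphy in $\lambda$ and smoothness in the base come along as in your last paragraph. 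Your first-order approach buys a more direct construction and avoids squaring, but only once the indicial comparison above is corrected.
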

\begin{proof}
We need to find a holomorphic family $v_\lambda\in \cA^{G_n +1}_{\phg}(\widetilde{F})$ such that
$$
     \varrho^2(D-\lambda)^2 v_{\lambda}-\varrho^2 f_{\lambda}
$$
has a trivial polyhomogeneous expansion at $\beta^{-1}(0)$, for then
$$
     u_{\lambda}= (D-\lambda) v_{\lambda}
$$
would be the desired solution. The index set of $\varrho^2 f_{\lambda}$ is $\bbN_0+3$, and $\rho^2(D-\lambda)^2$ is a family of $b$-operators with indicial families having their poles specified by Lemma~\ref{bu.4}.  Thus, in a fixed fiber $\widetilde{F}_p=\widetilde{\nu}^{-1}(p)$ of $\widetilde{\nu}:\widetilde{F}\to X$ and for fixed $\lambda$, we can apply \cite[Lemma~5.44]{MelroseAPS} to find $v_{\lambda,p}\in\cA^{G_n}(\widetilde{F}_p)$ such that  $\varrho(D-\lambda)v_{\lambda,p}$ has the same polyhomogeneous asymptotic expansion as $f$ on $\widetilde{F}_p$.  But since $G_n$ is fixed, the proof of \cite[Lemma~5.44]{MelroseAPS} generalizes immediately, allowing us to do this smoothly on $X$ and in a holomorphic way in $\lambda$.
\end{proof}

 \section{Geometric microlocal preliminaries} In this appendix, we give a brief summary of some key concepts in geometric microlocal analysis; these definitions are originally due to Melrose. Further details may be found in, e.g., \cite{me1,MelroseAPS,maz91,gr}.

\subsection{Manifolds with corners and blow-up} 

A \textbf{manifold with corners} $W$ of dimension $n$ is loosely speaking a topological space with differentiable structure  locally modelled on $\mathbb R_{+}^k\times\mathbb R^{n-k};$ the $n$-dimensional unit cube is a simple example. The boundary of $W$ is a union of \textbf{boundary hypersurfaces}, and we assume throughout that each boundary hypersurface is embedded. For any boundary hypersurface $H\subset W,$ we let $\rho_H$ denote a \textbf{boundary defining function} for $H$; that is, $\rho_H$ is a smooth non-negative function on $W$ which is zero precisely at $H$ and with nonvanishing differential on $H.$

Blow-up is a procedure for creating new manifolds with corners from old ones. We say that $S\subset W$ is a \textbf{p-submanifold} if for every point $p\in S,$ there exists a local model $\mathbb R_+^k(x_1,\ldots,x_k)\times\mathbb R^{n-k}(x_{k+1},\ldots,x_n)$ on $W$ centered at $p$ in which $S$ is the zero set of a fixed subset of the coordinates $\{x_i\}.$ Given such a p-submanifold, we define the \textbf{(radial) blow-up} $[W;S]$ by replacing $S\subset W$ with a copy of its spherical normal bundle. This procedure can be seen as the introduction of polar coordinates around $S.$

\subsection{$b$-maps and $b$-fibrations}

Let $W$ and $Z$ be manifolds with corners and let $\mathcal M(W)$ and $\mathcal M(Z)$ be the collections of their respective boundary hypersurfaces.
\begin{definition}\cite{me1} A \textbf{$b$-map} $f:W\rightarrow Z$ is a smooth map with the property that for each $H\in\mathcal M(Z),$ there exist nonnegative integers $e_f(G,H)$ such that
\[f^*\rho_H= a_G\prod_{G\in\mathcal M(W)}\rho_G^{e_f(G,H)}\] for $a_G$ a smooth positive function. \end{definition}
Technically, this is what Melrose calls an \textbf{interior $b$-map}, but we may restrict our attention to this case and hence we drop the word `interior.' Note that this definition is independent of the choice of boundary defining functions. The numbers $e_f(G,H)$ are called the \textbf{exponent matrix} for the $b$-map $f.$ Any $b$-map also induces a well-defined map from the boundary faces (not just hypersurfaces) of $W$ to the boundary faces of $Z,$ which we also call $f$ (see \cite{gr}).

Note that this condition ensures in particular that no boundary hypersurface is mapped into a corner.

Any $b$-map also induces a natural map $^bf_*,$ called the \textbf{$b$-differential}, between the `$b$-tangent bundles' $^bT^*W$ and $^bT^*Z.$ Since we do not need to use the explicit form of the $b$-differential, we refer to \cite{me1} for details. The only key fact that we need is that the $b$-differential respects composition; given $b$-maps $f:W\rightarrow Z$ and $g:Z\rightarrow Y,$ $^bg_*\circ\ ^bf_*=\ ^b(g\circ f)_*.$

\begin{definition} A $b$-map $f:W\rightarrow Z$ is a \textbf{$b$-submersion} if $^bf_*(w)$ is surjective for all $w\in W.$\end{definition}

\begin{definition} A \textbf{$b$-fibration} $f:W\rightarrow Z$ is a surjective  $b$-submersion such that  for each $G\in \cM(G)$, either $f(G) \in \cM(Z)$ or $f(G)=Z$.  \end{definition}
\begin{remark}
For a $b$-submersion, the second condition is equivalent to the $b$-normality condition of \cite{me1}, see \cite{gr}.
\label{ne.2}\end{remark}
\begin{proposition}\label{prop:bsubcomp} Suppose that $f:W\rightarrow Z$ and $g:Z\rightarrow Y$ are $b$-maps which are $b$-submersions. Then $g\circ f$ is a $b$-map and a $b$-submersion.\end{proposition}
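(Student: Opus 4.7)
The plan is to verify the two required properties of $g\circ f$ directly from the definitions, since both interact well with composition.

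First, to check that $g\circ f$ is a b-map, I would simply compute pullbacks of boundary defining functions. For $H\in\mathcal{M}(Y)$, use the b-map condition on $g$ to write $g^*\rho_H=\prod_{K\in\mathcal{M}(Z)}\rho_K^{e_g(K,H)}$, then pull back under $f$ and use the b-map condition on each factor to get
\begin{equation*}
(g\circ f)^*\rho_H = \prod_{K\in\mathcal{M}(Z)}(f^*\rho_K)^{e_g(K,H)}= \prod_{G\in\mathcal{M}(W)}\rho_G^{\sum_{K}e_f(G,K)e_g(K,H)}.
\end{equation*}
This exhibits the exponent matrix $e_{g\circ f}(G,H)=\sum_{K}e_f(G,K)e_g(K,H)$ as a product of nonnegative integer matrices, hence with nonnegative integer entries, so $g\circ f$ is a b-map.

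Second, for the b-submersion property, I would invoke the functoriality of the b-differential recalled above, namely that $^b(g\circ f)_* = {}^bg_*\circ {}^bf_*$. Since the composition of surjective linear maps is surjective, and by hypothesis $^bf_*(w)$ and $^bg_*(f(w))$ are both surjective for every $w\in W$, it follows that $^b(g\circ f)_*(w)$ is surjective at every $w\in W$. This is exactly the b-submersion condition for $g\circ f$.

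There is no real obstacle here: both statements reduce to elementary consequences of the definitions, one via direct computation of pullbacks of boundary defining functions, the other via functoriality of the b-differential together with the fact that surjectivity is preserved under composition. The only thing to keep in mind is that we are not claiming b-normality (and hence not that $g\circ f$ is a b-fibration), which would require separate verification and can fail in general.
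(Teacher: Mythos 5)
Your proof is correct and follows essentially the same route as the paper: the paper cites the literature for the fact that a composition of b-maps is a b-map (which you instead verify directly by multiplying the exponent matrices) and, exactly as you do, deduces the b-submersion property from the functoriality of the b-differential. Your remark that b-normality is not being claimed is also consistent with the statement as used in the paper.
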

\begin{proof} The composition of $b$-maps is well-known to be a $b$-map; see remark 2.12 in \cite{gr}. As for the $b$-submersion condition, it follows immediately from the fact that the $b$-differential respects composition. \end{proof}

\subsection{Polyhomogeneous conormal functions, pull-back, and push-forward}
In order to define the calculi of pseudodifferential operators that we need, we recall the definition of \textbf{polyhomogeneous conormal functions} from \cite{me1} (also see \cite{MelroseAPS,maz91,gr}). First recall that smooth functions on a manifold with corners $W$ have Taylor expansions in terms of the boundary defining functions at each boundary hypersurface, with joint Taylor expansions at the corners. For polyhomogeneous conormal functions, we simply allow more terms in the Taylor expansion. In particular, we require asymptotic expansions at each boundary hypersurface $H,$ with each term in the expansion having the form
\[c\rho_H^z(\log\rho_H)^p\]
for $(z,p)\in\mathbb C\times\mathbb N_0,$ and with only finitely many terms in each expansion having $\Re z<N$ for any fixed $N.$ We also require joint asymptotic expansions at each corner. If $u$ is polyhomogeneous conormal on $W,$ the \textbf{index set} at each $H$ is the set of $(z,p)$ with a term in the expansion of $u$ at $H$ of the form $c\rho_H^z(\log\rho_H)^p.$ We also adopt the convention that if $(z,p)$ is in the index set, so is $(z+1,p),$ and if $p>0$ so is $(z,p-1).$ The \textbf{index family} of $u$ is a collection of index sets, one for each boundary hypersurface. If the index family of $u$ is contained in an index family $\mathcal F,$ we write
\[u\in\mathcal A^{\mathcal F}(W).\]

There are various operations on index sets, from \cite{me1} (see also \cite{maz91}). First, given two index sets $E_1$ and $E_2,$ we define
\[E_1+E_2=\{(z_1+z_2,p_1+p_2):\ (z_1,p_1)\in E_1,\ (z_2,p_2)\in E_2\}.\]
The operation of \textbf{extended union} is defined by
\[E_1\extu E_2=E_1\cup E_2\cup\{(z,p_1+p_2+1):(z,p_1)\in E_1,\ (z,p_2)\in E_2\}.\]
For any index set $E,$ $\inf E=(z_0,p_0)$ is defined by asking $z_0$ to be  the infimum of $\Re z$ over all $z$ for which $(z,0)\in E$ and $p_0$ to be the greatest $p\in \bbN_0$ such that $(z,p_0)\in E$ with $\Re z=z_0$. We say an index set $E$ is \textbf{positive} if $\inf E>0=(0,0).$ For a positive index set $E,$ we write
\begin{equation}\label{posinfsum} E|_{\infty}=\bigcup_{k=1}^{\infty}\sum_{i=1}^{k} E.
\end{equation}
Since $E$ is positive, $E|_{\infty}$ is also an index set.

We also need to consider polyhomogeneous conormal distributions with interior conormal singularities at p-submanifolds. These conormal singularities are locally modelled on the singularities of the Schwarz kernel of a differential or pseudodifferential operator on a compact manifold; we omit the complete definition, which may be found in \cite{gr}. If $u$ is a polyhomogeneous conormal distribution on $W$ with index family $\mathcal F$ and with an interior conormal singularity of order $m$ at a p-submanifold $S\subset W,$ we write
\[u\in\mathcal A^{\mathcal F}I^m(W,S).\]

Polyhomogeneous conormal distributions have nice properties under pull-back and push-forward by $b$-maps between manifolds with corners.
\begin{proposition}[Melrose's pull-back and push-forward theorems] Suppose that $W_1$ and $W_2$ are manifolds with corners and $f:W_1\rightarrow W_2$ is a $b$-map. Then:

a) If $u$ is polyhomogeneous conormal on $W_2,$ $f^*u$ is polyhomogeneous conormal on $W_1.$

b) If $f$ is also a $b$-fibration and $v$ is polyhomogeneous conormal on $W_1,$ and $f_*v$ is well-defined, then $f_*v$ is polyhomogeneous conormal on $W_2.$

Moreover, the index families of the pullback and pushforward distributions may be computed from the index families of the original distributions and the exponent matrix of $f.$ See \cite{MelroseAPS,maz91,gr} for the specific formulas.
\end{proposition}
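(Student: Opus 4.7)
The plan is to treat the two parts separately: part (a) is essentially a direct local computation, whereas part (b) is the substantial statement and requires an iterative reduction to model integrals. Throughout, a partition of unity subordinate to coordinate charts adapted to the boundary structure lets us work locally near a point $p \in W_1$ with image $q = f(p) \in W_2$.

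For (a), pick product-type local coordinates $(x_1,\ldots,x_k,y_1,\ldots,y_{n-k})$ on $W_2$ near $q$ in which the $x_i$ are boundary defining functions for the hypersurfaces through $q$, so that $u$ has the form
\begin{equation*}
u \sim \sum_{\alpha} c_\alpha(y) \prod_{i=1}^k x_i^{z_{\alpha,i}}(\log x_i)^{q_{\alpha,i}},
\end{equation*}
and choose analogous coordinates $(\rho_1,\ldots,\rho_\ell,w_1,\ldots,w_{m-\ell})$ on $W_1$ near $p$. The b-map condition gives $f^*x_i = a_i\prod_j \rho_j^{e_{ij}}$ with $a_i>0$ smooth. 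Substituting and using $\log f^*x_i = \log a_i + \sum_j e_{ij}\log\rho_j$ yields a polyhomogeneous expansion of $f^*u$ whose index set at each $G \in \cM(W_1)$ is read off the columns of the exponent matrix, and regrouping of coincident exponents contributes the operations $+$ and $\extu$ on index sets.

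For (b), the idea is to factor the b-fibration locally into elementary pieces whose push-forward can be computed by hand. After adapted coordinates, $f$ becomes a composition of a smooth submersion with coordinate monomial maps of the form $\rho \mapsto \rho^a$, so the push-forward of a polyhomogeneous density reduces to iterated integrals of the Mellin form
\begin{equation*}
\int_0^1 \rho^s (\log \rho)^q \, \frac{d\rho}{\rho}
\end{equation*}
combined with smooth integration over compact factors. Each such integral is a meromorphic function of $s$ with poles at prescribed locations and of prescribed orders, and this residue structure translates directly into a polyhomogeneous expansion on $W_2$. Whenever several hypersurfaces of $W_1$ map into the same hypersurface of $W_2$, the corresponding Mellin contributions combine via $\extu$, with coincident exponents producing the extra logarithmic factors encoded in that operation.

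The main obstacle is the meticulous accounting of index sets in (b). Two points are delicate: first, the b-fibration hypothesis (b-submersion together with b-normality) is used decisively to guarantee that no fiber of $f$ is squeezed into a lower-dimensional corner of $W_2$ and that fibers retain the expected dimension, ensuring that the push-forward is well-defined as a distribution; second, one must verify that the formal series assembled from all face contributions satisfies the local finiteness condition in each strip $\Re z < N$, so as to define a genuine polyhomogeneous conormal distribution rather than a mere formal sum. The precise index-set formulas together with these verifications are carried out in \cite{me1} and summarized in \cite{MelroseAPS,maz91,gr}.
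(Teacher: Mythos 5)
This proposition is quoted background: the paper gives no proof at all, stating the result and referring to \cite{me1,MelroseAPS,maz91,gr} for the formulas, so there is no in-paper argument to compare yours against. Your sketch follows the standard route of those references — local product coordinates and the relation $f^*x_i = a_i\prod_j\rho_j^{e_{ij}}$ for the pull-back, and for the push-forward a local factorization of the b-fibration so that everything reduces to Mellin-type model integrals $\int_0^1\rho^{s}(\log\rho)^q\,d\rho/\rho$, with b-normality ruling out hypersurfaces being crushed into corners and the extended union $\extu$ accounting for coincident exponents coming from different faces of $W_1$ over the same face of $W_2$. That is the correct outline, and your identification of the two delicate points (well-definedness of $f_*$, i.e.\ integrability at the faces that get integrated out, and local finiteness of the assembled expansion in each half-plane $\Re z < N$) is exactly where the real work lies.

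Two caveats. First, a small inaccuracy: the operation $\extu$ does not occur in the pull-back formula. For a b-map, the index set of $f^*u$ at a face $G\in\cM(W_1)$ is obtained from weighted sums $\sum_H e_f(G,H)\,E_H$ over the faces $H$ with $e_f(G,H)\neq 0$ (log powers simply add), with no extended unions; $\extu$ belongs to the push-forward, where it records the logarithms produced when the asymptotic evaluation of the model integrals brings together equal exponents from distinct hypersurfaces of $W_1$. Second, be aware that your argument is a proof sketch rather than a proof: the index-set bookkeeping and the finiteness/uniformity verifications that you defer to \cite{me1} are precisely the substance of the theorem, so in effect your write-up is at the same level as the paper's own treatment, which simply cites the result. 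That is acceptable for a background proposition of this kind, but it should be presented as such.
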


There are also versions of the pull-back and push-forward theorems for polyhomogeneous conormal distributions with interior conormal singularities; see the appendix to \cite{emm}.

\section{Proof of composition formula}\label{appendix:composition}
\subsection{Triple surgery space}
In order to discuss the composition of the operators in our pseudodifferential calculus, we now define a `triple surgery space' $X_s^3.$ This construction combines key features of the constructions in \cite{mame1,v}. In fact, as with the double space construction, our triple space is constructed by taking the triple space from \cite{mame1} and performing additional blow-ups.  Additionally, we will be able to identify the $\phi$-calculus triple space from \cite{mame2} with a submanifold of our surgery triple space. Our notation is taken mostly from the explanation of the $\phi$-triple space given in \cite{v}.

We start with the space $M^3\times[0,1]_{\epsilon},$ and label its boundary hypersurface $\bhs{Z}.$ Boundary defining functions for each copy of $H$ are $x,$ $x',$ and $x''$ respectively. Now we perform a series of iterated blow-ups: 

1) Blow up $H\times H\times H\times\{0\}.$ Call the new face $\bhs{T}.$

2) Blow up $H\times H\times M\times \{0\},$ $H\times M\times H\times\{0\},$ and $M\times H\times H\times\{0\}.$ Call the new faces $\bhs{F},$ $\bhs{C},$ and $\bhs{S}$ respectively.

3) Blow up $H\times M\times M\times\{0\},$ $M\times H\times M\times\{0\},$ and $M\times M\times H\times\{0\},$ calling the new faces $\bhs{N_1},$ $\bhs{N_2},$ and $\bhs{N_3}$ respectively. At this point, we have reconstructed the triple space from \cite{mame1}, which we call $X_{b,s}^3.$ 

4) Let $D$ be the lifted triple fiber diagonal in $X_{b,s}^3$ - that is, the interior lift of $\{x=x'=x'',y=y'=y''\}.$ Then blow up $\bhs{T}\cap D.$ This creates a new face which we call $\bhs{\phi_{TT}},$ corresponding to the central face in the $\phi$-triple space \cite{v}. 

5) Let $D_S,$ $D_C,$ $D_F$ be the lifted double fiber diagonals given by the interior lifts of $\{x=x',y=y'\},$ $\{x=x'',y=y''\},$ and $\{x'=x'',y'=y''\}$ respectively. Then create new faces $\bhs{\phi_{ST}},$ $\bhs{\phi_{CT}},$ $\bhs{\phi_{FT}}$ by blowing up $\bhs T\cap D_S,$ $\bhs T\cap D_C,$ $\bhs T\cap D_F$ respectively (in any order - the submanifolds are disjoint). These correspond to the `spokes' coming from the center of the scattering triple space \cite{v}. 

6) Finally, viewing $D_S,$ $D_C,$ and $D_F$ as p-submanifolds of the new space (by taking interior lifts in each case), we create new faces $\bhs{\phi_S},$ $\bhs{\phi_C},$ and $\bhs{\phi_S}$ by blowing up $\bhs S\cap D_S,$ $\bhs C\cap D_C,$ and $\bhs F\cap D_F$ respectively.

\begin{figure}
    \centering

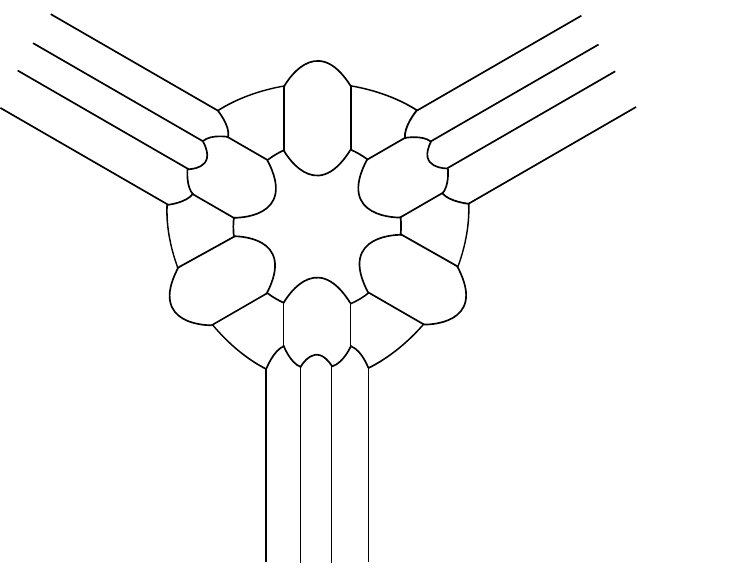

    \caption{The triple surgery space $X_s^3,$ with $\epsilon$ suppressed.}
\label{fig:triplespace}
      \end{figure}

This final space is $X_s^3,$ illustrated with $\epsilon$ suppressed in Figure \ref{fig:triplespace}. It has fifteen boundary hypersurfaces, labeled as discussed in the construction; we let each boundary hypersurface keep its label under the lift via each successive blow-down map. Additionally, the original $\epsilon=0$ boundary hypersurface $\bhs{Z}$ may be identified with the $\phi$-triple space of \cite{mame2,v}. As before, let $\beta_{(3)}$ be the blow-down map to $M^3\times[0,1]_{\epsilon}.$

\subsection{Properties of the projection maps}

First we show that there are natural projection maps from the double space to the single space and that these maps are $b$-fibrations.

\begin{proposition}\label{propc1} Projection off $x,y,z$ (resp. $x',y',z'$) extends by continuity to a well-defined map $\beta_{(2),R}:X^2_{s}$ to $X_s$ (resp. $\beta_{(2),L}$), and each map is a $b$-fibration. \end{proposition}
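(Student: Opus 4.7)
By symmetry it suffices to treat $\beta_{(2),R}$, the candidate being the b-fibration $\pi_R\colon M^2\times[0,1]_{\epsilon}\to M\times[0,1]_{\epsilon}$ that forgets the first factor, followed by the blow-down $\beta_{(2)}\colon X^2_s\to M^2\times[0,1]_{\epsilon}$. The plan is in two steps: (1) factor the composite $g=\pi_R\circ\beta_{(2)}\colon X^2_s\to M\times[0,1]_{\epsilon}$ through the blow-down $\beta_{(1)}\colon X_s\to M\times[0,1]_{\epsilon}$ to obtain continuous (in fact smooth) extension $\beta_{(2),R}$, and (2) verify that the resulting map is a b-fibration.

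For (1), I will invoke the universal property of radial blow-up: a b-map $g\colon X^2_s\to M\times[0,1]_{\epsilon}$ factors through $\beta_{(1)}$ provided $g^{-1}(H\times\{0\})$ is a union of boundary hypersurfaces of $X^2_s$. Tracing through the construction, $\pi_R^{-1}(H\times\{0\})=M\times H\times\{0\}$ was precisely one of the submanifolds blown up in the definition of $X^2_{b,s}$; since the subsequent blow-up of $D_{\mathrm{fib}}\subseteq\bhs{bf}$ does not meet (the interior lift of) $\bhs{rf}$, the total transform of $M\times H\times\{0\}$ in $X^2_s$ is the union of boundary hypersurfaces $\bhs{rf}\cup\phibf\cup\bhs{ff}$. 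This yields the desired continuous extension $\beta_{(2),R}\colon X^2_s\to X_s$.

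For (2), I will apply iteratively the standard principle (cf.\ \cite{me1}, \cite{MelroseAPS}): if $f\colon W\to Z$ is a b-fibration, $S\subseteq W$ a p-submanifold, and $f(S)$ is contained in a single boundary face of $Z$ on which $f|_S$ is a b-submersion, then $f$ lifts to a b-fibration $[W;S]\to Z$; and dually, if moreover $f(S)\subseteq T$ for a p-submanifold $T\subseteq Z$ with $f|_S\colon S\to T$ a b-fibration, then $f$ lifts to a b-fibration $[W;S]\to[Z;T]$. Starting from $\pi_R$, apply the principle once for each of the four blow-ups defining $X^2_s$: the three initial b-type blow-ups all have centers mapping into $H\times\{0\}$ or $M\times\{0\}$ by $\pi_R$, and the submersion conditions are immediate since $\pi_R$ is projection off the first factor; the final blow-up of $D_{\mathrm{fib}}\subseteq\bhs{bf}$ has center mapping into $\bhs{sb}\subseteq X_s$. b-normality of the resulting map can then be read off: $\bhs{lf}$ and $\bhs{mf}$ map to $\bhs{sm}$, while $\bhs{rf}$, $\phibf$ and $\bhs{ff}$ map to $\bhs{sb}$, each into a single boundary hypersurface.

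The main technical point, and the step that I expect to require most care, is the verification of the b-submersion condition at $\bhs{ff}$ and $\phibf$, which is where the geometry of the fiber-diagonal blow-up interacts with the right projection. This will be verified directly in the projective coordinates computed in the proof of Proposition \ref{liftedvfs}: near $\bhs{ff}$ the coordinates \eqref{eq.lc2} are $(\rho',\hat s,\hat y,y',z,z',\theta')$, and $\beta_{(2),R}$ is given by $(\rho',y',z',\theta')$ under the natural identification of a neighborhood in $X_s$ near $\bhs{sb}$ with coordinates $(\rho',y',z',\theta')$ (with $\rho'$ the bdf for $\bhs{sb}$ and $\cos\theta'$ the bdf for $\bhs{sm}$); this representation makes b-normality, b-submersion, and smoothness manifest. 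At the remaining boundary hypersurfaces the corresponding verification in local coordinates is routine.
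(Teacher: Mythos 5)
Your route is genuinely different from the paper's, which is much shorter: there one writes $\beta_{(2),R}=\pi^2_{b,s,R}\circ\beta_{\mathrm{ff}}$, where $\pi^2_{b,s,R}\colon X^2_{b,s}\to X_s$ is the right projection already proved to be a b-fibration in \cite{mame1} and $\beta_{\mathrm{ff}}\colon X^2_s\to X^2_{b,s}$ is the blow-down of $\bhs{\ff}$; well-definedness is then automatic, b-map and b-submersion follow because blow-downs are b-maps and b-submersions and these properties compose, and b-normality is checked exactly as you do at the end, by noting that $\bhs{\lf},\bhs{\mf}$ map onto $\bhs{\sm}$ while $\bhs{\rf},\phibf,\bhs{\ff}$ map onto $\bhs{\bs}$. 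Your plan instead goes all the way down to $M^2\times[0,1]_\eps\to M\times[0,1]_\eps$ and rebuilds the lift through all four blow-ups in the domain and the single blow-up in the target; this is workable (it is essentially the strategy the paper itself uses for the triple-space projections, via \cite[Lemma 10]{hmm}) but re-proves the \cite{mame1} input rather than quoting it, and the ``lifting principles'' you invoke need to be stated with the precise hypotheses of that lemma.

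One step as stated is not correct and should be repaired in any write-up: the universal property of radial blow-up is \emph{not} that a b-map $g$ lifts through $[Z;S]$ whenever $g^{-1}(S)$ is a union of boundary hypersurfaces. For example $(s,u)\mapsto(s^2,su)$ from $\bbR_+\times\bbR$ to $\bbR^2$ has preimage of the origin equal to the boundary hypersurface $\{s=0\}$, yet it does not lift through the blow-up of the origin. What is needed is that the pullback of the ideal of the center be locally principal, generated by a monomial in boundary defining functions; in your situation this holds because $\beta_{(2)}^*x'$ and $\beta_{(2)}^*\eps$ are, near $\bhs{\ff}$, of the form $\rho'\sin\theta'$ and $\rho'\cos\theta'$, so the pulled-back ideal is generated by $\rho'$ (and similarly at $\phibf$ and $\bhs{\rf}$). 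This is precisely what your projective-coordinate computation near $\bhs{\ff}$ exhibits, so the coordinate check you defer to is in fact carrying the weight of the ``well-defined'' claim, not the preimage condition; make that explicit (or simply use the factorization through $X^2_{b,s}$, which sidesteps the issue entirely).
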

\begin{proof} Analogous projection maps $\pi^2_{b,s,R}$ and $\pi^2_{b,s,L}$ from $X^2_{b,s}$ to $X_s$ appear in \cite{mame1}; they are shown to be well-defined $b$-fibrations. Let $\beta_{\ff}$ be the blow-down map from $X^{2}_{\epsilon}$ to $X^2_{b,s}$ given by blowing down $\bhs{\ff}.$ Then
\[\beta_{(2),R}=\pi^2_{b,s,R}\circ\beta_{\ff};\ \beta_{(2),L}=\pi^2_{b,s,L}\circ\beta_{\ff},\]
and therefore $\beta_{(2), R}$ and $\beta_{(2), L}$ are well-defined. Moreover, from Proposition \ref{prop:bsubcomp} and the well-known fact that all blow-down maps are $b$-maps and $b$-submersions (a reference may be found in the proof of Lemma 12 in \cite{hmm}), both $\beta_{(2), R}$ and $\beta_{(2), L}$ are $b$-maps and $b$-submersions. It remains only to check that each map is $b$-normal. We know from \cite{mame1} that $\pi^2_{b,s,R}$ and $\pi^2_{b,s,L}$ are both $b$-fibrations and hence $b$-normal, and the images of $\bhs{\mf},$ $\bhs{\lf},$ and $\bhs{\rf}$ remain unchanged after composition with $\beta_{\ff}.$ Finally, it is easy to see in local coordinates (see Figure \ref{fig:doublespace}) that $\beta_{(2),R}(\phibf)=\beta_{(2), R}(\bhs{\ff})=\bhs{\bs}.$ Since an analogous statement is true for $\beta_{(2),L},$ both maps are $b$-normal and hence $b$-fibrations. 
\end{proof}
The exponent matrices may be computed directly; for $\pi_{s,R}^2,$ all entries are zero except for the following, which are equal to $1:$
\[\{(\bhs{\rf},\bhs{\bs}),(\phibf,\bhs{\bs}),(\bhs{\ff},\bhs{\bs}),(\bhs{\lf},\bhs{\ms}),(\bhs{\mf},\bhs{\ms})\}.\] 
Similarly, for $\pi_{s,L}^2,$ the entries which are equal to $1$ are:
\[\{(\bhs{\lf},\bhs{\bs}),(\phibf,\bhs{\bs}),(\bhs{\ff},\bhs{\bs}),(\bhs{\rf},\bhs{\ms}),(\bhs{\mf},\bhs{\ms})\}.\]

Now we show that the natural projection maps from the triple space to the double space are $b$-fibrations; this analysis is the critical technical tool used in the proof of the composition formula.

\begin{lemma} Projection off $x,y,z$ (resp. $x',y',z'$ or  $x'',y'',z''$) extends by continuity to a well-defined map $\pi_F:X^3_{s}\rightarrow X^2_{s}$ (resp. $\pi_C,$ $\pi_S$), and each map is a $b$-fibration. \end{lemma}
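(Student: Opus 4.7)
The plan is to mirror the strategy of Proposition~\ref{propc1}, realizing each projection map as a composition of already-established b-fibrations with blow-down maps, and then verifying b-normality face by face. By the symmetry of the construction under relabeling the three factors, it suffices to treat $\pi_F$. From \cite{mame1}, the corresponding projection $\pi^3_{b,s,F}\colon X^3_{b,s} \to X^2_{b,s}$ (obtained by forgetting the first factor) is a b-fibration. Let $\gamma_3 \colon X^3_s \to X^3_{b,s}$ denote the composite blow-down map produced by steps~4--6 of the triple-space construction, and let $\gamma_2 \colon X^2_s \to X^2_{b,s}$ be the blow-down corresponding to the single additional fibered blow-up $[X^2_{b,s}; D_{fib}]$. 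Both $\gamma_2$ and $\gamma_3$ are b-submersions, so $\pi^3_{b,s,F}\circ \gamma_3\colon X^3_s \to X^2_{b,s}$ is automatically a b-submersion by Proposition~\ref{prop:bsubcomp}.

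The first substantive step is to show that this composite lifts continuously through $\gamma_2$ to give a well-defined map $\pi_F\colon X^3_s \to X^2_s$. By the universal property of radial blow-up, this reduces to verifying that the preimage of the center $D_{fib} \subseteq \bhs{\bff}(X^2_{b,s})$ under $\pi^3_{b,s,F}\circ \gamma_3$ is already contained in the exceptional divisor of $\gamma_3$. The preimage of $\bhs{\bff}$ under $\pi^3_{b,s,F}$ is exactly $\bhs{S}\cup \bhs{T}$ (both being the locus $\{x' = x'' = \epsilon = 0\}$ lifted through the $b$-surgery blow-ups), and the preimage of $D_{fib}$ within it is $(D_F \cap \bhs{S})\cup(D_F\cap \bhs T)\cup (D\cap \bhs T)$, where $D_F = \{(x',y') \sim_\phi (x'',y'')\}$ and $D$ is the triple fiber diagonal. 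These three p-submanifolds are blown up precisely to produce the faces $\bhs{\phi_F}, \bhs{\phi_{FT}}$, and $\bhs{\phi_{TT}}$ in steps~5 and~6, so the lift exists and $\pi_F$ is a b-map; composing with $\gamma_2$ on the other side, $\pi_F$ inherits the b-submersion property from $\pi^3_{b,s,F}\circ \gamma_3$.

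The remaining step, which is the main obstacle, is to verify b-normality: no boundary hypersurface of $X^3_s$ may be sent into a codimension-two corner of $X^2_s$. This is checked face-by-face using local projective coordinates or, more conceptually, by tracing the iterated lifts. I would record a table of images for $\pi_F$:
\[
\bhs{Z}, \bhs{N_1} \longmapsto \bhs{\mf}; \qquad
\bhs{N_2}, \bhs{F} \longmapsto \bhs{\lf}; \qquad
\bhs{N_3}, \bhs{C} \longmapsto \bhs{\rf};
\]
\[
\bhs{S}, \bhs{T}, \bhs{\phi_S}, \bhs{\phi_C}, \bhs{\phi_{ST}}, \bhs{\phi_{CT}} \longmapsto \phibf; \qquad
\bhs{\phi_F}, \bhs{\phi_{FT}}, \bhs{\phi_{TT}} \longmapsto \bhs{\ff}.
\]
The grouping under $\phibf$ versus $\bhs{\ff}$ is dictated by whether the corresponding diagonal involves the projected-away first factor (in which case projection erases the fiber-diagonal constraint and the image lies on $\phibf$) or involves only the surviving factors (in which case the image lies on $\bhs{\ff}$). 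Each face goes to a single boundary hypersurface of $X^2_s$, so $\pi_F$ is b-normal, hence a b-fibration. The analogous arguments for $\pi_C$ and $\pi_S$ follow by obvious relabeling.
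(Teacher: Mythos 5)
There is a genuine gap in the middle step, which is precisely the crux of the lemma. Your skeleton (reduce to $\pi_F$ by symmetry, start from the b-fibration $\pi_{b,F}:X^3_{b,s}\to X^2_{b,s}$ of \cite{mame1}, finish with b-normality via the ``each hypersurface hits only one hypersurface'' criterion) matches the paper, but your justification that the lift $\pi_F:X^3_s\to X^2_s$ exists \emph{and is a b-submersion} does not hold up. First, the ``universal property of radial blow-up'' you invoke, with the set-theoretic condition that the preimage of the center $D_{fib}$ be contained in the exceptional divisors of $\gamma_3$, is not a valid lifting criterion in the manifolds-with-corners category (even in the algebraic setting one needs the pulled-back ideal of the center to be locally principal, not mere containment of the preimage in a divisor); some extension-by-continuity or structural lemma must be verified. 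Second, and more seriously, your final sentence of that step is backwards: from the fact that $\gamma_2\circ\pi_F=\pi^3_{b,s,F}\circ\gamma_3$ is a b-submersion you cannot conclude that $\pi_F$ is one, because the blow-down $\gamma_2:X^2_s\to X^2_{b,s}$ has non-injective b-differential along $\bhs{\ff}$ (it collapses that face onto $D_{fib}$), so surjectivity of the composite differential says nothing about surjectivity of ${}^b(\pi_F)_*$ at points of $\pi_F^{-1}(\bhs{\ff})$ --- exactly the new faces where the lemma has content. Proposition~\ref{prop:bsubcomp} only goes in the forward direction.

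The paper closes this gap by introducing the auxiliary space $\tilde X_s^3=[[X^3_{b,s};\bhs T\cap D_F];\bhs F\cap D_F]$, proving by commutation of blow-ups that it is a blow-down of $X_s^3$, and then applying Lemma~10 of \cite{hmm}: since $\pi_{b,F}^{-1}(\bhs{\bff}\cap D_{fib})$ is the union of the two p-submanifolds $\bhs T\cap D_F$ and $\bhs F\cap D_F$ and the exponent matrix of $\pi_{b,F}$ has only entries $0$ and $1$, the b-fibration $\pi_{b,F}$ lifts to a b-fibration $\hat\pi_F:\tilde X_s^3\to X_s^2$; composing with the blow-down $X_s^3\to\tilde X_s^3$ then yields well-definedness and the b-submersion property simultaneously, leaving only b-normality, which is checked from the explicit exponent matrix together with the criterion you also use (a b-submersion each of whose domain hypersurfaces has at most one nonzero column in the exponent matrix is b-normal). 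If you want to salvage your route, you need a substitute for Lemma~10 of \cite{hmm} at the face $\bhs{\ff}(X^2_s)$; the set-theoretic containment plus composition argument cannot supply it. As a minor point, your image table also disagrees with the paper's exponent matrix (for instance $\bhs{\phi_S}$ and $\bhs{\phi_C}$ map to $\bhs{\lf}$ and $\bhs{\rf}$, not to $\phibf$), though the paper's own F/C/S labelling is used inconsistently, so the bookkeeping there is less important than the structural gap above.
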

\begin{proof}By symmetry, it suffices to prove that $\pi_F$ is well-defined and a $b$-fibration. The image of $\pi_F$ should be thought of as $X^2_{s}$ with the coordinates $x'$ and $x''$ in place of $x$ and $x'.$

Regard the fiber diagonal $D_F$ (i.e. the closure of the lift of the interior of $\{x=x',y=y'\}$) as a p-submanifold of $X_{b,s}^3.$ Define an auxiliary space \[\tilde X_s^3=[[X_{b,s}^3;\ \bhs T\cap D_F];\ \bhs F\cap D_F],\] and let $\bhs{\tilde\phi_{FT}}$ be the face created by the first blow-up. 
\begin{proposition} $\tilde X_s^3$ is a blow-down of $X_s^3.$
\end{proposition}
\begin{proof} Consider $[\tilde X_s^3;\bhs{\tilde\phi_{FT}}\cap D_C\cap D_S].$ By \cite[Lemma 5]{hmm}, blow-ups which are nested, transverse, or disjoint commute. Since $\bhs F\cap D_F$ and $\bhs{\tilde\phi_{FT}}\cap D_C\cap D_S$ are disjoint p-submanifolds of $[X_{b,s}^3; T\cap D_T],$
\[[\tilde X_s^3;\bhs{\tilde\phi_{FT}}\cap D_C\cap D_S]=
[[[X_{b,s}^3;\ \bhs T\cap D_F];\ \bhs{\tilde\phi_{FT}}\cap D_C\cap D_S];\ \bhs F\cap D_F].\]
Now $\bhs{\tilde\phi_{FT}}\cap D_C\cap D_S$ is the lift of $\bhs T\cap D\subset X_{b,s}^3$ to $[X_{b,s}^3;\ \bhs T\cap D_F].$ Since $\bhs T\cap D\subset \bhs T\cap D_F$ and nested blow-ups commute,
\[[\tilde X_s^3;\bhs{\tilde\phi_{FT}}\cap D_C\cap D_S]=[[[X_{b,s}^3;\ \bhs T\cap D];\ \bhs T\cap D_F];\ \bhs F\cap D_F].\]
The final space in this chain is a blow-down of $X_s^3,$ obtained by blowing down $\bhs{\phi_C}$ and $\bhs{\phi_S},$ then $\bhs{\phi_{CT}}$ and $\bhs{\phi_{ST}}.$ \end{proof}
\begin{proposition} Projection off $x$ extends by continuity to a map $\hat{\pi}_F$ from $\tilde X_s^3$ to $X_s^2,$ and $\hat{\pi}_F$ is a $b$-fibration.
\end{proposition}
\begin{proof} One could write everything out in local coordinates and prove the proposition directly, but it is far simpler to use a technical lemma from \cite{hmm}. In \cite{mame1}, an analogous projection map which we call $\pi_{b,F}$ is shown to be a well-defined $b$-fibration from $X_{b,s}^3$ to $X_{b,s}^2.$ By equation (102) in \cite{mame1}, each entry in the exponent matrix of $\pi_{b,F}$ is either zero or one. Now remember that $X_s^2$ is obtained from $X_{b,s}^2$ by blowing up $\bhs{\bff}\cap D_{fib}.$ The inverse image of $\bhs{\bff}\cap D_{fib}$ under $\pi_{b,F}$ is not a p-submanifold itself, but it is the union of the two p-submanifolds $\bhs T\cap D_F$ and $\bhs F\cap D_F.$ Therefore, by Lemma 10 of \cite{hmm} and the definitions of $\tilde X_s^3$ and $X_s^2,$ the $b$-fibration $\pi_{b,F}$ lifts to a well-defined map which is a $b$-fibration from $\tilde X_s^3$ to $X_s^2.$ This lift is precisely $\hat\pi_F,$ completing the proof of the proposition.
\end{proof}

We conclude from these two propositions that $\pi_F$ is the composition of a blow-down map and $\hat{\pi}_F$ and is hence well-defined. Since both the blow-down map and $\hat{\pi}_F$ are $b$-maps and $b$-submersions, so is $\pi_F$ (Proposition \ref{prop:bsubcomp}). It remains only to check $b$-normality. This may be done directly and is not hard, but for later purposes we first compute the exponent matrix $e_{\pi_F}(G,H).$ From Figure \ref{fig:triplespace}, as well as the analogous analysis in \cite{mame1} and \cite{v}, the pullbacks of boundary defining functions are given by
\[(\pi_F)^*\rho_{\ff}=\rho_{\phi_{TT}}\rho_{\phi_{FT}}\rho_{\phi_F}; (\pi_F)^*\rho_{\fbf}=\rho_{\phi_{CT}}\rho_{\phi_{ST}}\rho_{T}\rho_{F};\]\[(\pi_F)^*\rho_{\lf}=\rho_{\phi_S}\rho_S\rho_{N_2};\ (\pi_F)^*\rho_{\rf}=\rho_{\phi_C}\rho_C\rho_{N_3};\ (\pi_F)^*\rho_{\mf}=\rho_{N_1}\rho_Z.\]
So the exponent matrix $e_{\pi_F}(G,H)$ is zero unless $(G,H)$ is one of the following pairs, in which case $e_{\pi_F}(G,H)=1$:
\begin{multline}\label{expmatrixf}\left\{(\bhs{\phi_{TT}},\bhs{\ff}), (\bhs{\phi_{FT}},\bhs{\ff}), (\bhs{\phi_F},\bhs{\ff}), (\bhs{\phi_{CT}},\phibf), (\bhs{\phi_{ST}},\phibf), \right.\\ \left.(\bhs T,\phibf), (\bhs F,\phibf),(\bhs{\phi_S},\bhs{\lf}),(\bhs S,\bhs{\lf}),(\bhs{N_2},\bhs{\lf}),\right.\\ \left.(\bhs{\phi_C},\bhs{\rf}),(\bhs C,\bhs{\rf}),(\bhs{N_3},\bhs{\rf}),(\bhs{N_1},\bhs{\mf}),(\bhs Z,\bhs{\mf})\right\}.\end{multline}
Notice that each $G\in\mathcal M(X_s^3)$ has exactly one $H\in\mathcal(X_s^2)$ for which $e_{\pi_F}(G,H)$ is nonzero. By Remark~\ref{ne.2} and the fact that $\pi_F$ is already known to be a $b$-submersion, we conclude that $\pi_F$ is $b$-normal, and hence a $b$-fibration.
\end{proof}

The same arguments apply for $\pi_C$ and $\pi_S,$ and their exponent matrices may be read off from (\ref{expmatrixf}) by symmetry; all entries are zero, except the ones listed below, which are 1. For $\pi_C$:
\begin{multline}\label{expmatrixc}\left\{(\bhs{\phi_{TT}},\bhs{\ff}), (\bhs{\phi_{CT}},\bhs{\ff}), (\bhs{\phi_C},\bhs{\ff}), (\bhs{\phi_{FT}},\phibf), (\bhs{\phi_{ST}},\phibf), \right.\\ \left.(\bhs T,\phibf), (\bhs C,\phibf),(\bhs{\phi_S},\bhs{\lf}),(\bhs S,\bhs{\lf}),(\bhs{N_1},\bhs{\lf}),\right.\\ \left.(\bhs{\phi_F},\bhs{\rf}),(\bhs F,\bhs{\rf}),(\bhs{N_3},\bhs{\rf}),(\bhs{N_2},\bhs{\mf}),(\bhs Z,\bhs{\mf})\right\}.\end{multline}
And for $\pi_S$:
\begin{multline}\label{expmatrixs}\left\{(\bhs{\phi_{TT}},\bhs{\ff}), (\bhs{\phi_{ST}},\bhs{\ff}), (\bhs{\phi_S},\bhs{\ff}), (\bhs{\phi_{FT}},\phibf), (\bhs{\phi_{CT}},\phibf), \right.\\ \left.(\bhs T,\phibf), (\bhs S,\phibf),(\bhs{\phi_C},\bhs{\lf}),(\bhs C,\bhs{\lf}),(\bhs{N_1},\bhs{\lf}),\right.\\ \left.(\bhs{\phi_F},\bhs{\rf}),(\bhs F,\bhs{\rf}),(\bhs{N_2},\bhs{\rf}),(\bhs{N_3},\bhs{\mf}),(\bhs Z,\bhs{\mf})\right\}.\end{multline}

\subsection{Densities and blow-up}
As a preliminary step in the proofs, we note that the canonical density bundles on manifolds with corners transform nicely under blow-up. The following proposition, due to Melrose, may be proved by writing out the expression for a blow-up in local coordinates (see also \cite[Lemma 2.2]{v}):
\begin{proposition}\label{prop:densityblowup} Suppose that $F$ is a p-submanifold of a manifold with corners $W,$ $\dim F=f$ and $\dim W=w.$ Then if $\beta$ is the blow-down map from $[W;F]$ to $W,$
\[\beta^*\Omega_b(W)=\Omega_b([W;F]);\ \ \beta^*\Omega(W)=(\rho_F)^{w-f-1}\Omega([W;F]).\]
\end{proposition}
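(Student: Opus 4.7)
Both identities are assertions about local sections of density bundles on $[W;F]$, so it suffices to verify them in a coordinate neighborhood of an arbitrary point $p \in F$. The plan is to reduce everything to the standard polar-coordinate Jacobian and then keep careful track of how boundary defining functions lift.

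First I would use the p-submanifold condition to choose local coordinates on $W$ near $p$ of the form $[0,\infty)^k \times \mathbb{R}^{w-k}$, with coordinates $(x_1,\dots,x_k,y_1,\dots,y_{w-k})$, in which $F$ is the zero set of a fixed sublist. After reordering, assume $F = \{x_1=\cdots=x_a=0,\; y_1=\cdots=y_b=0\}$ with $a+b=c:=w-f$. The radial blow-up $[W;F]$ is then realized in these coordinates by introducing polar coordinates $(r,\omega)$ in the variables $(x_1,\dots,x_a,y_1,\dots,y_b)$, where $r=\sqrt{\sum x_i^2+\sum y_j^2}$ is a boundary defining function $\rho_F$ for the front face and $\omega$ ranges over a fixed relatively open subset of $S^{c-1}$ determined by the sign conditions $x_i\ge 0$. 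The remaining coordinates $x_{a+1},\dots,x_k, y_{b+1},\dots,y_{w-k}$ lift unchanged.

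Next I would carry out the Jacobian computation. Writing $x_i = r\,\omega_i^x$ and $y_j = r\,\omega_j^y$, the standard polar change of variables gives
\[
\beta^*|dx_1\cdots dx_a\, dy_1\cdots dy_b| \;=\; r^{c-1}\, |dr\, d\omega|,
\]
where $d\omega$ denotes any nonvanishing smooth density on the spherical region. Tensoring with the unchanged density factor in the remaining coordinates, this yields
\[
\beta^*\Omega(W) \;=\; r^{\,c-1}\,\Omega([W;F]) \;=\; \rho_F^{\,w-f-1}\,\Omega([W;F]),
\]
which is the first formula.

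For the $b$-density formula, recall that $\Omega_b(W)=f_W^{-1}\Omega(W)$ with $f_W$ a product of boundary defining functions, one for each boundary hypersurface of $W$; similarly $\Omega_b([W;F])=f_{[W;F]}^{-1}\Omega([W;F])$, where now $f_{[W;F]}$ also contains a factor $\rho_F$ coming from the new front face. For each boundary hypersurface $H$ of $W$ containing $F$, one may choose boundary defining functions so that $\beta^*\rho_H = \rho_F\,\rho_{H'}$, while for $H$ not containing $F$ one has $\beta^*\rho_H=\rho_{H'}$. Combining these with the canonical-density identity above, the $\rho_F$-weights cancel to give the second formula.

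The main bookkeeping obstacle is the last step: one must check that the number of extra $\rho_F$ factors picked up from lifting the boundary defining functions of $W$, together with the single new factor $\rho_F$ in $f_{[W;F]}$, combines with the Jacobian weight $\rho_F^{\,w-f-1}$ to leave no net weight. This is a codimension count that works out because in the local model the vanishing locus of the chosen coordinates is exactly the intersection of the relevant boundary hypersurfaces with a transverse slice, making the formula an essentially one-line computation once the local model is set up.
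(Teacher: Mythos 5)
Your computation of the canonical-density identity is correct, and it is exactly the local polar-coordinate argument the paper has in mind (the paper offers no more detail than "write out the blow-up in local coordinates"). The genuine gap is in the $b$-density half, and it sits precisely at the step you defer to a "codimension count". Do the count in your own local model, where $F=\{x_1=\cdots=x_a=0,\ y_1=\cdots=y_b=0\}$ with $a+b=c=w-f$, the $x_i$ boundary coordinates and the $y_j$ interior coordinates: lifting the total boundary defining function $f_W$ produces a factor $\rho_F^{\,a}$ (one $\rho_F$ for each of the $a$ hypersurfaces containing $F$), the front face contributes exactly one new factor $\rho_F$ to $f_{[W;F]}$, and the Jacobian contributes $\rho_F^{\,c-1}$. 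The net result is
\[
\beta^*\Omega_b(W)\;=\;\rho_F^{\,(c-1)-a+1}\,\Omega_b([W;F])\;=\;\rho_F^{\,b}\,\Omega_b([W;F]),
\]
so the weights cancel only when $b=0$, i.e. when every direction normal to $F$ is a boundary direction, which means $F$ is locally a corner (a boundary face) of $W$, as for the blow-up of $(\pa M)^2$ in the $b$-double space. Your closing claim that "the vanishing locus of the chosen coordinates is exactly the intersection of the relevant boundary hypersurfaces with a transverse slice" is precisely what fails for a general p-submanifold.

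Two sanity checks make the failure concrete: blowing up a point in the interior of $\bbR^2$ gives $\beta^*|dx\,dy|=r\,|dr\,d\theta|=r^2\cdot r^{-1}|dr\,d\theta|$, i.e. $\beta^*\Omega_b=\rho_F^{2}\,\Omega_b$; and for the paper's single surgery space, blowing up $H\times\{0\}$ in $M\times[0,1]_{\eps}$ gives $\beta^*\bigl(\tfrac{|dx\,dm'\,d\eps|}{\eps}\bigr)=\tfrac{|dr\,d\theta\,dm'|}{\cos\theta}=\rho_{sb}\cdot\nu_b(X_s)$, not $\nu_b(X_s)$. So the $b$-density identity cannot be proved in the stated generality because it is not true in that generality: your argument needs the additional hypothesis that all normal directions to $F$ are of boundary type (equivalently, the identity should carry the extra weight $\rho_F^{\,b}$, with $b$ the codimension of $F$ in the smallest boundary face containing it). Note this does not affect anything downstream in the paper, since Corollary C.2 and the composition/mapping proofs only use the canonical-density formula, but your write-up should either add that hypothesis or record the weighted formula rather than assert an unqualified cancellation.
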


The following corollary of Proposition \ref{prop:densityblowup} will be useful in the proof of the composition theorem:

\begin{corollary}\label{cor:densityblowup}[Blow-up density transformations]
The density bundles transform under blow-ups as follows:
\begin{equation}\label{dentranssingle}(\beta_s)^*\nu(M\times[0,1]_{\epsilon})=\rho_{\bs}\nu(X_s);\end{equation}
\begin{equation}\label{dentransdouble}(\beta_s^2)^*\nu(M^2\times[0,1]_{\epsilon})=\rho_{\lf}\rho_{\rf}\rho_{\fbf}^{2}\rho_{\ff}^{h+3}\nu(X_s^2);\end{equation}
\begin{multline}\label{dentranstriple}(\beta_{(3)})^*\nu(M^3\times[0,1]_{\epsilon})=(\rho_{N_1}\rho_{N_2}\rho_{N_3})(\rho_F\rho_C\rho_S)^{2}(\rho_T)^{3}(\rho_{\phi_F}\rho_{\phi_C}\rho_{\phi_S})^{h+3}\cdot\\ (\rho_{\phi_{FT}}\rho_{\phi_{CT}}\rho_{\phi_{ST}})^{h+4}(\rho_{\phi_{TT}})^{2h+5}\nu(X_s^3).\end{multline}
\end{corollary}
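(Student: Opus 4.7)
The plan is to apply Proposition \ref{prop:densityblowup} iteratively through each blow-up in the constructions of $X_s$, $X_s^2$, and $X_s^3$, keeping track at each step of two ingredients: (a) the new factor $\rho_S^{w-s-1}$ produced by blowing up a p-submanifold $S$ of codimension $w-s$, and (b) the transformation of any pre-existing boundary defining function $\rho_H$ whose hypersurface $H$ contains the center $S$. The second ingredient rests on the local-coordinates fact that when $S\subset \bhs{H}$, polar coordinates give $\beta^*\rho_H=\rho_{S_{\mathrm{new}}}\tilde\rho_H$ (up to a smooth non-vanishing factor absorbable into the choice of $\tilde\rho_H$); otherwise $\rho_H$ lifts unchanged. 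Equation \eqref{dentranssingle} is then immediate, since $H\times\{0\}\subset M\times[0,1]_\epsilon$ has codimension $2$ and hence produces the single factor $\rho_{\bs}^{2-1}=\rho_{\bs}$.

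For \eqref{dentransdouble}, I would run through the four blow-ups defining $X_s^2$. The initial blow-up of $H^2\times\{0\}$, of codimension $3$, contributes $\rho_{\bff}^2$, and the blow-ups of the interior lifts of $H\times M\times\{0\}$ and $M\times H\times\{0\}$, each of codimension $2$, contribute $\rho_{\lf}$ and $\rho_{\rf}$. Finally, $D_{fib}$ has codimension $h+2$ in $X_{b,s}^2$ (since its base $\{(h,h'):\phi(h)=\phi(h')\}$ has dimension $2(m-1)-h$ and the fibres inside $\bhs{\bff}$ are one-dimensional, giving $\dim D_{fib}=2m-h-1$), contributing $\rho_{\ff}^{h+1}$; and since $D_{fib}\subset\bhs{\bff}$, the earlier factor $\rho_{\bff}^2$ transforms into $(\rho_{\ff}\rho_{\fbf})^2$. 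Multiplying yields $\rho_{\lf}\rho_{\rf}\rho_{\fbf}^2\rho_{\ff}^{h+3}$.

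For \eqref{dentranstriple}, the same procedure applied to the triple space yields the following raw contributions from the codimensions (codim $4$ for $H^3\times\{0\}$; codim $3$ for each of $HHM,HMH,MHH$; codim $2$ for each $HMM$-type submanifold; codim $2h+3$ for $\bhs{T}\cap D$; codim $h+2$ for each inner spoke $\bhs{T}\cap D_\ast$ and each outer spoke $\bhs{\ast}\cap D_\ast$):
\[
\rho_T^3\,(\rho_F\rho_C\rho_S)^2\,\rho_{N_1}\rho_{N_2}\rho_{N_3}\,\rho_{\phi_{TT}}^{2h+2}\,(\rho_{\phi_{FT}}\rho_{\phi_{CT}}\rho_{\phi_{ST}})^{h+1}\,(\rho_{\phi_F}\rho_{\phi_C}\rho_{\phi_S})^{h+1}.
\]
On top of this, the containments $\bhs{T}\cap D\subset\bhs{T}$ and $\bhs{T}\cap D_\ast\subset\bhs{T}$ mean that the four successive blow-ups performed inside $\bhs{T}$ each transform the surviving defining function of $\bhs{T}$, so $\rho_T^3$ acquires the extra factor $\rho_{\phi_{TT}}^3(\rho_{\phi_{FT}}\rho_{\phi_{CT}}\rho_{\phi_{ST}})^3$; and likewise each containment $\bhs{\ast}\cap D_\ast\subset\bhs{\ast}$ turns $(\rho_F\rho_C\rho_S)^2$ into $(\rho_F\rho_C\rho_S)^2(\rho_{\phi_F}\rho_{\phi_C}\rho_{\phi_S})^2$. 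Assembling all the exponents gives exactly \eqref{dentranstriple}.

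The principal difficulty is pure bookkeeping: correctly identifying the p-submanifold structure of each center at the stage it enters (so that the proposition is applicable), correctly tallying the codimensions, and enumerating all containment relations between the centers and the existing boundary hypersurfaces. The containments themselves follow from the iterative construction of $X_s^3$---each ``diagonal--face intersection'' center sits in the boundary face introduced at the immediately preceding blow-up step---and the disjointness of centers from faces they do not obviously embed into can be verified by inspection in the local projective coordinates used throughout Section \ref{doublespsec}.
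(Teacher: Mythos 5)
Your proof is correct and follows essentially the same route as the paper: iterate Proposition \ref{prop:densityblowup} through the blow-up sequence, with the correct codimension counts, and account for the extra factors coming from the fact that boundary defining functions of faces containing a later blow-up center lift to products (e.g.\ $\rho_{\bff}\mapsto\rho_{\fbf}\rho_{\ff}$, and $\rho_T$, $\rho_F$, $\rho_C$, $\rho_S$ picking up the $\phi$-face factors), which is exactly the paper's bookkeeping. The only quibble is the closing remark that each center sits in the face introduced at the \emph{immediately preceding} step (e.g.\ $\bhs{T}\cap D_\ast\subset\bhs{T}$ and $\bhs{\ast}\cap D_\ast\subset\bhs{\ast}$ involve faces from earlier stages), but the containments you actually use are the correct ones and the exponents all match.
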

\begin{proof} The proof of each statement follows from repeated applications of Proposition \ref{prop:densityblowup}. The proof of (\ref{dentranssingle}) requires only one application of Proposition \ref{prop:densityblowup} and is left to the reader.

To prove (\ref{dentransdouble}), recall that the space $X_s^2$ is obtained by iterated blow-up of $M^2\times [0,1]_{\epsilon}.$ The first blow-up creates the face $\bhs{\bff},$ and with $w-f-1=2,$ which appears at first to give a factor of $\rho_{\bff}^{2}.$ However, under the iterated blow-ups that follow, the defining function for the front face of this first blow-up actually lifts to $\rho_{\fbf}\rho_{\ff},$ so the first blow-up gives us a factor of $(\rho_{\fbf}\rho_{\ff})^{2}.$ Similarly, the blow-ups creating $\bhs{\lf}$ and $\bhs{\rf}$ give factors of $\rho_{\lf}$ and $\rho_{\rf}$ respectively, and the blow-up creating $\bhs{\ff}$ gives a factor of $\rho_{\ff}^{h+1}.$ Multiplying the various factors together gives (\ref{dentransdouble}).

The proof of (\ref{dentranstriple}) proceeds in an analogous manner. In the first step in the creation of the triple space, we blow up $\{x=x'=x''=\epsilon=0\}$ to create a new face $\bhs T.$ This blow-up has $w-f-1= 3$, so we expect a factor of $\rho_T^{3}.$ However, under the remaining iterated blow-ups, a boundary defining function for $\bhs T$ at this stage will lift to $\rho_T\rho_{\phi_{TT}}\rho_{\phi_{FT}}\rho_{\phi_{CT}}\rho_{\phi_{ST}},$ so we instead get a factor of
\[(\rho_T\rho_{\phi_{TT}}\rho_{\phi_{FT}}\rho_{\phi_{CT}}\rho_{\phi_{ST}})^{3}\]
from the first step. The remainder of the proof proceeds analogously and is again left to the reader.
\end{proof}

\subsection{Proof of mapping theorem} Now we prove Theorem \ref{mappingprops}. Assume for simplicity that $m=-\infty.$ By the Schwarz kernel theorem and the definition of the surgery calculus, the kernel of $A$ is a distribution $K(A)\in\mathcal A(X_s)^2$ with index sets as above, and we have
\begin{equation}\label{mapping}g=(\beta_{(2),L})_*(K(A)\kappa_{\phi}(\beta_{(2),R})^*f).\end{equation}
We will now apply the pullback and pushforward theorems to analyze this expression. 

When using the pullback theorem it is convenient to work with functions, but when using the pushforward theorem it is easier to work with the canonical full densities. So we transform (\ref{mapping}) into an equation where the left-hand side is a multiple of $\nu(X_s)$ and the interior of the push-forward on the right is a multiple of $\nu(X_s^2).$ First multiply both sides by $\nu_{\ephi}|d\epsilon|$ and rewrite, using the definition of $\kappa_{\phi}$:
\[g\nu_{\ephi}|d\epsilon|=(\beta_{(2),L})_*(K(A)(\beta_{(2),R})^*f((\beta_{(2),L})^*\nu_{\ephi}(\beta_{(2),R})^*\nu_{\ephi}|d\epsilon|)).\]
Using the definitions of the various density bundles, the left-hand side is $g\rho^{-(h+2)}\beta_s^*\nu(M\times[0,1]_{\epsilon})$ and the right-hand side is
\[(\beta_{(2),L})_*((\rho\rho')^{-(h+2)}K(A)(\beta_{(2),R})^*f((\beta_{(2),L})^*\beta_s^*\nu(M\times[0,1]_{\epsilon})(\beta_{(2),R})^*\beta_s^*\nu(M\times[0,1]_{\epsilon})|d\epsilon|^{-1}).\]
Since $\nu(M\times[0,1]_{\epsilon})=|dx\ dy\ dz\ d\epsilon|,$ we have that on $X_s^2,$
\[(\beta_{(2),L})^*\beta_s^*\nu(M\times[0,1]_{\epsilon})(\beta_{(2),R})^*\beta_s^*\nu(M\times[0,1]_{\epsilon})=(\beta_s^2)^*\nu(M^2\times[0,1]_{\epsilon})|d\epsilon|.\]
Using this identity together with the facts that $\rho=\rho_{\bs}$ on the single space, $\rho=\rho_{\lf}\rho_{\fbf}\rho_{\ff}$ on the double space, and $\rho'=\rho_{\rf}\rho_{\fbf}\rho_{\ff}$ on the double space, we have
\[g\rho_{\bs}^{-(h+2)}\beta_s^*\nu(M\times[0,1]_{\epsilon})=(\beta_{(2),L})_*((\rho_{\lf}\rho_{\rf}\rho^2_{\fbf}\rho^2_{\ff})^{-(h+2)}K(A)(\beta_{(2),R})^*f((\beta_s^2)^*\nu(M^2\times[0,1]_{\epsilon})).\]
Applying Corollary \ref{cor:densityblowup} yields
\begin{equation}\label{mappingtrans} g\rho_{\bs}^{-(h+1)}\nu(X_s)=(\beta_{(2),L})_*(K(A)(\beta_{(2),R})^*f(\rho_{\lf}\rho_{\rf}\rho_{\fbf}^2\rho_{\ff})^{-(h+1)}\nu(X_s^2)).\end{equation}
This is the form we wanted, and we are now in a position to apply the pullback and pushforward theorems. 

By the pullback theorem, the interior of the expression on the right-hand side of (\ref{mappingtrans}), $K(A)(\beta_{(2),R})^*f(\rho_{\lf}\rho_{\rf}\rho_{\fbf}^2\rho_{\ff})^{-(h+1)},$ is polyhomogeneous on $X_s^2$ with index family
\[E_{\ff}+F_{\bs}-(h+1)\textrm{ at }\bhs{\ff},\ E_{\fbf}+F_{\bs}-2(h+1)\textrm{ at }\phibf,\ E_{\lf}+E_{\ms}-(h+1)\textrm{ at }\bhs{\lf},\]
\[E_{\rf}+F_{\bs}-(h+1)\textrm{ at }\bhs{\rf},\ E_{\mf}+F_{\ms}\textrm{ at }\bhs{\mf}.\]
Then applying the pushforward theorem, we see that $g\rho^{-(h+1)}_{\bs}$ is polyhomogeneous on $X_s$ with index family 
\[(E_{\ff}+F_{\bs}-(h+1))\extu (E_{\fbf}+F_{\bs}-2(h+1))\extu(E_{\lf}+F_{\ms}-(h+1))\textrm{ at }\bhs{\bs},\]
\[(E_{\mf}+F_{\ms})\extu (E_{\rf}+F_{\bs}-(h+1))\textrm{ at }\bhs{\ms}.\]
Subtracting $h+1$ from the index set at $\bhs{\bs}$ gives the desired index family for $g,$ completing the proof. 

The case of general $m$ is no harder.   Then, $K(A)(\beta_{(2),R})^*f$ is still polyhomogeneous on $X_s^2$ with the same index family, but also with a conormal singularity at the lifted diagonal. However, the fibers of the pushforward by $\pi_{s,L}$ are transverse to the lifted diagonal, so the singularity `integrates out' and we get the same conclusion.

\subsection{Proof of composition formula}

We are now ready to prove Theorem \ref{composition}.

\begin{proof} Assume for simplicity that $m=m'=-\infty$; we will discuss the extension to arbitrary $m$ and $m'$ at the end of the proof. Then if the Schwarz kernel of $A$ is $K(A),$ et cetera, we have (following \cite{mame1}):
\[K(C)\kappa_{\phi}=(\pi_C)_*((\pi_F)^*(K(B)\kappa_{\phi})(\pi_S)^*(K(A)\kappa_{\phi})).\]

As in the proof of the mapping properties lemma, we want to write the left-hand side as a section of $\nu(X_s^2)$ and the interior of the pushforward on the right as a section of $\nu(X_s^3).$ First multiply both sides by $(\beta_{(2),L})^*\nu_{\ephi}$ and re-write everything in terms of $\nu(M^{i}\times[0,1]_{\epsilon}),$ $i=1,$ 2, 3. Using coordinates $(x,x',x'')$ and the corresponding $(\rho,\rho',\rho''),$ we have
\begin{multline}\label{thiseq}K(C)(\rho\rho'')^{-(h+2)}(\beta_s^2)^*\nu(M^2\times[0,1]_{\epsilon})=(\pi_C)_*((\rho\rho'\rho'')^{-(h+2)}(\pi_F)^*(K(B))(\pi_S)^*(K(A))\\ \cdot (\beta_{(3)})^*\nu(M^3\times[0,1]_{\epsilon})).\end{multline}
We rewrite the left-hand side of (\ref{thiseq}), using the fact that $\rho\rho''=(\rho_{\lf}\rho_{\rf}\rho_{\fbf}^2\rho_{\ff}^2)$ together with Corollary \ref{cor:densityblowup}, as
\[K(C)(\rho_{\lf}\rho_{\rf}\rho_{\fbf}^2\rho_{\ff})^{-(h+1)}\nu(X_s^2).\]
To analyze the right-hand side of (\ref{thiseq}), let
\[\nu_{RHS}=(\rho\rho'\rho'')^{-(h+2)}(\beta_{(3)})^*\nu(M^3\times[0,1]_{\epsilon}).\]
As functions on $X_s^3,$ $\rho,$ $\rho',$ $\rho''$ are the pullbacks of $\rho_{\bs}$ under $\pi_{F}\circ\beta_{(2),L},$ $\pi_{F}\circ\beta_{(2),R},$ and $\pi_S\circ\beta_{(2),R}$ respectively. So we may use the pullback theorem along with the exponent matrices for $\pi_F,$ $\pi_S,$ $\beta_{(2),L},$ and $\beta_{(2),R}$ to compute that
\begin{multline}\nu_{RHS}=(\rho_{\phi_{TT}}\rho_{\phi_{FT}}\rho_{\phi_{CT}}\rho_{\phi_{ST}}\rho_{T})^{-3(h+2)}(\rho_{\phi_F}\rho_{\phi_C}\rho_{\phi_S}\rho_F\rho_C\rho_S)^{-2(h+2)}\\ \cdot(\rho_{N_1}\rho_{N_2}\rho_{N_3})^{-(h+2)}\beta_{(3)}^*\nu(M^3\times[0,1]_{\epsilon}).\end{multline}
Then using Corollary \ref{cor:densityblowup} gives
\[\nu_{RHS}=(\rho_T)^{-3(h+1)}(\rho_{\phi_{FT}}\rho_{\phi_{CT}}\rho_{\phi_{ST}}\rho_F\rho_C\rho_S)^{-2(h+1)}(\rho_{\phi_{TT}}\rho_{\phi_F}\rho_{\phi_C}\rho_{\phi_S}\rho_{N_1}\rho_{N_2}\rho_{N_3})^{-(h+1)}\nu(X_s^3).\]

Now we put everything together by applying the pullback theorem to compute the index sets of $(\pi_S)^*K(A)$ and $(\pi_F)^*K(B).$ We find that $K(C)(\rho_{\lf}\rho_{\rf}\rho_{\fbf}^2\rho_{\ff})^{-(h+1)}\nu(X_s^2)$ is the pushforward by $\pi_C$ of a full density on $X_s^3$ with index sets, as a section of $\Omega(X_s^3),$ given by
\begin{multline}E_{\ff}+F_{\ff}-(h+1) \textrm{ at $\bhs{\phi_{TT}},$ } E_{\fbf}+F_{\ff}-2(h+1)\textrm{ at $\bhs{\phi_{FT}},$ } E_{\fbf}+F_{\fbf}-2(h+1)\textrm{ at $\bhs{\phi_{CT}},$ }\\
E_{\ff}+F_{\fbf}-2(h+1)\textrm{ at $\bhs{\phi_{ST}},$ } E_{\rf}+F_{\ff}-(h+1)\textrm{ at $\bhs{\phi_{F}},$ } E_{\lf}+F_{\rf}-(h+1)\textrm{ at $\bhs{\phi_{C}},$ }\\
E_{\ff}+F_{\lf}-(h+1)\textrm{ at $\bhs{\phi_{S}},$ } E_{\fbf}+F_{\fbf}-3(h+1)\textrm{ at $\bhs T,$ } E_{\rf}+F_{\fbf}-2(h+1)\textrm{ at $\bhs F,$ }\\
E_{\lf}+F_{\rf}-2(h+1)\textrm{ at $\bhs C,$ } E_{\fbf}+F_{\lf}-2(h+1)\textrm{ at $\bhs S,$ } E_{\lf}+F_{\mf}-(h+1)\textrm{ at $\bhs{N_1},$ }\\
E_{\rf}+F_{\lf}-(h+1)\textrm{ at $\bhs{N_2},$ } E_{\mf}+F_{\rf}-(h+1)\textrm{ at $\bhs{N_3},$ } E_{\mf}+F_{\mf}\textrm{ at $\bhs Z.$}
\end{multline}
We now apply the pushforward theorem to conclude that, as an element of $\Omega(X_s^2),$ the full density $K(C)(\rho_{\lf}\rho_{\rf}\rho_{\fbf}^2\rho_{\ff})^{-(h+1)}\nu(X_s^2)$ has index sets
\[\tilde G_{\ff}=(E_{\ff}+F_{\ff}-(h+1))\extu(E_{\fbf}+F_{\fbf}-2(h+1))\extu(E_{\lf}+F_{\rf}-(h+1));\] 
\[\tilde G_{\fbf}=(E_{\fbf}+F_{\ff}-2(h+1))\extu(E_{\ff}+F_{\fbf}-2(h+1))\extu(E_{\fbf}+F_{\fbf}-3(h+1))\]\[\extu (E_{\lf}+F_{\rf}-2(h+1));\] 
\[\tilde G_{\lf}=(E_{\ff}+F_{\lf}-(h+1))\extu(E_{\fbf}+F_{\lf}-2(h+1))\extu(E_{\lf}+F_{\mf}-(h+1));\]
\[\tilde G_{\rf}=(E_{\rf}+F_{\ff}-(h+1))\extu(E_{\rf}+F_{\fbf}-2(h+1))\extu(E_{\mf}+F_{\rf}-(h+1));\]
\[\tilde G_{\mf}=(E_{\mf}+F_{\mf})\extu(E_{\rf}+F_{\lf}-(h+1)).\]
Finally, multiply by $(\rho_{\lf}\rho_{\rf}\rho_{\fbf}^2\rho_{\ff})^{h+1}$; this adds $(h+1)$ to the index sets at $\bhs{\lf},$ $\bhs{\ff},$ and $\bhs{\rf}$ and adds $2(h+1)$ to the index set at $\phibf.$ We see immediately that $K(C)$ has the index sets claimed in the theorem. This completes the proof for $m=m'=-\infty.$

The generalization of this argument to arbitrary $m$ and $m'$ is standard and follows the arguments in \cite{mame1,v}. The only difference is that instead of applying the pullback and pushforward theorems for distributions which are smooth in the interior, we apply the pullback and pushforward theorems for distributions with interior conormal singularities. These theorems may be found in the appendix to \cite{emm}; in particular, see Propositions B7.6 for pullback and B7.20 for pushforward. To use the pullback theorem, we need to know that the $b$-fibration $\pi_F: X_s^3\rightarrow X_s^2$ is transversal to $D_{fib}\subset X_s^2$ - i.e. that for every point $p\in\pi_F^{-1}(D_{fib}),$ we have $(^b\pi_F)_*(^bT_pX_s^3)+\ ^bT_{\pi_F(p)}D_{fib}=\ ^bT_{\pi_F(p)}X_s^2.$ To use the pushforward theorem, we must show that the intersection $D_F\cap D_S=D\subset X_s^3$ is transverse, that $\pi_C(D)=D_{fib}$ is an embedded submanifold of $X_s^2,$ and that $\pi_C$ is transversal to both $D_F$ and $D_S.$ The definition of that transversality is that for all $p\in D_F$ (respectively $D_S$), $\ker((^b\pi_C)_*)+\ ^bT_pD_F=\ ^bT_pX_s^3.$ All of these statements are obvious in the interior and may be checked near the boundaries of $X_s^3$ and $X_s^2$ using explicit local coordinates. 
\end{proof}

\bibliographystyle{amsalpha}

\bibliography{cuspsurgres}

\end{document}